%% file: the_real_main.tex
\documentclass[11pt,a4paper]{article}
\input{settings/packages_new}
\input{settings/styles_new}

\input{settings/macros_Maxi}

\input{settings/macros_fanny}

\begin{document}


\begin{center}
    {\LARGE
        \textsc{Posterior contraction\\ under misspecification and heteroscedasticity\\ in non-linear inverse problems}
    \par}

    \vspace{1cm}

 \begin{center}
{\Large\scshape
\begin{tabular}{c c c}
Fanny Seizilles\footnotemark[1] & \hspace{1cm}\&\hspace{1cm} & Maximilian Siebel\footnotemark[2] \\
[-0.3em]
{\small University of Cambridge} & & {\small Heidelberg University}
\end{tabular}
}
\end{center}

    \vspace{0.5cm}

    {\small
        \textsc{\today}
    \par}
\end{center}

\footnotetext[1]{DPMMS, University of Cambridge; e-mail: \href{mailto:fps25@cam.ac.uk}{fps25@cam.ac.uk}}
\footnotetext[2]{Institute for Mathematics, Heidelberg University; e-mail: \href{mailto:siebel@math.uni-heidelberg.de}{siebel@math.uni-heidelberg.de}}

\begin{abstract}
    In many practical and numerical inverse problems, the exact data log-likelihood is not fully accessible, motivating the use of surrogate models. We study heteroscedastic nonparametric nonlinear regression problems with Gaussian errors and establish contraction  results for posterior distributions arising from a surrogate log-likelihood constructed from proxy error variances, an approximate forward map, and an appropriate Gaussian process prior. Under general assumptions on the approximation quality, we show that the resulting surrogate posterior is statistically reliable and contracts about the true parameter at rates comparable to those of the exact posterior. The analysis leverages consistency properties of the (penalised) MLE to effectively handle heteroscedastic noise and to control the impact of likelihood approximation errors. We apply the framework to PDE-constrained inverse problems for a reaction–diffusion equation and the two-dimensional Navier–Stokes equation. In the latter case, we consider misspecified viscosity and forcing terms as well as Oseen-type linearization models, highlighting the relevance of our results for numerical analysis applications.
\end{abstract}

\tableofcontents

\input{Sections/_01_Introduction}

\input{Sections/_02_LSE_ErrorMisspecification}

\input{Sections/_03_GeneralBayes}

\input{Sections/_04_Examples}
\input{Sections/_05_Additional_proofs_General}

\section*{Funding}
 FS is funded by an ERC Advanced Grant (UKRI G116786) and MS is funded by Deutsche Forschungsgemeinschaft (DFG, German Research Foundation) under Germany’s Excellence Strategy EXC-2181/1-39090098 (the Heidelberg STRUCTURES Cluster of Excellence). 

 \section*{Acknowledgements}
The authors are grateful to Richard Nickl for suggesting this project and for many insightful discussions. They further thank Richard Nickl and Jan Johannes for facilitating reciprocal research visits to Cambridge and Heidelberg, respectively, and Robert Scheichl for proposing the inclusion of Oseen-type iterations as an illustrative example.


\printbibliography

\newpage
\appendix

\newcommand{\appendixtitlerule}{Appendix }
\titleformat{\section}
  {\normalfont\Large\bfseries}
  {\appendixtitlerule\Alph{section}:}{1em}{}

    \input{Sections/_app_Prior}
    \input{Sections/_app_MEstimation}
\input{Sections/_app_PDE}

    \input{Sections/app_miscellaneous}

\end{document}

%% file: settings/packages_new.tex
\usepackage[utf8]{inputenc}
\usepackage[T1]{fontenc}
\usepackage{lmodern}
\usepackage{amsmath, amsthm, amssymb}
\allowdisplaybreaks
\usepackage{mathtools}
\usepackage{geometry}
\usepackage{graphicx}
\usepackage{authblk}
\usepackage{xcolor}

\usepackage[
  backend=biber,
  style=authoryear,
  sortcites=true,
  sorting=ynt,
  maxcitenames=2,
  mincitenames=1,
   date=year,
  maxbibnames=99,
  uniquelist=false,
  uniquename=false
]{biblatex}

\usepackage[
  colorlinks=true,
  linkcolor=black,
  citecolor=blue
]{hyperref}

\usepackage{cleveref}
\AtBeginDocument{%
  \DeclareCiteCommand{\cite}
    {\usebibmacro{prenote}}
    {\bibhyperref{\textcolor{blue}{%
      \printnames{labelname}~%
      \mkbibparens{\usebibmacro{cite:labeldate+extradate}}%
    }}}
    {\multicitedelim}
    {\usebibmacro{postnote}}}

\usepackage{booktabs}
\usepackage{colortbl}
\usepackage{float}

\usepackage{etoolbox}
\usepackage{bbm}
\usepackage{xcolor}
\usepackage{amsfonts}
\usepackage{enumerate}   
\usepackage{enumitem}
\usepackage{stmaryrd}
\DeclareSymbolFont{stmry}{U}{stmry}{m}{n}
\SetSymbolFont{stmry}{bold}{U}{stmry}{m}{n}
\usepackage{mathrsfs} 
\usepackage{titlesec}
\usepackage{dsfont}
\usepackage{accents}
\usepackage{lipsum}



%% file: settings/styles_new.tex
\usepackage{thmtools}
\usepackage{mdframed}
\definecolor{cardinalred}{RGB}{140,21,21}
\definecolor{forestgreen}{RGB}{34,139,34}

\definecolor{titlegray}{RGB}{240,240,240}
\definecolor{firstcolgray}{RGB}{248,248,248}

\declaretheoremstyle[
  headfont=\color{cardinalred}\bfseries,
  bodyfont=\normalfont,
  qed=\raisebox{1pt}{\textcolor{cardinalred}{\openbox}}
]{theoremstyle}

\declaretheoremstyle[
  headfont=\color{black}\bfseries,
  bodyfont=\normalfont,
  qed=\raisebox{1pt}{\textcolor{black}{\openbox}}
]{conditionstyle}

\declaretheoremstyle[
  headfont=\color{forestgreen}\bfseries,
  bodyfont=\normalfont,
  qed=\raisebox{1pt}{\textcolor{forestgreen}{\openbox}}
]{examplestyle}

\declaretheorem[style=theoremstyle, numberwithin=section]{theorem}
\declaretheorem[style=theoremstyle, sibling=theorem]{lemma}
\declaretheorem[style=theoremstyle, sibling=theorem]{proposition}
\declaretheorem[style=theoremstyle, sibling=theorem]{corollary}
\declaretheorem[style=conditionstyle, sibling=theorem]{condition}
\declaretheorem[style=conditionstyle, sibling=theorem]{remark}
\declaretheorem[style=examplestyle, sibling=theorem]{example}

\theoremstyle{definition}

\theoremstyle{remark}

\newtheorem{notation}[theorem]{Notation}

\crefname{theorem}{\textsc{Theorem}}{\textsc{Theorems}}
\crefname{lemma}{\textsc{Lemma}}{\textsc{Lemmas}}
\crefname{proposition}{\textsc{Proposition}}{\textsc{Propositions}}
\crefname{corollary}{\textsc{Corollary}}{\textsc{Corollaries}}
\crefname{definition}{\textsc{Definition}}{\textsc{Definitions}}
\crefname{remark}{\textsc{Remark}}{\textsc{Remarks}}
\crefname{example}{\textsc{Example}}{\textsc{Examples}}
\crefname{equation}{\textsc{Equation}}{\textsc{Equations}}
\crefname{condition}{\textsc{Condition}}{\textsc{Conditions}}
\crefname{notation}{\textsc{Notation}}{\textsc{Notations}}
\crefname{section}{\textsc{Section}}{\textsc{Sections}}
\crefname{appendix}{\textsc{Appendix}}{\textsc{Appendix}}

\crefname{enumi}{}{}  
\Crefname{enumi}{}{}  

\crefname{equation}{\textsc{Eq.}}{\textsc{Eqs.}}
\Crefname{equation}{\textsc{Equation}}{\textsc{Equations}}

%% file: settings/macros_Maxi.tex
\newcommand{\IC}{\mathbb{C}}

\newcommand{\IE}{\mathbb{E}}

\newcommand{\IL}{L}

\newcommand{\IN}{\mathbb{N}}

\newcommand{\IP}{\mathbb{P}}
\newcommand{\IQ}{\mathbb{Q}}
\newcommand{\IR}{\mathbb{R}}

\newcommand{\IT}{\mathbb{T}}

\newcommand{\IZ}{\mathbb{Z}}

\newcommand{\rmB}{\mathrm{B}}

\newcommand{\rmD}{\mathrm{D}}

\newcommand{\rmJtilde}{\Tilde{\mathrm{J}}}

\newcommand{\rmM}{\mathrm{M}}

\newcommand{\rmU}{\mathrm{U}}

\newcommand{\rma}{\mathrm{a}}
\newcommand{\rmb}{\mathrm{b}}
\newcommand{\rmc}{\mathrm{c}}
\newcommand{\rmd}{\mathrm{d}}
\newcommand{\rme}{\mathrm{e}}

\newcommand{\rmv}{\mathrm{v}}

\newcommand{\calA}{\mathcal{A}}
\newcommand{\calB}{\mathcal{B}}

\newcommand{\calG}{\mathcal{G}}
\newcommand{\calGtilde}{\widetilde{\mathcal{G}}}
\newcommand{\calH}{\mathcal{H}}

\newcommand{\calJ}{\mathcal{J}}

\newcommand{\calL}{\mathcal{L}}
\newcommand{\calM}{\mathcal{M}}
\newcommand{\calN}{\mathcal{N}}
\newcommand{\calO}{\mathcal{O}}
\newcommand{\calP}{\mathcal{P}}

\newcommand{\calT}{\mathcal{T}}

\newcommand{\calW}{\mathcal{W}}

\newcommand{\calY}{\mathcal{Y}}
\newcommand{\calZ}{\mathcal{Z}}

\newcommand{\scrB}{\mathscr{B}}

\newcommand{\scrH}{\mathscr{H}}

\newcommand{\scrR}{\mathscr{R}}

\newcommand{\scrZ}{\mathscr{Z}}

\newcommand{\frakd}{d}
\newcommand{\frakdtilde}{\widetilde{\frakd}}

\newcommand{\frakt}{\mathfrak{t}}

\newcommand{\frakD}{D}
\newcommand{\frakDtilde}{\widetilde{\frakD}}

\makeatletter
\newcommand{\norm}[1]{%
  \| #1 \|%
  \@ifnextchar\bgroup 
    {\norm@i}
    {}
}

\newcommand{\norm@i}[1]{_{#1}}

\makeatother

\newcommand{\IRdnorm}[1]{\left|#1\right|_{\IR^d}}

\newcommand{\sprod}[3]{\left\langle #1, #2\right\rangle_{#3}}

\newcommand{\nset}[1]{\leq #1}

\newcommand{\domain}{\calO}

\newcommand{\distd}{\widetilde{\frakd}_{\delta,\ops}^2}

\newcommand{\sbar}{\Bar{s}_N}
\newcommand{\Mbar}{\Bar{M}}

\newcommand{\elltilde}{\Tilde{\ell}}

\newcommand{\abs}[1]{\left|#1\right|}
\newcommand{\pRz}{(0,\infty)}
\newcommand{\pRzz}{[0,\infty)}

\newcommand{\Vnorm}[1]{\left|#1\right|_V}
\newcommand{\Wnorm}[1]{\left|#1\right|_W}
\newcommand{\Vsprod}[2]{\sprod{#1}{#2}{V}}

\newcommand{\thetahat}{\hat{\theta}_{N}}

\newcommand{\thetatrue}{\theta_{0}}
\newcommand{\thetadiamond}{\theta^\diamond}
\newcommand{\Thetadiamond}{\Theta^\diamond}

\newcommand{\empmeasure}{\widehat{\IP}^{(i)}}

\newcommand{\br}[1]{\left(#1\right)}
\newcommand{\brK}[1]{\left\{#1\right\}}
\newcommand{\brE}[1]{\left[#1\right]}

\newcommand{\ops}{\operatorname{s}}

\DeclareMathOperator*{\argmin}{arg\,min}

\newcommand{\tikfunc}{\rmJtilde_{\delta,N,\ops}}

\newcommand{\ConstLiptwo}{C_{\operatorname{Lip},2}}
\newcommand{\ConstLipinfty}{C_{\operatorname{Lip},\infty}}
\newcommand{\Const}[1]{C_{\operatorname{#1}}}
\newcommand{\const}{c}

\newcommand{\Lfor}{\IL_\zeta^2(\calZ,V)}

\newcommand{\torus}{\IT^d}

\newcommand{\Hdot}{\dot{H}}

\newcommand{\dimW}{d_W}
\newcommand{\dimV}{d_V}

\newcommand{\Pitilde}{\widetilde{\Pi}}

\newcommand{\Hddiamond}{\dot{H}_{\diamond}}

\newcommand{\boldbeta}{\boldsymbol{\beta}}

%% file: settings/macros_fanny.tex
\newcommand{\loglikmodel}{\tilde \ell_N}

\newcommand{\Truedensityo}{\IP_{\thetatrue}^N}

\newcommand{\diffG}{\calGtilde(\theta_0)(Z_i)- \calGtilde(\theta)(Z_i)}
\newcommand{\diffnormG}{\norm{\calGtilde(\thetatrue)-\calGtilde(\theta)}_{\IL_\zeta^2}}

\newcommand{\deltanoise}{\tilde\delta_{\mathrm{noise},N}}
\newcommand{\deltamap}{\tilde\delta_{\mathrm{model},N}}

%% file: Sections/_01_Introduction.tex
\section{Introduction}\label{sec:intro}

\subsection{Statistical inference for nonlinear inverse problems}

A wide range of statistical inference problems arising in the natural sciences can be formulated
as \emph{nonlinear inverse problems}.
In such settings, an unknown parameter $\theta$ that is typically infinite-dimensional, is linked to
observable data through a nonlinear forward operator
\[
\calG \colon \Theta\ni\theta \mapsto \calG(\theta) \in \calY
\]
which models the response of a complex system to the parameter of interest.
A practically important class of nonlinear inverse problems arises when
the forward operator $\calG$ is defined implicitly as the solution map of a nonlinear
dynamical system governed by ordinary or partial differential equations; see, e.g.,
\cite{Temam_1997,Strogatz_2018}.
In this case, $\theta$ may represent an unknown initial condition, forcing term, or constitutive
parameter. Physical observations are typically indirect, noisy, and available only at finitely many `design' points $(t_i,x_i)$,
leading to regression-type models of the form
\begin{equation}\label{eq:intro:regression_general}
    Y_i = \calG(\theta)(t_i,x_i) + \varepsilon_i, \quad i=1,\dots,N,
\end{equation}
where $\varepsilon_1,\dots,\varepsilon_N$ are independent Gaussian measurement errors with noise variances $\sigma_1^2,\dots,\sigma_N^2$.
The statistical task then consists in recovering $\theta$ from noisy partial
observations as described by \cref{eq:intro:regression_general}.
From a deterministic and statistical perspective, inverse problems of this type have been studied
extensively; see, for instance, \cite{Engletal_2000, Kaipio_Somersalo_2005, Kaltenbacheretal_2008, stuart_2010, Arridgeetal_2019} and the references therein. A popular viewpoint is provided by the Bayesian approach. From this perspective, uncertainty about $\theta$ is encoded by placing a prior
distribution $\Pi$ on the parameter space $\Theta$, leading - via Bayes' theorem - to the posterior
distribution
\begin{equation}\label{eq:intro:posterior}
    \rmd\Pi(\theta\mid D_N) \propto \rme^{\ell_N(\theta)}\rmd\Pi(\theta),
\end{equation}
where $\ell_N(\theta)$ denotes the log-likelihood function with data $D_N = \brK{(Y_i,t_i,x_i)}_{i=1}^N$ associated to
\cref{eq:intro:regression_general}.
In infinite-dimensional inverse problems, Gaussian process priors are commonly employed, and
posterior contraction rates provide a first-order notion of frequentist validity.
For models, where the log-likelihood $\ell_N$ is fully accessible, posterior contraction for nonlinear inverse problems is by now
well understood in a number of settings, including problems governed by partial differential
equations. 
For a comprehensive overview of the (infinite dimensional) Bayesian methodology as well its
treatment in (non-)linear inverse problems, we refer to \cite{GhosalvanderVaart_2017} and \cite{Nickl_2023}, together with the references therein.\\


In practice, the exact log-likelihood function $\ell_N(\theta)$ may not be available.
Instead, statistical inference is then often based on \emph{approximate} or \emph{surrogate}
likelihoods, arising from numerical discretization of the forward operator, incomplete knowledge
of the noise distribution, or the use of pre-estimated noise levels.
Even in the idealized case of additive Gaussian errors, the exact evaluation of
$\ell_N(\theta)$ requires repeated solutions of the forward problem $\theta\mapsto \calG(\theta)$, which is often computationally
prohibitive in nonlinear or high-dimensional settings.
As a result, Bayesian inference is commonly carried out using a surrogate posterior $\Pitilde(\theta|D_N)$ computed as in \cref{eq:intro:posterior} but with a misspecified log-likelihood of
the form
\begin{equation}\label{eq:intro:misspec_likelihood}
    \tilde\ell_N(\theta)
    = -\frac{1}{2}\sum_{i=1}^N \frac{1}{s_i^2}\,
    \bigl|Y_i-\tilde{\calG}(\theta)(t_i,x_i)\bigr|^2,
\end{equation}
where $\tilde{\calG}$ denotes a numerical or otherwise approximate forward map replacing $\calG$ and
$s_1^2,\dots,s_N^2$ are surrogate noise variances.\\

Beyond modelling considerations, likelihood misspecification is also closely tied to
computational feasibility.
Posterior inference for Bayesian inverse problems typically relies on sampling-based algorithms
such as Markov chain or sequential Monte Carlo methods, all of which require repeated
evaluation of the (log-)likelihood; see, e.g.,
\cite{stuart_2010, Cotteretal_2013,Haireretal_2014, Nickl_Wang_2024, GiordanoWang_2025, CastreNickl_2026} and the references therein.
This observation has motivated a substantial literature on approximate Bayesian methods,
including noisy and pseudo-marginal MCMC algorithms, delayed acceptance schemes, and surrogate-based
approaches; see, for instance,
\cite{ChristenFox_2005, AndrieuRoberts_2009,Andrieuetal_2010}.\\

\subsection{Prior works and contributions}

Studying properties of posteriors in misspecified models is notoriously challenging, and obtaining quantitative contraction results often requires strong assumptions. In noise misspecification, for instance, previous approaches model an unknown noise using a Gaussian distribution: that is the case of \cite{Norets_2015} for heteroscedastic misspecified noise in nonparametric linear regression, or \cite{kleijn_2006, GhosalvanderVaart_2017} for nonparametric (nonlinear) regression. Such results however require stronger hypotheses, such as the parameter-to-observation map being uniformly bounded over the parameter space. 
The use of \textit{fractional posteriors} where the surrogate posterior is obtained by raising the likelihood to some power $\alpha\in(0,1)$, has also been investigated in this context: by reducing the weight of the data, this makes the posterior more robust to misspecification, and leads to posterior contraction results obtained in Rényi divergences (see e.g. \cite{Grunwald_2012, Bhattacharya_2019, miller_2019, Huillier_2023}). These divergences are however weaker metrics, so that in turn one requires stronger stability conditions if one aims to recover contraction at the level of the target parameter.

\bigskip

In a significant proportion of the Bayesian inference literature, misspecification arises when the data generating distribution $\IP_{\theta_0}$ does not belong to the class  $\{ \IQ_\theta, \theta\in \Theta\}$ of models under consideration.  In these situations, \cite{berk_1966, kleijn_2012} show that the posterior distribution typically concentrates around a pseudo-true parameter $\theta^*$ minimising the Kullback-Leibler divergence between the true distribution and the model class, i.e. $\theta^*=\argmin_{\theta \in \Theta} D_{KL}(\IP_{\theta_0}, \IQ_\theta)$. Contraction is established in a `Hellinger transform' testing divergence, which cannot be immediately related to less abstract metrics relevant in the case of nonlinear inverse problems, as the class of models associated to the loglikehood \cref{eq:intro:misspec_likelihood} is not convex.\\

The main contribution of this paper is to give sufficient, workable, conditions for reliable Bayesian inference in PDE-based inverse problems under likelihood misspecification. Precisely, we show posterior contraction of the resulting misspecified posterior
distribution
\[
    \rmd\Pitilde(\theta\mid D_N) \propto \rme^{\tilde\ell_N(\theta)}\rmd\Pi(\theta)
\]
directly around the true parameter $\thetatrue$ at the usual nonparametric rate for correctly specified problems, under a regime of \emph{mild misspecification} in which the `error' arising from either unknown Gaussian noise variance or an approximate PDE model decays sufficiently fast relative to the sample size $N$ and the ill-posedness of the inverse-problem. Adapting ideas in \cite{Nickl_2023} to such a situation, and employing suitable stability estimates, we then show posterior contraction for the nonlinear inverse problem, and prove convergence rates for the surrogate posterior mean $\IE^{\Pitilde}[\theta | D_N]$ towards the ground truth $\thetatrue$.
Our approach leverages the inherent robustness of the penalised MLE -- which has been shown to converge to the pseudo-true parameter even under model error (\cite{white_1982, kleijn_2012}) -- to build a sequence of test functions $\Psi_N = \Psi_N(D_N)$ whose type-I-error and type-II-error decay sufficiently fast (following the work of \cite{Siebel_2025, Nickl_vdGeer_Wang_2020}). Robustness of those point estimators is however not sufficient, and to obtain posterior contraction we further establish `change of measure' conditions, which ensure that the growth of certain likelihood terms is offset by the decay of the prior. Importantly, our proof methods for the change of measure enable us to consider PDE maps that are not uniformly bounded over the parameter space, through the use of a slicing argument. These conditions are also essential to prove convergence results of the mean of the surrogate posterior distribution. 

\medskip


\paragraph{Outline of the paper} In \cref{sec:setting} we introduce the general setting and notations. In \cref{sec:posterior-contraction} we establish the main surrogate posterior contraction theorem, with proofs given in \cref{sec:additional-proofs}. In \cref{sec:examples} we give three prototypical examples of misspecification for PDE-based inverse problems, respectively noise misspecification in the reaction-diffusion equation and model misspecification (via wrong parameter definition, and via numerical approximation) in the Navier-Stokes equation. Further results in \cref{app:MAP} revisit the known robustness of M-estimation techniques under misspecification which are needed to establish the correct convergence for the hypothesis tests.

%% file: Sections/_02_LSE_ErrorMisspecification.tex
\section{Setting} \label{sec:setting}

\subsection{Notation and preliminaries}
\begin{notation}[Preliminaries]
        We set $\IN_0\coloneqq \IN\cup\brK{0}$. 
         If $(S,\calT)$ is a topological vector space, we denote by $\scrB_S$ the Borel-$\sigma$-field on $S$ generated by the topology $\calT$. 
        The topological dual of $S$ is denoted by $S^*$ and consists of all linear and bounded functionals $L\colon S\to\IR$.
         Given two normed spaces $(S_1,\norm{\cdot}_{S_1})$ and $(S_2,\norm{\cdot}_{S_2})$, we write $S_1\hookrightarrow S_2$, if $S_1$ is continuously embedded into $S_2$. Further, for $M>0$ we write $S_1(M)\coloneqq\brK{s\in S_1:\;\norm{s}_{S_1}\leq M}$.
       Throughout, random variables are defined on a probability space $(\Omega,\calA,\IP)$ if not further mentioned. The expectation w.r.t. $\IP$ is denoted by $\IE$. Lastly, universal constants are denote by $\const$. If a constant depends on a family of objects $A$, we write $\const(A)$. Constants arising from assumptions are denoted by $\Const{}$ with some further specifications. If not further mentioned, the value of a constant can change from line to line.
\end{notation}

\subsection{Function spaces}

In the following let $d\in \IN$ be the fixed dimension. 
Let $(\calZ,\scrZ,\zeta)$ be any measurable space. We denote for $p\in[1,\infty)$ by $ \IL_{\zeta}^p(\calZ,\IC)\coloneqq\IL_{\zeta}^p(\calZ,\scrZ,\IC)$ the space of $p$-integrable functions $f\colon(\calZ,\scrZ,\zeta)\to\br{\IC,\scrB_\IC}$. In particular, the Hilbert space $\IL_\zeta^2(\calZ,\IC)$ is equipped with inner product 
\begin{equation*}
    \sprod{f}{g}{\IL_\zeta^2(\calZ,\IC)}\coloneqq \int_{\calZ}f(z)\overline{g(z)}\rmd\zeta(z),
\end{equation*}
where $\overline{w}$ denotes the complex conjugate of any $w\in\IC$. For simplicity, we write $\IL_\zeta^p(\calZ)$ for the corresponding space of real-valued functions.
Throughout, $\calM$ denotes either 
\begin{itemize}
    \item a bounded open set $\domain\subseteq\IR^d$ with smooth boundary $\partial\domain$, or 

    \item the $d$-dimensional torus $\IT^d\coloneqq [0,1]^d\setminus\sim_T$, where $\sim_T$ is the equivalence relation identifying opposite points.
\end{itemize}
Both spaces, equipped with their Borel-$\sigma$-field $\scrB_\domain$ and $\scrB_{\IT^d}$, and the Lebesgue measure $\calL^d$ on $\IR^d$ form measure spaces. We define for $m\in\IN_0$ the Banach space $C^m(\calM)$ of $m$-times differentiable functions $f\colon \calM\to\IR$ with bounded derivatives up to order $m$, equipped with the norm 
\begin{equation*}
    \norm{f}_{C^m(\calM)}\coloneqq\sum_{\boldbeta\in\IN_0^d:\;|\boldbeta| \leq m}\norm{\rmD^{\boldbeta} f}_{\infty},
\end{equation*}
where $\norm{\cdot}_\infty$ denotes the uniform norm.
For $s\in\IR_{\geq 0}$ this definition is extended by saying $f\in C^s(\calM)$, if $f\in C^{\lfloor s\rfloor}(\calM)$ and further $\rmD^{\boldbeta}u$ is $s-\lfloor s\rfloor$-Hölder continuous for $|\boldbeta|= \lfloor s\rfloor$. A norm on $C^s(\calM)$ is given by 
\begin{equation*}
    \norm{f}_{C^s(\calM)}\coloneqq \sum_{\boldbeta\in\IN_0^d:\;|\boldbeta|\leq\lfloor s\rfloor}\norm{\rmD^{\boldbeta} f}_\infty + \sum_{\boldbeta\in\IN_0^d:\; |\boldbeta| =\lfloor s\rfloor} \sup_{x,y\in \calM:\; x\neq y}\brK{\frac{|\rmD^{\boldbeta} f(x)-\rmD^{\boldbeta} f(y)|}{\IRdnorm{x-y}^{s-\lfloor s\rfloor}}}.
\end{equation*}
We call a function smooth if it belongs to $C^\infty(\calM)\coloneq\bigcap_{s>0}C^s(\calM)$. We denote by $C_c^\infty(\calM)$ the subspace of smooth functions with compact support, noting that $C_c^\infty(\torus) = C^\infty(\torus)$.
For $m\in\IN_0$, we define the usual Sobolev spaces of real-valued functions $u\colon\calM\to\IR$ with square integrable weak derivatives up to order $m$.  
 Note, $H^0(\calM) = \IL^2(\calM)\coloneqq\IL^2_{\calL^d}(\calM)$. For non-integer $s\geq0$, $H^s(\calM)$ is defined via interpolation, see \cite{Triebel_1983}. For $s<0$, we define 
  $  H^s(\calM) \coloneqq \br{H^{-s}(\calM)}^*$
as the topological dual.
For $\calM = \domain$, we further define for $s\geq 0$
            $H_c^s(\domain)\coloneqq \overline{C_c^\infty(\domain)}^{\norm{\cdot}_{H^s(\domain)}}$.
        Note, for $s\leq\frac{1}{2}$, we have $H_c^s(\domain) = H^s(\domain)$ and otherwise, if $s\in\IN$, $H^s_c(\domain)$ equals the subspace of $H^s(\domain)$ of vanishing trace on $\partial\domain$, see \cite{LionsMagenes_1972}.
    For $\calM = \IT^d$ being the $d$-dimensional torus, the periodic Laplacian $-\Delta$ diagonalises the Fourier basis, i.e. after enumerating $\IZ^d = \brK{k_j:\; j\in\IN}$ such that $j\mapsto k_j$ is non-decreasing, we have an orthonormal system
    \begin{equation}\label{eq:ONB:torus}
        e_{j}\coloneqq e_{k_j} \coloneqq \exp\br{-2\pi i\sprod{x}{k_j}{\IR^d}},\quad \lambda_0 = 0,\quad \lambda_j = 4\pi\IRdnorm{k_j}^2
    \end{equation}
    with imaginary unit $i = \sqrt{-1}\in\IC$. For $s\in\IR$, the Sobolev space $H^s(\torus)$ has a equivalent spectral norm given by 
    \begin{equation*}
        \norm{u}_{h^s(\torus)}^2 \coloneqq \sum_{j\in\IN}\br{1+\lambda_j}^s\abs{\sprod{u}{e_j}{\IL^2(\torus,\IC)}}_{\IC}^2.
    \end{equation*}
    In the subsequent, we will need \textit{homogeneous} Sobolev spaces $\Hdot^s(\torus)$, which are defined as subspace of $H^s(\torus)$ removing the zero mode $\lambda_0 = 0$. To that end, define the corresponding inner product 
    \begin{equation*}
        \sprod{f}{g}{\Hdot^s(\torus)}\coloneq\sum_{j\in\IN}\lambda_j^s\sprod{f}{e_j}{\IL^2(\torus,\IC)}\cdot\overline{\sprod{g}{e_j}{\IL^2(\torus,\IC)}},
    \end{equation*}
    noting that the iduced norm is given by 
    \begin{equation*}
        \norm{u}_{\Hdot^s(\torus)}\coloneq \sum_{j\in\IN}\lambda_j^s\abs{\sprod{u}{e_j}{\IL^2(\torus,\IC)}}_{\IC}^2 = \norm{(-\Delta)^{\frac{s}{2}}u}_{\IL^2(\torus)}^2.
    \end{equation*}
We generalize the previous definitions for real- or complex valued functions to functions with values in $W$, where $(W,\Wnorm{\cdot})$ is a finite-dimensional $\IC$-vector space with dimension $\dimW\coloneqq\operatorname{dim}_{\IC}(W)$. To that end, let $F(\calM)$ one of the the previous function spaces defined before. 
We then define the space of functions $f=(f_1,\dots,f_{\dimW})\colon \calM\to W$ as
\begin{equation*}
    F(\calM,W)\coloneqq \bigtimes_{i=1}^{\dimW}F(\calM),\quad \norm{f}_{F(\calM,W)}^2 \coloneqq \sum_{i\nset{\dimW}}\norm{f_i}_{F(\calM)}^2,
\end{equation*}
where we identify $W$ canonically with $\IC^{\dimW}$. In particular, if $F(\calM)$ is a Hilbert space with inner product $\sprod{\cdot}{\cdot}{F(\calM)}$, then $F(\calM,W)$ is an Hilbert space equipped with inner product 
\begin{equation*}
    \sprod{f}{g}{F(\calM,W)}\coloneqq \sum_{i\nset{\dimW}}\sprod{f_i}{g_i}{F(\calM)}.
\end{equation*}
Accordingly, these objects are also defined for $W$ being a finite-dimensional $\IR$-vector space.
\begin{itemize}
    \item For the analysis of the 2D-Navier-Stokes equation, we require Sobolev spaces with vanishing mean and divergence. To that end, we define
    \begin{equation*}
        \Hddiamond \coloneqq \brK{u\in\IL^2(\IT^2,\IR^2):\;\operatorname{div}(u) = 0,\;\int_{\torus}u_i(x)\rmd\calL^2(x) = 0\; \text{ for } i=1,2},
    \end{equation*}
    where $\operatorname{div}(u) \coloneqq \frac{\partial}{\partial x_1}u_1 + \frac{\partial}{\partial x_2}u_2$ denotes the divergence.
    As $( \Hddiamond,\sprod{\cdot}{\cdot}{\IL^2(\IT^2,\IR^2)})$ is a closed linear subspace of $\IL^2(\IT^2,\IR^2)$, we can define the $\IL^2$-projection operator, also called \textit{Leray}-operator
    \begin{equation} \label{eq:leray-operator}
        P\colon\IL^2(\IT^2,\IR^2)\to \Hddiamond.
    \end{equation}
    For any $s\geq 0$, we then define 
    \begin{equation*}
         \Hddiamond^s\coloneqq  \Hddiamond \cap H^s(\IT^2,\IR^2).
    \end{equation*}
\end{itemize}

Let $T>0$. If $X$ is a normed linear space, we further define the Bochner space $\IL^2([0,T],X)$ of measurable maps $h$ from $[0,T]$ to $X$, such that $\norm{h(\cdot)}_{X}$ is a map in $\IL^2\br{[0,T]}$. Analogously, we also define $C^0(\brE{0,T},X)$ as the space of continuous maps from $[0,T]$ to $X$, such that $\sup_{t\in(0,T)}\norm{h(t)}_{X}$ is finite.

\subsection{Observation model, Bayesian approach and mild misspecification}\label{subsec:ObservationModel}

Throughout, let $(V,\Vnorm{\cdot})\simeq(\IR^{d_V},\sprod{\cdot}{\cdot}{\IR^{d_V}})$ and $(W,\Wnorm{\cdot})\simeq(\IR^{d_W},\sprod{\cdot}{\cdot}{\IR^{d_W}})$ be two finite dimensional $\IR$-vector spaces with dimensions $d_V\in\IN$ and $d_W\in\IN$ respectively.  
Further, let $(\calZ,\scrZ,\zeta)$ be a probability space. In this work, we are interested in a parameter space $\Theta\subseteq\IL^2(\calM,W)$ and in a measurable and possibly non-linear forward map 
\begin{equation*}
    \calG:\Theta\to\IL^2_\zeta(\calZ,V),
\end{equation*}
such that pointwise evaluations for each $\theta\in\Theta$ of $\calG(\theta)(z)$ for $z\in\calZ$ are well-defined. We assume to have access to data coming from a random design regression model with heteroscedastic error, i.e., for a fixed sample size $N\in\IN$, we observe $D_N\coloneqq\br{Y_i,Z_i}_{i=1}^N\in\br{V\times\calZ}^N$ arising from 
 \begin{equation}\label{eq:setting:data}
    Y_i = \calG(\theta)(Z_i) + \varepsilon_i,\quad \theta\in\Theta,\quad i=1,\dots ,N.
\end{equation}
The covariates $(Z_i)_{i=1}^N$ are drawn identically and independently (i.i.d.) from the law $\zeta$ and assumed to be independent of the independent Gaussian errors $\br{\varepsilon_i}_{i=1}^N$, which satisfy $ \varepsilon_i\sim\calN\br{0,\sigma_i^2\operatorname{Id}_V}$ for $i=1,\dots,N$ with heteroscedastic variances $\sigma_1^2,\dots,\sigma_N^2 >0$. The law of the data vector $D_N$ is denoted by $\IP_\theta^N$, its corresponding expectation operator is denoted by $\IE_\theta^N$. The law and expectation of a single datum $(Y_i,Z_i)$ is denoted by $\IP_{\theta}^{(i)}$ and $\IE_{\theta}^{(i)}$, respectively. $\IP_\theta^N$ has a probability density function with respect to the measure $\bigotimes_{i=1}^N\big(\calL^{d_V}\otimes \zeta\big)$, which is for all $\theta\in\Theta,\;(y,z)\in(V\times\calZ)^N$ given by 
\begin{equation*}
    p_\theta^N(y,z)\coloneqq \prod_{i\nset{N}}p_\theta^{(i)}\coloneqq\prod_{i\nset{N}}\left\{(2\pi\sigma_i^2)^{-\frac{\dimV}{2}}\exp\left(-\frac{1}{2\sigma_i^2}\Vnorm{Y_i-\calG(\theta)(Z_i)}^2\right)\right\}.
\end{equation*}
We define a corresponding (re-scaled) log-likelihood $\ell_N(\theta)$ by 
\begin{equation}\label{eq:loglikelihood}
    \forall\theta\in\Theta:\;\ell_N(\theta) \coloneqq \sum_{i\nset{N}}\ell^{(i)}(\theta)\coloneqq-\frac{1}{2}\sum_{i\nset{N}}\sigma_i^{-2}\Vnorm{Y_i-\calG(\theta)(Z_i)}^2.
\end{equation}
The results obtained in this work hold under the frequentist assumption that the data $D_N$ is generated from the law $\IP_{\thetatrue}^N$ described by a fixed and unknown \textit{ground truth} $\thetatrue\in\Theta$, which we aim to recover.\\

For the Bayesian framework, we assume that $\Theta$ is equipped with a Borel-$\sigma$-field $\scrB_\Theta$. Let $\Pi'$ be a probability measure (called \textit{base prior}) on the measurable space $(\Theta,\scrB_\Theta)$. We use so-called \textit{re-scaled} priors, defined by
\begin{equation}\label{eq:rescaledprior}
    \Pi_N = \operatorname{Law}(\theta),\quad \theta = \frac{1}{\sqrt{N\delta_N^2}}\theta',\quad \theta'\sim\Pi',
\end{equation}
where $\delta_N>0$ is a sequence, such that $N\delta_N^2\to \infty$ as $N\to\infty$.
Assuming that the map 
\begin{equation*}
    \Theta\times\calZ\ni(\theta,z)\mapsto \calG(\theta)(z) \in V
\end{equation*}
is $\scrB_\Theta\otimes\scrZ-\scrB_V$ measurable, we introduce the associated \textit{posterior measure} $\Pi\br{\cdot|D_N}$ given by 
\begin{equation*}
    \forall B\in\scrB_\Theta:\, \Pi_N(B|D_N) = \frac{\int_B\rme^{\ell_N(\theta)}\rmd\Pi_N(\theta)}{\int_\Theta\rme^{\ell_N(\theta)}\rmd\Pi_N(\theta)}.
\end{equation*}


As discussed in \cref{sec:intro}, we investigate situations in which the error variances $\sigma_1^2,\dots,\sigma_N^2$ are not known and the forward map $\calG$ is only approximatively known. To that end, let $s_1^2,\dots,s_N^2>0$ be surrogate (proxy) variances and $\calGtilde\colon \Theta\to\IL^2_\zeta(\calZ,V)$ a surrogate (proxy) forward map, which is jointly  $\scrB_\Theta\otimes\scrZ-\scrB_V$ measurable as a map $\Theta\times\calZ\ni(\theta,z)\mapsto \calGtilde(\theta)(z) \in V$. Given the data $D_N\sim\IP_{\thetatrue}^N$, we then have a surrogate log-likelihood defined by 
\begin{equation}\label{eq:surrogateloglikelihood}
    \forall \theta\in\Theta:\;  \elltilde_N(\theta)\coloneqq  -\frac{1}{2}\sum_{i\nset{N}}s_i^{-2}\Vnorm{Y_i-\calGtilde(\theta)(Z_i)}^2,
\end{equation}
which we can evaluate numerically.
Note, $\elltilde_N$ is the (re-scaled) log-likelihood associated to the \textit{misspecified} regression model described by 
\begin{equation}\label{eq:wrongregmodel}
    Y_i = \calGtilde(\theta)(Z_i) + \Tilde{\varepsilon}_i, \quad\theta\in\Theta,\quad i=1,\dots,N,
\end{equation}
with independent $\Tilde{\varepsilon}_i\sim\operatorname{N}\br{0,s_i^2\operatorname{Id}_V}$. The law generating \cref{eq:wrongregmodel} is analogously denoted by $\IQ_\theta^N$.  Its probability density function w.r.t $\bigotimes_{i=1}^N\br{\calL^{d_V}\otimes \zeta}$ for $\theta\in\Theta,\;(y,z)\in(V\times\calZ)^N$ is given by 
\begin{equation*}
    q_\theta^N(y,z)\coloneqq \prod_{i\nset{N}}q_\theta^{(i)}(y_i,z_i)\coloneqq\prod_{i\nset{N}}\left\{(2\pi s_i^2)^{-\frac{\dimV}{2}}\exp\left(-\frac{1}{2 s_i^2}\Vnorm{Y_i-\calGtilde(\theta)(Z_i)}^2\right)\right\}.
\end{equation*} 
In other words, given the data $D_N\sim\IP_{\thetatrue}^N$ with not fully accessible log-likelihood $\ell_N$, we replace the \textit{true} log-likelihood $\ell_N$ by its surrogate $\elltilde_N$. In the Bayesian approach that means we look at the corresponding surrogate posterior distribution on $(\Theta,\scrB_\Theta)$, which we define as 
    \begin{equation}\label{eq:surrogateposterior}
        \forall B\in\scrB_\Theta:\, \Pitilde_N(B|D_N) = \frac{\int_B\rme^{ \elltilde_N(\theta)}\rmd\Pi_N(\theta)}{\int_\Theta\rme^{ \elltilde_N(\theta)}\rmd\Pi_N(\theta)}.
    \end{equation}

    The goal in this work is to show that $\Pitilde_N(\cdot|D_N)$ is statistical reliable under $\IP_{\thetatrue}^N$-probability to infer the unknown parameter of interest $\thetatrue$. To that end, we provide contraction results in \cref{sec:posterior-contraction}.
    
\subsection{Regularity conditions on the forward map}

We now impose analytical assumptions on the forward map $\calG$.

\begin{condition}[Forward Regularity]\label{ass:operatorI}
    \mbox{}\\
     Let $\Theta\subseteq\IL^2(\calM,W)$ be the parameter space. Let $(\scrR,\norm{\cdot}_{\scrR})$ be a separable normed subspace of $\Theta$ such that
    \[
        (\scrR,\norm{\cdot}_{\scrR})\hookrightarrow (B^\eta,\norm{\cdot}_{B^\eta}),
    \]
    where $B^\eta$ is either $C^\eta(\calM,W)$ or $H^\eta(\calM,W)$ for some $\eta\geq 0$.
    \begin{enumerate}[label={\textbf{[FR\arabic*]}}]
        \item\label{cond:fm:Liptwo} For all $M>0$ there exist constants $\ConstLiptwo(M)>0$ and $\kappa\geq 0$, such that for all $\theta_1,\theta_2\in\scrR(M)$
        \begin{equation*}
            \norm{\calG(\theta_1)-\calG(\theta_2)}_{\IL_\zeta^2(\calZ,V)} \leq \ConstLiptwo(M)\times\norm{\theta_1-\theta_2}_{(H^\kappa(\calM,W))^*}.
        \end{equation*}
    
        \item\label{cond:fm:bound} There exist constants $\Const{\calG,B}>0$ and $\gamma_B\geq 0$, such that for all $\theta\in\scrR$
        \begin{equation*}
            \norm{\calG(\theta)}_\infty \leq \Const{\calG,B}\times\br{1+\norm{\theta}_{\scrR}^{\gamma_B}}.
        \end{equation*}

        \item\label{cond:fm:Lipinf} For all $M>0$ there exist a constant $\ConstLipinfty(M)>0$, such that for all $\theta_1,\theta_2\in\scrR(M)$
        \begin{equation*}
            \norm{\calG(\theta_1)-\calG(\theta_2)}_{\infty} \leq \ConstLipinfty(M)\times\norm{\theta_1-\theta_2}_{B^{\eta}}.
        \end{equation*}
    \end{enumerate}
\end{condition}
\newpage
\begin{remark}[Forward Regularity]\label{rmk:forward-regularity}
    \mbox{}
    \begin{enumerate}[label = \roman*)]
        \item In the following, we refer to $(\scrR,\norm{\cdot}_{\scrR})$ as the \textit{regularization space}, which is typically the largest possible space so that the conditions in \cref{ass:operatorI} are still satisfied.
        
        \item Note, the general theory developed in \cite{Nickl_2023}
        utilizes \cref{cond:fm:Liptwo}, which implies that the induced prior is well specified on the set of admissible regression functions, and a weaker version of \cref{cond:fm:bound}, namely that $\calG$ is uniformly bounded on bounded balls of $\scrR$.
        In this work, we need to trace the dependence on $\theta$ more carefully, since later in \cref{cond:misspec-error} and \cref{cond:misspec-model} we require at most linear or quadratic (polynomial) growth in order to prove posterior contraction in misspecified models.
        Furthermore, we want to highlight that \cref{cond:fm:bound} includes uniformly bounded forward maps ($\gamma_B = 0$), such as the solution maps associated to PDE-constrained regression models driven by the Darcy problem and the time-(in)dependent Schrödinger equation (see \cite{Nickl_vdGeer_Wang_2020,Kekkonen_2022}).
        In \cref{sec:examples}, we will apply the present general theory to PDE-constrained regression models driven by non-linear reaction diffusion equation and the 2D-Navier-Stokes equation. 
        For the latter, it is shown in \cite{NicklTiti_2024} and \cite{Konen_Nickl_2025} that the corresponding solution map, mapping the initial condition $\theta$ to the solution $u_\theta$ of the dynamical system satisfies the assumptions imposed in \cref{ass:operatorI}, particularly \cref{cond:fm:bound} with some $\gamma_B >0$. Following the theory provided in \cite{Nickl_2025_BvMRDE}, where \cref{cond:fm:Liptwo} and \cref{cond:fm:Lipinf} are shown for the solution map of the non-linear reaction diffusion equation, we derive \cref{cond:fm:bound} in \cref{lemma:prop5new}.
        

    \end{enumerate}

\end{remark}







\subsection{Conditions on the Prior}

\begin{condition}[Base Prior]\label{cond:BasePrior}\label{cond:MAP:scrH}
    Under the conditions imposed in \cref{ass:operatorI}, let $\Pi'$ be a centered Gaussian measure on the linear subspace $\Theta\subseteq\IL^2(\calM,W)$ with \textit{reproducing kernel Hilbert space} (RKHS) $\scrH$, such that $\scrH\hookrightarrow \scrR$. For some $\alpha>0$ assume that 
    either 
    \begin{equation*}
        \scrH \hookrightarrow H_c^\alpha(\calM,W), \quad\text{if $\kappa\geq \frac{1}{2}$},\quad\text{or}\quad  \scrH \hookrightarrow H^\alpha(\calM,W), \quad\text{if $\kappa < \frac{1}{2}$.}
    \end{equation*}
    Further, assume that
    \begin{equation*}
        \Pi'\br{\theta\in\Theta:\; \norm{\theta}_{\scrR} < \infty} = 1.
    \end{equation*}
\end{condition}

\begin{remark}
    \mbox{}
    \begin{enumerate}[label = \roman*)]
        \item Typical choices of $\scrH$ and $\scrR$ from \cref{cond:BasePrior} we have in mind are $\scrH = H^\alpha(\calM,W)$ and $\scrR = H^\beta(\calM,W)$ for appropriate $\alpha>\beta$, which will be particularly important in \cref{sec:examples}.
    
        \item Several constructions for Gaussian priors that satisfy \cref{cond:BasePrior} have been discussed in \cite{Nickl_2023}. In \cref{sec:app:prior}, we summarize these discussions including explicit constructions that are suitable for the Reaction Diffusion Equation and the 2D-Navier-Stokes equation, see particularly \cref{example:prior:generic} and \cref{example:prior:PDE}. The results in this work are presented for Gaussian process priors, while finite-dimensional (\textit{sieve}) priors could also be used with some minor changes in the proofs, see also \cref{rem:app:prior} \cref{rem:prior:item:finitediemsnional}.

    \end{enumerate}
\end{remark}

%% file: Sections/_03_GeneralBayes.tex
\section{Posterior Contraction for misspecified models}\label{sec:posterior-contraction}

 In the rest of the section, let $\calG$ be a forward map satisfying the forward regularity conditions formulated in \cref{ass:operatorI} for some $\kappa\geq 0$, and $\Pi'$ be a base Gaussian prior satisfying \cref{cond:BasePrior} with $\alpha >0$. Let $\Pi_N$ be the corresponding sequence of rescaled priors from \cref{eq:rescaledprior} with
\begin{equation}\label{eq:minimax-rate}
    \delta_N = N^{-\frac{\alpha+\kappa}{2\alpha +2\kappa +d}}.
\end{equation}

In this section, the main result, \cref{thm:posterior-contraction} consists of a posterior contraction theorem at rate $\delta_N$ around the fixed ground truth $\thetatrue\in \Theta$ for the misspecified posterior distribution $\Pitilde_N\br{\cdot\mid D_N}$ under $\IP_{\thetatrue}^N$ probability, provided the misspecification of the noise variance ($s_i^2$ for $\sigma_i^2$) or approximate map ($\calGtilde$ for $\calG$) is sufficiently small. To that end, we need some assumptions on the noise variances $\sigma_1^2,\dots,\sigma_N^2$ as well as on the level of misspecification.\medskip

\begin{condition}[Noise variances]\label{cond:main:variances}
    \mbox{}
    \begin{enumerate}[label={\textbf{[NV]}}]
        \item\label{cond:error:variances} The error variances $\sigma_1^2,\dots,\sigma_N^2>0$ satisfy 
        \begin{equation*}
            0< \sigma_0^2 \coloneqq \min_{i\nset{N}}\sigma_i^2 \leq \max_{i\nset{N}}\sigma_i^2 \eqqcolon \sigma_\infty^2.
        \end{equation*}
    \end{enumerate}
\end{condition}

We will consider \cref{cond:error:variances} to hold implicitly in the rest of the paper.

\begin{condition}[Noise misspecification]\label{cond:misspec-error}
For $N\in\IN$, let $s_1^2,\dots,s_N^2>0$ be the sequence of proxy variances used in place of $\sigma_1^2,...,\sigma_N^2$.

\begin{enumerate}[label={\textbf{[NM\arabic*]}}]
    \item\label{item:NMI} Let $0<s_0^2 := \min_{i\nset{N}}s_i^2 \leq \max_{i\nset{N}}s_i^2 \eqqcolon s_\infty^2$ such that $\bar{s}_N^{-2}:=\frac{1}{N}\sum\limits_{i=1}^N s_i^{-2} \leq s_0^{-2}$.
    \item\label{item:NMII} We have 
        \begin{equation*}
            \max_{i\nset{N}}\left|1-\frac{\sigma_i^2}{s_i^2}\right| = \deltanoise
        \end{equation*}
        with some sequence $\deltanoise>0$, and either 
        \begin{enumerate}[label={\textbf{[NM2.\arabic*]}}]
            \item\label{item:NMII.I} the variance is consistently \textit{overestimated}, that is  $s_i^2 > \sigma_i^2$ for all $i\nset{N}$, and $\deltanoise\to 0$ as $N\to\infty$;

            \item \label{item:NMII.II}or, $\deltanoise \leq \Const{\mathrm{noise}}\times \delta_N^2$ for a sufficiently small constant $\Const{\mathrm{noise}}>0$, and the proxy forward $\calGtilde$ satisfies \cref{cond:fm:bound} with $\gamma_B\in[0,1]$.
        \end{enumerate}
\end{enumerate}

\end{condition}

\begin{condition}[Model misspecification]\label{cond:misspec-model} 
    Let $\calGtilde_\cdot$ be as in \cref{subsec:ObservationModel}.
     Further:
    \begin{enumerate}[label={\textbf{[MM\arabic*]}}]
    
        \item\label{cond:MM:I} The proxy operator $\calGtilde$ satisfies \cref{cond:fm:bound} with $\gamma_B\in[0,2]$.
        \item\label{cond:MM:III} Let $\rmM>0$. There exists a constant $\const(\rmM)>0$ and sequence of $\deltamap >0$ such that $\norm{\mathcal{G}(\theta) - \tilde{\mathcal{G}}(\theta)}_\infty \leq \const(\rmM)\times\deltamap$ for all $\theta\in\scrR(\rmM)$, with $\deltamap\leq\Const{\mathrm{model}}\times\delta_N^2$, for some constant $\Const{\mathrm{model}}>0$ sufficiently small.
    \end{enumerate}
    
\end{condition}
\newpage
\begin{remark}[Interpretation of \cref{cond:misspec-error} and \cref{cond:misspec-model}]
\mbox{}
\begin{enumerate}
    \item[i)] \cref{item:NMI} means that the surrogate variance has similar bounds as the true variance from \cref{cond:error:variances}, and prevents the use of proxies that would vastly underestimate the correct ones. Concerning \cref{item:NMII.I}, overestimating the variance can be related to the approach of fractional posteriors mentioned in \cref{sec:intro}, as indeed it amounts to using $s_i^2=\sigma_i^2/\alpha_N$ as proxy variance for some $0<\alpha_N<1$ with $\alpha_N \to 1$, effectively raising the original likelihood \cref{eq:loglikelihood} to the power $\alpha_N$.


    \item[ii)] The `smallness' condition on $C_{\mathrm{noise}}$ and $C_{\mathrm{model}}$ is in regard to the small ball exponent of the prior (as defined in \cref{eq:prior-tail}), and will play a part in the proofs of \cref{prop:change-measure-peeling} and \cref{thm:posterior-contraction-inverse}, \cref{eq:posterior-mean}. For the examples in \cref{sec:examples} we ignore this technicality, replacing these constants with $1/(\log N)$.
\end{enumerate}

\end{remark}

\subsection{Preliminary results}\label{subsec:prem:results}

We start with some preliminary results. Following the proof strategies in \cite{GhosalvanderVaart_2017} and \cite{Nickl_2023}, a standard posterior contraction proof relies on two main conditions: a \textit{small ball condition} and the \textit{existence of tests}. In presence of misspecification, we exhibit a third so called \textit{change of measure} condition which is crucial in nonlinear problems to control the behaviour of the posterior distribution. We show that under \cref{cond:misspec-error} and \cref{cond:misspec-model}, these requirements are satisfied. For conciseness we state here the key lemmas and propositions; remaining proofs can be found in detail in \cref{sec:additional-proofs}.\\

We define for any $\theta_1,\theta_2\in\Theta$ the shorthand notation
\begin{equation*}
    d_\calG(\theta_1,\theta_2)\coloneqq\norm{\calG(\theta_1)-\calG(\theta_2)}_{\IL^2_\zeta(\calZ,V)},
\end{equation*}
noting that this defines a semi-metric on the parameter-space $\Theta$. We analogously define $d_{\calGtilde}$. Given a fixed constant $\rmU>0$, we define the sets 
\begin{equation*}
    \widetilde{\calB}_N \coloneqq \brK{\theta\in\Theta:\;d_{\calGtilde}(\theta,\thetatrue)\leq\delta_N,\;\norm{\calGtilde_\theta}_\infty\leq\rmU}.
\end{equation*}

\paragraph{Small ball computations}

\begin{proposition}[Information Inequality]\label{prop:information-ineq}
     Under the misspecification assumptions \cref{cond:misspec-error} and \cref{cond:misspec-model}, we have the following properties.
\begin{enumerate}[label = \roman*)]
    \item There exists a constant $\const_1 = \const_1(\thetatrue,s_0^2)>0$, such that for all $\theta\in\widetilde{\calB}_N$
        \begin{equation*}
            -\IE_{\thetatrue}^N\brE{\log \br{\frac{q^N_\theta}{q^N_{\thetatrue}}}} \leq \frac{1}{2}N s_0^{-2}\times d_{\calGtilde}(\theta,\thetatrue)^2 + \const_1 N \deltamap \times d_{\calGtilde}(\theta,\thetatrue).
        \end{equation*}
    \item There exists a constant $\const_2 = \const_2(\rmU, s_0^2, \sigma_\infty^2)>0$, such that for all $\theta\in\widetilde{\calB}_N$ 
    \begin{equation*}
       \forall i\nset{N}:\;\IE_{\theta}^{(i)}\brE{\left(\log \br{\frac{q_{\theta}^{(i)}}{q_{\thetatrue}^{(i)}}} -\IE_{\theta}^{(i)}\log \br{\frac{q_{\theta}^{(i)}}{q_{\thetatrue}^{(i)}}}\right)^2} \leq \const_2\times d_{\calGtilde}(\theta,\thetatrue)^2 .
    \end{equation*}
\end{enumerate}
\end{proposition}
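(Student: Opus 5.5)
We compute directly with the Gaussian densities. Writing $\Delta_\theta(z)\coloneqq \calGtilde(\thetatrue)(z)-\calGtilde(\theta)(z)$ and $r(z)\coloneqq \calG(\thetatrue)(z)-\calGtilde(\thetatrue)(z)$, we have under $\IP_{\thetatrue}^N$ that $Y_i-\calGtilde(\thetatrue)(Z_i)=r(Z_i)+\varepsilon_i$. The normalising constants cancel in $q_\theta^{(i)}/q_{\thetatrue}^{(i)}$, and
\begin{equation*}
    \log\frac{q_\theta^{(i)}}{q_{\thetatrue}^{(i)}}(Y_i,Z_i) = -\frac{1}{2s_i^2}\Big(\Vnorm{\Delta_\theta(Z_i)}^2 + 2\sprod{\Delta_\theta(Z_i)}{Y_i-\calGtilde(\thetatrue)(Z_i)}{V}\Big).
\end{equation*}

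\textbf{Part i).} We take expectation under $\IP_{\thetatrue}^{(i)}$. The Gaussian term $\sprod{\Delta_\theta}{\varepsilon_i}{V}$ has mean zero by independence of $\varepsilon_i$ and $Z_i$. This leaves
\begin{equation*}
-\IE\log(q_\theta^{(i)}/q_{\thetatrue}^{(i)}) = \frac{1}{2s_i^2}\big(d_{\calGtilde}(\theta,\thetatrue)^2 + 2\IE_\zeta\sprod{\Delta_\theta}{r}{V}\big).
\end{equation*}
Bound $s_i^{-2}\le s_0^{-2}$ using \cref{item:NMI}, and bound the cross term by Cauchy--Schwarz in $\IL^2_\zeta$ as $\norm{r}_{\infty}\, d_{\calGtilde}(\theta,\thetatrue)$. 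By \cref{cond:MM:III} with $\rmM \ge \norm{\thetatrue}_{\scrR}$, we get $\norm{r}_\infty\le \const(\rmM)\deltamap$; this assumes $\thetatrue\in\scrR$, which is implicit in the constant depending on $\thetatrue$. Summing over $i$ gives i) with $\const_1=s_0^{-2}\const(\norm{\thetatrue}_\scrR)$.

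\textbf{Part ii).} Here the expectation is under $\IP_\theta^{(i)}$. There $Y_i=\calG(\theta)(Z_i)+\varepsilon_i$, so $Y_i-\calGtilde(\thetatrue)(Z_i) = \varepsilon_i + \calG(\theta)(Z_i)-\calGtilde(\thetatrue)(Z_i)$. We write this as $\varepsilon_i + g(Z_i)$ with $g\coloneqq \calG(\theta)-\calGtilde(\thetatrue)$. The variance is at most the second moment of $\log(q_\theta/q_{\thetatrue})$, which is bounded by
\begin{equation*}
\frac{3}{4s_0^4}\Big(\IE\Vnorm{\Delta_\theta}^4 + 4\IE\sprod{\Delta_\theta}{\varepsilon_i}{V}^2 + 4\IE\sprod{\Delta_\theta}{g}{V}^2\Big).
\end{equation*}
We treat the three terms separately.
\begin{itemize}
\item On $\widetilde{\calB}_N$ we have $\norm{\calGtilde(\theta)}_\infty\le\rmU$. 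Hence $\Vnorm{\Delta_\theta}\le \rmU+\norm{\calGtilde(\thetatrue)}_\infty$, and the first term is at most $c\, d_{\calGtilde}^2$.
\item The Gaussian term equals $\sigma_i^2 d_{\calGtilde}^2\le\sigma_\infty^2 d_{\calGtilde}^2$.
\item The third term is bounded by $\norm{g}_\infty^2 d_{\calGtilde}^2$.
\end{itemize}

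\textbf{Main obstacle.} The delicate point is a uniform bound on $\norm{g}_\infty$ in the third term. I would write $g=(\calG(\theta)-\calGtilde(\theta))+(\calGtilde(\theta)-\calGtilde(\thetatrue))$. The second piece is bounded by $\rmU+\norm{\calGtilde(\thetatrue)}_\infty$. The first needs \cref{cond:MM:III}, which only applies on $\scrR(\rmM)$. If $\theta$ is not $\scrR$-bounded, I would instead use the fact that \cref{cond:MM:III} controls $\norm{\calG(\theta)-\calGtilde(\theta)}_\infty$ on the relevant sets, or absorb the dependence into the constant by working on $\widetilde{\calB}_N\cap\scrR(\rmM)$. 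Alternatively, one could note that the definition of $\IE_\theta^{(i)}$ in the lemma is meant in the surrogate model $\IQ_\theta$, where $g=\calGtilde(\theta)-\calGtilde(\thetatrue)$ and the issue disappears. The constants then depend on $\rmU$, $s_0^2$, $\sigma_\infty^2$ and $\thetatrue$, as in the statement.
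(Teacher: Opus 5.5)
\textbf{Part i)} matches the paper's argument. Under $\IP_{\thetatrue}^{(i)}$ the Gaussian cross term has mean zero and $s_i^{-2}\le s_0^{-2}$. The remaining cross term is handled by Cauchy--Schwarz together with $\|\calG(\thetatrue)-\calGtilde(\thetatrue)\|_\infty\le c(\thetatrue)\deltamap$ from \cref{cond:MM:III}.

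\textbf{Part ii)} has a genuine gap, located where you flagged it: the choice of law. Taking the variance under $\IP_\theta^{(i)}$ forces you to control $\|\calG(\theta)\|_\infty$. But $\widetilde{\calB}_N$ only constrains $\calGtilde(\theta)$, and a $\theta\in\widetilde{\calB}_N$ need not lie in any $\scrR(\rmM)$. So neither \cref{cond:MM:III} nor \cref{cond:fm:bound} gives a bound depending only on $(\rmU,s_0^2,\sigma_\infty^2)$, and that step cannot be closed from the hypotheses. Your fallbacks do not rescue it. Restricting to $\widetilde{\calB}_N\cap\scrR(\rmM)$ changes the set and makes $c_2$ depend on $\rmM$. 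Working under $\IQ_\theta$ replaces the noise variance $\sigma_i^2$ by $s_i^2$ and bounds a different quantity. Neither is what is used downstream.

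The paper's proof computes $T_i$ with $\IE_{\thetatrue}^{(i)}$, so the $\IE_\theta^{(i)}$ in the statement should be read as $\IE_{\thetatrue}^{(i)}$. This is exactly what \cref{lemma:contraction-auxiliary} needs, since Chebyshev is applied to $\operatorname{Var}(W_i)$ under $\IP_{\thetatrue}^N$. Under $\IP_{\thetatrue}^{(i)}$ you have $Y_i-\calGtilde(\thetatrue)(Z_i)=r(Z_i)+\varepsilon_i$, with the same $r$ as in part i). Your three-term second-moment bound then goes through verbatim with $g$ replaced by $r$:
\begin{itemize}
\item the quartic term is at most $4\rmU^2 d_{\calGtilde}(\theta,\thetatrue)^2$, taking $\rmU\ge\|\calGtilde(\thetatrue)\|_\infty$;
\item the Gaussian term is $\sigma_i^2 d_{\calGtilde}(\theta,\thetatrue)^2$;
\item the last term is at most $c(\thetatrue)^2\deltamap^2\, d_{\calGtilde}(\theta,\thetatrue)^2$, which is negligible since $\deltamap\to 0$.
\end{itemize}
This yields $c_2=c_2(\rmU,s_0^2,\sigma_\infty^2)$ for $N$ large, as in the paper.
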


Note that the noise variance misspecification does not play a part here, beyond affecting the multiplicative constant in front of $d_{\calGtilde}(\theta,\thetatrue)$. From the proposition one can then easily derive the following auxiliary lemma, which shows that the denominator in the formula of the posterior measure is bounded away from $0$ on events of high $\IP_{\thetatrue}^N$-probability. \\

\begin{lemma}[Auxiliary contraction]\label{lemma:contraction-auxiliary} In the setting of \cref{prop:information-ineq}, let $\nu$ be a probability measure on some (measurable) subset $B_N \subseteq \widetilde{\mathcal{B}}_N$. For the surrogate log-likelihood $\elltilde_N$ from \cref{eq:surrogateloglikelihood}, we have for all $K> s_0^{-2}$
    
    \begin{equation*}
        \IP_{\thetatrue}^N\left(\int_{B_N} e^{\tilde\ell_N(\theta)-\tilde\ell_N(\thetatrue)}\rmd\nu(\theta) \leq e^{-KN\delta_N^2}\right) \xrightarrow[N\to \infty]{}0.
    \end{equation*}
\end{lemma}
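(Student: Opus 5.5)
This is the standard evidence lower bound argument (Lemma 7.3.2 in \cite{Nickl_2023}, Lemma 8.10 in \cite{GhosalvanderVaart_2017}), adapted to the surrogate likelihood via \cref{prop:information-ineq}.

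\textbf{Step 1: restrict to a high-probability event via Jensen.} Write $\Lambda_N(\theta)\coloneqq\elltilde_N(\theta)-\elltilde_N(\thetatrue)=\sum_{i\nset{N}}\log\bigl(q_\theta^{(i)}/q_{\thetatrue}^{(i)}\bigr)(Y_i,Z_i)$, since the normalising constants of $q^{(i)}_\theta$ do not depend on $\theta$. By Jensen's inequality,
\begin{equation*}
\log\int_{B_N}\rme^{\Lambda_N(\theta)}\rmd\nu(\theta)\geq\int_{B_N}\Lambda_N(\theta)\,\rmd\nu(\theta)\eqqcolon W_N .
\end{equation*}
It therefore suffices to show $\IP_{\thetatrue}^N\bigl(W_N\leq -KN\delta_N^2\bigr)\to0$. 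Split $W_N=\IE_{\thetatrue}^N W_N+(W_N-\IE_{\thetatrue}^N W_N)$; Fubini is justified because $\Lambda_N$ is jointly measurable and, on $\widetilde{\calB}_N$, $\calGtilde$ is bounded by $\rmU$.

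\textbf{Step 2: the mean.} By \cref{prop:information-ineq} i) and $d_{\calGtilde}\leq\delta_N$ on $B_N\subseteq\widetilde{\calB}_N$,
\begin{equation*}
-\IE_{\thetatrue}^N W_N\leq \tfrac12 Ns_0^{-2}\delta_N^2+\const_1N\deltamap\delta_N\leq \tfrac12Ns_0^{-2}\delta_N^2+\const_1\Const{\mathrm{model}}N\delta_N^3 ,
\end{equation*}
using \cref{cond:MM:III}. The second term is $o(N\delta_N^2)$, so for $N$ large $-\IE W_N\leq (\tfrac12 s_0^{-2}+\epsilon)N\delta_N^2$ for any fixed $\epsilon>0$. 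Since $K>s_0^{-2}$, there is a gap of order $(K-\tfrac12 s_0^{-2}-\epsilon)N\delta_N^2\geq c N\delta_N^2$ with $c>0$.

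\textbf{Step 3: the fluctuation.} By Chebyshev,
\begin{equation*}
\IP\bigl(|W_N-\IE W_N|\geq cN\delta_N^2\bigr)\leq \mathrm{Var}(W_N)/(c^2N^2\delta_N^4).
\end{equation*}
The summands are independent across $i$. Applying Jensen (or Cauchy--Schwarz) to the $\nu$-average,
\begin{equation*}
\mathrm{Var}(W_N)\leq\sum_{i}\int_{B_N}\mathrm{Var}\bigl(\log(q_\theta^{(i)}/q_{\thetatrue}^{(i)})\bigr)\rmd\nu(\theta).
\end{equation*}
Here I use \cref{prop:information-ineq} ii), reading the variance there as being taken under the true datum law; this is the intended quantity. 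That gives $\mathrm{Var}(W_N)\leq \const_2N\delta_N^2$, so the probability is at most $\const_2/(c^2N\delta_N^2)\to0$, because $N\delta_N^2\to\infty$.

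\textbf{Main obstacle.} The delicate point is Step 2: checking that the model-misspecification cross term $N\deltamap d_{\calGtilde}$ is negligible relative to $N\delta_N^2$. This requires the rate in \cref{cond:MM:III}. A looser hypothesis $\deltamap\lesssim\delta_N$ would only give a term of order $\Const{\mathrm{model}}N\delta_N^2$, which is still absorbable if the constant is small. One also has to check that the variance bound in part ii) is under $\IP_{\thetatrue}$, since that is the law governing the data.
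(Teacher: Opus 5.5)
Your proposal is correct and follows essentially the same route as the paper. The shared steps are: Jensen's inequality to pass to $\int_{B_N}\log(q_\theta^N/q_{\thetatrue}^N)\,\rmd\nu$; the mean bound from \cref{prop:information-ineq} i), with the cross term $N\deltamap\delta_N=o(N\delta_N^2)$; and Chebyshev, with the variance controlled by Jensen on the $\nu$-average, Fubini and \cref{prop:information-ineq} ii), giving a bound of order $1/(N\delta_N^2)$. Your reading of part ii) as a variance under $\IP_{\thetatrue}^{(i)}$ is also how the paper actually proves and uses it.
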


\paragraph{Existence of Tests}

As already discussed above, in homoscedastic models, the existence of tests $\Psi_N$ is a standard result, which follows for instance from Theorem 7.1.4 in \cite{Gine_Nickl_2021}. Due to the heteroscedasticity of the observation scheme \cref{eq:intro:regression_general}, we need to construct tests explicitly, whose type-I and type-II errors are controlled sufficiently. \\

We define introduce the following \textit{regularization sets}. Given $m>0$, we define
\begin{equation}\label{eq:reg-set}
    \Theta_N(m) \coloneqq \brK{\theta \in \scrR :\; \theta=\theta_1+\theta_2,\; \norm{\theta_1}_{\br{H^\kappa(\calM,W)}^*} \leq m\delta_N, \;\norm{\theta_2}_\scrH \leq m,\; \norm{\theta}_\scrR\leq m}.
\end{equation}

\begin{proposition}[Existence of Tests]\label{prop:testing}
Let $\calG$ satisfy \cref{cond:fm:Liptwo}-\cref{cond:fm:Lipinf}. Let $\scrH$ be the RKHS from \cref{cond:BasePrior} with  $\alpha>\eta + d$. Let $N\in\IN$ and assume \cref{cond:error:variances}. Let $D_N\sim\IP_{\thetatrue}^N$ with fixed $\thetatrue\in\scrH$. Let $\delta_N$ as in \cref{eq:minimax-rate}. 
Given $\Bar{c}>0$, there exist a sequence of tests (indicator functions) $\Psi_N = \Psi_N(D_N)$, such that 
    \begin{equation*}
        \lim_{N\to\infty}\IE_{\thetatrue}^N\brE{\Psi_N} = 0 \quad \text{ and }\quad  \sup_{\substack{\theta\in\Theta_N(m):\;\norm{\calG(\theta)-\calG(\thetatrue)}{\IL_\zeta^2(\calZ,V)}\geq \rho\delta_N}}\IE_\theta^N[1-\Psi_N]  \lesssim \exp\br{-\Bar{\const}N\delta_N^2}
        \end{equation*}
    for all $\rho = \rho(\texttt{S}),m=m(\thetatrue)>0$ and $N$ sufficiently large, where
    \begin{equation*}
        \texttt{S}\coloneqq\brK{\Bar{c},\alpha,\gamma_B,\kappa,\eta,d,d_W,m,\Const{\calG,\rmB},\ConstLiptwo,\ConstLipinfty,\Const{var},\sigma_0,\sigma_\infty}.
    \end{equation*}
\end{proposition}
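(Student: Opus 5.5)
The plan is to build the test from a penalised least-squares (Tikhonov-type) estimator, following \cite{Nickl_vdGeer_Wang_2020} and \cite{Siebel_2025}. Heteroscedasticity rules out the off-the-shelf test of Theorem 7.1.4 in \cite{Gine_Nickl_2021}. The estimator is robust to the heteroscedastic design, and its convergence rate in $d_\calG$ can be proved by M-estimation arguments (\cref{app:MAP}).

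\textbf{Construction of the test.} Let $\thetahat$ minimise the weighted, Tikhonov-penalised empirical risk
\[
\frac{1}{N}\sum_{i\nset{N}}\sigma_i^{-2}\Vnorm{Y_i-\calG(\theta)(Z_i)}^2+\delta_N^2\norm{\theta}_{\scrH}^2
\]
over $\scrR$. If the $\sigma_i$ are unknown, the unweighted version can be used instead, since \cref{cond:error:variances} makes the two criteria equivalent up to constants. Set $\Psi_N\coloneqq\mathbbm{1}\{d_\calG(\thetahat,\thetatrue)\geq \rho\delta_N/2\}$.

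\textbf{Concentration step.} The key step is an exponential concentration bound, uniform over alternatives $\theta\in\Theta_N(m)$: for each such $\theta$,
\[
\IP_\theta^N\br{d_\calG(\thetahat,\theta)\geq \rho\delta_N/2}\lesssim e^{-\bar c N\delta_N^2},
\]
with $\rho$ large. To prove it, start from the basic inequality, which compares the penalised risk of $\thetahat$ with that of the element $\theta_2$ in the decomposition $\theta=\theta_1+\theta_2$ from \cref{eq:reg-set}. The approximation error is controlled as follows. By \cref{cond:fm:Liptwo}, $d_\calG(\theta,\theta_2)\lesssim \norm{\theta_1}_{(H^\kappa)^*}\leq m\delta_N$. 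Moreover $\delta_N^2\norm{\theta_2}_\scrH^2\leq m^2\delta_N^2$.

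\textbf{Empirical process step.} The stochastic terms are handled by peeling over slices $\{\norm{\vartheta}_\scrH\leq 2^k\}$. The cross term $\frac1N\sum_i\sigma_i^{-2}\langle\varepsilon_i,\calG(\vartheta)(Z_i)-\calG(\theta)(Z_i)\rangle$ is a Gaussian process conditionally on the $Z_i$. Since $\sigma_i^{-2}\varepsilon_i$ is subgaussian with variance at most $\sigma_0^{-2}$, heteroscedasticity only affects constants. The difference between empirical and $\IL^2_\zeta$ norms is a bounded empirical process. Bound both by Dudley or Bernstein-type chaining together with Talagrand's inequality. The entropy bounds come from \cref{cond:fm:Lipinf}, combined with the metric entropy of $\scrH$-balls in $B^\eta$, which is of order $\epsilon^{-d/(\alpha-\eta)}$. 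The assumption $\alpha>\eta+d$ is what makes these entropy integrals converge; it gives a uniform sup-norm envelope for the Bernstein step. \cref{cond:fm:bound} controls $\norm{\calG(\vartheta)}_\infty$ polynomially on each slice. Together with $\delta_N$ from \cref{eq:minimax-rate}, this yields deviation probability $e^{-cN\delta_N^2}$ with $c$ growing in $\rho$. The same argument at $\theta=\thetatrue\in\scrH$ (take $\theta_1=0$) gives $\IE_{\thetatrue}^N\Psi_N\to0$.

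\textbf{Conclusion.} For an alternative with $d_\calG(\theta,\thetatrue)\geq\rho\delta_N$, the triangle inequality gives
\[
\{\Psi_N=0\}\subseteq\{d_\calG(\thetahat,\theta)\geq\rho\delta_N/2\},
\]
so the type-II bound follows from the concentration step. Choosing $\rho$ large relative to $\bar c$ and $m$ completes the proof.

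The main obstacle is to obtain the concentration bound uniformly over the non-compact alternatives, where $\calG$ is not uniformly bounded. The plan is to use the penalty to localise $\norm{\thetahat}_\scrH$ to a ball of radius $O(m)$ with exponentially small failure probability, and only then apply the chaining bounds on that bounded set.
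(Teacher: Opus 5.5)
\paragraph{Verdict.} Your architecture matches the paper's: a penalised least-squares estimator, the basic inequality against the comparator $\theta_2$ from \cref{eq:reg-set}, peeling plus chaining, and the triangle inequality for the type-II error. However, the step you flag as the ``main obstacle'' is a genuine gap. It is created by minimising over all of $\scrR$, i.e.\ effectively over all of $\scrH$.

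\paragraph{Why the localisation step fails.} The proposition only assumes the \emph{local} conditions \cref{cond:fm:Liptwo}--\cref{cond:fm:Lipinf}. The constants $\ConstLiptwo(M)$, $\ConstLipinfty(M)$ may grow arbitrarily fast in $M$, and by \cref{cond:fm:bound} the envelope $\norm{\calG(\vartheta)}_\infty$ may grow like $\norm{\vartheta}_{\scrR}^{\gamma_B}$. Localising $\norm{\thetahat}_{\scrH}$ to $O(m)$ ``via the penalty'' is not an easier preliminary step.
\begin{itemize}
\item The basic inequality by itself only gives $\delta_N^2\norm{\thetahat}_{\scrH}^2\lesssim \frac1N\sum_{i}\sigma_i^{-2}\Vnorm{\varepsilon_i}^2+O(1)=O_{\IP}(1)$. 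That is a radius of order $\delta_N^{-1}$.
\item Improving this to $O(m)$ needs exactly the uniform bounds you planned: on the cross term, and on the deviation between empirical and $\IL^2_\zeta$ norms. These are needed over the slices $\brK{2^k\leq\norm{\vartheta}_{\scrH}<2^{k+1}}$ for $2^k$ up to order $\delta_N^{-1}$.
\item On those slices your entropy bounds scale with $\ConstLipinfty(c2^{k})$, about which nothing is assumed.
\item The Bernstein/Talagrand term also carries envelope factors growing like $2^{k\gamma_B}$.
\end{itemize}
Neither growth is in general absorbed by the penalty gain $\delta_N^2 2^{2k}$, so the argument is circular under the stated hypotheses. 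Note that even the paper's \cref{thm:LS_estimator}, which assumes the global polynomial-growth conditions of \cref{ass:operatorII} and extra restrictions on $\alpha$, only covers $\Thetadiamond\subseteq\scrR(M)\cap\scrH$ when $\gamma_B\neq 0$.

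\paragraph{The repair.} Put the localisation into the definition of the test. This is allowed because $\Psi_N$ may depend on $m$ and $\thetatrue$. Let $\thetahat$ maximise the Tikhonov functional over the bounded ball $\scrH(M)$ with $M\geq m\vee\norm{\thetatrue}_{\scrH}$, so that $\thetatrue$ and every comparator $\theta_2$ are feasible. This is what the paper does in \cref{co:existence:test}, taking $\Thetadiamond=\scrH(M)$. On $\scrH(M)\subseteq\scrR(cM)$:
\begin{itemize}
\item the constants in \cref{cond:fm:Liptwo}--\cref{cond:fm:Lipinf} and the envelope from \cref{cond:fm:bound} are fixed numbers depending only on $M$;
\item existence of $\thetahat$ follows from \cref{propo:existenceofthethat};
\item the peeling is done inside this bounded set, in the penalised distance $\norm{\calG(\vartheta)-\calG(\thetadiamond)}_{\IL^2_\zeta}^2+\delta_N^2\norm{\vartheta}_{\scrH}^2$ relative to the comparator $\thetadiamond$.
\end{itemize}
With that change, the rest of your plan goes through: the type-I bound with comparator $\thetatrue$, and the type-II bound with comparator $\theta_2$ plus the triangle inequality. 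Your other departures from the paper are harmless. These are the weights $\sigma_i^{-2}$, and thresholding $d_\calG(\thetahat,\thetatrue)$ alone instead of $\norm{\calG(\thetahat)-\calG(\thetatrue)}_{\IL^2_\zeta}^2+\delta_N^2\norm{\thetahat}_{\scrH}^2$.
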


For the proof of \cref{prop:testing}, we use concentration properties of estimators as proposed in \cite{gine_2011}. We derive in \cref{co:GeneralConsitency} that the maximizer $\thetahat$ of the following Tikhonov-type-functional
\begin{equation*}
    \scrH(m)\ni\theta\mapsto - \frac{1}{2N}\sum_{i\nset{N}}\Vnorm{Y_i-\calG(\theta)(Z_i)}^2 - \frac{\delta_N^2}{2}\norm{\theta}_{\scrH}^2,\quad \text{for }\;m>0\;\text{ sufficiently large}
\end{equation*}
defined on balls $\scrH(m)$ of the RKHS $\scrH$, exists and is consistent in the sense that 
\begin{equation*}
    \IP_{\thetatrue}^N\br{d_{\delta_N}^2(\thetahat,\thetatrue)\geq c\delta_N^2} \xrightarrow[]{N\to\infty} 0,\quad d_{\delta_N}^2(\thetahat,\thetatrue) \coloneqq d_{\calG}(\thetahat,\thetatrue)+\delta_N^2\norm{\thetahat}_{\scrH}^2
\end{equation*}
for some $c>0$ sufficiently large.
Defining the events $A_N\coloneqq\brK{d_{\delta_N}^2(\thetahat,\thetatrue)\geq c\delta_N^2}$, we show in \cref{co:existence:test} that the resulting sequence of tests $\Psi_N \coloneqq \mathds{1}_{A_N}$ has the desired properties. In fact, in \cref{co:existence:test} we can abandon the Gaussian assumption on the measurement errors $\varepsilon_1,\dots,\varepsilon_N$ and require only a Bernstein condition (see \cref{ass:errorBernstein}), due to the well-known robustness of M-estimation techniques, which we again demonstrate in \cref{thm:LS_estimator} and \cref{co:GeneralConsitency}, respectively.


\paragraph{Change of measure}


In the proof of the main theorem \cref{thm:posterior-contraction}, it will become apparent that conditions are required control the effect of misspecification on the (log-)likelihood: this is the purpose of the following two propositions.

\begin{proposition}[Change of measure I: Inside of the regularization set]\label{prop:change-measure}
    \mbox{}\\
    Suppose \cref{cond:misspec-error} and \cref{cond:misspec-model} are satisfied.
    Then, for all $M>0$ and $b>0$, there exist $c_5=c_5(b, s_0^{-2}, \sigma_\infty^2 M, \Const{\calGtilde}, \Const{noise},\Const{model})>0$ such that 
    \begin{equation*}
       \forall\theta\in\scrR(M):\; \mathbb{E}_{\theta}^N\left[\left(\frac{q^N_\theta}{q^N_{\thetatrue}}\frac{p^N_{\thetatrue}}{p^N_{\theta}}\right)^b\right] \leq \exp\br{ c_5 \times N\delta_N^2}
    \end{equation*}
    %
%

\end{proposition}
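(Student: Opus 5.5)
By independence of the data, the expectation factorises:
\[
\IE_\theta^N\Big[\Big(\tfrac{q^N_\theta}{q^N_{\thetatrue}}\tfrac{p^N_{\thetatrue}}{p^N_\theta}\Big)^b\Big]=\prod_{i\nset{N}}\IE_\theta^{(i)}\big[e^{bX_i}\big],\qquad X_i\coloneqq\log\frac{q^{(i)}_\theta p^{(i)}_{\thetatrue}}{q^{(i)}_{\thetatrue}p^{(i)}_\theta}.
\]
It therefore suffices to show that $\log\IE_\theta^{(i)}[e^{bX_i}]\le c\,\delta_N^2$ for each $i$, uniformly in $\theta\in\scrR(M)$, with $c$ depending on the listed constants. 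Under $\IP_\theta^{(i)}$ we write $Y_i=\calG(\theta)(Z_i)+\varepsilon_i$. We abbreviate $g=\calG(\theta)(Z_i)$, $g_0=\calG(\thetatrue)(Z_i)$, $\tilde g=\calGtilde(\theta)(Z_i)$, $\tilde g_0=\calGtilde(\thetatrue)(Z_i)$ and $w_i\coloneqq s_i^{-2}-\sigma_i^{-2}$. Expanding the squares, the normalising constants cancel and $X_i$ is a quadratic polynomial in $\varepsilon_i$:
\[
X_i=-\tfrac{1}{2}s_i^{-2}\big(\Vnorm{g-\tilde g+\varepsilon_i}^2-\Vnorm{g-\tilde g_0+\varepsilon_i}^2\big)+\tfrac12\sigma_i^{-2}\big(\Vnorm{\varepsilon_i}^2-\Vnorm{g-g_0+\varepsilon_i}^2\big).
\]
The $\Vnorm{\varepsilon_i}^2$ terms cancel exactly, because both brackets have the same leading coefficient. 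So $X_i=a_i+\langle v_i,\varepsilon_i\rangle_V$ is \emph{affine} in $\varepsilon_i$, with
\[
v_i=s_i^{-2}(\tilde g-\tilde g_0)-\sigma_i^{-2}(g-g_0),
\]
and $a_i$ a deterministic quadratic in $g,g_0,\tilde g,\tilde g_0$. The Gaussian moment generating function then gives $\IE[e^{bX_i}\mid Z_i]=\exp(ba_i+\tfrac{b^2}{2}\sigma_i^2\Vnorm{v_i}^2)$. This cancellation is what makes the proposition hold for any $b>0$, without a restriction coming from the Gaussian second moment.

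The task is now to show that $a_i$ and $\Vnorm{v_i}^2$ are $O(\delta_N^2)$ uniformly, and this is done by inserting the misspecification sizes. We write $\tilde g-\tilde g_0=(g-g_0)+r_i$ with $\Vnorm{r_i}\le 2\const(\rmM')\deltamap$ by \cref{cond:MM:III}, where $\rmM'=\max(M,\norm{\thetatrue}_\scrR)$. We also use $|w_i|\le s_0^{-2}\deltanoise$, which follows from \cref{item:NMII} and \cref{item:NMI}. This gives
\[
v_i=w_i(g-g_0)+s_i^{-2}r_i,\qquad \Vnorm{v_i}^2\lesssim \deltanoise^2\Vnorm{g-g_0}^2+\deltamap^2 .
\]
For $a_i$, expanding $\Vnorm{g-\tilde g}^2-\Vnorm{g-\tilde g_0}^2$ shows that it equals $w_i\Vnorm{g-g_0}^2$ up to cross terms of the form $s_i^{-2}\Vnorm{r_i}\,\Vnorm{g-g_0}$ and $s_i^{-2}\Vnorm{r_i}^2$. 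The envelope $\Vnorm{g-g_0}\le 2\Const{\calG,B}(1+\rmM'^{\gamma_B})$ comes from \cref{cond:fm:bound} on $\scrR(\rmM')$, so it is a constant depending on $M$. Since $\deltamap\le\Const{model}\delta_N^2$, all model terms are $O(\delta_N^2)$.

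The noise term needs the case split. Under \cref{item:NMII.II} we have $\deltanoise\lesssim\delta_N^2$ and we are done, with $c_5$ proportional to $b\,(\Const{noise}+\Const{model})$ times constants depending on $M$. Under \cref{item:NMII.I} we have $s_i^2>\sigma_i^2$, so $w_i<0$. The coefficient of $\Vnorm{g-g_0}^2$ in $a_i$ is then $-\tfrac12 w_i-\tfrac12\sigma_i^{-2}+\ldots$, and collecting terms gives $-\tfrac12 s_i^{-2}\Vnorm{g-g_0}^2\le 0$ plus cross terms. After adding the variance contribution $\tfrac{b}{2}\sigma_i^2w_i^2\Vnorm{g-g_0}^2$, the total coefficient is $-\tfrac{b}{2}s_i^{-2}+\tfrac{b^2}{2}\sigma_i^2 w_i^2$. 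This is $\le 0$ once $\deltanoise$ is small, because $\sigma_i^2w_i^2\le s_i^{-2}\deltanoise^2$. So the sign of the misspecification absorbs the noise term, which is why no rate is needed there.

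The main obstacle is this sign and constant bookkeeping in $a_i$, especially under \cref{item:NMII.I}. There $\deltanoise$ is not $O(\delta_N^2)$, so every occurrence of $w_i\Vnorm{g-g_0}^2$ has to be either absorbed by the negative $-\tfrac12 s_i^{-2}\Vnorm{g-g_0}^2$ or multiplied by $\deltamap$. Taking the expectation over $Z_i$ afterwards is harmless because all bounds are uniform in $Z_i$. Multiplying over $i$ then yields $\exp(c_5N\delta_N^2)$.
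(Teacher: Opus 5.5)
Your proposal takes the same route as the paper. You factorise over $i$ and note that the $\Vnorm{\varepsilon_i}^2$ terms cancel, so the log-ratio is affine in $\varepsilon_i$. You then apply the conditional Gaussian MGF and control the deterministic remainder through \cref{cond:MM:III} and the case split \cref{item:NMII.I}/\cref{item:NMII.II}; these are the paper's terms (I)--(III).

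One step in the overestimation case is wrong as written, although the conclusion survives. We have $\Vnorm{g-\tilde g}^2-\Vnorm{g-\tilde g_0}^2=-\Vnorm{g-g_0}^2+O(\deltamap)$. Hence the prefactor $-\tfrac12 s_i^{-2}$ contributes $+\tfrac12 s_i^{-2}\Vnorm{g-g_0}^2$, and $a_i=\tfrac12 w_i\Vnorm{g-g_0}^2+O(\deltamap)$. The coefficient is therefore $\tfrac12 w_i=-\tfrac12\sigma_i^{-2}\epsilon_i$, with $\epsilon_i\coloneqq 1-\sigma_i^2/s_i^2\in(0,\deltanoise]$, not $-\tfrac12 s_i^{-2}$. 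In particular there is no order-one negative drift.

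The absorption still works, for a different reason. Since $\sigma_i^2w_i=-\epsilon_i$, the total coefficient of $\Vnorm{g-g_0}^2$ in $ba_i+\tfrac{b^2}{2}\sigma_i^2\Vnorm{v_i}^2$ is $\tfrac b2 w_i+\tfrac{b^2}{2}\sigma_i^2w_i^2=\tfrac b2 w_i(1-b\epsilon_i)$. This is $\le 0$ once $b\deltanoise\le 1$. So the mechanism is a drift of order $\deltanoise$ beating a variance term of order $b\deltanoise^2$, which is what the paper's term (I) expresses. Every other cross term carries a factor $\deltamap$ and needs no absorption.

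A minor point: $|w_i|=\sigma_i^{-2}|1-\sigma_i^2/s_i^2|$. Your bounds $|w_i|\le s_0^{-2}\deltanoise$ and $\sigma_i^2w_i^2\le s_i^{-2}\deltanoise^2$ therefore hold only up to a harmless factor $(1-\deltanoise)^{-1}$.
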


\begin{proposition}[Change of measure II: Outside of the regularisation set]\label{prop:change-measure-peeling}
 \mbox{}\\
    Grant \cref{cond:misspec-error} and \cref{cond:misspec-model}, with constants $C_{\mathrm{noise}}, C_{\mathrm{model}}$ small enough compared to the small ball exponent of the prior (see \cref{eq:prior-tail}). Let $M>0$. Then there exists $c_6 = c_6(M, C_{\calGtilde, B}, \thetatrue, s_0^{-2}, \Const{\mathrm{noise}}, \Const{\mathrm{model}})>0$ such that
    \begin{equation*}
        \int_{\Theta_N(M)^c} \IE_{\thetatrue}^N\brE{e^{\loglikmodel(\theta)-\loglikmodel(\thetatrue)}}\rmd\Pi_N(\theta) \leq \exp\br{-c_6\times N\delta_N^2}
    \end{equation*}
    and where the constant $c_6$ can be made as large as desired by increasing $M$.
\end{proposition}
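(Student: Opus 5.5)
The plan is to compute the inner expectation $\IE_{\thetatrue}^N\brE{e^{\loglikmodel(\theta)-\loglikmodel(\thetatrue)}}$ in closed form using the Gaussianity of the errors. This yields a pointwise bound of the form $\exp\br{c\,\epsilon\,N\delta_N^2(1+\norm{\theta}_\scrR^2)}$, where $\epsilon$ is small (of the order of $\Const{\mathrm{noise}}$ or $\Const{\mathrm{model}}$). That bound is then integrated against the rescaled prior on $\Theta_N(M)^c$, combining a Fernique-type exponential moment bound with the prior tail bound for the regularisation sets (\cref{eq:prior-tail}).

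\emph{Step 1 (explicit Gaussian computation).} Under $\IP_{\thetatrue}^N$ we have $Y_i=\calG(\thetatrue)(Z_i)+\varepsilon_i$. Set $b_i\coloneqq\calG(\thetatrue)(Z_i)-\calGtilde(\thetatrue)(Z_i)$ and $g_i\coloneqq\calGtilde(\thetatrue)(Z_i)-\calGtilde(\theta)(Z_i)$. Expanding the squares gives
\begin{equation*}
\loglikmodel(\theta)-\loglikmodel(\thetatrue)=-\sum_{i\nset{N}}\frac{1}{2s_i^2}\br{\Vnorm{g_i}^2+2\sprod{b_i}{g_i}{V}+2\sprod{\varepsilon_i}{g_i}{V}}.
\end{equation*}
Taking the expectation over $\varepsilon_i\sim\calN(0,\sigma_i^2\operatorname{Id}_V)$ conditionally on $Z_i$, and using independence across $i$, gives
\begin{equation*}
\IE\brE{e^{\loglikmodel(\theta)-\loglikmodel(\thetatrue)}\,\middle|\,Z}=\prod_{i\nset{N}}\exp\br{-\frac{\Vnorm{g_i}^2}{2s_i^2}\Big(1-\frac{\sigma_i^2}{s_i^2}\Big)-\frac{\sprod{b_i}{g_i}{V}}{s_i^2}}.
\end{equation*}

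\emph{Step 2 (pointwise bound).} The first term is controlled by the noise assumption. Under \cref{item:NMII.I} it is nonpositive and can be dropped. Under \cref{item:NMII.II} it is at most $\deltanoise\Vnorm{g_i}^2/(2s_0^2)$. In that case \cref{cond:fm:bound} for $\calGtilde$ with $\gamma_B\le1$ gives $\Vnorm{g_i}\le c(\thetatrue)(1+\norm{\theta}_\scrR^{\gamma_B})$, so this contribution is at most $c\,\Const{\mathrm{noise}}\delta_N^2(1+\norm{\theta}_\scrR^2)$.

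For the cross term, note that $b_i$ involves only $\thetatrue$. Hence \cref{cond:MM:III} applied on the fixed ball $\scrR(\norm{\thetatrue}_\scrR)$ gives $\Vnorm{b_i}\le c(\thetatrue)\deltamap$, uniformly in $\theta$. This point matters: \cref{cond:MM:III} is only available on bounded balls, whereas $\theta$ here ranges over an unbounded set. Together with \cref{cond:MM:I} ($\gamma_B\le 2$), the cross term is at most $c\,\Const{\mathrm{model}}\delta_N^2(1+\norm{\theta}_\scrR^2)$.

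Multiplying over $i$ and integrating out $Z$ then yields
\begin{equation*}
\IE_{\thetatrue}^N\brE{e^{\loglikmodel(\theta)-\loglikmodel(\thetatrue)}}\le\exp\br{c_*\epsilon N\delta_N^2+c_*\epsilon N\delta_N^2\norm{\theta}_\scrR^2},\qquad \epsilon\coloneqq\Const{\mathrm{noise}}+\Const{\mathrm{model}}.
\end{equation*}

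\emph{Step 3 (integration against the prior).} Write $\theta=\theta'/\sqrt{N\delta_N^2}$ with $\theta'\sim\Pi'$, so that $N\delta_N^2\norm{\theta}_\scrR^2=\norm{\theta'}_\scrR^2$. Cauchy--Schwarz then bounds the integral by
\begin{equation*}
e^{c_*\epsilon N\delta_N^2}\,\Pi_N(\Theta_N(M)^c)^{1/2}\br{\IE^{\Pi'}e^{2c_*\epsilon\norm{\theta'}_\scrR^2}}^{1/2}.
\end{equation*}
The last factor is handled by Fernique's theorem. Since $\Pi'$ is Gaussian and $\norm{\cdot}_\scrR<\infty$ holds $\Pi'$-a.s. (\cref{cond:BasePrior}, with $\scrR$ separable), there is $\lambda_0>0$ with $\IE^{\Pi'}e^{\lambda_0\norm{\theta'}_\scrR^2}<\infty$. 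This is exactly where the smallness of $\Const{\mathrm{noise}}$ and $\Const{\mathrm{model}}$ is used: we need $2c_*\epsilon\le\lambda_0$, where $\lambda_0$ is tied to the prior's tail and small-ball behaviour.

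The middle factor is handled by the standard Gaussian-prior tail bound for the regularisation sets $\Theta_N(M)$ (via Borell's isoperimetric inequality, as in \cite{Nickl_2023}; \cref{eq:prior-tail}). It gives $\Pi_N(\Theta_N(M)^c)\le e^{-B(M)N\delta_N^2}$ with $B(M)\to\infty$ as $M\to\infty$. Choosing $M$ large enough that $B(M)/2>c_*\epsilon+c_6$ gives the claim, and $c_6$ can be made as large as desired by increasing $M$.

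\emph{Main obstacle.} I expect the delicate point to be this last step. The tail bound from \cref{eq:prior-tail} must control all three constraints defining $\Theta_N(M)$, including $\norm{\theta}_\scrR\le M$ for the rescaled prior. At the same time, the Fernique exponent has to be compatible with the growth rate $\gamma_B$: the restrictions $\gamma_B\le1$ under \cref{item:NMII.II} and $\gamma_B\le2$ under \cref{cond:MM:I} are precisely what keep the exponent at most quadratic in $\norm{\theta'}_\scrR$. If the $\scrR$-part of the tail bound is weak, I would instead split $\Theta_N(M)^c$ dyadically (peeling) into shells $\{2^{k}M<\norm{\theta'}_\scrR\le 2^{k+1}M\}$ and sum the resulting Gaussian tail bounds shell by shell.
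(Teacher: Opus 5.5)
Your Steps 1 and 2 are essentially the paper's argument. You use the same conditional Gaussian MGF identity and the same observation that \cref{cond:MM:III} is only needed at $\thetatrue$. You also use the same growth restrictions ($\gamma_B\le 1$ under \cref{item:NMII.II}, $\gamma_B\le 2$ under \cref{cond:MM:I}) to keep the exponent at most quadratic in $\norm{\theta}_\scrR$. Your proof is correct, but it genuinely differs from the paper in the integration against the prior.

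The paper keeps the pointwise bound in terms of $\norm{\calGtilde(\theta)-\calGtilde(\thetatrue)}_\infty$. It slices $\{\norm{\theta}_\scrR\ge 2^{\ell_0}S\}$ dyadically and bounds this sup-norm by $(2^{\ell+1}S)^{\gamma_B}$ on each slice. It then multiplies by the tail bound \cref{eq:prior-tail} and sums the series, with the smallness of $\Const{\mathrm{noise}},\Const{\mathrm{model}}$ keeping the bracket $K(\ell)$ uniformly positive.

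You instead use that the rescaling turns $N\delta_N^2\norm{\theta}_\scrR^2$ into $\norm{\theta'}_\scrR^2$. Cauchy--Schwarz and Fernique's theorem then absorb the growth into an $N$-independent constant. The smallness condition becomes $2c_*\epsilon\le\lambda_0$, which is the same kind of prior-tail input. All the decay then comes from $\Pi_N(\Theta_N(M)^c)\le e^{-B(M)N\delta_N^2}$.

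Your route has a concrete advantage. The paper's slices start at $\ell_0=\lfloor\log_2(M/S)\rfloor$, so they only cover $\norm{\theta}_\scrR\gtrsim M/2$. The part of $\Theta_N(M)^c$ where $\norm{\theta}_\scrR$ is small but no decomposition $\theta=\theta_1+\theta_2$ as in \cref{eq:reg-set} exists is not addressed by that slicing. Your use of the standard Borell-type bound for $\Pi_N(\Theta_N(M)^c)$ from \cite{Nickl_2023} handles the whole complement at once.

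The paper's slicing, in turn, makes the growth $c_6\asymp M^2$ explicit. It is also reused directly, with the weight $\norm{\theta-\thetatrue}_{\IL^2}^2$, in the posterior-mean part of \cref{thm:posterior-contraction-inverse}. In your approach the same $M^2$ growth comes from $B(M)$ via Borell's inequality. The weighted version would follow by H\"older, since the Gaussian base prior has all polynomial moments.
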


\begin{remark}
    \cref{prop:change-measure-peeling} is reminiscent of Equation (2.13) in \cite{kleijn_2006}. Here, working with the slicing technique enables us to cover a wider range of PDE problems; not necessarily uniformly bounded over the parameter space.
\end{remark}

\subsection{Basic contraction theorem}
A final requirement is a mass condition on the prior:

\begin{proposition}\label{prop:prior-mass-bn}
Let $\Pi_N$ be the sequence of rescaled Gaussian priors as above with $\delta_N$ as in \cref{eq:minimax-rate}, such that $N\delta_N^2\to\infty$ as $N\to\infty$. Under \cref{cond:misspec-model}, there exists some $A = A(d_W,\calGtilde,\calG,\thetatrue)>0$, such that for all $N$ large enough, we have
    \begin{equation}\label{eq:contraction-prior}
        \Pi_N(\widetilde\calB_N)\geq e^{-AN\delta_N^2} \text{ for some $A>0$.}
    \end{equation}
\end{proposition}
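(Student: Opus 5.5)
The plan is to reduce the prior mass of $\widetilde\calB_N$ to a small-ball probability for the true forward map $\calG$. That quantity is controlled by standard Gaussian small-ball estimates, exactly as in the correctly specified theory of \cite{Nickl_2023}.

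\emph{Step 1: reduce $d_{\calGtilde}$ to $d_{\calG}$.} Fix $M>\norm{\thetatrue}_{\scrR}$. This is possible since $\thetatrue\in\scrH\hookrightarrow\scrR$, and we can take $\thetatrue\in\scrH$, as in \cref{prop:testing}. For $\theta\in\scrR(M)$, the triangle inequality gives
\begin{equation*}
d_{\calGtilde}(\theta,\thetatrue)\leq d_{\calG}(\theta,\thetatrue) + \norm{\calG(\theta)-\calGtilde(\theta)}_\infty + \norm{\calG(\thetatrue)-\calGtilde(\thetatrue)}_\infty \leq d_{\calG}(\theta,\thetatrue) + 2\const(M)\deltamap .
\end{equation*}
By \cref{cond:MM:III}, $\deltamap\lesssim\delta_N^2=o(\delta_N)$, so for $N$ large we get $d_{\calGtilde}(\theta,\thetatrue)\leq d_{\calG}(\theta,\thetatrue)+\delta_N/2$. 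The sup-norm requirement is handled by \cref{cond:MM:I}: on $\scrR(M)$ we have $\norm{\calGtilde(\theta)}_\infty\leq \Const{\calGtilde,B}(1+M^{\gamma_B})$, so we choose $\rmU$ at least this large (or $M$ accordingly). It follows that
\begin{equation*}
\widetilde\calB_N\supseteq\brK{\theta\in\scrR(M):\; d_{\calG}(\theta,\thetatrue)\leq \delta_N/2}.
\end{equation*}

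\emph{Step 2: reduce to a dual-norm ball via \cref{cond:fm:Liptwo}.} For $\theta,\thetatrue\in\scrR(M)$ we have $d_{\calG}(\theta,\thetatrue)\leq \ConstLiptwo(M)\norm{\theta-\thetatrue}_{(H^\kappa)^*}$. Hence it suffices to lower bound
\begin{equation*}
\Pi_N\br{\theta:\;\norm{\theta-\thetatrue}_{(H^\kappa)^*}\leq c\delta_N,\ \norm{\theta}_{\scrR}\leq M}
\end{equation*}
with $c=1/(2\ConstLiptwo(M))$. This is the standard small-ball estimate for rescaled Gaussian priors; cf.\ Theorem 1.3.2 / Lemma 1.3.? in \cite{Nickl_2023}, restated in \cref{sec:app:prior}.

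\emph{Step 3: lower bound the Gaussian probability.} First, shift to the origin by the Cameron--Martin theorem. The rescaled prior has RKHS norm $\sqrt{N}\delta_N\norm{\cdot}_{\scrH}$, so the shift costs $e^{-N\delta_N^2\norm{\thetatrue}_{\scrH}^2/2}$, using Anderson's inequality and symmetry of the centred ball. Next, bound the centred small-ball probability $\Pi'(\norm{\theta'}_{(H^\kappa)^*}\leq c\sqrt N\delta_N^2)$ via metric-entropy bounds for the unit ball of $H^\alpha$ in $(H^\kappa)^*$; these scale as $\epsilon^{-d/(\alpha+\kappa)}$. This gives a lower bound $\exp(-c'(\sqrt N\delta_N^2)^{-2d/(2\alpha+2\kappa-d)})$, which equals $e^{-c''N\delta_N^2}$ by the choice \cref{eq:minimax-rate} of $\delta_N$. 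The boundary condition $H_c^\alpha$ for $\kappa\geq1/2$ is what makes the dual-norm duality estimates valid. Finally, the constraint $\norm{\theta}_{\scrR}\leq M$ must be intersected in. I expect this to be the main obstacle, since the constraint is not a symmetric convex condition centred at $\thetatrue$. I would handle it as in \cite{Nickl_2023}: by Fernique's theorem, $\Pi_N(\norm{\theta}_{\scrR}>M)\leq e^{-cM^2N\delta_N^2}$ because of the rescaling, so choosing $M$ large makes this term negligible compared with $e^{-c''N\delta_N^2}$. Alternatively, apply the Gaussian correlation inequality to the two symmetric sets after the shift. Combining the three steps gives \cref{eq:contraction-prior} with $A$ depending on $\norm{\thetatrue}_{\scrH}$, $\ConstLiptwo(M)$, $\Const{\calGtilde,B}$ and $d_W$.
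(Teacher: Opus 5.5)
Your Steps 1 and 2 match the paper's reduction. The paper intersects with $\{\|\theta-\thetatrue\|_{\scrR}\le M\}\subseteq\scrR(M+\|\thetatrue\|_{\scrR})$ rather than with $\scrR(M)$, which is equivalent at that stage. The gap is in Step 3, at exactly the obstacle you flagged, and your primary way around it does not work. You use $\Pi_N(A\cap B)\ge\Pi_N(A)-\Pi_N(\|\theta\|_{\scrR}>M)$. There the radius $c=1/(2\ConstLiptwo(M))$ of the dual-norm ball $A$ depends on $M$. So the small-ball exponent is really a function $c''(M)$ of order $\ConstLiptwo(M)^{2d/(2\alpha+2\kappa-d)}$; by Anderson's inequality and the two-sided small-ball asymptotics, this order is sharp. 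Fernique/Borell only give $\Pi_N(\|\theta\|_{\scrR}>M)\le e^{-cM^2N\delta_N^2}$. Both exponents are of order $N\delta_N^2$, so the subtraction gives a nontrivial bound only if $c''(M)<cM^2$ for some $M$. Enlarging $M$ does not help, because it also shrinks the ball whose mass you need. \cref{cond:fm:Liptwo} places no growth restriction on $\ConstLiptwo(M)$; for the Navier--Stokes examples, Gr\"onwall-type constants typically grow exponentially in $M$. Even polynomial growth $\ConstLiptwo(M)\sim M^{\gamma_2}$ would force the extra assumption $d\gamma_2<2\alpha+2\kappa-d$. So this route does not prove the proposition as stated, and it is not the argument of \cite{Nickl_2023}.

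Your fallback, the Gaussian correlation inequality, is the right tool. As written, however, the two sets are not both symmetric after the shift. With $\theta=\thetatrue+h$ your constraint becomes $\{h:\|h+\thetatrue\|_{\scrR}\le M\}$, which is not symmetric in $h$. So neither the Cameron--Martin bound $\Pi_N(\thetatrue+B)\ge e^{-N\delta_N^2\|\thetatrue\|_{\scrH}^2/2}\,\Pi_N(B)$, which requires $B$ symmetric, nor the correlation inequality applies.

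The missing step is to shrink the constraint first. Since $M>\|\thetatrue\|_{\scrR}$, it contains $\{\|h\|_{\scrR}\le M'\}$ with $M'=M-\|\thetatrue\|_{\scrR}>0$. Equivalently, work with $\{\|\theta-\thetatrue\|_{\scrR}\le M'\}$ from the start, as the paper does. Then:
\begin{enumerate}
\item Apply Cameron--Martin to the symmetric convex set $B=\{\|h\|_{(H^\kappa)^*}\le c\delta_N\}\cap\{\|h\|_{\scrR}\le M'\}$.
\item Apply the correlation inequality to get $\Pi_N(B)\ge\Pi_N(\|h\|_{(H^\kappa)^*}\le c\delta_N)\,\Pi_N(\|h\|_{\scrR}\le M')$.
\item The last factor equals $\Pi'(\|\theta'\|_{\scrR}\le M'\sqrt{N}\delta_N)$, which tends to $1$ for every fixed $M'>0$.
\end{enumerate}
No large $M$ and no tail bound are needed, $\ConstLiptwo(M)$ and $\rmU$ stay fixed constants, and the remaining centred small-ball estimate goes through as you describe.
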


We are now able to state the main theorem:

\begin{theorem}[Posterior contraction]\label{thm:posterior-contraction}  Let $\scrH$ and $\scrR$ be as in \cref{cond:BasePrior} with  $\alpha>\eta + d$. Let $\calG$ satisfy \cref{ass:operatorI}. Let $D_N\sim\IP_{\thetatrue}^N$ be data arising as in \cref{eq:setting:data}, for fixed $\thetatrue\in \scrH$. Let $\Pi_N$ be the sequence of rescaled Gaussian priors as above with $\delta_N$ as in \cref{eq:minimax-rate}, such that $N\delta_N^2\to\infty$ as $N\to\infty$. Let $\Pitilde_N(\cdot | D_N)$ be the surrogate posterior distribution arising as in \cref{sec:setting}. Grant \cref{cond:misspec-error} and \cref{cond:misspec-model}, with constants $C_{\mathrm{noise}}, C_{\mathrm{model}}$ small enough compared to the small ball exponent of the prior (see \cref{eq:prior-tail}). Let $A$ be as in the setting of \cref{prop:prior-mass-bn}, and $M$ be large enough such that $c_6$ from \cref{prop:change-measure-peeling} satisfies $c_6 >A+ s_0^{-2}$.  Then for all $0< b < c_6-A- s_0^{-2}$, we can find $\rho>0$ large enough such that

    \begin{equation} \label{eq:target-proba}
    \IP_{\thetatrue}^N\br{\Pitilde_N(\theta \in \Theta_N(M)\;:\; d_{\mathcal{G}}(\theta, \thetatrue) \leq \rho \delta_N| D_N) \leq 1-e^{-bN\delta_N^2}} \xrightarrow[N\to\infty]{} 0
    \end{equation}

\end{theorem}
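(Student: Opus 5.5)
We follow the standard posterior contraction proof in the style of \cite{GhosalvanderVaart_2017} and \cite{Nickl_2023}, adapted to the surrogate posterior. The complement set is
\[
\Theta_N(M)^c\cup\brK{\theta\in\Theta_N(M):\; d_\calG(\theta,\thetatrue)>\rho\delta_N}.
\]
We write its posterior mass as a ratio. The denominator is
\[
D\coloneqq\int e^{\elltilde_N(\theta)-\elltilde_N(\thetatrue)}\rmd\Pi_N(\theta),
\]
and for each bad set $S$ the numerator is
\[
\int_S e^{\elltilde_N(\theta)-\elltilde_N(\thetatrue)}\rmd\Pi_N(\theta).
\]
For the denominator, we restrict to $B_N=\widetilde\calB_N$ and take $\nu=\Pi_N(\cdot\cap\widetilde\calB_N)/\Pi_N(\widetilde\calB_N)$. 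By \cref{lemma:contraction-auxiliary} (with any $K>s_0^{-2}$, slightly above) and \cref{prop:prior-mass-bn}, the event $E_N\coloneqq\brK{D\geq e^{-(A+K)N\delta_N^2}}$ has $\IP_{\thetatrue}^N$-probability tending to one. We also let $\Psi_N$ be the test from \cref{prop:testing}, with $\bar\const$ to be chosen large. It then suffices to show that
\[
\IP_{\thetatrue}^N\big(E_N\cap\{\Psi_N=0\}\cap\{\Pitilde_N(\text{bad}\mid D_N)>e^{-bN\delta_N^2}\}\big)\to 0,
\]
since $\IE_{\thetatrue}^N\Psi_N\to0$. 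By Markov's inequality it is enough to bound $e^{(A+K+b)N\delta_N^2}$ times the expected numerators on $\{\Psi_N=0\}$.

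\emph{Outside the regularisation set.} By Fubini and \cref{prop:change-measure-peeling},
\[
\IE_{\thetatrue}^N\int_{\Theta_N(M)^c}e^{\elltilde_N(\theta)-\elltilde_N(\thetatrue)}\rmd\Pi_N\leq e^{-c_6N\delta_N^2}.
\]
Multiplying by $e^{(A+K+b)N\delta_N^2}$ gives a quantity tending to zero, because $b<c_6-A-s_0^{-2}$ and $K$ can be chosen close enough to $s_0^{-2}$. This is exactly where the hypothesis on $b$ enters.

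\emph{Inside, far from the truth.} Here $\theta\in\Theta_N(M)\subseteq\scrR(M)$ with $d_\calG(\theta,\thetatrue)>\rho\delta_N$. The difficulty is that the test controls errors under $\IP_\theta^N$, while the integrand involves the surrogate ratio $q_\theta^N/q_{\thetatrue}^N$. We write
\[
\IE_{\thetatrue}^N\Big[(1-\Psi_N)\frac{q_\theta^N}{q_{\thetatrue}^N}\Big]=\IE_{\theta}^N\Big[(1-\Psi_N)\frac{q_\theta^N}{q_{\thetatrue}^N}\frac{p_{\thetatrue}^N}{p_\theta^N}\Big].
\]
We then apply Hölder's inequality with exponents $p,p'$. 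Since $(1-\Psi_N)^{p}=1-\Psi_N$, this gives
\[
\le \big(\IE_\theta^N[1-\Psi_N]\big)^{1/p}\Big(\IE_\theta^N\Big[\Big(\tfrac{q_\theta^N p_{\thetatrue}^N}{q_{\thetatrue}^N p_\theta^N}\Big)^{p'}\Big]\Big)^{1/p'}\le e^{-\bar\const N\delta_N^2/p}\,e^{c_5N\delta_N^2/p'},
\]
using \cref{prop:testing} and \cref{prop:change-measure} with $b=p'$. The constant $c_5$ does not depend on $\bar\const$: it depends on $M$ and $p'$ only. We may therefore fix $p=2$ and choose $\bar\const$ so large that $\bar\const/2-c_5/2>A+K+b+1$. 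This choice determines $\rho=\rho(\texttt{S})$ and $m=M$ in \cref{prop:testing}. Integrating over the prior, whose mass is at most one, yields the bound $e^{-(A+K+b+1)N\delta_N^2}$, and the Markov bound again tends to zero.

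The main obstacle is this change of measure step. Under misspecification the surrogate likelihood ratio is not a $\IP_{\thetatrue}^N$-martingale, so the usual identity $\IE_{\thetatrue}[q_\theta/q_{\thetatrue}]=1$ fails. The Hölder splitting must be arranged so that the constant $c_5$ does not depend on the test strength, and the order of constant choices ($M$, then $K$, $b$, $p$, then $\bar\const$ and $\rho$) has to be consistent. A second point to check is that \cref{prop:testing} is applied with $m=M$, matching the set $\Theta_N(M)$ of \cref{eq:reg-set}.
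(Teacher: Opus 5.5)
Your proposal is correct and follows the paper's proof essentially step by step. The denominator is controlled via \cref{lemma:contraction-auxiliary} and \cref{prop:prior-mass-bn}, the bad mass is handled by Markov and Fubini, and \cref{prop:change-measure-peeling} disposes of $\Theta_N(M)^c$. Inside $\Theta_N(M)$ you change measure to $\IP_\theta^N$ and use Cauchy--Schwarz (your H\"older step with $p=2$) to combine the type-II error of the test from \cref{prop:testing} with \cref{prop:change-measure} for exponent $2$, choosing $\bar c$ (hence $\rho$) after $c_5$. Your choice of $K$ strictly above $s_0^{-2}$, absorbed by the strict gap $b<c_6-A-s_0^{-2}$, is slightly cleaner than the paper, which applies \cref{lemma:contraction-auxiliary} at the boundary value $K=s_0^{-2}$.
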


\begin{remark}
Our approach covers the case of strongly consistent plug-in estimators as proxy variances and proxy forward map.
In practice, it could also happen, e.g. in the case of noise misspecification, that the variance estimator converges in a weaker sense (in probability), or that in a fully Bayesian approach one prefers adopting a hierarchical approach and putting a prior on $\sigma^2$. Our contraction guarantees remain valid, holding for the conditional posterior of $\theta$ given $\sigma^2$.
\end{remark}\medskip

\begin{remark}[About the contraction rate]
    The resulting rate \cref{eq:minimax-rate} matches standard nonparametric forward convergence rates, which minimax optimality has been established in special cases, such as the Darcy problem (see \cite{Nickl_vdGeer_Wang_2020}). In our approach, we fix the contraction rate $\delta_N$ and set the desired decay of $\deltamap, \deltanoise$ accordingly. In practice one might be limited by the decay of these misspecification rates: note then that the posterior contraction results still hold, with statements on $\delta_N$ replaced with a slower contraction rate, and prior renormalisation also modified suitably -- as long as Conditions \ref{cond:misspec-error} and \ref{cond:misspec-model} connecting contraction and misspecification rates still hold. This yields the following corollary.
\end{remark}

\begin{corollary}
    Consider the setting of \cref{thm:posterior-contraction}, replacing the rate $\delta_N$ from \cref{eq:minimax-rate} everywhere by $\delta_N'=\log N(\deltanoise \vee \deltamap)^{1/2}$. Then for all $0< b < c_6-A- s_0^{-2}$, we can find $\rho>0$ large enough such that

    \begin{equation*} 
    \IP_{\thetatrue}^N\br{\Pitilde_N(\theta \in \Theta_N(M)\;:\; d_{\mathcal{G}}(\theta, \thetatrue) \leq \rho \delta_N'| D_N) \leq 1-e^{-bN\delta_N'^2}} \xrightarrow[N\to\infty]{} 0
    \end{equation*}
\end{corollary}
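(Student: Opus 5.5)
The corollary is \cref{thm:posterior-contraction} rerun with $\delta_N$ replaced throughout by $\delta_N'$. So the plan is not to prove anything new. Instead, check that every ingredient of that proof still holds at rate $\delta_N'$ and that the misspecification conditions are now automatic. We may assume $\delta_N'\geq\delta_N$; this is the regime the corollary is meant for, namely when the misspecification errors decay slowly. If $\delta_N'$ were smaller, one would keep $\delta_N$, so $\delta_N\vee\delta_N'$ is really what is meant. Moreover $\delta_N'\to0$ whenever $\deltanoise,\deltamap$ decay polynomially, and $N\delta_N'^2\geq N\delta_N^2\to\infty$.

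\textbf{Step 1 (misspecification conditions at the new rate).} By construction $\delta_N'^2=(\log N)^2(\deltanoise\vee\deltamap)$. Hence
\[
\deltanoise\leq(\log N)^{-2}\delta_N'^2,\qquad \deltamap\leq(\log N)^{-2}\delta_N'^2 .
\]
So \cref{item:NMII.II} and \cref{cond:MM:III} hold with $\delta_N'$ in place of $\delta_N$. The constants $\Const{\mathrm{noise}},\Const{\mathrm{model}}$ can be taken as $(\log N)^{-2}\to0$, so eventually they are smaller than any fixed threshold dictated by the small ball exponent in \cref{eq:prior-tail}. This is exactly the reason for the logarithmic factor in the definition of $\delta_N'$. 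In case \cref{item:NMII.I} nothing changes.

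\textbf{Step 2 (preliminary results at rate $\delta_N'$).} The prior is now rescaled as in \cref{eq:rescaledprior} with $\delta_N'$, and $\Theta_N(M)$ and $\widetilde\calB_N$ are defined with $\delta_N'$. I would then go through the auxiliary results one by one.
\begin{itemize}
\item \cref{prop:information-ineq} and \cref{lemma:contraction-auxiliary} only use that $N\delta_N'^2\to\infty$, together with the Step 1 bounds on $\deltamap$.
\item \cref{prop:change-measure} and \cref{prop:change-measure-peeling} only use Step 1 and Gaussian tail bounds for the rescaled prior. The Borell--Sudakov/Fernique tail $\Pi_N(\Theta_N(M)^c)\leq e^{-cM^2N\delta_N'^2}$ holds for any rate, as long as the small ball probability of the prior at rate $\delta_N'$ is controlled.
\item For \cref{prop:prior-mass-bn}, the small ball estimate $\Pi'(\|\theta'\|_{(H^\kappa)^*}\leq\epsilon)\geq e^{-c\epsilon^{-2d/(2\alpha+2\kappa-d)}}$ gives $\Pi_N(\widetilde\calB_N)\geq e^{-AN\delta_N'^2}$ for every $\delta_N'\geq\delta_N$. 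This is because $N\delta^2\geq \delta^{-2d/(2\alpha+2\kappa-d)}$-type inequalities are monotone in $\delta$, and $\delta_N$ is precisely the balancing point.
\end{itemize}

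\textbf{Step 3 (tests).} The test of \cref{prop:testing} must separate $d_\calG(\theta,\thetatrue)\geq\rho\delta_N'$ with type-II error $e^{-\bar c N\delta_N'^2}$. I would build the Tikhonov estimator with penalty $\delta_N'^2$. The consistency theorem \cref{co:GeneralConsitency} holds at any rate slower than the entropy-balanced one: the entropy integral bound $\sqrt N\delta^2\gtrsim J(\delta)$ continues to hold for $\delta\geq\delta_N$.

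This step, together with checking the entropy/small-ball monotonicity in Step 2, is the main obstacle. The paper's statements are written at the fixed rate \cref{eq:minimax-rate}, so one has to verify that their proofs only use the inequality ``rate $\gtrsim$ critical rate'' and never equality. With these ingredients in hand, the proof of \cref{thm:posterior-contraction} is repeated verbatim with $\delta_N'$. It splits the posterior numerator via $\Psi_N$, lower-bounds the denominator by \cref{lemma:contraction-auxiliary} and \cref{prop:prior-mass-bn}, and handles $\Theta_N(M)^c$ by \cref{prop:change-measure-peeling} and the rest by \cref{prop:change-measure} with Hölder. This yields the claim for $0<b<c_6-A-s_0^{-2}$.
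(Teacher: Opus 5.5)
Your proposal is correct and matches the paper's argument, which is only implicit: as the remark preceding the corollary indicates, one reruns the proof of \cref{thm:posterior-contraction} with $\delta_N'$ in place of $\delta_N$ and the prior rescaled accordingly. The misspecification conditions then hold automatically with constants $(\log N)^{-2}\to 0$, and the small-ball, prior-tail and testing ingredients only require $\delta_N'\gtrsim\delta_N$; your caveat that the corollary is meant for this slower-rate regime is exactly right. The testing step is less of an obstacle than you suggest, because \cref{thm:LS_estimator} is already stated for arbitrary $\delta$ satisfying \cref{eq:ratecondI}, and that condition holds for every $\delta\geq\delta_N$.
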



\subsection{Contraction result for the inverse problem}

While the last theorem proves contraction for the forward problem, we need the following inverse modulus of continuity to provide a corresponding contraction theorem on the parameter-level.

\begin{condition}[Inverse modulus of continuity]\label{cond:inverseproblem}
    
    For any $\delta,M>0$ define
    \begin{equation*}
        \Lambda_\delta\coloneqq\brK{(\theta_1,\theta_2)\in \br{\Theta\cap\scrR(M)}^2:\; d_\calG(\theta_1,\theta_2)\leq \delta}.
    \end{equation*}
    There exist constants $\tau>0$ and  $\Const{\calG,inv}(M)>0$, such that for $\rho$ small enough
    \begin{equation}
        \sup_{(\theta_1,\theta_2)\in\Lambda_\rho}\brK{\norm{\theta_1-\theta_2}_{\IL^2(\calM,W)}} \leq \Const{\calG,inv}(M)\times \delta^\tau.\tag{\textbf{IR1}}
    \end{equation}
\end{condition}

\begin{theorem}[Posterior contraction - inverse problem]\label{thm:posterior-contraction-inverse} 
Grant the assumptions of \cref{thm:posterior-contraction}. 
Assume additionally that \cref{cond:inverseproblem} holds true for some $\tau>0$.
We then have
    \begin{equation} \label{eq:target-proba-inverse}
    \IP_{\thetatrue}^N\br{\Pitilde_N(\theta \in \Theta_N(M): \norm{\theta- \thetatrue}_{\IL^2(\calM,W)} \leq \Const{\calG,inv}(M)(\rho \delta_N)^\tau| D_N) \leq 1-e^{-bN\delta_N^2}} \xrightarrow[N\to\infty]{} 0.
    \end{equation}
    Moreover, denoting by $\IE^{\Pitilde}\brE{\cdot|D_N}$ the surrogate posterior mean, we have 
    \begin{equation}\label{eq:posterior-mean}
        \norm{\IE^{\Pitilde}\brE{\theta|D_N}-\thetatrue}_{\IL^2(\calM,W)} = \calO_{\IP_{\thetatrue}^N}\br{\delta_N^\tau}.
    \end{equation}
\end{theorem}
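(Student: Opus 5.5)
The first claim \cref{eq:target-proba-inverse} will follow directly from \cref{thm:posterior-contraction} combined with \cref{cond:inverseproblem}. On the event complementary to the one in \cref{eq:target-proba}, the surrogate posterior puts mass at least $1-e^{-bN\delta_N^2}$ on the set $\{\theta\in\Theta_N(M): d_\calG(\theta,\thetatrue)\le\rho\delta_N\}$. Every $\theta\in\Theta_N(M)$ satisfies $\norm{\theta}_\scrR\le M$ by \cref{eq:reg-set}. Since $\thetatrue\in\scrH\hookrightarrow\scrR$, after enlarging $M$ if necessary we also have $\thetatrue\in\scrR(M)$. Hence $(\theta,\thetatrue)\in\Lambda_{\rho\delta_N}$, and for $N$ large (so that $\rho\delta_N$ is small) \cref{cond:inverseproblem} gives $\norm{\theta-\thetatrue}_{\IL^2}\le\Const{\calG,inv}(M)(\rho\delta_N)^\tau$. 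The set in \cref{eq:target-proba} is therefore contained in the set in \cref{eq:target-proba-inverse}, so the probability in \cref{eq:target-proba-inverse} is bounded by the one in \cref{eq:target-proba}, which tends to $0$.

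For the posterior mean, I apply Jensen's inequality:
\[
\norm{\IE^{\Pitilde}[\theta|D_N]-\thetatrue}_{\IL^2}\le \IE^{\Pitilde}\big[\norm{\theta-\thetatrue}_{\IL^2}\big|D_N\big].
\]
I split the integral over $S_N:=\{\theta\in\Theta_N(M):\norm{\theta-\thetatrue}_{\IL^2}\le\Const{\calG,inv}(M)(\rho\delta_N)^\tau\}$ and over $S_N^c$. The contribution of $S_N$ is at most $\Const{\calG,inv}(M)\rho^\tau\delta_N^\tau$. For the contribution of $S_N^c$, I write the posterior as a ratio. I normalise numerator and denominator by $e^{\elltilde_N(\thetatrue)}$ and restrict the denominator to $\widetilde\calB_N$, taking $\nu=\Pi_N(\cdot\cap\widetilde\calB_N)/\Pi_N(\widetilde\calB_N)$. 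Then \cref{lemma:contraction-auxiliary} and \cref{prop:prior-mass-bn} show that, on an event of probability tending to one, the denominator is at least $e^{-(A+K)N\delta_N^2}$ with $K$ slightly larger than $s_0^{-2}$. By Markov's inequality it then suffices to show
\[
\IE_{\thetatrue}^N\int_{S_N^c}\norm{\theta-\thetatrue}_{\IL^2}\,e^{\elltilde_N(\theta)-\elltilde_N(\thetatrue)}\rmd\Pi_N(\theta)\le e^{-(A+K+\epsilon)N\delta_N^2}\delta_N^\tau.
\]
Because $N\delta_N^2$ is polynomial in $N$, the factor $\delta_N^\tau$ costs nothing against an exponential.

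I split $S_N^c$ into $\Theta_N(M)^c$ and $\Theta_N(M)\cap S_N^c$. On $\Theta_N(M)^c$, I apply Cauchy--Schwarz in $\IE_{\thetatrue}^N\times\Pi_N$, as in the proof of \cref{prop:change-measure-peeling}. The second moment $\int\norm{\theta-\thetatrue}^2\rmd\Pi_N$ is bounded by the Gaussian Fernique theorem, since $\Pi_N$ is a rescaled Gaussian. The other factor is $\int_{\Theta_N(M)^c}\IE_{\thetatrue}^N[e^{2(\elltilde_N(\theta)-\elltilde_N(\thetatrue))}]\rmd\Pi_N$. I expect to control it by the same slicing argument as in \cref{prop:change-measure-peeling}, with exponent $2$ instead of $1$. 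Here the smallness of $\Const{noise}$ and $\Const{model}$ relative to the prior's small ball exponent is needed, and $M$ must be taken large enough that the resulting constant exceeds $2(A+K)$. On $\Theta_N(M)\cap S_N^c$ we have $\norm{\theta-\thetatrue}_{\IL^2}\lesssim M$, and the set is contained in $\{d_\calG(\theta,\thetatrue)>\rho\delta_N\}\cap\Theta_N(M)$. I insert the test $\Psi_N$ from \cref{prop:testing} and change measure from $\IP_{\thetatrue}^N$ to $\IP_\theta^N$ via the likelihood ratio $\frac{q_\theta}{q_{\thetatrue}}\frac{p_{\thetatrue}}{p_\theta}$. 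Then Hölder's inequality together with \cref{prop:change-measure} bounds $\IE_\theta^N[(1-\Psi_N)\cdot\text{ratio}]\le e^{c_5N\delta_N^2/p}\,e^{-\bar c N\delta_N^2/q}$. The constant $\bar c$ is arbitrary, so with $\rho$ large this beats $e^{(A+K)N\delta_N^2}$. The $\Psi_N$ part is negligible on an event of probability $1-o(1)$ because $\IE_{\thetatrue}^N\Psi_N\to0$.

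The main obstacle is the tail contribution from $\Theta_N(M)^c$. The statement of \cref{prop:change-measure-peeling} controls only the first exponential moment without the weight $\norm{\theta-\thetatrue}$, and the forward map is unbounded. I would first try to reprove that proposition with the likelihood raised to a power $2$, or equivalently to absorb the weight $\norm{\theta}_{\IL^2}\le e^{\norm{\theta}_\scrR}$ into the Gaussian tails slice by slice. This is exactly where the smallness of $\Const{noise}$ and $\Const{model}$ relative to the small ball exponent enters.
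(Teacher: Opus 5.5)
Your first claim and the overall decomposition are fine, but the route you list first for the tail $\Theta_N(M)^c$ would fail. Cauchy--Schwarz in $\IE_{\thetatrue}^N\times\Pi_N$ produces $\IE_{\thetatrue}^N[e^{2(\elltilde_N(\theta)-\elltilde_N(\thetatrue))}]$. This is a $\chi^2$-type moment that no slicing controls.

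\emph{Why it fails.} Conditionally on the covariates, the Gaussian MGF now leaves the coefficient $s_i^{-2}(2\sigma_i^2/s_i^2-1)\geq s_i^{-2}(1-2\deltanoise)$ in front of $\Vnorm{\calGtilde(\theta)(Z_i)-\calGtilde(\thetatrue)(Z_i)}^2$. At exponent one the coefficient is instead $-\frac12 s_i^{-2}(1-\sigma_i^2/s_i^2)=O(\deltanoise)$, as in \cref{eq:variance-contribution}. By Jensen, the second moment is therefore at least of order $\exp(cN\,d_{\calGtilde}(\theta,\thetatrue)^2)$ with $c\asymp s_\infty^{-2}$. Already in the well-specified case it equals $\int (p_\theta^N)^2/p_{\thetatrue}^N$.

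The slicing in \cref{prop:change-measure-peeling} works only because, at exponent one, the noise cancels the quadratic term up to a factor $\deltanoise\lesssim\delta_N^2$. That leaves growth $\exp(O(N\delta_N^2)\,\mathrm{poly}(2^{\ell}S))$ on $\calP_\ell$. At exponent two you face $\exp(cN\,\mathrm{poly}(2^{\ell}S))$ against prior tails $\exp(-c_{prior}2^{2\ell}S^2N\delta_N^2)$. Since $N\gg N\delta_N^2$, and $d_{\calGtilde}(\theta,\thetatrue)$ is generically of order one on $\Theta_N(M)^c$, the integral diverges. Smallness of $\Const{noise},\Const{model}$ cannot help, because the offending term is pure noise, not misspecification.

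\emph{The fix.} Your two options are not equivalent. You must keep the likelihood ratio at exponent one and put the weight on the slices, which is what the paper does. On $\calP_\ell$ one has $\norm{\theta-\thetatrue}_{\IL^2}\lesssim\norm{\theta}_\scrR+\norm{\thetatrue}_\scrR\le 2\cdot 2^{\ell+1}S$ by $\scrR\hookrightarrow\IL^2$, and this polynomial factor is harmless in \cref{eq:last-one-peeling}. Cauchy--Schwarz in $\Pi_N$ alone also works: it squares the deterministic bound \cref{eq:exp-for-slicing}, which only doubles the misspecification terms.

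With that repair your argument is correct, and it differs from the paper's on $\Theta_N(M)$. The paper applies Cauchy--Schwarz to the surrogate posterior. It bounds $\Pitilde_N(\norm{\theta-\thetatrue}_{\IL^2}>\Const{\calG,inv}(M)(\rho\delta_N)^\tau\,|\,D_N)\le e^{-bN\delta_N^2}$ via \cref{eq:target-proba-inverse}, and then controls the full posterior second moment. On $\Theta_N(M)$ it simply pays $e^{c_5N\delta_N^2}$ from \cref{prop:change-measure} with $b=1$. This forces $c_5+A+s_0^{-2}<b<c_6-A-s_0^{-2}$, and hence an argument comparing how $c_5$ and $c_6$ grow in $M$; both grow like $M^2$, with $c_5$ carrying the small factor $\Const{noise}\vee\Const{model}$.

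Your direct Markov bound reuses the test-plus-H\"older step from the proof of \cref{thm:posterior-contraction} on $\Theta_N(M)\cap S_N^c$. This avoids the two-sided constraint on $b$, because $\bar c$ in \cref{prop:testing} is arbitrary and absorbs $e^{c_5N\delta_N^2/p}$ once $\rho$ is enlarged. The price is rerunning the testing argument with the bounded weight, instead of using the contraction statement as a black box.
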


The proof of \cref{eq:target-proba-inverse} follows easily from \cref{thm:posterior-contraction} and \cref{cond:inverseproblem}. The proof of \cref{eq:posterior-mean} requires more care, in particular the use of the change of measure conditions detailed in \cref{subsec:prem:results}: details can be found in \cref{sec:additional-proofs}.

%% file: Sections/_04_Examples.tex
\section{Examples}\label{sec:examples}
In this section, we apply the theoretical results established in \cref{sec:posterior-contraction} to three illustrative examples of nonlinear and time-dependent PDE-based inverse problems, where the goal is to infer the initial condition $\theta$  of the system at time $t=0$. 
\begin{enumerate}
    \item Noise Misspecification in Reaction-Diffusion: we first consider a time-evolution problem where the primary challenge lies not in the PDE model, but in the observation noise. We focus on a general heteroscedastic setting where sensor noise variances are unknown and must be estimated from auxiliary data.

    \item Model Misspecification via parameter uncertainty in Navier-Stokes: we study the case where other physical parameters governing the PDE are only known approximately.

    \item Model Misspecification via numerical approximation: when the PDE solution is computed approximately - in our case via an Oseen iterative scheme.
\end{enumerate}

\subsection{Example 1: Noise misspecification in the Reaction Diffusion equation}



We begin by addressing the problem of noise misspecification in a dynamical setting. In many practical data assimilation scenarios, observations are gathered from a network of sensors where the precision (noise variance) may vary from one sensor to another and is not known a priori. In this example, we ignore misspecification arising from the PDE map itself and isolate the problem of heteroscedastic misspecified noise. \\

In this setting, it makes sense to consider observations arising from a fixed design setting for the spatial covariate. The random design employed in \cref{eq:RDE:data} facilitates a simpler presentation and is essentially a technical choice: it can be shown to be asymptotically equivalent to other commonly used nonparametric regression models, see for instance \cite{reiss_2008}. In \cite{vollmer_2013}, a condition on the empirical distribution of design points is used.

\paragraph{Construction of the variance proxy.}
Let us then consider a fixed design setting with $L_X \in \IN$ sensors densely distributed across the spatial domain at locations $x_1,\dots,x_{L_X}$, measuring the solution $u_{\thetatrue}$ of \cref{eq:reaction-diffusion} over time. Each sensor $j$ is associated with a measurement error $\calN(0, \sigma_j^2)$ with unknown variance $\sigma_j^2$. To estimate these variances, we collect observations at each spatial location over a small time window, at $\frakt_1,\dots\frakt_{L_T}\in [0, \Delta T]$, $L_T\in\IN$
$$\Upsilon_{ij}=u_{\thetatrue}(\frakt_i,x_j)+\xi_{ij},\quad j=1,\dots,L_X,\quad i=1,\dots,L_T,\quad\xi_{ij}\sim \calN(0, \sigma_j^2).$$
We denote the law of $\Upsilon_{\cdot j}\coloneqq\br{\Upsilon_{ij}}_{i\nset{L_T}}$ by $\widetilde{\IP}_{\thetatrue,j}^{L_T}$. Natural estimators for each of the variances $\sigma_j^2$'s are then the sample variance estimators
\begin{equation}\label{eq:noise-estimate}
    s_j^2 = \frac{1}{L_T-1}\sum_{i=1}^{L_T} (\Upsilon_{ij} - \bar \Upsilon_j)^2, \hspace{1cm} \text{ with } \bar \Upsilon_{j} = \frac{1}{L_T}\sum_{i=1}^{L_T} \Upsilon_{ij}
\end{equation}

These plug-in estimators $s_1^2,...,s_{L_X}^2$ are subsequently used to compute the surrogate log-likelihood $\tilde \ell_N(\theta)$ and the resulting surrogate posterior distribution.\\

\paragraph{Parameter reconstruction.}

The underlying physical process governing the data generation is the Reaction Diffusion equation. Let $\calM = \IT^d$ with $d\leq{3}$. This equation models the time-evolution for a fixed time horizon $T>0$ of the the concentration $u\colon[0,T]\times\torus\to\IR \eqqcolon V$ of a substance from its initial condition $\theta$ by 
\begin{equation}\label{eq:reaction-diffusion}
    \begin{dcases}
        \frac{\partial}{\partial t}u - \Delta u& = f(u)\text{ on $(0,T]\times\torus$}\\
        \hfill u(\cdot,0) &= \theta\text{ on $\torus$}
    \end{dcases}
\end{equation}

where $f \colon\domain \to \IR$ is a nonlinear reaction term modelling potential creation or destruction of the substance (\cite{Temam_1997}). If $f\in C_c^\infty(\IR)$ and $\theta\in H^1(\torus)$, it can be shown (see e.g. \cite{Evans_2010}) that there exists a solution of \cref{eq:reaction-diffusion} that is unique in $C^0([0,T],\IL^2(\torus))$.

We now turn to the inverse problem of recovering $\thetatrue$. 
To that end, we observe another sample of data, which is drawn  independently from $(\Upsilon_{ij})_{i\leq L_T,j\leq L_X}$. 
Precisely, we have
$D_N\coloneqq(Y_i,t_i,X_i)_{i=1}^N\sim \IP_{\thetatrue}^N$ generated by 
\begin{equation}\label{eq:RDE:data}
    Y_i = u_{\thetatrue}(t_i,X_i) + \varepsilon_i,\quad (t_i,X_i)\overset{i.i.d.}{\sim}\operatorname{Unif}\br{[0,T]\times \{x_1,...,x_{L_X}\}},\quad \varepsilon_i| x_i \overset{ind.}{\sim}\operatorname{N}\br{0,\sigma_i^2},\quad i = 1,\dots,N
\end{equation}
with $\thetatrue\in H^1(\torus)$ and unknown variances $\sigma_1^2,\dots,\sigma_N^2$ satisfying \cref{cond:error:variances}. With the noise estimators and forward model defined, we can now state the contraction result for this setup. The following theorem ensures that, despite using estimated variances, the surrogate posterior contracts around the ground truth.\\

\begin{theorem}\label{thm:example-noise} 
 Let $\Pi'$ be a Gaussian process base prior satisfying \cref{cond:BasePrior} with $\Theta = \scrR = H^\beta(\torus)$, $\beta>2+d$, and RKHS $\scrH \hookrightarrow H^\alpha(\torus)$ with $\alpha > \beta+\frac{d}{2}$. Let $\Pi_N$ be the corresponding rescaled prior from \cref{eq:rescaledprior} with 
\begin{equation*}
    \delta_N = N^{-\frac{\alpha}{2\alpha +d}}.
\end{equation*}
Let $D_N\sim\IP_{\thetatrue}^N$ as in \cref{eq:RDE:data} with $\thetatrue\in \scrH$. Consider the surrogate posterior distribution $\Pitilde(\cdot | D_N)$ arising from that choice of prior and the surrogate log-likelihood $\loglikmodel(\theta)$ computed with the noise variance estimators $s_1^2,\dots,s_{L_X}^2$ from \cref{eq:noise-estimate} over a small enough time window $\Delta T$ such that the bias term $b_T$ satisfies $b_T^2 + L_T^{-1/2} \le \frac{1}{\log N}\delta_N^2$.
Then, for $N\to\infty$ and thus sufficiently many $L_T = L_T(N)$ past observations, the surrogate posterior
$\Pitilde_N(\cdot | D_N)$ contracts around the ground truth $\thetatrue$ at rate $\delta_N$, i.e. there exist $m,m'>0$ sufficiently large, such that
\begin{equation*}
    \Pitilde_N\br{\theta\in H^\beta(\torus):\;\norm{u_\theta-u_{\thetatrue}}_{\IL^2([0,T]\times\torus)} \leq m\delta_N\mid D_N} = 1 - o_{\IP_{\thetatrue}^N}(1).
\end{equation*}
Moreover, we have 
\begin{equation*}
    \Pitilde_N\br{\theta\in H^\beta(\torus):\;\norm{\theta-\thetatrue}_{\IL^2(\torus)} \leq m'\delta_N^{\frac{\beta}{\beta+1}}\mid D_N} = 1 - o_{\IP_{\thetatrue}^N}(1)
\end{equation*}
as well as 
\begin{equation*}
     \norm{\IE^{\Pitilde}\brE{\theta|D_N}-\thetatrue}_{\IL^2(\torus)} = \calO_{\IP_{\thetatrue}^N}\br{\delta_N^{\frac{\beta}{\beta+1}}}.
\end{equation*}
\end{theorem}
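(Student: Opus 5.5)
The proof will apply \cref{thm:posterior-contraction} and \cref{thm:posterior-contraction-inverse} with $\calGtilde=\calG$. The forward map is $\calG(\theta)=u_\theta$, viewed as an element of $\IL^2_\zeta(\calZ,\IR)$ with $\calZ=[0,T]\times\{x_1,\dots,x_{L_X}\}$ and $\zeta$ the uniform law. In this setting only noise misspecification is present, so \cref{cond:misspec-model} holds trivially with $\deltamap=0$. Three things must therefore be checked: \cref{ass:operatorI}, \cref{cond:misspec-error} on an event of $\IP$-probability tending to one, and \cref{cond:inverseproblem}.

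\textbf{Step 1: forward regularity and stability.} I take $\scrR=H^\beta(\torus)$, $B^\eta=C^\eta$ with $\eta<\beta-d/2$, and $\kappa=0$, so the rate \cref{eq:minimax-rate} becomes $N^{-\alpha/(2\alpha+d)}$. The requirement $\alpha>\eta+d$ follows from $\alpha>\beta+d/2$ once $\eta$ is chosen suitably. The Lipschitz estimates \cref{cond:fm:Liptwo} and \cref{cond:fm:Lipinf} are taken from \cite{Nickl_2025_BvMRDE}. The growth bound \cref{cond:fm:bound} is \cref{lemma:prop5new}; I need it with $\gamma_B\le 1$, which should follow since $f$ is bounded, so $\norm{u_\theta}_\infty\lesssim\norm{\theta}_\infty+T\norm{f}_\infty$. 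On the fixed design, $L^2$ in time at each sensor is not the same as $L^2([0,T]\times\torus)$. I will bridge the two using the density of the sensors together with the $C^1$-regularity of $u_\theta$ on bounded $H^\beta$-balls, and accept an approximation error of at most $\delta_N$. For the inverse stability \cref{cond:inverseproblem}, I would combine the backward-uniqueness and energy estimate $\norm{\theta_1-\theta_2}_{H^{-1}}\lesssim\norm{u_{\theta_1}-u_{\theta_2}}_{\IL^2}$ from \cite{Nickl_2025_BvMRDE} with interpolation between $H^{-1}$ and $H^\beta$ on $\scrR(M)$. This gives $\norm{\theta_1-\theta_2}_{\IL^2}\lesssim\delta^{\beta/(\beta+1)}$, so $\tau=\beta/(\beta+1)$.

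\textbf{Step 2: the variance proxies.} The auxiliary data are independent of $D_N$, so I condition on them and treat $s_j^2$ as deterministic. It then suffices to show that $\max_j|1-\sigma_j^2/s_j^2|\le C\delta_N^2/\log N$ with probability tending to one. I decompose $s_j^2-\sigma_j^2$ into two parts. The first is the bias $b_T$, which comes from the time variation of $u_{\thetatrue}(\cdot,x_j)$ on $[0,\Delta T]$ and is bounded by $\Delta T\,\norm{\partial_t u_{\thetatrue}}_\infty$. The second is the stochastic fluctuation of a chi-square average, of order $L_T^{-1/2}$. Gaussian concentration (Laurent--Massart) combined with a union bound over the $L_X$ sensors gives an extra factor $\sqrt{\log L_X}$, which I absorb into the hypothesis $b_T^2+L_T^{-1/2}\le\delta_N^2/\log N$, possibly after letting $L_T$ grow slightly faster. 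This yields \cref{item:NMII.II} with $\Const{noise}$ replaced by $1/\log N$, as permitted by the remark following \cref{cond:misspec-model}. The same bounds give \cref{item:NMI}, since $s_j^2\to\sigma_j^2\in[\sigma_0^2,\sigma_\infty^2]$.

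\textbf{Step 3: conclusion and main obstacle.} Applying \cref{thm:posterior-contraction} conditionally on the auxiliary data, and then integrating out over the high-probability event, gives the forward statement. The inverse statement and the bound on the posterior mean then follow from \cref{thm:posterior-contraction-inverse}. Because posterior mass off $\Theta_N(M)$ is exponentially small, the posterior mass of $\{\theta\in\Theta_N(M)\}$ tends to one, and on this set the contraction events above hold. I expect the main obstacle to be the smallness requirement: the constants $\Const{noise}$ must be small relative to the prior small-ball exponent. With the data-driven proxies this is only available through the $1/\log N$ margin, and this has to be propagated carefully through the proofs of \cref{prop:change-measure-peeling} and \cref{eq:posterior-mean}. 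A second delicate point is verifying \cref{cond:fm:bound} with $\gamma_B\le1$ uniformly for the reaction term, which requires a maximum-principle-type argument.
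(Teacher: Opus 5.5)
Your outline matches the paper's proof. The paper likewise takes $\calGtilde=\calG$. It gets \cref{item:NMI} and \cref{item:NMII.II} from $s_j^2=\sigma_j^2+b_T^2+O_p(L_T^{-1/2})$ together with the $1/\log N$ margin, and gets \cref{cond:fm:bound} with $\gamma_B=1$. It imports \cref{cond:fm:Liptwo}, \cref{cond:fm:Lipinf} ($\kappa=0$) and \cref{cond:inverseproblem} ($\tau=\beta/(\beta+1)$) from \cite{Nickl_2025_BvMRDE}, and then invokes \cref{thm:posterior-contraction} and \cref{thm:posterior-contraction-inverse}.

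Your variations are harmless. You use the Duhamel bound $\|u_\theta\|_\infty\le\|\theta\|_\infty+T\|f\|_\infty$ instead of \cref{lemma:prop5new} with $a=2$ and $H^2\hookrightarrow C^0$. You use explicit chi-square concentration instead of the law of large numbers, and you obtain $\tau$ by interpolation.

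The two obstacles you announce are not real. Your Duhamel bound already is the maximum-principle argument. On the good event, $\deltanoise\lesssim\delta_N^2/\log N\le\Const{noise}\delta_N^2$ for any prescribed $\Const{noise}$ once $N$ is large. So \cref{item:NMII.II} holds verbatim, and nothing needs to be re-propagated through \cref{prop:change-measure-peeling} or the posterior-mean bound.

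The only genuine departure is your treatment of the sensor design, and there your sketch does not close what it opens. The paper's proof just uses the continuous-design results of \cite{Nickl_2025_BvMRDE}, i.e.\ it tacitly identifies $d_\calG$ with the $\IL^2([0,T]\times\torus)$-distance.

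Once you set $\calZ=[0,T]\times\{x_1,\dots,x_{L_X}\}$, you can no longer import \cref{cond:fm:Liptwo} from there. That condition is a statement about $d_\calG$ on the sensor set, and it feeds \cref{prop:prior-mass-bn} and \cref{prop:testing}. For $d=3$ it is actually false on point sensors for every $\kappa\ge0$. Take $\theta_1-\theta_2\ge0$ an $\epsilon$-bump at $x_1$, of amplitude small enough that both stay in $\scrR(M)$. The comparison principle then gives $\int_0^T|u_{\theta_1}-u_{\theta_2}|^2(t,x_1)\,\mathrm{d}t\gtrsim\epsilon^{-1}\|\theta_1-\theta_2\|_{\IL^2}^2$.

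Your output bridge also needs $L_X=L_X(N)\to\infty$ with mesh $\lesssim\delta_N^2$, which is not among the hypotheses. Some such density requirement is unavoidable for the inverse claims. For example, with $d=1$, odd $f$ and a single sensor at $0$, every odd initial condition gives $u_\theta(t,0)=0$ and hence the same data law.

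So you have two options. Either adopt the paper's identification explicitly, in which case your proof is the paper's. Or accept that the literal sensor design needs extra assumptions and a modified \cref{cond:fm:Liptwo}, which neither your sketch nor the paper supplies.
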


\begin{proof}[Proof of \cref{thm:example-noise}]
 We choose the time window $\Delta T$ used to build estimators $s_1^2,...,s_{L_X}^2$ sufficiently small so that
$$ \max_{1\le i\le L_T}|u_{\theta_0}(t_i,x_j)-u_{\theta_0}(0,x_j)| \le b_T, \qquad b_T \to 0,$$
uniformly in $1\leq j\leq L_X$ as $L_T\to\infty$.  Then the deterministic bias term in $s_j^2$, arising from the variations in time of $u_{\thetatrue}(t,x_j)$, is bounded by $b_T^2$. By the strong law of large numbers,
\begin{equation}\label{eq:slln}
    s_j^2 = \sigma_j^2 + b_T^2 + O_p(L_T^{-1/2})  
\end{equation}

under $\widetilde{\IP}_{\thetatrue,j}^{L_T}$ for every $1\leq j\leq L_X$.
Hence, \cref{cond:misspec-error} \ref{item:NMI} bounding the $s_j's$ away from $0$ and $\infty$ follows from \cref{cond:error:variances} with $\widetilde{\IP}_{\thetatrue,j}^{L_T}$-probability arbitrarily close to one for $L_T$ large enough.


\cref{item:NMII} is also satisfied with $\deltanoise = \max_{1\leq i \leq L_X}|1-\sigma_i^2/s_i^2|\leq \delta_N^2/\log N$, by \cref{eq:slln} and by the theorem assumption. In particular, \cref{item:NMII.II} follows from \cref{lemma:prop5new} with $a = 2$ and the embedding $H^2(\torus)\hookrightarrow C^0(\torus)$ yielding \cref{cond:fm:bound} with $\gamma_B = 1$. Note that $\calG\colon \theta\mapsto u_\theta$ satisfies \cref{cond:fm:Liptwo} and \cref{cond:fm:Lipinf} with $\kappa = 0$ and $\eta = 2$, which is shown in section 3.1.3 in \cite{Nickl_2025_BvMRDE}. Further, \cref{cond:inverseproblem} is shown in \cite{Nickl_2025_BvMRDE} (see equation (45)) with $\tau = \frac{\beta}{\beta+1}$. 
The claim thus follows immediately from the main theorems \cref{thm:posterior-contraction} and \cref{thm:posterior-contraction-inverse}.
\end{proof}

\subsection{Example 2: Model misspecification in the Navier-Stokes equation}

We now turn to the case of misspecification of the PDE forward map, ignoring possible noise misspecification. In practical fluid dynamics and data assimilation, the governing physical laws are often well-understood, but the specific physical parameters defining the system may only be known approximately: this is the example we address here.\\

Let $\calM = \IT^2$. The 2D Navier-Stokes equation describes the evolution of the velocity $u:[0,T]\times\IT^2\to V \coloneqq \IR^2$ of (incompressible) fluids from an initial velocity $\theta$ at time $t=0$ for a fixed time horizon $T>0$. Given a viscosity $\nu>0$, a scalar pressure $p : [0,T] \times \IT^d \to \mathbb{R}$, and some time-independent external forcing $f\colon\IT^2\to\IR^2$, $u =u_\theta^{\nu, f}$ then solves
\begin{equation}\label{eq:navier-stokes}
    \begin{dcases}
\begin{aligned}
\frac{\partial u}{\partial t}
- \nu \Delta u
+ (u \cdot \nabla) u
&= f - \nabla p
&& \text{on } (0,T) \times \mathbb{T}^2 , \\
u(0) &= \theta
&& \text{on } \mathbb{T}^2 , \\
\nabla \cdot u &= 0
&& \text{on } [0,T] \times \mathbb{T}^2  .
\end{aligned}
    \end{dcases}
\end{equation}

Among the physical parameters appearing in $\cref{eq:navier-stokes}$, the initial condition $\theta$ is the one that we want to infer; however the other parameters $\nu$ and $f$ might only be known approximately via some estimates $\tilde \nu$ and $\tilde f$.\\

As common in the literature, we consider the \textit{projected} equation \cref{eq:navier-stokes} by applying the Leray operator \cref{eq:leray-operator} on it. This leads to an equivalent formulation in functional form where the velocity field, as a map $u_\theta^{\nu,f}\colon[0,T]\to \Hddiamond$, is the solution to 
\begin{equation}\label{eq:projectednavierstokes}
    \frac{\rmd}{\rmd t}u + \nu Au + B[u,u] = f, \quad u(0) = \theta
\end{equation}
with  $A\coloneqq -P\Delta$ and $B[u,v]\coloneqq P[(u\cdot\nabla)v]$. It is well-known that for any $f\in\Hddiamond$, $\nu,T>0$ and any $\theta\in \Hddiamond^1$, there exists a solution of \cref{eq:projectednavierstokes} that is unique in $C^0\br{[0,T],\Hddiamond^1}\cap\IL^2\br{[0,T],\Hddiamond^2}$, see e.g. \cite{Robinson_2001}.
 Our observation scheme hence consists of data $D_N\coloneqq(Y_i,t_i,X_i)_{i=1}^N\sim \IP_{\thetatrue}^N$ generated by 
\begin{equation}\label{eq:NSE:data}
    Y_i = u_{\thetatrue}^{\nu, f}(t_i,X_i) + \varepsilon_i,\quad (t_i,X_i)\overset{i.i.d.}{\sim}\operatorname{Unif}\br{[0,T]\times\IT^2},\quad \varepsilon_i\overset{ind.}{\sim}\operatorname{N}\br{0,\sigma_i^2\operatorname{Id}_{\IR^2}},\quad i=1,\dots,N
\end{equation}
with $\thetatrue\in H^1(\torus)$ and variances $\sigma_1^2,\dots,\sigma_N^2$ satisfying \cref{cond:error:variances}. 

\begin{theorem}\label{thm:example-model-rde}

Let $\Pi'$ be a Gaussian process base prior satisfying \cref{cond:BasePrior} with $\Theta = \scrR = \Hddiamond^\beta$ with $\beta>4$ and RKHS $\scrH \hookrightarrow \Hddiamond^\alpha$ with $\alpha > \beta+1$. Let $\Pi_N$ be the corresponding rescaled prior from \cref{eq:rescaledprior} with 
\begin{equation*}
    \delta_N = N^{-\frac{\alpha}{2\alpha +2}}.
\end{equation*}
Let $D_N\sim\IP_{\thetatrue}^N$ as in \cref{eq:NSE:data} with $\thetatrue\in \scrH$, $f\in\Hddiamond^1$ and $\nu>0$. Consider the surrogate posterior distribution $\Pitilde(\cdot | D_N)$ arising from that choice of prior and the surrogate log-likelihood $\loglikmodel(\theta)$ computed with surrogate forward operator $\calGtilde(\theta) = u_{\theta}^{\tilde\nu,\tilde f}$, where the parameter approximations $\tilde \nu >0$ and $\tilde f\in\Hddiamond^1$ satisfy
\begin{equation}\label{eq:nse-proxys}
    |\tilde \nu -\nu| \lesssim \frac{1}{\log N}\delta_N^2 \text{ and } \| f-\tilde f\|_{L^2([0,T], \dot H^1)} \lesssim \frac{1}{\log N}\delta_N^2.
\end{equation}
Then 
the surrogate posterior
$\Pitilde_N(\cdot | D_N)$ contracts around the ground truth $\thetatrue$ at rate $\delta_N$, i.e. there exist $m,m'>0$ sufficiently large, such that
\begin{equation*}
    \Pitilde_N\br{\theta\in\Hddiamond^\beta:\;\norm{u_\theta-u_{\thetatrue}}_{\IL^2([0,T]\times\IT^2,\IR^2)} \leq m\delta_N\mid D_N} = 1 - o_{\IP_{\thetatrue}^N}(1).
\end{equation*}
Moreover, we have that 
\begin{equation*}
    \Pitilde_N\br{\theta\in\Hddiamond^\beta:\;\norm{\theta-\thetatrue}_{\IL^2(\IT^2,\IR^2)} \leq m'\delta_N^{\frac{\beta}{\beta+1}}\mid D_N} = 1 - o_{\IP_{\thetatrue}^N}(1)
\end{equation*}
as well as
\begin{equation*}
     \norm{\IE^{\Pitilde}\brE{\theta|D_N}-\thetatrue}_{\IL^2(\IT^2,\IR^2)} = \calO_{\IP_{\thetatrue}^N}\br{\delta_N^{\frac{\beta}{\beta+1}}}.
\end{equation*}
\end{theorem}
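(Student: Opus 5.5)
The plan is to reduce \cref{thm:example-model-rde} to the two general results \cref{thm:posterior-contraction} and \cref{thm:posterior-contraction-inverse}. This mirrors the proof of \cref{thm:example-noise}, but now the misspecification sits in the forward map rather than in the noise, so there is no noise misspecification ($s_i^2=\sigma_i^2$, $\deltanoise=0$). We take $\calG(\theta)=u_\theta^{\nu,f}$ and $\calGtilde(\theta)=u_\theta^{\tilde\nu,\tilde f}$, viewed as maps $\Hddiamond^\beta\to \IL^2_\zeta([0,T]\times\IT^2,\IR^2)$, with $\zeta$ the uniform law on $[0,T]\times\IT^2$.

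First I verify \cref{ass:operatorI} for $\calG$ with $\kappa=0$ and $\eta=2$ (so $d=2$ and $\delta_N=N^{-\alpha/(2\alpha+2)}$ as in \cref{eq:minimax-rate}), and \cref{cond:inverseproblem} with $\tau=\beta/(\beta+1)$. The sources are:
\begin{itemize}
\item the $\IL^2$-Lipschitz bound \cref{cond:fm:Liptwo}, from the energy estimate for the difference of two Navier--Stokes solutions;
\item the sup-norm Lipschitz bound \cref{cond:fm:Lipinf}, from $H^2\hookrightarrow C^0$ together with $H^2$-stability on bounded sets of $\Hddiamond^\beta$;
\item the polynomial bound \cref{cond:fm:bound}, from $\sup_t\norm{u_\theta(t)}_{H^2}\lesssim 1+\norm{\theta}_{H^2}^{\gamma_B}$;
\item the backward stability estimate giving \cref{cond:inverseproblem}, by interpolating between $\IL^2([0,T]\times\IT^2)$ and $H^\beta$ bounds.
\end{itemize}
All of these are established in \cite{NicklTiti_2024,Konen_Nickl_2025}, and I would cite them. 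The prior requirements are $\alpha>\beta+1=\eta+d$ up to the stated choices, and $\beta>4$ gives enough room for the embeddings. The same results apply verbatim to $\calGtilde$, since $\tilde\nu$ and $\tilde f$ range over a compact set of admissible parameters (for $N$ large, $\tilde\nu\ge\nu/2$ and $\norm{\tilde f}$ is bounded).

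\textbf{Condition [MM1].} For \cref{cond:MM:I} I need $\gamma_B\le 2$. I would derive the growth exponent of $\sup_t\norm{u^{\tilde\nu,\tilde f}_\theta(t)}_{H^2}$ directly. Testing \cref{eq:projectednavierstokes} against $A u$ and $A^2u$ in 2D, the enstrophy is bounded linearly in $\norm{\theta}_{H^1}$ plus forcing terms. The $H^2$ estimate then picks up at most a quadratic factor through the nonlinearity $B[u,u]$, for example via a Gronwall argument with the $\IL^2_tH^2_x$ bound. I would check that the exponent reported in \cite{Konen_Nickl_2025} is at most $2$, and otherwise sharpen it using the 2D Ladyzhenskaya inequality.

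\textbf{Condition [MM2], the main step.} I must show $\sup_{\theta\in\scrR(\rmM)}\norm{u^{\nu,f}_\theta-u^{\tilde\nu,\tilde f}_\theta}_\infty\lesssim |\nu-\tilde\nu|+\norm{f-\tilde f}_{\IL^2([0,T],\dot H^1)}$. Set $w=u-\tilde u$, which satisfies
\[
\partial_t w+\nu Aw+B[w,u]+B[\tilde u,w]=(f-\tilde f)-(\nu-\tilde\nu)A\tilde u,\quad w(0)=0 .
\]
I would test with $A w$ and use $\IL^2_tH^3_x$ bounds on $\tilde u$, which hold since $\theta\in H^\beta$ with $\beta>4$ and $\tilde f\in \dot H^1$. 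Then
\[
\tfrac{\rmd}{\rmd t}\norm{w}_{\dot H^1}^2+\nu\norm{w}_{\dot H^2}^2\lesssim C(\rmM)\norm{w}_{\dot H^1}^2+\norm{f-\tilde f}_{\dot H^1}^2+|\nu-\tilde\nu|^2\norm{\tilde u}_{\dot H^3}^2 ,
\]
possibly after moving one derivative onto the forcing, which is where $\IL^2_t\dot H^1$ for $f-\tilde f$ enters. Gronwall gives $\sup_t\norm{w}_{\dot H^1}$ together with $\IL^2_t\dot H^2$ control. To reach the sup norm I need $\sup_t\norm{w}_{H^{1+\epsilon}}$, obtained by repeating the estimate one level higher, or by interpolating with a uniform $H^3$ bound on $w$ and then applying $H^{1+\epsilon}\hookrightarrow C^0$ in 2D. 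Under \cref{eq:nse-proxys} this yields $\deltamap\lesssim\delta_N^2/\log N$, which is eventually below $\Const{model}\delta_N^2$ for any fixed small constant, as the smallness remark after \cref{cond:misspec-model} allows.

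I expect this sup-norm perturbation bound to be the main obstacle, in particular matching the regularity needed for the $(\nu-\tilde\nu)A\tilde u$ term with $\beta>4$. With \cref{cond:misspec-model} verified, \cref{thm:posterior-contraction} gives the forward contraction at rate $m\delta_N$. \cref{thm:posterior-contraction-inverse} then gives the $\IL^2$ contraction at rate $\delta_N^{\beta/(\beta+1)}$ and the corresponding rate for the posterior mean.
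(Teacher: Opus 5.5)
Your reduction is the paper's. You cite \cite{NicklTiti_2024,Konen_Nickl_2025} for \cref{ass:operatorI}, \cref{cond:MM:I} and \cref{cond:inverseproblem} with $\kappa=0$, $\eta=2$, $\gamma_B=2$, $\tau=\beta/(\beta+1)$. You prove \cref{cond:MM:III} by a parameter-stability estimate for $w=u_\theta^{\nu,f}-u_\theta^{\tilde\nu,\tilde f}$ followed by a Sobolev embedding, and then invoke \cref{thm:posterior-contraction} and \cref{thm:posterior-contraction-inverse}.

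The step that needs tightening is how you close \cref{cond:MM:III}. Testing with $Aw$ controls only $\sup_t\norm{w(t)}_{\Hdot^1}$ and $\int_0^T\norm{w(t)}_{\Hdot^2}^2\,\rmd t$. Neither bounds $\norm{w}_\infty$ on $[0,T]\times\IT^2$, and of your two remedies only the first works.

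Repeating the estimate one level higher is exactly \cref{lem:NS:Misspec:regularity}:
\begin{enumerate}
\item take the $\Hdot^2$ inner product with $w$;
\item bound the viscosity term by $|\nu-\tilde\nu|\,\norm{\tilde u}_{\Hdot^3}\norm{w}_{\Hdot^3}$ and the forcing term by $\norm{f-\tilde f}_{\Hdot^1}\norm{w}_{\Hdot^3}$;
\item handle the trilinear terms with \cite[Proposition A.3]{Konen_Nickl_2025}, and absorb the $\Hdot^3$ terms;
\item close with Gr\"onwall using the $\IL^2_t\Hdot^3$ bound on $\tilde u$, and finish with $\Hdot^2(\IT^2)\hookrightarrow C^0$.
\end{enumerate}
The quantities in your displayed inequality, $\norm{\tilde u}_{\Hdot^3}$ and $\norm{f-\tilde f}_{\Hdot^1}$, are in fact the ones this $\Hdot^2$-level estimate needs.

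The interpolation route is a dead end. The higher-order norm of $w$ is merely bounded, not small. Interpolating with the $\Hdot^1$ estimate therefore gives only $\norm{w}_\infty\lesssim(\delta_N^2/\log N)^{1-\epsilon}$ for some $\epsilon>0$. Since $\delta_N$ decays polynomially in $N$, this is not $\leq\Const{model}\delta_N^2$.

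Two smaller remarks. For \cref{cond:MM:I}, your Gr\"onwall sketch does not give a quadratic factor. A plain Gr\"onwall argument on the $\Hdot^2$ energy produces a factor like $\exp\bigl(c\int_0^T\norm{u(t)}_{\Hdot^2}^2\,\rmd t\bigr)$, which is exponential in $\norm{\theta}_{\Hdot^1}^2$. You therefore need the polynomial bound from the literature, as the paper uses, or a sharper argument that avoids Gr\"onwall. On the positive side, you note that $\tilde\nu\geq\nu/2$ and bounded $\norm{\tilde f}_{\Hdot^1}$ keep all constants uniform in $N$. The paper leaves this implicit in ``with a change of constants''.
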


\begin{proof}[Proof of \cref{thm:example-model-rde}]
    Firstly, note that the associated forward map $\calG\colon\theta\to u_\theta^{\nu,f}$ of \cref{eq:projectednavierstokes} satisfies the conditions imposed in \cref{ass:operatorI} and \cref{ass:operatorII} with $\kappa = 0$, $\eta = 2$, $\gamma_B = 2$ and $\tau=\frac{\beta}{\beta+1}$ as derived in \cite{NicklTiti_2024} and \cite{Konen_Nickl_2025}. Analogously, $\calGtilde\colon\theta\mapsto u_\theta^{\tilde\nu,\tilde f}$ satisfies \cref{cond:MM:I} (with a change of constants). \cref{cond:MM:III} then follows from  the stability of $u_\theta^{\nu,f}$ with respect to the parameters $\nu$ and $f$ as derived in \cref{lem:NS:Misspec:regularity} as well as the Sobolev embedding $\Hdot^2(\IT^2)\hookrightarrow C^0(\IT^2)$. Thus the claim follows immediately from an application of \cref{thm:posterior-contraction} and \cref{thm:posterior-contraction-inverse}.
\end{proof}

\subsection{Example 3: Model misspecification from numerical approximation}





We now address another source of model misspecification, via numerical approximation. Looking at the 2D Navier-Stokes equation \cref{eq:navier-stokes}, computing a corresponding solution $\calG(\theta) = u_\theta^{\nu,f}$ is generally challenging, with the main difficulty coming from the non-linear convection term $(u \cdot \nabla)u$. In practice, linearization from an iterative process is often used. Starting with an appropriate initializer $u^0$, the $l^{th}$ (projected) iteration for $l\in\IN_0$ is defined via
\begin{equation}\label{eq:oseen-iteration}
    \frac{\rmd}{\rmd t}u^l + \nu Au^l + B[u^{l-1},u^l] = f, \quad u^l(0) = \theta.
\end{equation}
These are called \emph{Oseen equations} and constitute a good approximation of Navier-Stokes in viscous flow settings under some smallness condition on the Reynolds number ($\nu \gg1, Re\ll1$)(see \cite{Girault_Raviart_1986, Batchelor_1999} for details). These fixed point (Picard) iterations are known to converge linearly, so that by taking the number of iterations $L\in\IN$ sufficiently high one can then show that for $\calGtilde(\theta)=u^L_\theta$ the surrogate operator is \textit{good enough} for the contraction of the surrogate posterior as the following results show.\\

\begin{proposition} \label{prop:oseen-satisfies-condition}
Let $\nu>0$, $f\in \IL^2([0,T],\Hddiamond^1)$, $\theta\in\Hddiamond^1$, and choose an initilizer $u^0\in\IL^2\br{[0,T],\Hddiamond^2}$.
Beginning with $u^0$, for all $l\in\IN$, the $l^{th}$-iteration step of \cref{eq:oseen-iteration} has a solution 
\begin{equation*}
    u^l\in C^{0}([0,T],\Hddiamond^1)\cap\IL^2([0,T],\Hddiamond^{2}),\quad\frac{\rmd u^l}{\rmd t}\in \IL^2([0,T],\Hddiamond^{0}).
\end{equation*}
Now let $L=L_N\in\IN$ be chosen sufficiently large, such that 
\begin{equation*}
        \forall r>0:\; \sup_{\theta\in\Hddiamond^2(r)}\sup_{t\in[0,T]}\norm{u_\theta^{L}(t)-u_\theta^{L-1}(t)}_{\Hdot^2(\IT^2)} \leq \Const{\mathrm{model}}(r)\times\frac{1}{\log N}\delta_N^2. 
\end{equation*}
Assume further, that this last iterate $u_\theta^L$ satisfies
\begin{equation*}
    \forall\theta\in\Hddiamond^2:\;\sup_{t\in[0,T]}\norm{u_\theta^{L}(t)}_{\Hdot^2(\IT^2)} \leq \Const{Oseen,B}\times\br{1+\norm{\theta}_{\Hdot^2}^2}.
\end{equation*}
Defining than the surrogate forward map $\calGtilde\colon \Hddiamond^2\ni\theta\mapsto u_\theta^{L}\in \Hddiamond^2$ it satisfies \cref{cond:misspec-model}.
\end{proposition}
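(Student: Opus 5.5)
The proof splits into three parts: existence and regularity of the Oseen iterates, the growth bound \cref{cond:MM:I}, and the approximation bound \cref{cond:MM:III}.

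\textbf{Existence and regularity.} We argue by induction on $l$. Given $u^{l-1}\in\IL^2([0,T],\Hddiamond^2)$, equation \cref{eq:oseen-iteration} is a \emph{linear} parabolic equation for $u^l$ with a transport coefficient $u^{l-1}$. We construct a solution by a Galerkin approximation in the Fourier basis of $\Hddiamond$ and derive energy estimates in $\Hdot^1$.

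Test the equation with $Au^l$ and use the two-dimensional Ladyzhenskaya and Agmon type bound
\begin{equation*}
|\sprod{B[u^{l-1},u^l]}{Au^l}{\IL^2}| \le \|u^{l-1}\|_{\IL^\infty}\|\nabla u^l\|_{\IL^2}\|Au^l\|_{\IL^2} \le \tfrac{\nu}{4}\|Au^l\|^2 + c\,\nu^{-1}\|u^{l-1}\|_{\Hdot^2}^2\|u^l\|_{\Hdot^1}^2.
\end{equation*}
Since $\|u^{l-1}\|_{\Hdot^2}^2$ is integrable in time, Grönwall's inequality yields $u^l\in C^0([0,T],\Hddiamond^1)\cap \IL^2([0,T],\Hddiamond^2)$. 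The time derivative $\frac{\rmd u^l}{\rmd t}$ is then read off from the equation and lies in $\IL^2([0,T],\Hddiamond^0)$. Passing to the limit in the Galerkin scheme uses the standard Aubin–Lions compactness argument. Uniqueness follows from the same energy estimate applied to the difference of two solutions, so the induction closes.

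\textbf{Growth bound \cref{cond:MM:I}.} Together with the embedding $\Hdot^2(\IT^2)\hookrightarrow C^0(\IT^2)$, the assumed bound on $\sup_t\|u^L_\theta(t)\|_{\Hdot^2}$ gives
\begin{equation*}
\|\calGtilde(\theta)\|_\infty\le c\,\Const{Oseen,B}\bigl(1+\|\theta\|_{\Hdot^2}^2\bigr)\le c\bigl(1+\|\theta\|_{\scrR}^2\bigr).
\end{equation*}
Here $\scrR\hookrightarrow\Hddiamond^2$ with $\beta>4$. This is \cref{cond:fm:bound} for $\calGtilde$ with $\gamma_B=2$.

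\textbf{Approximation bound \cref{cond:MM:III}.} This is the main step. Set $w=u_\theta-u^L_\theta$, where $u_\theta$ is the Navier–Stokes solution. Subtracting \cref{eq:oseen-iteration} (with $l=L$) from \cref{eq:projectednavierstokes} gives
\begin{equation*}
\tfrac{\rmd}{\rmd t}w+\nu Aw+B[u_\theta,w]+B[w,u^L_\theta]=B[u^L_\theta-u^{L-1}_\theta,\,u^L_\theta],\qquad w(0)=0.
\end{equation*}
This writes $w$ as the solution of a linearized Navier–Stokes equation around $u_\theta$, driven by the forcing $B[u^L-u^{L-1},u^L]$.

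We then run $\Hdot^2$-level energy estimates, testing with $A^2w$, or alternatively work at the $\Hdot^1$ level and lift to $\Hdot^2$ by parabolic smoothing. The coefficients $u_\theta$ and $u^L_\theta$ are bounded in $\IL^\infty_t\Hdot^2$ uniformly for $\theta\in\scrR(\rmM)$: for $u_\theta$ by the results of \cite{NicklTiti_2024} used in Example 2, and for $u^L_\theta$ by the assumed growth bound. The forcing satisfies
\begin{equation*}
\|B[u^L-u^{L-1},u^L]\|_{\Hdot^1}\lesssim \|u^L-u^{L-1}\|_{\Hdot^2}\|u^L\|_{\Hdot^2},
\end{equation*}
using the algebra property of $H^2$ in two dimensions. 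Grönwall's inequality then gives
\begin{equation*}
\sup_t\|w(t)\|_{\Hdot^2}\le c(\rmM)\sup_t\|u^L_\theta-u^{L-1}_\theta\|_{\Hdot^2}\le c(\rmM)\,\Const{\mathrm{model}}\,\delta_N^2/\log N.
\end{equation*}
Applying the embedding once more yields
\begin{equation*}
\|\calG(\theta)-\calGtilde(\theta)\|_\infty\le c(\rmM)\,\deltamap,\qquad \deltamap=\delta_N^2/\log N,
\end{equation*}
which is small compared with $\Const{\mathrm{model}}\delta_N^2$.

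The expected obstacle is the $\Hdot^2$-level estimate. Testing with $A^2 w$ requires controlling $B[\cdot,\cdot]$ against $\|w\|_{\Hdot^3}$. This should work via
\begin{equation*}
\sprod{A B[u,w]}{Aw}{} \lesssim \|u\|_{\Hdot^2}\|w\|_{\Hdot^2}\|w\|_{\Hdot^3},
\end{equation*}
using commutator and product estimates together with absorption into $\nu\|w\|_{\Hdot^3}^2$. The forcing term only needs to be controlled in $\Hdot^1$. If this proves too delicate, the fallback is a mild (Duhamel) formulation with the analytic semigroup $e^{-\nu tA}$, whose smoothing gives $\|e^{-\nu tA}g\|_{\Hdot^2}\lesssim t^{-1/2}\|g\|_{\Hdot^1}$; the resulting integrable singularity is then handled by a singular Grönwall inequality.
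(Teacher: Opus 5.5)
Your proposal is correct and follows essentially the same route as the paper. The paper also writes the error $u^L_\theta-u_\theta$ as the solution of a linear Oseen-type equation with coefficients $u^L_\theta,u_\theta$, zero initial datum and forcing $\pm B[u^{L-1}_\theta-u^L_\theta,u^L_\theta]$, closes it at the $\Hdot^2$ level, and bounds the forcing in $\IL^2([0,T],\Hdot^1)$ by $\|u^L-u^{L-1}\|_{\Hdot^2}\|u^L\|_{\Hdot^2}$; the only difference is that it cites the stability estimate of Proposition B.1 in \cite{Konen_Nickl_2025} (with $a=2$, and $a=1$ for existence) where you derive the energy estimates by hand. One harmless sign slip: with $w=u_\theta-u^L_\theta$ the right-hand side is $B[u^{L-1}_\theta-u^L_\theta,u^L_\theta]$, not $B[u^L_\theta-u^{L-1}_\theta,u^L_\theta]$.
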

The proof of \cref{prop:oseen-satisfies-condition} follows standard regularity estimates as presented in \cite{Konen_Nickl_2025} and can be found in \cref{sec:app:PDE}. 

\begin{theorem}\label{thm:example-model-oseen}

Let $\Pi'$ be a Gaussian process base prior satisfying \cref{cond:BasePrior} with $\Theta = \scrR = \Hddiamond^\beta$ with $\beta>4$ and RKHS $\scrH \hookrightarrow \Hddiamond^\alpha$ with $\alpha > \beta+1$. Let $\Pi_N$ be the corresponding rescaled prior from \cref{eq:rescaledprior} with 
\begin{equation*}
    \delta_N = N^{-\frac{\alpha}{2\alpha +2}}.
\end{equation*}
Let $D_N\sim\IP_{\thetatrue}^N$ as in \cref{eq:NSE:data} with $\thetatrue\in \scrH$, $f\in\Hddiamond^1$ and $\nu>0$. Consider the surrogate posterior distribution $\Pitilde(\cdot | D_N)$ arising from that choice of prior and the surrogate log-likelihood $\loglikmodel(\theta)$ computed with surrogate forward operator $\calGtilde(\theta) = u_\theta^{L}$ as described in \cref{prop:oseen-satisfies-condition}.
Then 
the surrogate posterior
$\Pitilde_N(\cdot | D_N)$ contracts around the ground truth $\thetatrue$ at rate $\delta_N$, i.e. there exist $m,m'>0$ sufficiently large, such that
\begin{equation*}
    \Pitilde_N\br{\theta\in\Hddiamond^\beta:\;\norm{u_\theta-u_{\thetatrue}}_{\IL^2([0,T]\times\IT^2,\IR^2)} \leq m\delta_N\mid D_N} = 1 - o_{\IP_{\thetatrue}^N}(1).
\end{equation*}
Moreover, we have that 
\begin{equation*}
    \Pitilde_N\br{\theta\in\Hddiamond^\beta:\;\norm{\theta-\thetatrue}_{\IL^2(\IT^2,\IR^2)} \leq m'\delta_N^{\frac{\beta}{\beta+1}}\mid D_N} = 1 - o_{\IP_{\thetatrue}^N}(1)
\end{equation*}
as well as
\begin{equation*}
     \norm{\IE^{\Pitilde}\brE{\theta|D_N}-\thetatrue}_{\IL^2(\IT^2,\IR^2)} = \calO_{\IP_{\thetatrue}^N}\br{\delta_N^{\frac{\beta}{\beta+1}}}.
\end{equation*}
\end{theorem}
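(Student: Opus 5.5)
The proof reduces \cref{thm:example-model-oseen} to the general results \cref{thm:posterior-contraction} and \cref{thm:posterior-contraction-inverse}, following the same pattern as the proof of \cref{thm:example-model-rde}. The only new input is that the Oseen surrogate $\calGtilde(\theta)=u^L_\theta$ satisfies \cref{cond:misspec-model}, which is exactly the conclusion of \cref{prop:oseen-satisfies-condition}.

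First, we collect the properties of the true forward map $\calG\colon\theta\mapsto u^{\nu,f}_\theta$ already used in Example 2. By \cite{NicklTiti_2024} and \cite{Konen_Nickl_2025}, $\calG$ satisfies \cref{ass:operatorI} with $\kappa=0$, $\eta=2$, $\gamma_B=2$ and $\scrR=\Hddiamond^\beta$. It also satisfies \cref{cond:inverseproblem} with $\tau=\beta/(\beta+1)$. With $d=2$ and $\kappa=0$, the rate \cref{eq:minimax-rate} becomes $\delta_N=N^{-\alpha/(2\alpha+2)}$. The requirement $\alpha>\eta+d=4$ holds because $\alpha>\beta+1>5$. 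The prior hypotheses of \cref{cond:BasePrior} are as in \cref{example:prior:PDE}. Since no noise misspecification is present ($s_i=\sigma_i$), \cref{cond:misspec-error} holds trivially with $\deltanoise=0$.

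Second, we verify \cref{cond:misspec-model} through \cref{prop:oseen-satisfies-condition}. We choose $L=L_N$ large enough that the stated bound on $\sup_t\|u^L_\theta-u^{L-1}_\theta\|_{\Hdot^2}$ holds. This is possible because the Picard/Oseen iterations converge linearly, so $L_N\asymp\log N$ iterations suffice to reach accuracy $\delta_N^2/\log N$ uniformly on $\Hddiamond^2(r)$. Two facts then connect this to the hypotheses of \cref{cond:misspec-model}:
\begin{itemize}
\item $\scrR(M)=\Hddiamond^\beta(M)\subseteq\Hddiamond^2(cM)$, so the bound holds uniformly on $\scrR(M)$;
\item the embedding $\Hdot^2(\IT^2)\hookrightarrow C^0$ turns the $\Hdot^2$ bounds into the sup-norm bounds required in \cref{cond:MM:III} and \cref{cond:MM:I}.
\end{itemize}
The quadratic growth bound assumed on $u^L_\theta$ gives $\gamma_B=2$ in \cref{cond:MM:I}, measured in $\|\theta\|_{\Hdot^2}\lesssim\|\theta\|_\scrR$.

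The main obstacle is \cref{cond:MM:III}. It asks for a bound on $\|\calG(\theta)-\calGtilde(\theta)\|_\infty$, the distance to the true Navier–Stokes solution, whereas the proposition controls only consecutive iterate differences. The plan is to take this passage as given, via the contraction estimate for the Oseen map in the proof of \cref{prop:oseen-satisfies-condition}. The map $u^{l-1}\mapsto u^l$ is a contraction with constant $q(r)<1$ on bounded sets. Since $u_\theta$ is its fixed point, we obtain $\|u_\theta-u^L_\theta\|\le \frac{q}{1-q}\|u^L_\theta-u^{L-1}_\theta\|$. This yields $\deltamap\lesssim\delta_N^2/\log N$, which is smaller than $\Const{\mathrm{model}}\delta_N^2$ for any fixed constant once $N$ is large. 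The factor $1/\log N$ therefore absorbs the smallness requirement relative to the prior's small ball exponent.

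With all hypotheses verified, \cref{thm:posterior-contraction} yields contraction of the forward map at rate $\rho\delta_N$, in probability, with posterior mass $1-o(1)$. Together with the inverse modulus of continuity, \cref{thm:posterior-contraction-inverse} then gives the $\IL^2$ contraction at rate $\delta_N^{\beta/(\beta+1)}$ and the stated rate for the posterior mean.
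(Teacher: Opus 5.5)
Your reduction is the paper's own. Like \cref{thm:example-model-rde}, \cref{thm:example-model-oseen} follows by feeding the properties of $\calG$, the prior, and \cref{prop:oseen-satisfies-condition} into \cref{thm:posterior-contraction} and \cref{thm:posterior-contraction-inverse}; this is why the paper omits the proof.

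The problem is your ``main obstacle'' paragraph, which misreads \cref{prop:oseen-satisfies-condition}. Only its \emph{hypotheses} are phrased through consecutive iterates. Its \emph{conclusion} is all of \cref{cond:misspec-model}, including \cref{cond:MM:III}. Its proof contains no contraction estimate to ``take as given''. The contraction you posit is also not available here. Energy estimates for the Oseen step $u^{l-1}\mapsto u^l$ give a Lipschitz constant depending on $\nu$, $T$ and bounds on the iterates. That constant is below $1$ only under smallness assumptions (large viscosity, short time), or in a time-weighted norm combined with uniform bounds on \emph{all} iterates. The proposition assumes only a bound on the final iterate $u^L_\theta$. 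Your heuristic $L_N\asymp\log N$ has the same status and is not needed, since the choice of $L$ is a hypothesis.

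The proof of \cref{prop:oseen-satisfies-condition} instead uses a residual (a posteriori) stability estimate that needs no contraction. With $w^L=u^L_\theta-u_\theta$, bilinearity gives
\begin{equation*}
    \tfrac{\rmd}{\rmd t}w^L+\nu A w^L+B[w^L,u^L]+B[u,w^L]=-B[u^{L-1}-u^L,u^L],\qquad w^L(0)=0.
\end{equation*}
This is a \emph{linear} Oseen-type equation. Its coefficients are controlled by the growth bound assumed on $u^L_\theta$ and by Navier--Stokes regularity of $u_\theta$. Hence \cref{eq:oseen:generalinequality} with $a=2$ yields, uniformly over $\theta\in\Hddiamond^2(r)$,
\begin{equation*}
    \sup_{t\in[0,T]}\norm{w^L(t)}_{\Hdot^2}^2\lesssim\int_0^T\norm{u^L(t)-u^{L-1}(t)}_{\Hdot^2}^2\norm{u^L(t)}_{\Hdot^2}^2\,\rmd t\lesssim\br{\frac{\delta_N^2}{\log N}}^2 .
\end{equation*}
The embedding $\Hdot^2(\IT^2)\hookrightarrow C^0$ then gives \cref{cond:MM:III} with $\deltamap\lesssim\delta_N^2/\log N$.

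In other words, a contraction argument would bound the coupling term $B[u_\theta-u^{L-1}_\theta,u^L_\theta]$ by $q\norm{u_\theta-u^{L-1}_\theta}$. The paper instead splits it into $B[w^L,u^L_\theta]$, which Gr\"onwall absorbs without any smallness, plus the residual controlled by hypothesis. So replace that paragraph with a plain citation of the proposition.

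A minor point, shared with the paper: with $s_i=\sigma_i$, \cref{cond:misspec-error} does not hold literally. \cref{item:NMII.I} needs strict overestimation, and \cref{item:NMII.II} needs $\gamma_B\le 1$ while here $\gamma_B=2$. What \cref{prop:change-measure} and \cref{prop:change-measure-peeling} actually use is only that the noise contribution vanishes when $\deltanoise=0$.
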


The proof of \cref{thm:example-model-oseen} follows the arguments of \cref{thm:example-model-rde} using \cref{prop:oseen-satisfies-condition} and applying \cref{thm:posterior-contraction} as well as \cref{thm:posterior-contraction-inverse}, and is thus omitted.

\begin{remark}
   \cref{prop:oseen-satisfies-condition} provides a guideline for when the iteration can be considered sufficiently converged: when two successive iterates are sufficiently close in the $\dot H^2$-norm and when $u^l$ exhibits the qualitative analytic properties of the exact solution $u_\theta$ (relative to \cref{cond:fm:bound}).
\end{remark}
   It should be noted that in this setting, recovering the initial condition from a linearized Navier-Stokes becomes an approximately simpler \textit{linear} inverse problem, but our analysis still applies.

%% file: Sections/_05_Additional_proofs_General.tex
\section{\texorpdfstring
  {Proof of \cref{sec:posterior-contraction}}
  {Proof of Theorem X}
}\label{sec:additional-proofs}


In this section we prove the results of \cref{sec:posterior-contraction}. We will use repeatedly the following inequality, which can easily be derived by Cauchy-Schwarz.
\begin{equation}\label{eq:sum-squares}
    \forall j\nset{N},\;(a_j)_{j=1}^J\subseteq \IR:\;\sum_{j\nset{J}}a_j\leq \sqrt{J} \times \sqrt{\sum_{j\nset{J}} a_j^2}
\end{equation}
Further, in what follows we will write shorthand $\tilde w_\theta := \calG(\theta)-\calGtilde(\theta)$.

\subsection{Small ball computations}

\begin{proof}[Proof of \cref{prop:information-ineq}] 
    First, under the ground truth probability $\Truedensityo$, 

$$\forall i=1,\dots,N:\;Y_i = \calG(\thetatrue)(Z_i)+\varepsilon_i = \calGtilde(\thetatrue) (Z_i)+\tilde \varepsilon_i ~~~~\text{where } \tilde \varepsilon_i = \tilde w_{\thetatrue}(Z_i) + \varepsilon_i,$$
so
\begin{align}\label{eq:pseudo-loglik-comput}
    \loglikmodel(\theta)-\loglikmodel(\thetatrue) &= -\frac{1}{2}\sum_{i=1}^N\frac{1}{s_i^2}\Vnorm{\calGtilde(\thetatrue)(Z_i)-\calGtilde(\theta)(Z_i)+\tilde\varepsilon_i}^2 +\frac{1}{2}\sum_{i=1}^N \frac{1}{s_i^2}\Vnorm{\tilde \varepsilon_i}^2 \notag\\
    &=-\frac{1}{2}\sum_{i=1}^N\frac{1}{s_i^2}\Vnorm{\calGtilde(\theta)(Z_i)-\calGtilde(\thetatrue)(Z_i)}^2 -\sum_{i=1}^N \frac{1}{s_i^2}\Vsprod{\tilde w_{\thetatrue}(Z_i)}{\calGtilde(\thetatrue)(Z_i)-\calGtilde(\theta)(Z_i)}\notag\\
    &\hspace{1cm}-\sum_{i=1}^N \frac{1}{s_i^2}\Vsprod{\varepsilon_i}{\calGtilde(\thetatrue)(Z_i)-\calGtilde(\theta)(Z_i)}.
\end{align}

\begin{itemize}
    \item \textbf{Proof of i):} 
        Taking the expectation under $\Truedensityo$, the last term cancels and we obtain
        \begin{align*}
            -\IE_{\thetatrue}^N[\loglikmodel(\theta)-\loglikmodel(\thetatrue)] = \frac{N}{2} \bar{s}_N^{-2}\|\calGtilde(\theta) - \calGtilde(\thetatrue) \|_{\IL_\zeta^2(\calZ,V)}^2 + \sum_{i=1}^N \frac{1}{s_i^2} \IE\brE{ \Vsprod{\tilde w_{\thetatrue} (Z_i)}{\calGtilde(\thetatrue)(Z_i)-\calGtilde(\theta)(Z_i)}}.
        \end{align*}
        Applying \cref{cond:MM:III} on $\tilde w_{\thetatrue}$ and by Cauchy-Schwarz, we further obtain
        \begin{align*}
        \IE\brE{\Vsprod{\tilde w_{\thetatrue} (Z_i)}{\calGtilde(\thetatrue)(Z_i)-\calGtilde(\theta)(Z_i)}} 
        &\leq c(\thetatrue) \deltamap\times \| \calGtilde(\thetatrue)-\calGtilde(\theta)\|_{\IL_\zeta^2(\calZ,V)}
        \end{align*}
        
        
        So that in the end
        \begin{align*}
        -\IE_{\thetatrue}^N[\loglikmodel(\theta)-\loglikmodel(\thetatrue)] &\leq \frac{1}{2}N s_0^{-2}\times d_{\calGtilde}(\theta,\thetatrue)^2 + C(\thetatrue, s_0^2, s_\infty^2) N \deltamap \times d_{\calGtilde}(\theta,\thetatrue).
        \end{align*}

        \item \textbf{Proof of ii):}  We now compute $T_i\coloneqq\IE_{\thetatrue}^{(i)}\left[\left(\log\frac{q_{\theta}^{(i)}}{q_{\thetatrue}^{(i)}}- \IE_{\thetatrue}^{(i)} \log\frac{q_{\theta}^{(i)}}{q_{\thetatrue}^{(i)}}\right)^2\right]$ for $i = 1,\dots,N$. By similar computations as above, under $\Truedensityo$, we have 
\begin{align*}
   \log\frac{q_{\theta}^{(i)}}{q_{\thetatrue}^{(i)}}- \IE_{\thetatrue}^N \log\frac{q_{\theta}^{(i)}}{q_{\thetatrue}^{(i)}} 
   &= -\frac{1}{2s_i^2}\Vnorm{\diffG}^2 - \frac{1}{s_i^2}\Vsprod{\tilde\varepsilon_i}{\diffG} \\
    &\quad + \frac{1}{2s_i^2}\diffnormG^2 + \frac{1}{s_i^2}\IE_{\thetatrue}^{(i)}\Vsprod{\tilde w_{\thetatrue}(Z_i)}{\diffG}
\end{align*}

Using \cref{eq:sum-squares}, we upper bound $T_i$ by the sum of squares: then by Cauchy-Schwarz
\begin{align*}
    T_i
    &\leq \frac{4}{s_i^4}\IE_{\thetatrue}^{(i)}\Bigg[\frac{1}{4}\Vnorm{\diffG}^4 + \Vnorm{\tilde \varepsilon_i}^2\Vnorm{\diffG}^2  \\
    &~~~~~~~~~~~~~~~~~~~~~~~~~+\frac{1}{4}\diffnormG^4 + \IE_{\thetatrue}^{(i)}\left[\Vsprod{\tilde w_{\thetatrue} (Z_i)}{\calGtilde(\thetatrue)(Z_i)-\calGtilde(\theta)(Z_i)}\right]^2\Bigg]\\
    &= \frac{1}{s_i^4}\IE_{\thetatrue}^{(i)}\left[\Vnorm{\diffG}^4\right]+\frac{4}{s_i^4}\IE_{\thetatrue}^{(i)}\left[\Vnorm{\tilde w_{\thetatrue}(Z_i)}^2 \Vnorm{\diffG}^2\right] \\
    &+ \frac{4\sigma_i^2}{s_i^4}\diffnormG^2 + \frac{1}{s_i^4}\diffnormG^4 + 4 \left(\IE_{\thetatrue}^{(i)}\brE{\Vsprod{\tilde w_{\thetatrue} (Z_i)}{\calGtilde(\thetatrue)(Z_i)-\calGtilde(\theta)(Z_i)}}\right)^2
\end{align*}

where we used that $\tilde \varepsilon_i = \varepsilon_i + \tilde w_{\thetatrue}(Z_i)$ and $\varepsilon_i \perp Z_i, \IE\brE{ \varepsilon_i} =0$ and $\IE\brE{\varepsilon_i^2}=\sigma_i^2$ to go from the first to the second line.
Then, by Jensen, Cauchy-Schwarz and the uniform bound on $\tilde w_{\thetatrue}$ by \cref{cond:misspec-model}

$$\left(\IE\brE{\Vsprod{\tilde w_{\thetatrue} (Z_i)}{\calGtilde(\thetatrue)(Z_i)-\calGtilde(\theta)(Z_i)}}\right)^2 
\lesssim \deltamap^2\times \diffnormG^2$$

and finally using the uniform bounds on $\calGtilde$ on the set $\tilde\calB_N$
\begin{align*}
    \forall i=1,\dots,N:\; T_i&\leq \frac{1}{s_i^4}\diffnormG^2 \left[4U^2+ 4\deltamap^2 + 4\sigma_i^2 + 4U^2 + 4\deltamap^2 \right]\\
    &\leq 4 \diffnormG^2 \left(4\frac{U^2 + \sigma_i^2}{s_i^4} + 2\frac{\deltamap^2}{s_i^4}\right)
\end{align*}

As $\deltamap \to 0$ as $N\to \infty$, for $N$ large enough there exists a constant $\const_2 = \const_2(U, s_0^2, \sigma_\infty^2)>0$  such that
\begin{equation*}
    \IE_{\thetatrue}^{(i)}\left[\left(\log\frac{q_{\theta}^{(i)}}{q_{\thetatrue}^{(i)}}- \IE_{\thetatrue}^{(i)} \log\frac{q_{\theta}^{(i)}}{q_{\thetatrue}^{(i)}}\right)^2\right] \leq \const_2 \diffnormG^2,
\end{equation*}
which shows the claim.
\end{itemize}

\end{proof}

Following standard literature, if the small ball condition is satisfied then \cref{lemma:contraction-auxiliary} holds.

 \begin{proof}[Proof of \cref{lemma:contraction-auxiliary}] By Jensen's inequality,
        \begin{align*}
            \log \int_{B_N} \frac{q_\theta^N}{q_{\thetatrue}^N}(D_N)d\nu(\theta) \ge \int_{B_N} \log \frac{q_\theta^N}{q_{\thetatrue}^N}(D_N)d\nu(\theta)
        \end{align*}
        
So using \cref{prop:information-ineq}, and the fact that on the support $B_N$ of $\nu$ $\norm{\calGtilde(\theta) - \calGtilde(\thetatrue)}_{\IL^2_\zeta(\calZ,V)}^2\leq \delta_N^2$, the probability in question is bounded by

    \begin{align}
       \IP_{\thetatrue}^N &\left(\int_{B_N} \log \frac{q_{\theta}^N}{q_{\thetatrue}^N}(D_N)d\nu(\theta) \leq -  KN\delta_N^2  \right) \nonumber\\
        &= \IP_{\thetatrue}^N\Big(\int_{B_N} \big[\log \frac{q_{\theta}^N}{q_{\thetatrue}^N}(D_N)-\IE_{\thetatrue}^N\log \frac{q_{\theta}^N}{q_{\thetatrue}^N}(D_N)\big]d\nu(\theta) \leq  -KN\delta_N^2 - \int_{B_N} \IE_{\thetatrue}^N\log \frac{q_{\theta}^N}{q_{\thetatrue}^N}(D_N) d\nu(\theta) \Big) \nonumber\\
     &\le \IP_{\thetatrue}^N\left(\int_{B_N} \big[\log \frac{q_{\theta}^N}{q_{\thetatrue}^N}(D_N)-\IE_{\thetatrue}^N\log \frac{q_{\theta}^N}{q_{\thetatrue}^N}(D_N)\big]d\nu(\theta)   \leq  -(K-s_0^{-2}/2) N\delta_N^2 +c_1  N\deltamap\delta_N \right)\nonumber\\
     &= \IP_{\thetatrue}^N\left(\mathbb{E}_\nu \log\frac{q_\theta^N}{q_{\thetatrue}^N} - \mathbb{E}_\nu \IE_{\thetatrue}^N\log \frac{q_{\theta}^N}{q_{\thetatrue}^N}(D_N) \leq  -(K-s_0^{-2}/2) N\delta_N^2 +c_1 N\deltamap\delta_N \right)\nonumber\\
    &= \IP_{\thetatrue}^N\left(\sum\limits_{i=1}^N \left(\mathbb{E}_\nu \log\frac{q_{\theta}^{(i)}}{q_{\thetatrue}^{(i)}}(Y_i, Z_i) - \mathbb{E}_\nu \IE\log \frac{q_{\theta}^{(i)}}{q_{\thetatrue}^{(i)}}(Y_i, Z_i)\right) \leq  -(K-s_0^{-2}/2) N\delta_N^2 + c_1 N\deltamap\delta_N \right)\nonumber\\
    &\overset{\text{Fubini}}{=}\IP_{\thetatrue}^N\left(\sum\limits_{i=1}^N \left(\mathbb{E}_\nu \log\frac{q_{\theta}^{(i)}}{q_{\thetatrue}^{(i)}}(Y_i, Z_i) -  \IE\mathbb{E}_\nu \log \frac{q_{\theta}^{(i)}}{q_{\thetatrue}^{(i)}}(Y_i, Z_i)\right) \leq  -(K-s_0^{-2}/2) N\delta_N^2 + c_1 N\deltamap\delta_N \right) \nonumber\\
    &\overset{\text{Chebyshev}}{\leq} \IP_{\thetatrue}^N\left( \left| \sum\limits_{i=1}^N W_i \right| \geq  K'N \delta_N^2 \right)
    \leq \frac{\operatorname{Var}\left(\sum\limits_{i=1}^N W_i \right)}{K'^2 N^2 \delta_N^4} 
    = \frac{\sum\limits_{i=1}^N \operatorname{Var}\left(W_i \right)}{K'^2 N^2\delta_N^4} \label{eq:var-lemma}
\end{align}
by independence of the $W_i$, for some $K'>0$ since $N\deltamap \delta_N=o(N\delta_N^2)$ by \cref{cond:misspec-model}, having defined for $i\nset{N}$ the centered variables $W_i = \mathbb{E}_\nu\left[\log \frac{q_{\theta}^{(i)}}{q_{\thetatrue}^{(i)}}(Y_i, Z_i)\right] - \IE_{\thetatrue} ^N \mathbb{E}_\nu\left[\log \frac{q_{\theta}^{(i)}}{q_{\thetatrue}^{(i)}}(Y_i, Z_i)\right]$.\\

Now, for every $i=1,...,N$
\begin{align*}
\operatorname{Var}(W_i) = \IE_{\thetatrue}^N[W_i^2] 
    &= \IE_{\thetatrue}^N \left(\mathbb{E}_\nu\left[\log \frac{q_{\theta}^{(i)}}{q_{\thetatrue}^{(i)}}(Y_i, Z_i) - \IE_{\thetatrue}^N\log \frac{q_{\theta}^{(i)}}{q_{\thetatrue}^{(i)}}(Y_i, Z_i)\right]\right)^2 \text{ by Fubini}\\
    &\leq \IE_{\thetatrue}^N\mathbb{E}_\nu\left[\left(\log \frac{q_{\theta}^{(i)}}{q_{\thetatrue}^{(i)}}(Y_i, Z_i) - \IE_{\thetatrue}^N\log \frac{q_{\theta}^{(i)}}{q_{\thetatrue}^{(i)}}(Y_i, Z_i)\right)^2\right] \text{ by Jensen on $\mathbb{E}_\nu$}\\
    &= \mathbb{E}_\nu \IE_{\thetatrue}^N\left[\left(\log \frac{q_{\theta}^{(i)}}{q_{\thetatrue}^{(i)}}(Y_i, Z_i) - \IE_{\thetatrue}^N\log \frac{q_{\theta}^{(i)}}{q_{\thetatrue}^{(i)}}(Y_i, Z_i)\right)^2\right] \text{ by Fubini.}
\end{align*}
\cref{prop:information-ineq} (ii) and the assumption on the support of $\nu$ yields
$$\operatorname{Var}(W_i)\leq c_2 ~\mathbb{E}_\nu[d_{\calGtilde}(\theta,\thetatrue)^2]\leq c_2\delta_N^2$$

and coming back to \cref{eq:var-lemma}, this leads to 
\begin{align*}
            \IP_{\thetatrue}^N\left(\int_{B_N} e^{\tilde\ell_N(\theta)-\tilde\ell_N(\thetatrue)}d\nu(\theta) \leq e^{-KN\delta_N^2}\right) 
            \leq\frac{c_2 N\delta_N^2}{K'^2N^2\delta_N^4}=O\left(\frac{1}{N\delta_N^2}\right)\xrightarrow[N\to \infty]{} 0 .
\end{align*}

\end{proof}

\subsection{Proofs for the change of measure}

\begin{proof}[Proof of \cref{prop:change-measure}]
Recall the notation $\tilde w_\theta = \calG(\theta)-\calGtilde(\theta)$, for $\theta \in \Theta$. Under $\IP_{\theta}^N$, the generated data satisfies, $\forall 1\leq i \leq N:$
\begin{align*}
    Y_i = \calG(\theta)(Z_i)+\varepsilon_i = \calGtilde(\theta) (Z_i)+\tilde \varepsilon_i, \text{ in terms of the surrogate model}
\end{align*}
where $\tilde  \varepsilon_i\mid Z_i = \tilde w_\theta(Z_i) +\varepsilon_i\mid Z_i \sim N(\tilde w_\theta(Z_i),\sigma_i^2)$ are mutually independent. Following similar computations as in the proof of \cref{prop:information-ineq}, this time under $\IP_{\theta}^N$, we find:
\begin{align*}
    \ell_N(\theta)-\ell_N(\thetatrue) = \frac{1}{2}\sum_{i=1}^N \frac{1}{\sigma_i^2} \Vnorm{\calG(\theta)(Z_i)-\calG(\thetatrue)(Z_i)}^2 + \sum_{i=1}^N \frac{1}{\sigma_i^2} \Vsprod{\calG(\theta)(Z_i)-\calG(\thetatrue)(Z_i)}{\varepsilon_i}
\end{align*}
and
\begin{align*}
    \tilde \ell_N(\theta)- \tilde \ell_N(\thetatrue) 
    &= \frac{1}{2}\sum_{i=1}^N \frac{1}{s_i^2}\Vnorm{\calGtilde(\theta)(Z_i)-\calGtilde(\thetatrue)(Z_i)}^2 + \sum_{i=1}^N \frac{1}{s_i^2}\Vsprod{\calGtilde(\theta)(Z_i)-\calGtilde(\thetatrue)(Z_i)}{\tilde \varepsilon_i}\\
    &= \frac{1}{2}\sum_{i=1}^N\frac{1}{s_i^2} \Vnorm{\calGtilde(\theta)(Z_i)-\calGtilde(\thetatrue)(Z_i)}^2 + \sum_{i=1}^N \frac{1}{s_i^2}\Vsprod{\calGtilde(\theta)(Z_i)-\calGtilde(\thetatrue)(Z_i)}{\varepsilon_i } \\
    &~~~~~~~+ \sum_{i=1}^N \frac{1}{s_i^2} \Vsprod{\calGtilde(\theta)(Z_i)-\calGtilde(\thetatrue)(Z_i)}{ \tilde w_\theta(Z_i)}.
\end{align*}


This yields, by independence of observations
\begin{align} \label{eq:target-com}
    \IE_{\theta}^N\left[\left(\frac{q_\theta^N}{q_{\thetatrue}^N}\frac{p_{\thetatrue}^N}{p_{\theta}^N}\right)^b\right]
    = \IE_{\theta}^N\brE{\exp\left(b(\tilde \ell_N(\theta)-\tilde\ell_N(\thetatrue))-b(\ell_N(\theta)-\ell_N(\thetatrue))\right)}
    =\prod_{i=1}^N\IE_{\theta}^N\left[\exp\left(b W_i\right)\right],
\end{align}

with 
\begin{equation*}
    W_i= \underbrace{\frac{1}{2} \left[\frac{1}{s_i^2}\Vnorm{\Delta \calGtilde_{i}}^2-\frac{1}{\sigma_i^2}\Vnorm{\Delta \calG_{i}}^2\right]
+  \frac{1}{s_i^2}\Vsprod{\Delta \calGtilde_{i}}{\tilde w_\theta(Z_i)}}_{A_i}
+\underbrace{ \Vsprod{\frac{1}{s_i^2}\Delta \calGtilde_{i}-\frac{1}{\sigma_i^2}\Delta \calG_{i}}{\varepsilon_i}}_{B_i}
\end{equation*}

where we write $\Delta \calGtilde_{i}:= \calGtilde(\theta)(Z_i)-\calGtilde(\thetatrue)(Z_i)$ and $\Delta \calG_{i}:= \calG(\theta)(Z_i)-\calG(\thetatrue)(Z_i)$ for ease of notation. Now, knowing that $Z_i \perp \varepsilon_i$, by the tower property and applying the Gaussian MGF for $\varepsilon_i$:

\begin{align*}
    \log&\IE_{\theta}^N\left[\exp\left(b W_i\right)\right]
    =\log\IE_{\zeta}^N \left[\IE_{\varepsilon}^N \left[\exp\left(b W_i\right)\mid Z_i\right]\right] = \log\IE_{\zeta}^N\left[\exp\left(b A_i\right) \IE_{\varepsilon}^N \left[\exp\left(b B_i\right) | Z_i\right] \right]\\
    &=\log\IE_\zeta^N\left[\exp\left(\frac{b}{2} \left[\frac{1}{s_i^2}\Vnorm{\Delta \calGtilde_{i}}^2-\frac{1}{\sigma_i^2}\Vnorm{\Delta \calG_{i}}^2\right]
+  \frac{b}{s_i^2}\Vsprod{\Delta \calGtilde_{i}}{\tilde w_\theta(Z_i)}\right)\times \exp\left(\frac{b^2 \sigma_i^2}{2}\Vnorm{\frac{1}{s_i^2}\Delta \calGtilde_{i}-\frac{1}{\sigma_i^2}\Delta \calG_{i} }^2\right)\right]\\
    &=\log\IE_\zeta^N \left[\exp\left(\frac{b}{2} \left[\left(\frac{1}{s_i^2}-\frac{1}{\sigma_i^2}\right)\Vnorm{\Delta \calGtilde_i}^2-\frac{1}{\sigma_i^2}\Vnorm{\tilde{w}_\theta(Z_i) - \tilde{w}_{\thetatrue}(Z_i)}^2\right]-\frac{b}{\sigma_i^2}\Vsprod{\Delta \calGtilde_{i}}{\tilde w_\theta(Z_i)-w_{\thetatrue}(Z_i)}\right. \right.\\
&\qquad\qquad\qquad + \left.\left. \frac{b}{s_i^2}\Vsprod{\Delta \calGtilde_{i}}{\tilde w_\theta(Z_i)}
+\frac{b^2 \sigma_i^2}{2}\Vnorm{\frac{1}{s_i^2}\Delta \calGtilde_{i}-\frac{1}{\sigma_i^2}\Delta \calG_{i} }^2\right) \right]\\
&\leq \log\IE_\zeta^N \left[\exp\left({\frac{b}{2} \left( \frac{1}{s_i^2}-\frac{1}{\sigma_i^2}\right)\Vnorm{\Delta \calGtilde_i}^2 }
+ {\Vnorm{\Delta\calGtilde_i}\left(\frac{b}{\sigma_i^2}\Vnorm{w_\theta(Z_i)-w_{\thetatrue}(Z_i)} + \frac{b}{s_i^2}\Vnorm{\tilde w_\theta(Z_i)}\right)} \right.\right.\\
&\qquad\qquad\qquad \left.\left. +{\frac{b^2 \sigma_i^2}{2}\Vnorm{\frac{1}{s_i^2}\Delta \calGtilde_{i}-\frac{1}{\sigma_i^2}\Delta \calG_{i} }^2}\right)   \right]\\
&\leq \log\IE_\zeta^N \left[\exp\left(\underbrace{\left(-\frac{b\sigma_i^2}{2}\left( 1 -\frac{\sigma_i^2}{s_i^2}\right)+\frac{b^2}{\sigma_i^2}\left|1-\frac{\sigma_i^2}{s_i^2}\right|^2 \right)\Vnorm{\Delta \calGtilde_i}^2 }_{(I)} +\underbrace{\frac{b^2}{\sigma_i^2}\Vnorm{\tilde{w}_{\thetatrue}(Z_i)-\tilde{w}_\theta(Z_i)}^2}_{(II)}\right.\right.\\
&\qquad\qquad\qquad  \left.\left. + \underbrace{\Vnorm{\Delta\calGtilde_i}\left(\frac{b}{\sigma_i^2}\Vnorm{\tilde w_\theta(Z_i)-w_{\thetatrue}(Z_i)} + \frac{b}{s_i^2}\Vnorm{\tilde w_\theta(Z_i)}\right)}_{(III)} \right)   \right]
\end{align*}

by Cauchy-Schwarz, and where to simplify we used computations
$$\Delta\calGtilde_i=\left(\calGtilde(\theta)-\calG(\theta)+\calG(\theta)-\calG(\thetatrue)+\calG(\thetatrue)-\calGtilde(\thetatrue)\right)(Z_i)=\Delta\calG_i + \tilde{w}_{\thetatrue}(Z_i)-\tilde{w}_{\theta}(Z_i)$$
to go from the first to the second line, and
\begin{align*}
\frac{b^2 \sigma_i^2}{2} \Vnorm{\frac{1}{s_i^2}\Delta \calGtilde_{i}-\frac{1}{\sigma_i^2}\Delta \calG_{i} }^2 
 &=\frac{b^2 \sigma_i^2}{2} \Vnorm{\left(\frac{1}{s_i^2}-\frac{1}{\sigma_i^2}\right)\Delta \calGtilde_i+\frac{1}{\sigma_i^2}\tilde{w}_{\thetatrue}(Z_i) - \frac{1}{\sigma_i^2}\tilde{w}_\theta(Z_i)}^2\\
 &\leq \frac{b^2}{\sigma_i^2}\left|1-\frac{\sigma_i^2}{s_i^2}\right|^2 \Vnorm{\Delta \calGtilde_{i}}^2 + \frac{b^2}{\sigma_i^2}\Vnorm{\tilde{w}_{\thetatrue}(Z_i)-\tilde{w}_\theta(Z_i)}^2.
\end{align*}

for the last line. Let us now control each term (I), (II), (III), using to Conditions \ref{cond:misspec-error} and \ref{cond:misspec-model}. We first remark that by Conditions \ref{cond:fm:bound}, for $\theta$ with $\|\theta\|_\scrR \leq M$:
\begin{align*}
    \Vnorm{\Delta\calGtilde_i}\leq \| \calGtilde(\theta)-\calGtilde(\thetatrue)\|_\infty 
    &\leq  \| \calGtilde(\theta)\|_\infty + \|\calGtilde(\thetatrue)\|_\infty \leq \Const{\calGtilde,B}(1+\| \theta\|_\scrR^{\gamma_B})+\Const{\calGtilde,B}(1+\| \thetatrue\|_\scrR^{\gamma_B}) \leq \Const{M, \calGtilde, \gamma_B}.
\end{align*}

\textbf{Controlling (I).} (I) is the standalone contribution of the noise misspecification. 
\begin{itemize}
    \item Under \cref{cond:misspec-error}~\ref{item:NMII.I}, when the variance is overestimated (i.e. $s_i^2>\sigma_i^2$) with $1-\sigma_i^2/s_i^2=\deltanoise=o(1)$, then the term in parenthesis is of order $-b\sigma_i^2\left(1-\frac{\sigma_i^2}{s_i^2}\right)$ and the contribution of (I) inside the exponential is negative: we can therefore ignore (I) as it will yield an upper bound by 1.
    \item Under \cref{cond:misspec-error}~\ref{item:NMII.II}, the term in parenthesis is of order 
    $$b\sigma_i^2\left|1-\frac{\sigma_i^2}{s_i^2}\right|\leq b\sigma_\infty^2 \deltanoise \leq b\sigma_\infty^2 C_{noise} \delta_N^2$$
\end{itemize}

yielding then $(I)\leq c\delta_N^2$ for some constant $c=c(M, \calGtilde, \gamma_B, b, \sigma_\infty^2)$.\\

\textbf{Controlling (II) and (III).} 
By \cref{cond:misspec-model}, $ \|\tilde{w}_\theta\|_\infty \leq \|\calG(\theta)-\calGtilde(\theta)\|_\infty \leq c(M)\deltamap$ with $\deltamap \leq C_{model}\delta_N^2$, yielding
\begin{align*}
    (II)\lesssim \|\tilde{w}_\theta\|_\infty^2 + \|\tilde{w}_{\thetatrue}\|_\infty^2 \lesssim {\deltamap}^2 \lesssim \delta_N^4
\end{align*}
and 
\begin{align*}
    (III)\lesssim \|\Delta\calGtilde_i\|_\infty \left( \|\tilde{w}_{\theta}\|_\infty + \|\tilde{w}_{\thetatrue}\|_\infty \right) \lesssim {\deltamap} \lesssim \delta_N^2
\end{align*}
for constants depending on $\{M, \calGtilde, b, s_0^2,\sigma_0^2,\sigma_\infty^2, C_{model}\}$.\\

Finally, going back to \cref{eq:target-com}, we obtain 
\begin{align*} 
    \IE_{\theta}^N\left[\left(\frac{q_\theta^N}{q_{\thetatrue}^N}\frac{p_{\thetatrue}^N}{p_{\theta}^N}\right)^b\right] \leq \exp(c_5\times N \delta_N^2)
\end{align*}
for some $c_5=c(b, s_0^{2}, \sigma_\infty^2, M, \Const{\calGtilde}, \Const{noise},\Const{model})$.
\end{proof}

\bigskip

\begin{proof}[Proof of \cref{prop:change-measure-peeling}]
    
We want to show that there exists a constant $c_6>0$ such that

\begin{equation}\label{eq:integral-prior}
\int_{\Theta_N(M)^c} \IE_{\thetatrue}^N[e^{\tilde\ell_N(\theta)-\tilde\ell_N(\thetatrue)}]d\Pi_N(\theta) = e^{-c_6N\delta_N^2}
\end{equation}

Using computation (\cref{eq:pseudo-loglik-comput}), and recalling $\tilde w_{\thetatrue} = \calG(\thetatrue) - \calGtilde(\thetatrue)$, under $\IP^N_{\thetatrue}$
\begin{align*}
\loglikmodel(\theta)-\loglikmodel(\thetatrue) 
&=-\frac{1}{2}\sum_{i=1}^N\frac{1}{s_i^2}\Vnorm{\calGtilde(\theta)(Z_i)-\calGtilde(\thetatrue)(Z_i)}^2 
-\sum_{i=1}^N \frac{1}{s_i^2} \Vsprod{\tilde w_{\thetatrue}(Z_i)}{\calGtilde(\thetatrue)(Z_i)-\calGtilde(\theta)(Z_i)}\\
&~~~~~~~~~~~~~~~~~-\sum_{i=1}^N \frac{1}{s_i^2}\Vsprod{\varepsilon_i}{\calGtilde(\thetatrue)(Z_i)-\calGtilde(\theta)(Z_i)}
\end{align*}   

we find that conditioned on the covariates $Z_1,...,Z_N$, and using the MGF of $\varepsilon_i$ which under $P_{\thetatrue}$ is a $N(0, \sigma_i^2)$ random variable:
\begin{align}
    \IE_{\thetatrue}^N&[e^{\loglikmodel(\theta)-\loglikmodel(\thetatrue)}|Z_1,...,Z_N]\nonumber\\
    &= \exp\left(-\frac{1}{2}\sum_{i=1}^N \frac{1}{s_i^2}\left(1-\frac{\sigma_i^2}{s_i^2}\right)\Vnorm{\calGtilde(\theta)(Z_i)-\calGtilde(\thetatrue)(Z_i)}^2 \right) \label{eq:variance-contribution} \\ 
    &~~~~~~~~~~~~~~~~~~~~~~\times\exp\left(-\sum_{i=1}^N \frac{1}{s_i^2}\Vsprod{\calG(\thetatrue)(Z_i)-\calGtilde(\thetatrue)(Z_i)}{\calGtilde(\thetatrue)(Z_i)-\calGtilde(\theta)(Z_i)}\right)\nonumber\\
    & = \exp\left(\frac{1}{2}s_0^{-2} N \deltanoise \|\calGtilde(\theta)-\calGtilde(\thetatrue)\|_\infty^2 \right)  \exp\left(N C \deltamap \|\calGtilde(\thetatrue)-\calGtilde(\theta)\|_\infty\right) \label{eq:exp-for-slicing}
\end{align}
for some $C=C(s_0^{-2}, \|\thetatrue\|_\scrR)$. 
The quantity $\|\calGtilde(\theta)-\calGtilde(\thetatrue)\|_\infty$ cannot be controlled globally on $\Theta_N(M)^c$; however we can control it on ``slices'' where the parameter has bounded $\scrR$-norm, and use tail decay of the prior via a slicing argument. Note that the integral \cref{eq:integral-prior} is $0$ outside of the support $\scrR$ of the prior; hence we can restrict computations to the integral over $\Theta_N(M)^c \cap \scrR$. Write
\begin{align}\label{eq:peeling-slices}
\Theta_N(M)^c \cap\scrR &= \bigcup_{\ell=\ell_0}^\infty \br{\Theta_N(M)^c \cap \{\theta\in \scrR : 2^\ell S\leq\|\theta\|_\scrR < 2^{\ell+1}S\}} := \bigcup_{\ell=\ell_0}^\infty \calP_\ell
\end{align}

for $S> \|\thetatrue\|_\scrR\vee 1$ and $\ell_0 = \lfloor \log_2(M/S) \rfloor$. On every slice, for every $\theta \in \calP_\ell~ (\ell \geq \ell_0)$:
\begin{align*}
    \| \calGtilde(\theta)-\calGtilde(\thetatrue)\|_\infty 
    &\leq  \| \calGtilde(\theta)\|_\infty + \|\calGtilde(\thetatrue)\|_\infty \leq \Const{\calGtilde,B}(1+\| \theta\|_\scrR^{\gamma_B})+\Const{\calGtilde,B}(1+\| \thetatrue\|_\scrR^{\gamma_B}) \leq 4 \Const{\calGtilde,B} (2^{\ell+1}S)^{\gamma_B}
\end{align*}

Coming back to \cref{eq:exp-for-slicing} and using that (see e.g. (2.21) in \cite{Nickl_2023})
\begin{equation}\label{eq:prior-tail}
\Pi_N(\calP_\ell) \leq \Pi_N(\|\theta\|_\scrR > 2^\ell S)\leq \rme^{-c_{prior}2^{2\ell}S^2N\delta_N^2},
\end{equation}
we see
\begin{align}
    \sum_{\ell=\ell_0}^\infty \int_{\calP_\ell} &\IE_{\thetatrue}^N[\rme^{\loglikmodel(\theta)-\loglikmodel(\thetatrue)}]d\Pi_N(\theta) \notag\\
    &\leq
    \sum_{\ell=\ell_0}^\infty \exp\left(8s_0^{-2} N\deltanoise \Const{\calGtilde,B}^2 (2^{\ell+1}S)^{2\gamma_B}+4NC\Const{\calGtilde,B} \deltamap N (2^{\ell+1}S)^{\gamma_B}\right)\exp(-{c_{prior}}2^{2\ell}S^2N\delta_N^2) \notag\\
    &\leq \sum_{\ell=\ell_0}^\infty \exp\left[-N\delta_N^2 2^{2\ell} S^2 \left(c_{prior} - c_a \frac{\deltanoise}{\delta_N^2} 2^{(2\gamma_B-2)\ell}S^{(2\gamma_B-2)} -c_b \frac{\deltamap}{\delta_N^2}2^{(\gamma_B-2)\ell}S^{(\gamma_B-2)}\right)\right] \notag \\
        &\leq \sum_{\ell=\ell_0}^\infty \exp\left[-N\delta_N^2 2^{2\ell}S^2\underbrace{ \left(c_{prior} - c_a \Const{noise}2^{(2\gamma_B-2)\ell} -c_b \Const{model}2^{(\gamma_B-2)\ell}\right)}_{(K(\ell))}\right] \label{eq:last-one-peeling}
\end{align}

under conditions \ref{cond:misspec-error} and \ref{cond:misspec-model}, for some constants $c_a, c_b$ depending on $\{ S, \Const{\calGtilde,B}, s_0^2, \|\thetatrue\|_\scrR \}$. We show that under our conditions, $(K(\ell))$ can be lower bounded by some strictly positive constant $\tau>0$ for every $\ell \geq \ell_0$. Let us look at contributions from each source of misspecification in $(K(\ell))$:
\begin{itemize}
\item Under \ref{item:NMII.I} where the variance gets overestimated, the 2nd term in $(K(\ell))$ accounting for noise misspecification actually disappears (since the term inside the exponential in \cref{eq:variance-contribution} is negative when $s_i^2> \sigma_i^2$). Only the contribution from model misspecification remains, and by Condition $\gamma_B \leq 2$ in \cref{cond:misspec-model} and for $\Const{model}$ small enough we obtain the desired result.
    \item Otherwise under \ref{item:NMII.II}, $\gamma_B \leq 1$. The contribution from noise misspecification decays with $\ell$, and the dominating contribution is the one from noise misspecification. For $\Const{noise}$  small enough the term $K(\ell)$ remains positive.  
\end{itemize}

Therefore, for $\Const{\mathrm{noise}}$ and $\Const{\mathrm{model}}$ sufficiently small, the term $(K(\ell))$ can be lower bounded by some constant $\tau>0$ for every $\ell \geq \ell_0$. The summands decay superexponentially with $\ell$ and we can upper bound the sum by the order of the first term, $O(\exp(-c\tau 2^{2\ell_0}N\delta_N^2))$. 


In the end, we obtain the desired $\cref{eq:integral-prior}$, for some constant $c_6=c_6(M, C_{\calGtilde, B}, \thetatrue, s_0^{2}, \Const{\mathrm{noise}}, \Const{\mathrm{model}})$. In particular, $c_6$ can be made as large as desired by increasing the radius $M$ in the definition of the regularisation set $\Theta_N(M)$.
\end{proof}

\subsection{Proof of \cref{prop:prior-mass-bn}}

\begin{proof}[Proof of \cref{prop:prior-mass-bn}]
    The proof copies that of Theorem 2.2.2 (step 2) in \cite{Nickl_2023} replacing $\calG$ with $\calGtilde$; with a simple perturbation argument. Precisely: let $M\in\pRz$, and assume that $\norm{\theta-\thetatrue}_{\scrR} \leq M$. As $\thetatrue\in\scrR$, we thus have $\norm{\theta}_{\scrR} \leq M + \norm{\thetatrue}_{\scrR} \eqqcolon\bar{M}\in\pRz$. Then, by \cref{cond:MM:III} and \cref{cond:fm:bound}:
        \begin{equation*}
        \norm{\calGtilde(\theta)}_\infty \leq \Const{\calGtilde,\rmB}\times(1+\bar{M}^{\gamma_B}) \leq c_7(\bar{M},\Const{\calGtilde,\rmB})\eqqcolon \rmU.
    \end{equation*}
    Thus, 
    \begin{align*}
         \Pi_N(\widetilde\calB_N) &= \Pi_N\left(\theta \in\Theta: d_{\calGtilde}(\theta,\thetatrue) \le \delta_N,\; \|\calGtilde(\theta) \|_{\infty} \le U \right)\\
        &\ge \Pi_N\left(\theta\in\Theta :d_{\calGtilde}(\theta,\thetatrue)  \le \delta_N,\; \|\theta - \thetatrue\|_{\scrR} \le M
        \right).
    \end{align*}
    Now with the Lipschitz-condition \cref{cond:fm:Liptwo}, the triange inequality and \cref{cond:MM:III} (noting that $(\calZ,\scrZ,\zeta)$ is a probability space), we obtain 
    \begin{align*}
        d_{\calGtilde}(\theta,\thetatrue) &\leq d_\calG(\theta,\thetatrue) + \norm{\calGtilde(\theta)-\calG(\theta)}_{\IL_\zeta^2(\calZ,V)} +\norm{\calGtilde(\thetatrue)-\calG(\thetatrue)}_{\IL_\zeta^2(\calZ,V)}\\
        & \leq d_\calG(\theta,\thetatrue) + c(\bar{M})\times\deltamap + c(\thetatrue)\deltamap\\
        & \leq  d_\calG(\theta,\thetatrue) + \const\br{\Mbar,\thetatrue,C_{model}}\delta_N^2\\
        & \leq \ConstLiptwo\times\br{1+\Mbar^{\gamma_2}}\times\norm{\theta-\thetatrue}_{(H^\kappa(\calM,W))^*}+\const\br{\Mbar,\thetatrue,C_{model}}\delta_N^2.
    \end{align*}
    The last line is upper bounded by $\delta_N$, if $\norm{\theta-\thetatrue}_{(H^\kappa(\calM,W))^*}\leq \frac{\delta_N}{c_8(\bar{M},\ConstLiptwo,\gamma_2)}$ with $c_8(\bar{M},\ConstLiptwo,\gamma_2) > \ConstLiptwo\times\br{1+\Mbar^{\gamma_2}}$ and $N = N(\Mbar,\thetatrue,C_{model})$ sufficently large. Then, the above probability can be further upper bounded by 
    \begin{align*}
        \Pi_N(\widetilde\calB_N) & \geq \Pi_N\left(
  \theta\in\Theta\; :\;
  \|\theta - \thetatrue\|_{(H^{\kappa}(\calM,W))^{*}} \leq  \frac{\delta_N}{c_8(\bar{M},\ConstLiptwo,\gamma_2)},\; \|\theta - \thetatrue\|_{\scrR} \le M
\right).
    \end{align*}
    Applying Corollary 2.6.18 in \cite{Gine_Nickl_2021} as well as the Gaussian correlation inequality, Theorem B.1.2 in \cite{Nickl_2023}, we obtain
    \begin{align*}
        \Pi_N(\widetilde\calB_N) & \geq \rme^{-\frac{1}{2} N \delta_N^2 \|\thetatrue\|_{\scrH}^{2}}\times
\Pi_N\left(
  \theta\in\Theta\; :\;\|\theta\|_{(H^{\kappa}(\calM,W))^{*}} \le \frac{\delta_N}{c_8(\bar{M},\ConstLiptwo,\gamma_2)}\right)
\times
\Pi_N\left(\|\theta\|_{\scrR} \le M\right).
    \end{align*}

The proof now follows the same steps as in Step 2 of Theorem 2.2.2 in \cite{Nickl_2023} by noting that any ball in $H^\alpha(\calM,W)$ (and $H^\alpha_c(\calM,W)$) can be covered in terms of the $\|\cdot\|_{(H^{\kappa}(\calM,W))^{*}}$-norm taking care of the additional dimension $d_W$, see also Lemma 4.9 in \cite{Siebel_2025}.
\end{proof}

\subsection{\texorpdfstring
  {Proof of \cref{thm:posterior-contraction}}
  {Proof of Theorem X}
}


\begin{proof}[Proof of \cref{thm:posterior-contraction}]
    The beginning of the proof follows the steps of Theorem 1.3.2. in \cite{Nickl_2023}. From \cref{prop:prior-mass-bn} and \cref{lemma:contraction-auxiliary} applied to  $\nu = \Pi_N(\cdot)/\Pi_N(B_N)$ for $B_N = \tilde\calB_N, K=A+ s_0^{-2}$, it follows that the events
    \begin{equation}\label{eq:setsAN}
        A_N = \brK{ \int_\Theta e^{\tilde\ell_N(\theta)-\tilde\ell_N(\thetatrue)}\rmd\Pi_N(\theta) \geq \Pi_N(B_N)e^{-s_0^{-2}N\delta_N^2} \geq e^{-(A+ s_0^{-2})N\delta_N^2}}
    \end{equation}
    satisfy $\lim_{N\to\infty}\IP_{\thetatrue}^N(A_N)= 1$. Given $M,\rho \in\pRz$, we now introduce the sets
    \begin{equation*}
        \bar{\Theta}_N(M)\coloneqq \Theta_N(M)\cap\brK{\theta\in\Theta\;:\;d_\calG(\theta,\thetatrue)\leq \rho\delta_N}
    \end{equation*}
    and denote by $ \bar{\Theta}_N(M)^c$ their complements in $\Theta$. Now controlling the events $A_N$ as above and using the tests $\Psi_N$ from \cref{prop:testing}, the target probability \cref{eq:target-proba} can be written for $N\to \infty:$
    \begin{align*}
        \IP_{\thetatrue}^N&\br{\Pitilde_N(\bar\Theta_N(M)^c\mid D_N) \geq e^{-bN\delta_N^2}}  \\
        &= \IP_{\thetatrue}^N\br{\Pitilde_N(\bar\Theta_N(M)^c\mid D_N) \geq e^{-bN\delta_N^2}, A_N} +\IP_{\thetatrue}^N\br{\Pitilde_N(\bar\Theta^c_N) \geq e^{-bN\delta_N^2}, A_N^c} \\
        &=\IP_{\thetatrue}^N\left( \frac{\int_{\bar\Theta_N(M)^c} e^{\tilde\ell_N(\theta)-\tilde\ell_N(\thetatrue)}\rmd\Pi_N(\theta)}{\int_{\Theta} e^{\tilde\ell_N(\theta)-\tilde\ell_N(\thetatrue)}\rmd\Pi_N(\theta)} \geq e^{-bN\delta_N^2},\Psi_N=0,~ A_N\right) + o(1)\\
        &\leq \IP_{\thetatrue}^N\left( \int_{\bar\Theta_N(M)^c} e^{\tilde\ell_N(\theta)-\tilde\ell_N(\thetatrue)}(1-\Psi_N)\rmd\Pi_N(\theta)\geq e^{-(b+A+s_0^{-2})N\delta_N^2}\right) + o(1).
    \end{align*}
    It remains to upper bound the last probability. Markov's inequality and Fubini's theorem yield
    \begin{align*}
        e&^{(b+A+s_0^{-2})N\delta_N^2}\times\IE_{\thetatrue}^N\brE{\int_{\bar\Theta_N(M)^c} e^{\tilde\ell_N(\theta)-\tilde\ell_N(\thetatrue)}(1-\Psi_N)d\Pi_N(\theta)}\\
        &= e^{(b+A+s_0^{-2})N\delta_N^2}\Bigg[~\int_{\Theta_N(M)^c} \IE_{\thetatrue}^N[e^{\tilde\ell_N(\theta)-\tilde\ell_N(\thetatrue)}(1-\Psi_N)]d\Pi_N(\theta) \\
        &~~~~~~~~~~~~~~~~~~~~~~~~~~~~~~~~~~~~~+ \int\limits_{\{d_{\mathcal{G}}(\theta,\thetatrue)\geq \rho\delta_N\}} \!\IE_{\thetatrue}^N[e^{\tilde\ell_N(\theta)-\tilde\ell_N(\thetatrue)}(1-\Psi_N)]d\Pi_N(\theta)\Bigg] \\
        &\leq e^{(b+A+s_0^{-2})N\delta_N^2}\left[~2\int_{\Theta_N(M)^c} \IE_{\thetatrue}^N[e^{\tilde\ell_N(\theta)-\tilde\ell_N(\thetatrue)}(1-\Psi_N)]d\Pi_N(\theta) \right.\\
        &~~~~~~~~~~~~~~~~~~~~~~~~~~~~~~~~~~~~~+\left. \int\limits_{\{d_{\mathcal{G}}(\theta,\thetatrue)\geq \rho\delta_N\}\cap \Theta_N(M)} \!\IE_{\thetatrue}^N[e^{\tilde\ell_N(\theta)-\tilde\ell_N(\thetatrue)}(1-\Psi_N)]d\Pi_N(\theta)\right]  \tag{$\dagger$}\label{eq:proba-to-bound}
    \end{align*}




where we used that $\bar\Theta^c_N = \Theta_N(M)^c \cup \{d_{\mathcal{G}(\theta,\thetatrue)}\geq \rho\delta_N\}$. Let us now control each the integral.\\

\textbf{Step 1:} We first control 
$$\int_{\brK{\theta \in \Theta_N(M) : d_\calG(\theta,\thetatrue)> \rho\delta_N} }\IE_{\thetatrue}^N\left[\frac{q_\theta^N}{q_{\thetatrue}^N}(1-\Psi_N)\right]d\Pi_N(\theta)$$

Note that
\begin{align*}
    \IE_{\thetatrue}^N\left[\frac{q_\theta^N}{q_{\thetatrue}^N}(1-\Psi_N)\right] 
    &= \IE_{\thetatrue}^N\left[(1-\Psi_N)\frac{q_\theta^N}{q_{\thetatrue}^N}\frac{p_{\thetatrue}^N}{p_{\theta}^N}\frac{p_\theta^N}{p_{\thetatrue}^N}\right]
    =\IE_{\theta}^N\left[(1-\Psi_N)\frac{q_\theta^N}{q_{\thetatrue}^N}\frac{p_{\thetatrue}^N}{p_{\theta}^N}\right]\\
    &\leq \left(\IE_{\theta}^N\left[(1-\Psi_N)^2\right]\right)^{1/2} \left(\IE_{\theta}^N\left[\left(\frac{q_\theta^N}{q_{\thetatrue}^N}\frac{p_{\thetatrue}^N}{p_{\theta}^N}\right)^2\right]\right)^{1/2} \text{ by Cauchy-Schwarz }\\
    &\leq \left(\IE_{\theta}^N\left[(1-\Psi_N)\right]\right)^{1/2} \left(\IE_{\theta}^N\left[\left(\frac{q_\theta^N}{q_{\thetatrue}^N}\frac{p_{\thetatrue}^N}{p_{\theta}^N}\right)^2\right]\right)^{1/2} \text{as $|1-\Psi_N| \leq 1$}\\
    &\leq \exp{(- \bar c/2 \times N\delta_N^2)} \exp{(c_5 N\delta_N^2)},
\end{align*}

where we used the assumptions on the tests (\cref{prop:testing}) to control the factor on the left, and \cref{prop:change-measure}, to control the factor on the right, since $\Theta_N(M) \subset \{ \theta: \|\theta\|_\scrR \leq M \}$. \\

\textbf{Step 2:} On $\Theta^c_N$, notice that $\IE_{\thetatrue}^N[e^{\tilde\ell_N(\theta)-\tilde\ell_N(\thetatrue)}(1-\Psi_N)] \leq \IE_{\thetatrue}^N[e^{\tilde\ell_N(\theta)-\tilde\ell_N(\thetatrue)}]$ so that 
$$\int_{\Theta_N(M)^c} \IE_{\thetatrue}^N[e^{\tilde\ell_N(\theta)-\tilde\ell_N(\thetatrue)}(1-\Psi_N)]d\Pi(\theta)=\exp(-c_6\times N \delta_N^2)$$ by virtue of \cref{prop:change-measure-peeling}. \bigskip


In the end, the target probability can be bounded by above by
\begin{align*}
    e^{(b+A+\bar{s}_N^{-2})N\delta_N^2}\left(e^{-(\bar c/2 -c_5)N\delta_N^2} + e^{-c_6N\delta_N^2}\right)\to 0, \text{ as } N\to\infty
\end{align*}

for $\rho$ large enough such that $\bar c > 2(b+A+ s_0^{-2}+c_5)$, and also by assumption $c_6 > b + A + s_0^{-2}$.
\end{proof}

\subsection{\texorpdfstring
  {Proof of \cref{thm:posterior-contraction-inverse}}
  {Proof of Theorem X}
}

\begin{proof}[Proof of \cref{thm:posterior-contraction-inverse}]

As in Theorem 2.3.1 of \cite{Nickl_2023}, the proof of \cref{eq:target-proba-inverse} follows from\cref{thm:posterior-contraction} and \cref{cond:inverseproblem} yielding the set inclusion
$$\{\theta \in \Theta_N(M) : d_\calG(\theta,\thetatrue)\leq \rho \delta_N \} \subset \{\theta \in \Theta_N(M) : d_\calG(\theta,\thetatrue)\leq \Const{\calG,inv}(M)\} (\rho \delta_N)^\tau \}.$$

For the convergence of the posterior mean \cref{eq:posterior-mean}, following the proof of Theorem 2.3.2 in \cite{Nickl_2023}, we set $\eta_N = \Const{\calG,inv}(M)(\rho\delta_N)^{\eta}$. Then, by Jensen and Cauchy--Schwarz,
\begin{align*}
\left\| \mathbb{E}^{\Pitilde}[\theta \mid D_N] - \thetatrue\right\|_{L^2}
&\le \mathbb{E}^{\Pitilde}\!\left[\|\theta - \thetatrue\|_{L^2} \mid D_N \right] \\
&\le \eta_N + \mathbb{E}^{\Pitilde}\!\left[\|\theta - \thetatrue\|_{L^2}\mathds{1}_{\!\left\{\|\theta - \thetatrue\|_{L^2} > \eta_N\right\}}\mid D_N \right] \\
&\le \eta_N + \mathbb{E}^{\Pitilde}\!\left[\|\theta - \thetatrue\|_{L^2}^2 \mid D_N \right]^{1/2}\Pitilde\!\left(\|\theta - \thetatrue\|_{L^2} > \eta_N \mid D_N \right)^{1/2},
\end{align*}

and we now show that the last term is $O_{P^N_{\thetatrue}}(\eta_N)$ to prove the theorem. We recall the sets $A_N$ from \cref{eq:setsAN} which satisfy $\IP^N_{\thetatrue}(A_N) \to 1$ as $N \to \infty$. Then using \cref{eq:target-proba-inverse}, Markov's inequality, Fubini's theorem, 

\begin{align*}
& \IP^N_{\thetatrue}\!\Big(\mathbb{E}^{\Pitilde}\!\left[\|\theta - \thetatrue\|_{L^2}^2 \mid D_N\right]\Pitilde\!\left(\|\theta - \thetatrue\|_{L^2} > \eta_N \mid D_N\right)> \eta_N^2 \Big)\\
&\le \IP^N_{\thetatrue}\!\Big(\mathbb{E}^{\Pitilde}\!\left[\|\theta - \thetatrue\|_{L^2}^2 \mid D_N\right] e^{-bN\delta_N^2} > \eta_N^2 \Big) + o(1)\\
&\le \IP^N_{\thetatrue}\!\Big(e^{-bN\delta_N^2} \frac{\int \|\theta - \thetatrue\|_{L^2}^2 e^{\tilde\ell_N(\theta)-\tilde\ell_N(\thetatrue)}\, d\Pi(\theta)}{\int e^{\tilde\ell_N(\theta)-\tilde\ell_N(\thetatrue)}\, d\Pi(\theta)}> \eta_N^2,\; A_N \Big) + o(1)\\
&\le e^{(A+s_0^{-2}-b)N\delta_N^2}\,\eta_N^{-2} \int \|\theta - \thetatrue\|_{L^2}^2 \IE^N_{\thetatrue}\!\left[e^{\tilde\ell_N(\theta)-\tilde\ell_N(\thetatrue)}\right] d\Pi(\theta)
+ o(1)
\end{align*}

As in the proof of \cref{thm:posterior-contraction}, we cannot directly upper bound $\IE^N_{\thetatrue}\!\left[e^{\tilde\ell_N(\theta)-\tilde\ell_N(\thetatrue)}\right]$ by $1$ because of the misspecification. We again make use of the change of measures propositions. Note that we can decompose the integral:
\begin{align*}
    &\int \|\theta - \thetatrue\|_{L^2}^2 \IE^N_{\thetatrue}\!\left[e^{\tilde\ell_N(\theta)-\tilde\ell_N(\thetatrue)}\right] d\Pi(\theta)\\
    &= \int_{\Theta_{N}(M)} \|\theta - \thetatrue\|_{L^2}^2 \IE^N_{\thetatrue}\!\left[e^{\tilde\ell_N(\theta)-\tilde\ell_N(\thetatrue)}\right] d\Pi(\theta) 
    + \int_{\scrR\cap\Theta_{N}(M)^c} \|\theta - \thetatrue\|_{L^2}^2 \IE^N_{\thetatrue}\!\left[e^{\tilde\ell_N(\theta)-\tilde\ell_N(\thetatrue)}\right] d\Pi(\theta)
\end{align*}

For the first term, notice that $\Theta_{N}(M) \subset \scrR(M)$, and that
$\IE^N_{\thetatrue}\!\left[e^{\tilde\ell_N(\theta)-\tilde\ell_N(\thetatrue)}\right]=\IE^N_{\theta}\!\left[\frac{q^N_\theta}{q^N_{\thetatrue}}\frac{p^N_{\thetatrue}}{p^N_\theta}\right]$, therefore \cref{prop:change-measure} applies with $b=1$, yielding
\begin{align*}
    \int_{\Theta_{N}(M)} \|\theta - \thetatrue\|_{L^2}^2 \IE^N_{\thetatrue}\!\left[e^{\tilde\ell_N(\theta)-\tilde\ell_N(\thetatrue)}\right] d\Pi(\theta) \leq e^{c_5 N\delta_N^2} \int_{\Theta_{N}(M)} \|\theta - \thetatrue\|_{L^2}^2 d\Pi(\theta) \leq C  e^{c_5 N\delta_N^2}
\end{align*}

where we used that the Gaussian measure $\Pi'$ used as the base prior is supported in $L^2$ and integrates $\|\cdot\|_{L^2}^2$ to a finite constant.\\

To deal with the second term, we adapt the slicing argument from \cref{prop:change-measure-peeling}, noting that on every slice $\calP_\ell$, because $\scrR$ embeds continuously into $L^2$:

$$ \|\theta - \thetatrue \|_{L^2} \leq \|\theta - \thetatrue \|_{\scrR} \leq \| \theta\|_\scrR + \| \thetatrue \|_\scrR \leq 2 (2^{\ell+1}S)$$

and plugging into \cref{eq:last-one-peeling} we get
\begin{align*}
\int_{\scrR\cap\Theta_{N}(M)^c} \|\theta - \thetatrue\|_{L^2}^2 \IE^N_{\thetatrue}\!\left[e^{\tilde\ell_N(\theta)-\tilde\ell_N(\thetatrue)}\right] d\Pi(\theta)
&\leq \sum_{\ell= \ell_0}^\infty \int_{\calP_\ell} \|\theta - \thetatrue\|_{\scrR}^2 \IE^N_{\thetatrue}\!\left[e^{\tilde\ell_N(\theta)-\tilde\ell_N(\thetatrue)}\right] d\Pi(\theta)\\
&\leq 4 \sum_{\ell= \ell_0}^\infty \int_{\calP_\ell} (2^{\ell+1}S)^2 \exp(-N\delta_N^2 2^{2\ell}K(\ell))
\end{align*}

which converges to a constant $C'$ by \cref{lem:ExpoSum}. Therefore eventually: 

\begin{align*}
&e^{(A+s_0^{-2}-b)N\delta_N^2}\,\eta_N^{-2} \int \|\theta - \thetatrue\|_{L^2}^2 \IE^N_{\thetatrue}\!\left[e^{\tilde\ell_N(\theta)-\tilde\ell_N(\thetatrue)}\right] d\Pi(\theta)\\
&\leq e^{(A+s_0^{-2}-b)N\delta_N^2}\,\eta_N^{-2} \left( Ce^{c_5N\delta_N^2} + C' \right)
\end{align*}

We conclude for $b$ large enough such that $c_6-A-s_0^{-2}>b> c_5 + A + s_0^{-2}$. To ensure this is possible, we look at the growth of constants $c_6$ and $c_5$ with $M$. $c_6$ is of order $2^{2\ell_0}=2^{2\log_2(M/S)}=O(M^2)$ by \cref{eq:last-one-peeling}. $c_5$ grows at the rate of $O(s_0^{-2}C_{noise}(1+M^{\gamma_B})^2 \vee C_{model}(1+M^{\gamma_B})) \lesssim (C_{noise}\vee C_{model})M^2$ by \cref{eq:exp-for-slicing}, under Conditions \ref{cond:misspec-error} and \ref{cond:misspec-model}. Therefore for constants $C_{noise}$ and $C_{model}$ sufficiently small, by choosing $M$ and then $\rho$ large enough, convergence of the posterior mean to the ground truth holds at the desired rate. 

\end{proof}

%% file: Sections/_app_Prior.tex
\section{Choice of Priors}\label{sec:app:prior}

In this section we review the construction of Gaussian base prior distributions $\Pi'$ as they are needed in \cref{cond:BasePrior}.
The following examples and remarks summarize the constructions considered in \cite{Nickl_2023}, \cite{NicklTiti_2024}, \cite{Konen_Nickl_2025}, and \cite{Nickl_2025_BvMRDE}.

\begin{example}[General prior]\label{example:prior:generic}
Let $\br{\lambda_j,e_j}_{j\in\IN}\subseteq(0,\infty)\times\IL^2(\calM,W)$ be an orthonormal basis of the linear subspace $\Theta\subseteq \IL^2(\calM,W)$ such that there exist constants $\Const{1},\Const{2}>0$ with
\begin{equation*}
    \forall j\in\IN:\quad \Const{1} j^{\frac{2}{d}} \leq \lambda_j \leq \Const{2} j^{\frac{2}{d}}.
\end{equation*}
Let $\alpha>0$ be a fixed smoothness parameter. Given an i.i.d.\ sequence $\br{g_j}_{j\in\IN}\sim\operatorname{N}(0,1)$, we define a centred Gaussian process $\brK{\calW(z)\;:\; z\in\calM}$ by
\begin{equation}\label{eq:prior:process}
    \forall z\in\calM:\quad \calW(z)\coloneqq \sum_{j\in\IN}\lambda_j^{-\frac{\alpha}{2}} g_j e_j(z).
\end{equation}
If $(\lambda_j^\alpha)_{j\in\IN}\in\ell^2(\IN)$, it follows from \cite[Example 2.6.15]{Gine_Nickl_2021} that its RKHS $\scrH$ is given by
\begin{equation*}
    \scrH \coloneqq \brK{\sum_{j\in\IN}\lambda_j^{\frac{\alpha}{2}} h_j e_j \;:\; (h_j)_{j\in\IN}\in\ell^2(\IN)}
    = \Hdot^\alpha(\calM,W).
\end{equation*}
Moreover, the process $\brK{\calW(z)\;:\; z\in\calM}$ converges in $\Hdot^\beta(\calM,W)$ since
\begin{equation*}
    \sum_{j\in\IN}\lambda_j^{\beta}\IE\brE{\sprod{\calW}{e_j}{\IL^2(\calM)}^2}
    = \sum_{j\in\IN}\lambda_j^{\beta-\alpha}
    \lesssim \sum_{j\in\IN} j^{\frac{2(\beta-\alpha)}{d}} < \infty,
\end{equation*}
provided that $\beta < \alpha - \frac{d}{2}$. We thus define the prior $\Pi'$ on $\Theta$ as the resulting Borel law $\Pi'=\operatorname{Law}(\calW)$, whose support is given by $\scrR\coloneqq \Hdot^\beta(\calM,W)$. If $\beta>\frac{d}{2}$, the Sobolev embedding $\Hdot^\beta(\calM,W)\hookrightarrow C^0(\calM,W)$ implies that $\Pi'$ also defines a law on $C^0(\calM,W)$.
\end{example}

\begin{remark}\label{rem:app:prior}
\mbox{}
\begin{enumerate}[label=\roman*)]
\item As an alternative to the Gaussian random series approach, \emph{Whittle--Matérn prior measures} are commonly used for the construction of infinite-dimensional priors; see \cite[Theorem~B.1.3]{Nickl_2023} for further details.

\item If $\calM=\domain\subseteq\IR^d$ is a bounded domain with smooth boundary $\partial\domain$, one may wish to encode boundary behaviour into the prior $\Pi'$. This is particularly relevant for the Darcy problem; see, for instance, \cite{Giordano_Nickl_2020}. To this end, consider the centred Gaussian process $\brK{\calW(z)\;:\; z\in\domain}$ and a smooth cut-off function $\varphi\in C_c^\infty(\domain)$. Defining
\begin{equation*}
    \brK{\calW_\varphi(z)\coloneqq \calW(z)\varphi(z)\;:\; z\in\domain}
\end{equation*}
yields a centred Gaussian process with RKHS
\begin{equation*}
    \scrH_\varphi\coloneqq \brK{h\varphi\;:\; h\in\scrH} \subseteq H_c^\alpha(\domain,W),
\end{equation*}
whose support is continuously embedded into $H_c^\beta(\domain,W)$ for $\beta<\alpha-\frac{d}{2}$.

\item\label{rem:prior:item:finitediemsnional}
High-dimensional \emph{sieved} priors: For computational reasons, it can be attractive to consider finite- (but high-) dimensional priors. Fix a dimension $D\in\IN$ and consider the truncated Gaussian random series
\begin{equation*}
    \forall z\in\calM:\quad \calW^D(z)\coloneqq \sum_{j\nset{D}}\lambda_j^{-\frac{\alpha}{2}} g_j e_j(z),
\end{equation*}
which law defines a Gaussian prior distribution $\Pi'=\operatorname{Law}(\calW^D)$ on the finite-dimensional space
\begin{equation*}
    E_D \coloneqq \operatorname{span}\brK{e_j\;:\; j\nset{D}} \simeq \IR^D.
\end{equation*}
More precisely,
\begin{equation*}
    \Pi' = \operatorname{N}\br{0,\operatorname{diag}\br{\lambda_j^{-\alpha}\;:\; j\nset{D}}}.
\end{equation*}
In this work, we restrict ourselves to priors constructed as in \cref{example:prior:generic}, noting that analogous results can be obtained with only minor modifications of the proofs. We refer to \cite{Nickl_2023} (in particular Exercise~2.4.3), as well as to \cite{Bohr_Nickl_2024} and \cite{Giordano_Nickl_2020}.
\end{enumerate}
\end{remark}

The following example summarizes explicit choices for priors $\Pi'$ when studying the Reaction Diffusion Equation and the 2D-Navier-Stokes Equation in \cref{sec:examples}.
\begin{example}[Gaussian process priors for RDE and NSE]\label{example:prior:PDE}
\mbox{}
\begin{enumerate}[label=\roman*)]
\item\label{item:prior:RDE}
\textbf{Reaction--diffusion equation:} Based on \cref{eq:prior:process}, choose the orthonormal system from \cref{eq:ONB:torus}, which yields a prior supported on $\scrR=\Hdot^\beta(\torus)$ for all $\beta<\alpha-\frac{d}{2}$. See also \cite{Nickl_2025_BvMRDE}.

\item\label{item:prior:NSE}
\textbf{2D Navier--Stokes equation:} Based on \cref{eq:prior:process}, let $k=(k_1,k_2)\in\IZ^2\setminus\brK{(0,0)}$ and define
\begin{equation*}
    c_k(x) \propto (k_2,-k_1)^{T}\cos(2\pi k\cdot x),
    \qquad
    s_k(x) \propto (k_2,-k_1)^{T}\sin(2\pi k\cdot x),
\end{equation*}
which are eigenfunctions of the Stokes operator $A=-P\Delta$. After enumerating $k=k_j$, $j\in\IN$, we define the eigenpairs $(\lambda_j,e_j)_{j\in\IN}$ from \cref{example:prior:generic} by
\begin{equation*}
    e_{2j-1}\coloneqq c_{k_j},\quad
    e_{2j}\coloneqq s_{k_j},\quad
    Ae_j=\lambda_j e_j,\quad
    0<\lambda_j\simeq |k_j|.
\end{equation*}
The final identity follows from Weyl's law for the eigenvalues $\lambda_j$; see Proposition~4.14 in \cite{ConstantinFoias_1989}. Choosing $\Pi'=\operatorname{Law}(\calW)$ then induces a law on $\Hddiamond^\beta$ for all $\beta<\alpha-\frac{d}{2}$. See also \cite{Konen_Nickl_2025}.
\end{enumerate}
\end{example}

%% file: Sections/_app_MEstimation.tex
\section{Mild Misspecification in M-Estimation}\label{app:MAP}

While posterior contraction appears to be delicate with respect to (mild) misspecification, it is well known that frequentist M-estimation techniques are quite robust; see \cite{Geer_2000a,Vaart_1998}. In this section, we revisit the results of \cite{Siebel_2025} 
while generalizing the techniques of \cite{Nickl_vdGeer_Wang_2020} to allow heteroscedasticity and (mild) misspecification,
and unbounded forward maps, such as the solution maps to the Reaction Diffusion equation and the 2D-Navier-Stokes Equations.
While allowing a direct comparison between Bayesian and frequentist approaches, the favourable asymptotic behaviour of the MAP estimator is used to prove \cref{prop:testing}.\\

Recall the observation scheme presented in \cref{subsec:ObservationModel}. Instead of assuming heteroscedastic Gaussian errors, M-estimation techniques allow us to consider more general distributions.

\begin{condition}[Bernstein Condition]\label{ass:errorBernstein}
    The error terms $\varepsilon_1,\dots,\varepsilon_N$ are independent, heteroskedastic, and centred $V$-valued random variables that satisfy a Bernstein-type condition. That is, there exists a family $(\sigma_i^2)_{i=1}^N\subseteq(0,\infty)$ and a finite constant $\rmB\in(0,\infty)$ such that for all $i=1,\dots,N$
    \begin{equation*}
        \forall v\in V:\quad \IE\brE{\Vsprod{\varepsilon_i}{v}} = 0
        \quad\text{and}\quad
        \IE\brE{\abs{\Vsprod{\varepsilon_i}{v}}^2} \leq \sigma_i^2\,\Vnorm{v}^2,
    \end{equation*}
    as well as
    \begin{equation*}
        \forall v\in V,\; k\in\IN_{\geq 2}:\quad
        \IE\brE{|\Vsprod{\varepsilon_i}{v}|^k}
        \leq \frac{k!}{2}\,\sigma_i^2\,\rmB^{k-2}\,\Vnorm{v}^k.
    \end{equation*}
    In addition, there exist $\sigma_0,\sigma_\infty\in(0,\infty)$, such that 
                \begin{equation*}
                    \sigma_0^2\leq\min_{i\in\nset{N}}\sigma_i^2 \leq \max_{i\in\nset{N}}\sigma_i^2 \leq \sigma_\infty^2.
                \end{equation*}
\end{condition}

\begin{example}\label{ex:bernstein}
    In the following, let $N\in\IN$ be fixed.
    \begin{enumerate}[label=\roman*)]
        \item Let $V=\IR^d$ with $d\in\IN$. Assume that $\varepsilon_i\sim\operatorname{N}(0,\Sigma_i)$ independently, with positive semi-definite covariance matrices $\Sigma_i\in\IR^{d\times d}$ for $i=1,\dots,N$, such that the largest eigenvalues are uniformly bounded, i.e.,
        \[
            \max_{i\nset{N}}\lambda_{\max}(\Sigma_i)\leq \Sigma_\infty\in(0,\infty).
        \]
        Then, for all $v\in\IR^d$, we have
        \begin{equation*}
            \IE\brE{\sprod{\varepsilon_i}{v}{\IR^d}} = 0
            \quad\text{and}\quad
             \IE\brE{\abs{\sprod{\varepsilon_i}{v}{\IR^d}}^2}
            \leq \lambda_{\max}(\Sigma_i)\,\IRdnorm{v}^2.
        \end{equation*}
        Moreover,
        \begin{equation*}
            \forall v\in\IR^d,\; k\in\IN_{\geq 2}:\quad
            \IE\brE{|\sprod{\varepsilon_i}{v}{\IR^d}|^k}
            \leq \frac{k!}{2}\,(v^\top\Sigma_i v)^{k/2}\,\IRdnorm{v}^k
            \leq \frac{k!}{2}\,\lambda_{\max}(\Sigma_i)\,\Sigma_\infty^{\frac{k-2}{2}}\,\IRdnorm{v}^k.
        \end{equation*}
        Thus, $\varepsilon_1,\dots,\varepsilon_N$ satisfy \cref{ass:errorBernstein} with $\sigma_i^2=\lambda_{\max}(\Sigma_i)$, $i\leq N$, and $\rmB=\sqrt{\Sigma_\infty}$.
        In particular, if $\Sigma_i = v_i^2\operatorname{diag}(1,\dots,1)\in\IR^{d\times d}$ with $(v_i^2)_{i\in\nset{N}}\subseteq(0,v_\infty^2)$ for some $v_\infty^2>0$, we have $(\sigma_i^2)_{i\in\nset{N}}=(v_i^2)_{i\in\nset{N}}$ and $\rmB = v_{\infty}$.

        \item Let $\varepsilon_1,\dots,\varepsilon_N$ be independent, centred, $V$-valued random variables such that $\Vnorm{\varepsilon_i}\leq \rmB_i\in(0,\infty)$ for all $i=1,\dots,N$. Further assume that $\max_{i\nset{N}}\rmB_i\leq \rmB_\infty\in(0,\infty)$. Then, for all $v\in V$,
        \begin{equation*}
            \IE\brE{\Vsprod{\varepsilon_i}{v}} = 0
            \quad\text{and}\quad
            \IE\brE{\abs{\Vsprod{\varepsilon_i}{v}}^2}
            \leq \rmB_i^2\,\Vnorm{v}^2,
        \end{equation*}
        as well as
        \begin{equation*}
            \forall v\in V,\; k\in\IN_{\geq 2}:\quad
            \IE\brE{|\Vsprod{\varepsilon_i}{v}|^k}
            \leq \rmB_i^k\,\Vnorm{v}^k
            \leq \frac{k!}{2}\,\rmB_i^2\,\rmB_\infty^{k-2}\,\Vnorm{v}^k.
        \end{equation*}
        Thus, $\varepsilon_1,\dots,\varepsilon_N$ satisfy \cref{ass:errorBernstein} with $\sigma_i^2=\rmB_i^2$ and $\rmB=\rmB_\infty$.
    \end{enumerate}
\end{example}

In the Bayesian approach, we have imposed in \cref{ass:operatorI} forward regularity assumptions, that hold locally on bounded balls of $\scrR$. For M-estimation techniques, however, to obtain statistical guarantees for \textit{global optimizer} we generally need 
assumptions on the forward map that quantify its regularity more globally.

\begin{condition}[Forward Regularity -- Global]\label{ass:operatorII}
    Let $\Theta\subseteq\IL^2(\calM,W)$ be the parameter space. Let $(\scrR,\norm{\cdot}_{\scrR})$ be a separable normed subspace of $\Theta$ such that
    \[
        (\scrR,\norm{\cdot}_{\scrR})\hookrightarrow (B^\eta,\norm{\cdot}_{B^\eta}),
    \]
    where $B^\eta$ is either $C^\eta(\calW,W)$ or $H^\eta(\calW,W)$ for some $\eta\geq 0$. Furthermore, let $\Thetadiamond\subseteq\Theta$.
    \begin{enumerate}[label=\textbf{[gFR\arabic*]}]
        \item\label{cond:fm:Liptwoglobal}
        There exist constants $\ConstLiptwo'>0$ and $\gamma_2,\kappa\geq 0$ such that for all $\theta_1,\theta_2\in\Thetadiamond\cap\scrR$,
        \begin{equation*}
            \norm{\calG(\theta_1)-\calG(\theta_2)}_{\IL_\zeta^2(\calZ,V)}
            \leq \ConstLiptwo'\,
            \bigl(1+\norm{\theta_1}_{\scrR}^{\gamma_2}\lor\norm{\theta_2}_{\scrR}^{\gamma_2}\bigr)\,
            \norm{\theta_1-\theta_2}_{(H^\kappa(\domain,W))^*}.
        \end{equation*}

        \item\label{cond:fm:boundglobal}
        There exist constants $\Const{\calG,B}'>0$ and $\gamma_B\geq 0$ such that for all $\theta\in\Thetadiamond\cap\scrR$,
        \begin{equation*}
            \norm{\calG(\theta)}_\infty
            \leq \Const{\calG,B}'\,
            \bigl(1+\norm{\theta}_{\scrR}^{\gamma_B}\bigr).
        \end{equation*}

        \item\label{cond:fm:Lipinfglobal}
        There exist constants $\ConstLipinfty'>0$ and $\gamma_\infty\geq 0$ such that for all $\theta_1,\theta_2\in\Thetadiamond\cap\scrR$,
        \begin{equation*}
            \norm{\calG(\theta_1)-\calG(\theta_2)}_\infty
            \leq \ConstLipinfty'\,
            \bigl(1+\norm{\theta_1}_{\scrR}^{\gamma_\infty}\lor\norm{\theta_2}_{\scrR}^{\gamma_\infty}\bigr)\,
            \norm{\theta_1-\theta_2}_{B^\eta}.
        \end{equation*}
    \end{enumerate}
\end{condition}

We see that the conditions imposed in \cref{ass:operatorII} are stronger and hence imply the conditions imposed in \cref{ass:operatorI}.

\subsection{Tikhonov-regularized estimator}

Let $N\in\IN$ and $\thetatrue\in\Theta$. Given data $D_N\sim\IP_{\thetatrue}^N$, any $\delta>0$, and $\alpha>0$, we
we use the proxy variances $(s_i^2)_{i\leq N}$ and the proxy forward map $\calGtilde$ to define 
a Tikhonov-regularized functional
\begin{equation*}
    \rmJtilde_{\delta,N,\ops}\colon\Theta\to[-\infty,0),\qquad
    \rmJtilde_{\delta,N,\ops}[\theta]
    \coloneqq
    -\frac{1}{2N}\sum_{i\nset{N}} s_i^{-2}\,\Vnorm{Y_i-\calGtilde_\theta(Z_i)}^2
    -\frac{\delta^2}{2}\,\norm{\theta}_{\scrH}^2,
\end{equation*}
where $\scrH\subseteq\IL^2(\calM,W)$ is a separable Hilbert space.
We call any element $\thetahat\in\Thetadiamond$ a maximizer of $\rmJtilde_{\delta,N,\ops}$ over $\Thetadiamond\subseteq\Theta$ if it satisfies
\begin{equation}\label{eq:maximizerproperty}
    \rmJtilde_{\delta,N,\ops}[\thetahat]
    = \sup_{\theta\in\Thetadiamond}\rmJtilde_{\delta,N,\ops}[\theta].
\end{equation}


\begin{proposition}[Existence of $\thetahat$]\label{propo:existenceofthethat}
    \mbox{}
    Let \cref{ass:errorBernstein} be satisfied. Let $\thetatrue\in\Theta$ be fixed. Assume $D_N\sim\IP_{\thetatrue}^N$ for a fixed sample size $N\in\IN$. Assume that the surrogate operator $\calGtilde$ satisfies \cref{cond:fm:boundglobal} and \cref{cond:fm:Lipinfglobal} of \cref{ass:operatorII} as well as \cref{cond:MM:III} of \cref{cond:misspec-model}. Let $\scrH\subseteq\IL^2(\calM,W)$ satisfy \cref{cond:MAP:scrH} with $\alpha>\eta + \frac{d}{2}$.
    Let either 
    \begin{equation}\label{eq:existence:embedding}
        \Thetadiamond\subseteq\scrH,\quad\text{if $\gamma_B = 0$}\quad\text{or}\quad\Thetadiamond\subseteq\scrR(M)\cap\scrH\quad\text{for some $M\in(0,\infty)$, if $\gamma_B \neq 0$,}
    \end{equation}
    be weakly closed in $\scrH$. Then, for all $\delta\in(0,\infty)$ and proxy variances $(s_i^2)_{i=1}^N\subseteq(0,\infty)$, such that $\min_{i\nset{N}}s_i^2 \geq s_0^2\in(0,\infty)$, almost surely under $\IP_{\thetatrue}^N$, there exists a maximizer $\thetahat$ of $\tikfunc$ over $\Thetadiamond$, that is 
    \begin{equation*}
            \tikfunc[\thetahat] = \sup_{\theta\in\Thetadiamond}\tikfunc[\theta] \quad \IP_{\thetatrue}^N \;a.s.
    \end{equation*}
\end{proposition}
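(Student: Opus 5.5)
We will use the direct method of the calculus of variations in the weak topology of $\scrH$. Fix a realisation of $D_N$ outside a null set; all $Y_i$ and $Z_i$ are then fixed and finite. The functional $\tikfunc$ is bounded above by $0$. It is not identically $-\infty$ on $\Thetadiamond$ (assumed nonempty), since for $\theta\in\Thetadiamond\subseteq\scrR$ the bound \cref{cond:fm:boundglobal} gives $\Vnorm{\calGtilde_\theta(Z_i)}<\infty$. Hence $J^\ast\coloneqq\sup_{\Thetadiamond}\tikfunc\in(-\infty,0]$, and we may pick a maximizing sequence $(\theta_n)\subseteq\Thetadiamond$ with $\tikfunc[\theta_n]\to J^\ast$.

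The first step is boundedness in $\scrH$. Since the data-fit term is nonpositive, $\frac{\delta^2}{2}\norm{\theta_n}_\scrH^2\leq -\tikfunc[\theta_n]\leq -J^\ast+1$ for $n$ large, so $(\theta_n)$ is bounded in $\scrH$. By reflexivity (Banach--Alaoglu plus separability), a subsequence converges weakly in $\scrH$ to some $\bar\theta$. Because $\Thetadiamond$ is weakly closed, $\bar\theta\in\Thetadiamond$. By weak lower semicontinuity of the norm, $\norm{\bar\theta}_\scrH^2\leq\liminf_n\norm{\theta_n}_\scrH^2$.

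The second and main step is to show that the data-fit term is continuous along the subsequence, i.e.\ $\calGtilde_{\theta_n}(Z_i)\to\calGtilde_{\bar\theta}(Z_i)$ for each of the finitely many $i\leq N$. Here we use \cref{cond:MAP:scrH}, namely $\scrH\hookrightarrow H^\alpha$, together with $\alpha>\eta+\frac d2$. The embedding $H^\alpha(\calM,W)\hookrightarrow B^\eta$ is then compact: by Rellich for $B^\eta=H^\eta$, and by Sobolev embedding into $C^{\eta'}$ with $\eta'>\eta$ followed by compactness of Hölder embeddings for $B^\eta=C^\eta$; on $\domain$ we use the smooth boundary. Compact operators map weakly convergent sequences to norm-convergent ones, so $\norm{\theta_n-\bar\theta}_{B^\eta}\to0$. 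To apply \cref{cond:fm:Lipinfglobal} we also need uniform control of $\norm{\theta_n}_\scrR$:
\begin{itemize}
\item if $\gamma_B\neq0$, this holds by the assumption $\Thetadiamond\subseteq\scrR(M)$, which also gives $\norm{\bar\theta}_\scrR\le M$;
\item if $\gamma_B=0$, we would use $\scrH\hookrightarrow\scrR$ from \cref{cond:BasePrior} together with the $\scrH$-bound.
\end{itemize}
In both cases \cref{cond:fm:Lipinfglobal} gives
\[
\norm{\calGtilde_{\theta_n}-\calGtilde_{\bar\theta}}_\infty\leq \ConstLipinfty'(1+M'^{\gamma_\infty})\norm{\theta_n-\bar\theta}_{B^\eta}\to0 .
\]
Note that the sup-norm bound gives pointwise convergence at the $Z_i$. \Cref{cond:MM:III} is needed only to make sense of $\calGtilde$ relative to $\calG$ and plays no essential role here.

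Combining the two steps,
\[
\tikfunc[\bar\theta]\geq\lim_n\Bigl(-\frac1{2N}\sum_i s_i^{-2}\Vnorm{Y_i-\calGtilde_{\theta_n}(Z_i)}^2\Bigr)-\liminf_n\frac{\delta^2}{2}\norm{\theta_n}_\scrH^2=\limsup_n\tikfunc[\theta_n]=J^\ast .
\]
Hence $\bar\theta$ is a maximizer, and $s_i^2\geq s_0^2$ ensures all weights are finite. The main obstacle we expect is the compactness step for $B^\eta=C^\eta$: there we pass through an intermediate Hölder space, and we must make sure the $\scrR$-bound needed by \cref{cond:fm:Lipinfglobal} survives in the $\gamma_B=0$ case.
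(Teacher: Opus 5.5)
Your proof is correct and follows essentially the route the paper takes; the paper only defers to \cite{Siebel_2025}, whose existence argument is this same direct method. The ingredients match: a maximizing sequence bounded in $\scrH$ by the penalty, weak compactness together with weak closedness of $\Thetadiamond$, and compactness of $\scrH\hookrightarrow H^\alpha(\calM,W)\hookrightarrow B^\eta$ (from $\alpha>\eta+\frac{d}{2}$) combined with \cref{cond:fm:Lipinfglobal} to make the data-fit term continuous, plus weak lower semicontinuity of $\|\cdot\|_{\scrH}$. The point you flagged in the $\gamma_B=0$ case is already resolved by $\scrH\hookrightarrow\scrR$, which bounds $\|\theta_n\|_{\scrR}$ and $\|\bar\theta\|_{\scrR}$ uniformly, so the argument goes through in both cases.
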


\begin{proof}[Proof of \cref{propo:existenceofthethat}]
The proof follows the lines of \cite{Siebel_2025}, where existence is shown for the homoscedastic and correctly specified model, i.e. when $s_i^2 = \sigma_i^2 = \sigma^2\in(0,\infty)$ for all $i\leq N$.
As the proof does not change substantially, the details are left for the interested reader.

\end{proof}

\begin{example}\label{ex:thetadiamond}
    \mbox{}
    In \cref{propo:existenceofthethat}, it is required to chose $\Thetadiamond$ accordingly, such that it is weakly closed in $\scrH$. Standard arguments show that typical choices as
    \begin{equation*}
        \Thetadiamond\coloneqq\scrH, \quad\text{ if }\quad \gamma_B = 0,
    \end{equation*}
    or 
    \begin{equation*}
        \Thetadiamond\coloneqq\scrH\cap\scrR(M)\quad\text{and}\quad\Thetadiamond\coloneqq\scrH(M) \quad\text{ for some }\quad M\in(0,\infty),\quad\text{ if }\quad \gamma_B > 0
    \end{equation*}
    satisfy the requirements, if $\scrH\hookrightarrow\scrR$.

\end{example}

\begin{theorem}\label{thm:LS_estimator}
    Grant \cref{ass:errorBernstein} and assume that the surrogate operator $\calGtilde$ satisfies \cref{ass:operatorII} as well as \cref{cond:MM:III} of \cref{cond:misspec-model}. Let $N\in\IN$, $\thetatrue\in\Theta$ be fixed. Let $D_N\sim\IP_{\thetatrue}^N$.  Let $\scrH\subseteq\IL^2(\calM,W)$ satisfy \cref{cond:MAP:scrH} with $\alpha>\max\brK{\frac{d}{2}\gamma_2-\kappa,\eta + d\lor \frac{d}{2}(1+\gamma_\infty)}$.
    Let either 
    \begin{equation*}
        \Thetadiamond\subseteq\scrH,\quad\text{if $\gamma_B = 0$}\quad\text{or}\quad\Thetadiamond\subseteq\scrR(M)\cap\scrH\quad\text{for some $M\in(0,\infty)$, if $\gamma_B \neq 0$,}
    \end{equation*}
    be weakly closed in $\scrH$.
    Assume that 
    the proxy variances $(s_i^2)_{i=1}^N\subseteq(0,\infty)$ satisfy $0< s_0^2\coloneqq\min_{i\nset{N}}s_i^2\leq\max_{i\nset{N}}s_i^2\eqqcolon s_\infty^2$
     Given any $\Bar{\const}\in(0,\infty)$, we define the family of symbols $$\texttt{S}\coloneqq\brK{\Bar{c},\alpha,\gamma_2,\gamma_\infty,\gamma_B,\kappa,\eta,d,d_W,M,\Const{\calGtilde,\rmB},\ConstLiptwo',\ConstLipinfty',\sigma_0,\sigma_\infty,s_0,s_\infty,\rmB}.$$  Then, we can choose $\Const{Rate}=\Const{Rate}(\texttt{S})\in(0,\infty)$ sufficiently large, such that for all $\delta\in(0,\infty)$ and $R\in(0,\infty)$
        with $R\geq\delta \geq N^{-\frac{1}{2}}$ that fulfill 
        \begin{equation}\label{eq:ratecondI}
            \delta^{-1-\frac{d}{2(\alpha+\kappa)}}\lesssim N^\frac{1}{2},
        \end{equation}
        any maximizer $\thetahat$ of $\tikfunc$ over $\Thetadiamond$ satisfies
        \begin{equation*}
            \IP_{\thetatrue}^N\br{\frakdtilde_{\delta,\ops}^2(\thetahat,\thetatrue) \geq \Const{Rate}\br{\frakdtilde_{\delta,\ops}^2(\thetadiamond,\thetatrue)+R^2}} \leq \frac{3}{\ln(2)}\exp\br{-\Bar{\const}NR^2}
        \end{equation*}
        for any $\thetadiamond\in\Thetadiamond$.
        Here, $\frakdtilde_{\delta,\ops}^2$ is defined via 
        \begin{equation*}
            \forall \theta_1\in\Theta\cap\scrH,\theta_2\in\Theta:\; \frakdtilde_{\delta,\ops}^2(\theta_1,\theta_2)\coloneqq \sbar^{-2}\norm{\calGtilde(\theta_1)-\calG(\theta_2)}_{\IL_\zeta^2(\calZ,V)}^2+\delta^2\norm{\theta_1}_{\scrH}^2.
        \end{equation*}
\end{theorem}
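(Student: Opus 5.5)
The plan is the standard ``basic inequality plus peeling'' argument for penalised least squares (as in \cite{Nickl_vdGeer_Wang_2020, Siebel_2025}), adapted to the weights $s_i^{-2}$, to heteroscedastic Bernstein noise, and to the bias introduced by $\calGtilde\neq\calG$.

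\textbf{Basic inequality.} Since $\thetahat$ maximises $\tikfunc$ over $\Thetadiamond$, we have $\tikfunc[\thetahat]\geq\tikfunc[\thetadiamond]$. We substitute $Y_i=\calG(\thetatrue)(Z_i)+\varepsilon_i$ and expand the squares. Write $h_\theta\coloneqq\calGtilde(\theta)-\calG(\thetatrue)$ and let $P_N$ denote the $s^{-2}$-weighted empirical measure. This gives
\begin{equation*}
\tfrac12\|h_{\thetahat}\|_{P_N}^2+\tfrac{\delta^2}{2}\|\thetahat\|_{\scrH}^2\leq \tfrac12\|h_{\thetadiamond}\|_{P_N}^2+\tfrac{\delta^2}{2}\|\thetadiamond\|_{\scrH}^2+\tfrac1N\sum_{i\nset{N}}s_i^{-2}\Vsprod{\varepsilon_i}{h_{\thetahat}(Z_i)-h_{\thetadiamond}(Z_i)}.
\end{equation*}
Because $\calG(\thetatrue)$ is fixed, the difference $h_{\thetahat}-h_{\thetadiamond}$ equals $\calGtilde(\thetahat)-\calGtilde(\thetadiamond)$. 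The multiplier term is therefore indexed by $\calGtilde$ only, and misspecification enters solely through the deterministic centring $\calG(\thetatrue)$ inside $h$.

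The right-hand quantity is $\frakdtilde_{\delta,\ops}^2$, which involves $\sbar^{-2}\|h\|_{\IL^2_\zeta}^2$. We pass between the weighted empirical norm and this population norm as follows:
\begin{itemize}
\item Bound $s_0^{-2}\geq s_i^{-2}\geq s_\infty^{-2}$, which costs only constants that are absorbed into $\Const{Rate}$.
\item Control $\sup|\|h\|_{P_N}^2-\IE\|h\|^2|$ over the relevant classes by Bernstein/Talagrand concentration.
\item For this step we need uniform sup-norm bounds. On $\scrR(M)$ these come from \cref{cond:fm:boundglobal}, and from \cref{cond:MM:III} combined with the boundedness of $\calG(\thetatrue)$. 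In the case $\gamma_B=0$ we use the global bound directly.
\end{itemize}

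\textbf{Peeling.} We slice $\Thetadiamond$ into shells
\begin{equation*}
\Theta_j=\{\theta:\;2^{j-1}\tau^2<\frakdtilde_{\delta,\ops}^2(\theta,\thetatrue)\leq2^j\tau^2\},\qquad \tau^2=\Const{Rate}\bigl(\frakdtilde_{\delta,\ops}^2(\thetadiamond,\thetatrue)+R^2\bigr).
\end{equation*}
On each shell we prove that the event ``the basic inequality holds for some $\theta\in\Theta_j$'' forces the multiplier supremum to exceed $c\,2^j\tau^2$.

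On a shell the parameters satisfy $\|\theta\|_{\scrH}\lesssim 2^{j/2}\tau/\delta$, and by \cref{cond:fm:Liptwoglobal} the $\IL^2_\zeta$-radius of the image class is $\lesssim 2^{j/2}\tau$. For the supremum of $\frac1N\sum_i s_i^{-2}\Vsprod{\varepsilon_i}{f(Z_i)}$ over this class we use:
\begin{itemize}
\item a chaining bound under \cref{ass:errorBernstein}, with the heteroscedastic weights $s_i^{-2}\sigma_i^2\leq s_0^{-2}\sigma_\infty^2$ absorbed into constants;
\item entropy bounds for $\scrH$-balls in the $(H^\kappa)^*$-norm, giving $\log N(\epsilon)\lesssim(\|\theta\|_{\scrH}/\epsilon)^{d/(\alpha+\kappa)}$;
\item entropy bounds in $B^\eta$ for the sup-norm part, transported to function classes through \cref{cond:fm:Liptwoglobal} and \cref{cond:fm:Lipinfglobal}.
\end{itemize}
The conditions $\alpha>\frac d2\gamma_2-\kappa$ and $\alpha>\eta+d\lor\frac d2(1+\gamma_\infty)$ are exactly what make the entropy integrals converge despite the polynomial growth factors $(1+\|\theta\|_{\scrR}^{\gamma})$. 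The rate condition \cref{eq:ratecondI} ensures that the expected supremum is $\lesssim\sqrt N\,2^j\tau^2$, so this rate condition is used in the mean bound.

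We then apply the Bernstein/Talagrand tail bound to get a probability $\leq\exp(-\bar c N2^j\tau^2)$ per shell. Summing over $j$ gives a geometric-type series, which yields the constant $\frac{3}{\ln 2}\exp(-\bar cNR^2)$.

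\textbf{Main obstacle.} The hardest step is the uniform control of the empirical process with unbounded, polynomially growing envelopes while simultaneously replacing the empirical norm by the population norm. I would first handle the case $\Thetadiamond\subseteq\scrR(M)$, where all envelopes are bounded by constants depending on $M$. The case $\gamma_B=0$ follows from the global boundedness. The misspecification bias contributes only to $\frakdtilde(\thetadiamond,\thetatrue)$ and so requires no separate treatment.
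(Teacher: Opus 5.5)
Your plan follows the paper's proof in essence. The paper runs the basic inequality of \cref{lem:KeyInequality}, splits the error into the weighted multiplier process and the centred quadratic process (its $T_{N,1}$ and $T_{N,2}$), peels, applies the mixed-tail chaining bound \cref{lem:chaining} with the entropy estimates of \cite{Siebel_2025}, and sums via \cref{lem:ExpoSum}; the only structural difference is that you peel in $\frakdtilde_{\delta,\ops}^2(\cdot,\thetatrue)$ and absorb $\frakdtilde_{\delta,\ops}^2(\thetadiamond,\thetatrue)\le\tau^2/\Const{Rate}$ directly, whereas the paper first passes through \cref{lem:M_estimation} and peels in $\frakDtilde^2(\cdot,\thetadiamond)$. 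One correction: take the per-shell tail at level $x=\bar c N2^jR^2$, not $\bar c N2^j\tau^2$, because at the latter level the deviation terms scale exactly like the threshold $c\,2^j\tau^2$ and cannot be beaten for large $\bar c$; it is the ratio $R^2/\tau^2\le 1/\Const{Rate}$ that makes them small, which is why $\Const{Rate}$ must depend on $\bar c$.
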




\subsection{Consistency and Tests}  

\begin{corollary}\label{co:GeneralConsitency}
     Grant \cref{ass:errorBernstein} and assume that the surrogate operator $\calGtilde$ satisfies \cref{ass:operatorII} as well as \cref{cond:MM:III} of \cref{cond:misspec-model}. Let $\scrH\subseteq\IL^2(\calM,W)$ satisfy \cref{cond:MAP:scrH} with $\alpha>\max\brK{\frac{d}{2}\gamma_2-\kappa,\eta + d\lor \frac{d}{2}(1+\gamma_\infty)}$.
     Let either 
    \begin{equation*}
        \Thetadiamond = \scrH,\quad\text{if $\gamma_B = 0$}\quad\text{or}\quad\Thetadiamond = \scrR(M)\cap\scrH\quad\text{for some $M\in(0,\infty)$, if $\gamma_B \neq 0$.}
    \end{equation*}
     Let $\delta\in(0,\infty)$, such that $ \Bar{\delta}\coloneqq(\delta\lor\deltamap)\geq N^{-\frac{1}{2}}$ and $ \Bar{\delta}^{-1-\frac{d}{2(\alpha+\kappa)}}\lesssim N^{\frac{1}{2}}$.
     Given any $\Bar{\const},M\in(0,\infty)$, we can choose $\Const{Rate}=\Const{Rate}(\texttt{S})\in(0,\infty)$ and $m = m(\ConstLiptwo',M,\gamma_2)$ sufficiently large, such that 
     any maximizer $\thetahat$ of $\rmJtilde_{\delta,N,\boldsymbol{1}}$ over $\Thetadiamond$ satisfies
     \begin{equation*}
         \sup_{\thetatrue\in\Theta_{\Bar{\delta}}(M)}\IP_{\thetatrue}^N\br{\norm{\calGtilde(\thetahat)-\calG(\thetatrue)}_{\IL_\zeta^2(\calZ,V)}^2+\delta^2\norm{\thetahat}_{\scrH}^2 \geq \Const{Rate} m^2  \Bar{\delta}^2} \lesssim \exp\br{-\Bar{c}m^2 N \Bar{\delta}^2}.
     \end{equation*}
     Here, $\Theta_{ \Bar{\delta}}(M)$ denotes 
     \begin{equation*}
         \Theta_{ \Bar{\delta}}(M) \coloneqq \brK{\theta \in \scrR :\; \theta=\theta_1+\theta_2,\; \norm{\theta_1}_{\br{H^\kappa(\calM,W)}^*} \leq M \Bar{\delta} , \;\norm{\theta_2}_\scrH \leq M,\; \norm{\theta}_\scrR\leq M}.
     \end{equation*}
\end{corollary}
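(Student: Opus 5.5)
The plan is to derive the corollary from \cref{thm:LS_estimator}, applied with unit proxy variances ($s_i^2=1$, so $\sbar^{-2}=1$). This gives $\frakdtilde_{\delta,\ops}^2(\thetahat,\thetatrue)=\norm{\calGtilde(\thetahat)-\calG(\thetatrue)}_{\IL_\zeta^2}^2+\delta^2\norm{\thetahat}_{\scrH}^2$, which is exactly the quantity in the claim. We take $R\coloneqq m\Bar{\delta}$. Since $m\geq1$ can be assumed and $\Bar{\delta}\geq\delta$, we have $R\geq\delta$ and $R\geq N^{-1/2}$. The rate condition \cref{eq:ratecondI} should be checked for $\Bar{\delta}$; the hypothesis is stated for $\Bar\delta$, and \cref{thm:LS_estimator} only needs it for the regularisation level, so we apply it with $\delta$ replaced by $\Bar\delta$ where necessary (monotonicity in $\delta$ of the entropy bound). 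The theorem then yields, for any $\thetadiamond\in\Thetadiamond$,
\[
\IP_{\thetatrue}^N\br{\frakdtilde^2(\thetahat,\thetatrue)\geq \Const{Rate}\br{\frakdtilde^2(\thetadiamond,\thetatrue)+m^2\Bar\delta^2}}\lesssim \exp\br{-\Bar c\, m^2N\Bar\delta^2},
\]
with constants depending only on $\texttt{S}$, hence uniform in $\thetatrue$.

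The key step is choosing a good comparison point $\thetadiamond$ for each $\thetatrue\in\Theta_{\Bar\delta}(M)$. We write $\thetatrue=\theta_1+\theta_2$ with $\norm{\theta_1}_{(H^\kappa)^*}\leq M\Bar\delta$, $\norm{\theta_2}_{\scrH}\leq M$ and $\norm{\thetatrue}_{\scrR}\leq M$, and take $\thetadiamond\coloneqq\theta_2$. This lies in $\scrH$, and since $\scrH\hookrightarrow\scrR$ it satisfies $\norm{\theta_2}_{\scrR}\lesssim M$; in the case $\gamma_B\neq0$ one enlarges the radius of $\Thetadiamond$ by a constant factor so that $\theta_2\in\Thetadiamond$. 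We bound the bias by the triangle inequality:
\[
\norm{\calGtilde(\theta_2)-\calG(\thetatrue)}_{\IL^2_\zeta}\leq\norm{\calGtilde(\theta_2)-\calG(\theta_2)}_\infty+\norm{\calG(\theta_2)-\calG(\thetatrue)}_{\IL^2_\zeta}.
\]
The first term is at most $c(M)\deltamap\leq c(M)\Bar\delta$ by \cref{cond:MM:III}. The second term is at most $\ConstLiptwo'(1+c M^{\gamma_2})\norm{\theta_1}_{(H^\kappa)^*}\leq \ConstLiptwo'(1+cM^{\gamma_2})M\Bar\delta$ by \cref{cond:fm:Liptwoglobal}. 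We also have $\delta^2\norm{\theta_2}_{\scrH}^2\leq M^2\Bar\delta^2$. Together these give $\frakdtilde^2(\theta_2,\thetatrue)\leq m^2\Bar\delta^2$ once $m=m(\ConstLiptwo',M,\gamma_2)$ is large enough. We absorb $c(M)$ from \cref{cond:MM:III} into $m$ as well.

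Substituting this bound, the event in the corollary (with threshold $2\Const{Rate}m^2\Bar\delta^2$, after renaming $\Const{Rate}$) is contained in the event controlled by \cref{thm:LS_estimator}. Its probability is therefore $\lesssim\exp(-\Bar c m^2N\Bar\delta^2)$, and taking the supremum over $\thetatrue$ finishes the argument. The main obstacle is checking that \cref{thm:LS_estimator} applies at level $\Bar\delta$ rather than $\delta$ when $\deltamap>\delta$. The misspecification bias only enters through $\thetadiamond$, and the entropy and peeling arguments need $R\geq\delta$, which holds since $R=m\Bar\delta$. The only delicate point is then that \cref{eq:ratecondI} must hold for the actual $\delta$ used in $\rmJtilde$. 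If $\delta$ is very small, I would instead run the peeling of \cref{thm:LS_estimator} with radius $R$ and the hypothesis $\Bar\delta^{-1-d/(2(\alpha+\kappa))}\lesssim N^{1/2}$, checking that the entropy integral only involves $R$.
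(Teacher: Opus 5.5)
Your overall strategy is the paper's. You use unit proxy variances, so that $\frakdtilde_{\delta,\ops}^2(\thetahat,\thetatrue)$ is exactly the quantity in the claim, take $R\propto m\Bar\delta$, and choose the comparison point $\thetadiamond=\theta_{0,2}$ from the decomposition $\thetatrue=\theta_{0,1}+\theta_{0,2}$. You then show the bias is of order $\Bar\delta^2$ and apply \cref{thm:LS_estimator}.

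\textbf{The bias bound uses a hypothesis you do not have.} You route the triangle inequality through $\calG(\theta_{0,2})$ and then invoke \cref{cond:fm:Liptwoglobal} for the true map $\calG$. The corollary assumes \cref{ass:operatorII} only for the surrogate $\calGtilde$, and the constant $\ConstLiptwo'$ on which $m$ may depend is $\calGtilde$'s. No Lipschitz bound for $\calG$ (such as the local \cref{cond:fm:Liptwo}) is among the hypotheses. The paper instead routes through $\calGtilde(\thetatrue)$:
\[
\norm{\calGtilde(\theta_{0,2})-\calG(\thetatrue)}_{\IL^2_\zeta}^2\le 2\norm{\calGtilde(\theta_{0,2})-\calGtilde(\thetatrue)}_{\IL^2_\zeta}^2+2\norm{\calGtilde(\thetatrue)-\calG(\thetatrue)}_{\infty}^2 .
\]
The first term is bounded by the Lipschitz estimate for $\calGtilde$, applied to the pair $(\theta_{0,2},\thetatrue)$, whose difference is $-\theta_{0,1}$. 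The second is bounded by \cref{cond:MM:III} at $\thetatrue\in\scrR(M)$. This gives $\frakdtilde_{\delta,\ops}^2(\theta_{0,2},\thetatrue)\le c_3(\ConstLiptwo',M,\gamma_2)\Bar\delta^2$. Taking $R=m\Bar\delta/\sqrt{2}$ with $m^2\ge 2\lor 2c_3$ then finishes the proof. With that swap your argument coincides with the paper's. Your remark that the radius of $\Thetadiamond$ must absorb the embedding constant of $\scrH\hookrightarrow\scrR$ is a correct refinement of the paper's inclusion $\scrH(M)\subseteq\scrR(M)$.

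\textbf{Your fallback for the rate condition does not work.} The paper does not resolve this point either, but your repair fails for two reasons:
\begin{itemize}
\item The $\delta$ in \cref{thm:LS_estimator} is the regularisation parameter of the functional whose maximiser you analyse, so it cannot be swapped for $\Bar\delta$.
\item The peeling slices satisfy $\delta^2\norm{\theta}_{\scrH}^2\lesssim 2^{2l}R^2$, so they lie in $\scrH$-balls of radius $\sim 2^lR/\delta$. The entropy bounds therefore scale with $R/\delta$, not with $R$ alone, and monotonicity in $\delta$ runs the wrong way for you.
\end{itemize}
The paper simply applies \cref{thm:LS_estimator} with $R=m\Bar\delta/\sqrt2$. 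This tacitly requires $\delta\ge N^{-1/2}$ and \cref{eq:ratecondI} at level $\delta$ itself. These hold in the regime $\deltamap\le\delta$, where $\Bar\delta=\delta$, which is the case actually used (cf.\ \cref{rem:consistency}). If $\deltamap\gg\delta$ is to be allowed, these conditions should be imposed as hypotheses rather than derived from the condition on $\Bar\delta$.
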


\begin{remark}\label{rem:consistency}
    Note, if $\delta$ is chosen in an admissible way that $\Bar{\delta}N\to\infty$, we obtain consistency.
    This is particularly in the situation where $\delta = \delta_N = N^{-\frac{\alpha+\kappa}{2(\alpha+\kappa)+d}}$ and $\deltamap \leq \delta_N$, such that $\Bar{\delta} = \delta_N$.
\end{remark}

\begin{proof}[Proof of \cref{co:GeneralConsitency}]
    Fix $\thetatrue\in\Theta_{\delta}(M)$ and set $\boldsymbol{1}\coloneqq (1)_{i=1}^N \eqqcolon (s_i^2)_{i=1}^N$.
    Then, following \cref{propo:existenceofthethat} and \cref{ex:thetadiamond}, there exists a maximizer $\thetahat$
    of $\rmJtilde_{\delta,N,\boldsymbol{1}}$ over $\Thetadiamond$ at least $\IP_{\thetatrue}^N$-almost surely.
    As $\calGtilde$ satisfies \cref{cond:MM:III} of \cref{cond:misspec-model}, we have
    for all $\thetadiamond\in\Thetadiamond$
    \begin{align*}
        \norm{\calGtilde(\thetadiamond)-\calG(\thetatrue)}_{\Lfor}^2 & \leq 2 \norm{\calGtilde(\thetadiamond)-\calGtilde(\thetatrue)}_{\Lfor}^2+2 \norm{\calGtilde(\thetatrue)-\calG(\thetatrue)}_{\Lfor}^2.
    \end{align*}
    As $\thetatrue\in\Theta_{\delta}(M)\subseteq\scrR(M)$, we have 
    \begin{equation*}
         \norm{\calGtilde(\thetatrue)-\calG(\thetatrue)}_{\Lfor}^2 \leq \const_1(M)\times\deltamap^2 \leq \const_1(M)\times  \Bar{\delta}^2.
    \end{equation*}
   As $\calGtilde$ satisfies \cref{ass:operatorII}, we further have
   \begin{align*}
        \norm{\calGtilde(\thetadiamond)-\calGtilde(\thetatrue)}_{\Lfor} \leq \ConstLiptwo'\br{1+\norm{\thetadiamond}_{\scrR}^{\gamma_2}\lor \norm{\thetatrue}_{\scrR}^{\gamma_2}}\norm{\thetadiamond-\thetatrue}_{(H^\kappa(\calM,W))^*}.
   \end{align*}
   Now as $\thetatrue\in\Theta_{\delta}(M)$, we can choose $\theta_{0,1},\theta_{0,2}$, such that $\theta_0 =\theta_{0,1}+\theta_{0,2}$
   and $\norm{\theta_{0,1}}_{\br{H^\kappa(\calM,W)}^*} \leq M \Bar{\delta}$ and $\norm{\theta_{0,2}}_\scrH \leq M$.
   Now we can choose $\thetadiamond = \theta_{0,2}\in\scrH(M)\subseteq \scrH\cap\scrR(M)$, such that the last display reads as
   \begin{equation*}
        \norm{\calGtilde(\thetadiamond)-\calGtilde(\thetatrue)}_{\Lfor}^2 \leq c_2(\ConstLiptwo')\times \br{1+M^{2\gamma_2}} \Bar{\delta}^2 M^2.
   \end{equation*}
   Overall we conclude 
   \begin{align*}
         \norm{\calGtilde(\thetadiamond)-\calG(\thetatrue)}_{\Lfor}^2+\delta^2\norm{\thetadiamond}_{\scrH}\leq  \const_3(\ConstLiptwo',M,\gamma_2)\times \Bar{\delta}^2,
   \end{align*}
   where the right-hand-side does not depend on the choice of $\thetatrue$.
   Thus, choosing $R = \frac{m}{\sqrt{2}} \times \Bar{\delta}$ with $m^2\geq 2\lor 2\const_3(\ConstLiptwo',M,\gamma_2)$,
   the claim follows by a direct consequence of \cref{thm:LS_estimator}.
\end{proof}

\begin{corollary}[Tests]\label{co:existence:test}
    Let $N\in\IN$. Assume \cref{ass:errorBernstein}. Assume that the true operator $\calG$ satisfies \cref{ass:operatorI}. Let $\thetatrue\in\scrH$ be fixed with $\alpha>\eta+d$. Let $D_N\sim\IP_{\thetatrue}^N$. Given $\Bar{c}\in(0,\infty)$, there exists a test (indicator function) $\Psi_N = \Psi_N(D_N)$, such that 
    \begin{equation*}
        \lim_{N\to\infty}\IE_{\thetatrue}^N\brE{\Psi_N} = 0 \quad \text{ and }\quad  \sup_{\substack{\theta\in\Theta_N(M):\;\norm{\calG(\theta)-\calG(\thetatrue)}{\IL_\zeta^2(\calZ,V)}\geq L\delta_N}}\brK{\IE_\theta^N[1-\Psi_N] } \lesssim \exp\br{-\Bar{\const}N\delta_N^2}
        \end{equation*}
    for all $L = L(\texttt{S}),M=M(\thetatrue)>0$ and $N$ sufficiently large.
\end{corollary}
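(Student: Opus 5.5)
The plan is to build the test from the Tikhonov-regularised estimator of \cref{co:GeneralConsitency}, in the correctly specified case $\calGtilde=\calG$, $s_i^2\equiv 1$, $\deltamap=0$, and $\delta=\delta_N$, so that $\Bar{\delta}=\delta_N$. The admissibility conditions hold. First, $N\delta_N^2\to\infty$. Second, $\delta_N^{-1-\frac{d}{2(\alpha+\kappa)}}=N^{\frac{\alpha+\kappa+d/2}{2\alpha+2\kappa+d}}=N^{1/2}$, so \cref{eq:ratecondI} holds with equality. Let $\thetahat$ be a maximiser of $\rmJtilde_{\delta_N,N,\boldsymbol{1}}$ over $\Thetadiamond=\scrR(M')\cap\scrH$ for a suitable radius $M'$; it exists by \cref{propo:existenceofthethat}. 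Write
\[
\hat d_N\coloneqq\norm{\calG(\thetahat)-\calG(\thetatrue)}_{\IL_\zeta^2(\calZ,V)},
\]
and define the test by
\[
\Psi_N\coloneqq\mathds{1}\brK{\hat d_N\geq \tfrac{L}{2}\delta_N}.
\]
One caveat: \cref{co:GeneralConsitency} is stated under the global conditions \cref{ass:operatorII}. Since all relevant parameters lie in $\scrR(M)$ and the estimator is restricted to $\scrR(M')$, I would note that the local conditions \cref{ass:operatorI} on these balls give the global ones with constants depending on $M,M'$. This is where $\alpha>\eta+d$ enters.

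\textbf{Type-I error.} Since $\thetatrue\in\scrH$, it lies in $\Theta_{\delta_N}(M)$ via the decomposition $\theta_1=0$, $\theta_2=\thetatrue$, once $M\geq\norm{\thetatrue}_{\scrH}\vee\norm{\thetatrue}_{\scrR}$. This is why $M=M(\thetatrue)$. \cref{co:GeneralConsitency} then gives
\[
\hat d_N^2\leq \Const{Rate}m^2\delta_N^2
\]
with probability at least $1-c\,e^{-\Bar c m^2N\delta_N^2}$. Taking $L/2>\sqrt{\Const{Rate}}\,m$, we get $\IE_{\thetatrue}^N[\Psi_N]\to0$.

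\textbf{Type-II error.} Fix $\theta\in\Theta_N(M)$ with $d_\calG(\theta,\thetatrue)\geq L\delta_N$, and apply \cref{co:GeneralConsitency} again, now with $\theta$ as the true parameter. The supremum over $\Theta_{\delta_N}(M)$ in that corollary is exactly what makes this uniform in $\theta$. Under $\IP_\theta^N$ we have
\[
\norm{\calG(\thetahat)-\calG(\theta)}_{\IL_\zeta^2(\calZ,V)}\leq\sqrt{\Const{Rate}}\,m\,\delta_N
\]
except on an event of probability $\lesssim e^{-\Bar c m^2 N\delta_N^2}$. On the complement, the triangle inequality gives
\[
\hat d_N\geq d_\calG(\theta,\thetatrue)-\sqrt{\Const{Rate}}\,m\,\delta_N\geq (L-\sqrt{\Const{Rate}}\,m)\,\delta_N\geq \tfrac{L}{2}\delta_N,
\]
so $\Psi_N=1$. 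Hence $\sup\IE_\theta^N[1-\Psi_N]\lesssim e^{-\Bar c m^2N\delta_N^2}\leq e^{-\Bar c N\delta_N^2}$, using $m\geq1$. Choosing $L\geq 2\sqrt{\Const{Rate}}\,m$ closes both steps, and it depends only on $\texttt{S}$.

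\textbf{Main obstacle.} The delicate point is keeping the constants consistent across the two applications. The estimator's constraint radius $M'$ must be at least the $m$ produced in \cref{co:GeneralConsitency}, so that the comparison point $\thetadiamond=\theta_2\in\scrH(M)$ is admissible. At the same time, $\Const{Rate}$ must not depend on the true parameter beyond $M$. I would fix $M$, then $m=m(M)$, then $M'\geq m\vee M$, then $\Const{Rate}(\texttt{S})$, and finally $L$. I would also verify that the bound $\norm{\calG(\theta)}_\infty\lesssim 1+M^{\gamma_B}$ from \cref{cond:fm:bound} is what the Bernstein/chaining argument behind \cref{thm:LS_estimator} needs uniformly over $\theta\in\scrR(M)$. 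The Gaussian errors satisfy \cref{ass:errorBernstein} by \cref{ex:bernstein}.
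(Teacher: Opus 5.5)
Your proposal is correct and follows essentially the same route as the paper. Both use a penalised least-squares estimator over a bounded subset of $\scrH$, where the local conditions \cref{ass:operatorI} suffice, control the type-I error by its concentration under $\IP_{\thetatrue}^N$, and control the type-II error by its concentration under $\IP_\theta^N$ with comparison point $\theta_2$ from the decomposition in $\Theta_N(M)$ plus the triangle inequality. The paper applies \cref{thm:LS_estimator} directly over $\Thetadiamond=\scrH(M)$ (with $\thetadiamond=\thetatrue$, then $\thetadiamond=\theta_2$) and keeps $\delta_N^2\norm{\thetahat}_{\scrH}^2$ in the statistic; you route both steps through \cref{co:GeneralConsitency} and threshold only $\hat d_N$, which are cosmetic differences.
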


\begin{proof}[Proof of \cref{co:existence:test}]
    The proof follows with similar arguments as presented in \cref{co:GeneralConsitency} with the simplification $\calGtilde = \calG$ and hence $\deltamap = 0$. Given $M\in(0,\infty)$ consider $\Thetadiamond\coloneqq \scrH(M)$.
    From the proof of \cref{thm:LS_estimator} it becomes evident, that we can weaken the global assumptions on $\calG$ as formulated in \cref{ass:operatorII} to the local ones as stated in \cref{ass:operatorI}, if the $\Thetadiamond$ is a bounded ball in $\scrH$.
    Choosing $\delta = \delta_N$ as in \cref{rem:consistency} and utilizing \cref{propo:existenceofthethat} and \cref{ex:thetadiamond}, there exists a maximizer $\thetahat$ of $\tikfunc$ over $\scrH(M)$ at least $\IP_{\thetatrue}^N$-almost surely. We then define the test statistic
    \begin{equation*}
        \hat{S}_N \coloneqq \norm{\calG(\thetahat)-\calG(\thetatrue)}_{\IL^2_\zeta(\calZ,V)}^2 + \delta_N^2\norm{\thetahat}_{\scrH}^2
    \end{equation*}
    and a corresponding test statistic $\Psi_N = \Psi_N(D_N)$ by 
    \begin{equation*}
        \Psi_N\coloneqq\mathds{1}_{\brK{\hat{S}_N \geq \const{1}\delta_N^2}}
    \end{equation*}
    for some constant $c_1\in(0,\infty)$ choosen later. Applying \cref{thm:LS_estimator} with $R=\delta_N$, we get 
    \begin{equation*}
        \IP_{\thetatrue}^N\br{\frakd_{\delta_N}^2(\thetahat,\thetatrue) \geq \Const{Rate}\br{\frakd_{\delta_N}^2(\thetadiamond,\thetatrue)+\delta_N^2}} \lesssim\exp\br{-\Bar{\const}N\delta_N^2}
    \end{equation*}
    for any $\thetadiamond\in\scrH(M)$ and $\Const{rate} = \Const{rate}(\texttt{S})\in(0,\infty)$ sufficiently large.
    Here, $\frakd_{\delta_N}^2$ is defined via 
        \begin{equation*}
            \forall \theta_1\in\Theta\cap\scrH,\theta_2\in\Theta:\; \frakd_{\delta_N}^2(\theta_1,\theta_2)\coloneqq \norm{\calG(\theta_1)-\calG(\theta_2)}_{\IL_\zeta^2(\calZ,V)}^2+\delta_N^2\norm{\theta_1}_{\scrH}^2.
        \end{equation*}
    With $M>\norm{\thetatrue}_{\scrH}$, such that $\thetatrue\in\Thetadiamond$, we can choose $\thetadiamond = \thetatrue$ and obtain 
    \begin{equation*}
        \Const{Rate}\br{\frakd_{\delta_N}^2(\thetadiamond,\thetatrue)+\delta_N^2} \leq \Const{Rate}\br{\delta_N^2\norm{\thetatrue}_{\scrH}^2 + \delta_N^2} \leq \Const{rate}'(\texttt{S},\norm{\thetatrue}_{\scrH}^2)\times\delta_N^2.
    \end{equation*}
    Thus,
    \begin{equation*}
        \IE_{\thetatrue}^N\brE{\Psi_N} = \IP_{\thetatrue}^N\br{\hat{S}_N\geq \Const{rate}'(\texttt{S},\norm{\thetatrue}_{\scrH}^2)\times\delta_N^2} \lesssim \exp\br{-\Bar{c}N\delta_N^2},
    \end{equation*}
    which controls the type-I-error. For the type-II-error, let $\theta\in\Theta_N(M)$, such that $\norm{\calG(\theta)-\calG(\thetatrue)}{\IL_\zeta^2(\calZ,V)}\geq L\delta_N$ be arbitrary but fixed. We then have 
        \begin{equation*}
            \norm{\calG(\theta)-\calG(\thetatrue)}{\IL_\zeta^2(\calZ,V)}^2 \leq 2\norm{\calG(\thetahat)-\calG(\theta)}{\Lfor}^2+2\norm{\calG(\thetahat)-\calG(\thetatrue)}{\Lfor}^2,
        \end{equation*}
        where we have applied the triangle inequality and $(a+b)^2 \leq 2a^2+2b^2$ for any reals $a,b\in(0,\infty)$.
        Thus, we have 
        \begin{align*}
            \frakd_{\delta_N}^2(\thetahat,\thetatrue) & = \norm{\calG(\thetahat)-\calG(\thetatrue)}{\Lfor}^2+\delta_N^2\norm{\thetahat}{\scrH}^2\\
            & \geq \frac{1}{2}\norm{\calG(\theta)-\calG(\thetatrue)}{\Lfor}^2 -\norm{\calG(\thetahat)-\calG(\theta)}^2 +\delta_N^2\norm{\thetahat}_{\scrH}^2\\
            & \geq \frac{1}{2}\norm{\calG(\theta)-\calG(\thetatrue)}{\Lfor}^2 -\frakd_{\delta_N}^2(\thetahat,\theta).
        \end{align*}
        Thus, we have 
        \begin{align*}
            \IE_{\theta}^N\brE{1-\Psi_N} & =\IP_\theta^N\br{\frakd_{\delta_N}^2(\thetahat,\thetatrue)\leq c_1\delta_N^2} \\& \leq\IP_\theta^N\br{\frakd_{\delta_N}^2(\thetahat,\theta)\geq \frac{1}{2}\norm{\calG(\theta)-\calG(\thetatrue)}{\Lfor}^2-c_1\delta_N^2}\\
            & \leq \IP_\theta^N\br{\frakd_{\delta_N}^2(\thetahat,\theta)\geq \br{\frac{1}{2}L^2-c_1}\delta_N^2}\\
            & \leq\IP_\theta^N\br{\frakd_{\delta_N}^2(\thetahat,\theta)\geq \frac{1}{4}L^2\delta_N^2}
        \end{align*}
        where we have used that $L\geq 2\sqrt{c_1}$. To upper bound the last probability, let $\thetahat$ be any maximizer of $\tikfunc$ over $\Thetadiamond = \scrH(M)$ under $\IP_\theta^N$-probability with $\theta\in\Theta_N(M)$. Thus, by \cref{thm:LS_estimator}, for every $\Bar{c}\in(0,\infty)$, we can find $\Const{rate} = \Const{rate}(\texttt{S})$ sufficiently large, such that 
        \begin{equation*}
            \IP_{\theta}^N\br{\frakd_{\delta_N}^2(\thetahat,\theta) \geq \Const{rate}\br{\frakd_{\delta_N}^2(\thetadiamond,\theta)+\delta_N^2}} \leq 6\times\exp\br{-\Bar{\const}N\delta_N^2}.
        \end{equation*}
        As $\theta\in\Theta_N(M)$ there exists a decomposition $\theta  = \theta_1+\theta_2$, such that $\norm{\theta_1}_{(H^\kappa(\calM,W))^*}\leq M\delta_N$ and $\norm{\theta_2}_{\scrH} \leq M$ as well as $\norm{\theta}_{\scrR}\leq M$. Thus, choosing $\thetadiamond = \theta_2$, we obtain with \cref{cond:fm:Liptwo}
        \begin{align*}
            \frakd_{\delta_N}^2(\thetadiamond,\theta)  \lesssim_M\norm{\theta_1}_{(H^\kappa(\calM,W))^*}^2 \lesssim_M \delta_N^2,
        \end{align*}
        where $\lesssim_M$ means that the multiplicative constant depends on $M$.
        Thus, if we choose $L$ sufficiently large, depending on $M$, we obtain
        \begin{equation*}
             \IE_{\theta}^N\brE{1-\Psi_N} \lesssim \times \exp\br{-\Bar{\const}N\delta_N^2}
        \end{equation*}
        uniformly over $\theta\in\Theta_N(M)$, which shows the claim. 
\end{proof}

\subsection{\texorpdfstring
  {Proof of \cref{thm:LS_estimator}}
  {Proof of Theorem X}
}\label{sec:ProofMainIneq}

In this section we are going to proof the general concentration inequality, \cref{thm:LS_estimator}. To that end we need some notations and key arguments from M-estimation theory we adapt to the present setting.
\begin{notation}
    \mbox{}
    \begin{enumerate}
        \item For $N\in\IN$, let $D_N \sim (Y_i,Z_i)_{i\nset{N}} \subseteq (V\times\calZ)^N$ be a finite family of random variables with law $\IP_{\thetatrue}^N$, where $\thetatrue\in\Theta$. We denote by $\IP_{\thetatrue}^{(i)}$ the law of any datum $(Y_i,Z_i)$, $i\nset{N}$.  
        For any $(y,z)\in V\times \calZ$, let $\delta_{(y,z)}(\cdot)$ denote the Dirac measure on $(V\times\calZ,\scrB_{V\times\calZ})$.  
        For each $i\nset{N}$, we define
        \[
            \forall B\in\scrB_{V\times\calZ}:\quad
            \empmeasure(B)\coloneqq\delta_{(Y_i,Z_i)}(B).
        \]
        Thus, for any measurable function $h:V\times\calZ\to\IR$ and each $i\nset{N}$,
        \[
            \int_{V\times\calZ} h(y,z)\,\rmd\empmeasure(y,z)
            = h(Y_i,Z_i).
        \]

        \item For any $\theta\in\Theta$, we define
        \[
            \forall (y,z)\in V\times\calZ:\quad
            \Tilde{L}_\theta(y,z)\coloneqq
            \exp\br{\Vnorm{y-\calGtilde_\theta(z)}^2}.
        \]
    \end{enumerate}
\end{notation}

\begin{lemma}[Key Argument]\label{lem:KeyInequality}
    Let $\Thetadiamond\subseteq\Theta\cap\scrH$. Given $\thetatrue\in\Theta$ and $D_N\sim\IP_{\thetatrue}^N$, 
    we define for all $\theta\in\Thetadiamond$ the empirical processes 
    \begin{equation*}
        \widetilde{\calW}_{N}(\theta) \coloneqq \frac{1}{2N}\sum_{i\nset{N}}s_i^{-2}\int_{V\times\calZ}\log\br{\frac{\Tilde{L}_{\thetadiamond}(y_i,z_i)}{\Tilde{L}_{\theta}(y_i,z_i)}}\rmd(\empmeasure-\IP_{\thetatrue}^{(i)})(y_i,z_i)
    \end{equation*}
    with fixed $\thetadiamond\in\Thetadiamond$.
    Given any fixed $\delta\in(0,\infty)$ and $(s_i^2)_{i=1}^N\subseteq(0,\infty)$, let $\thetahat$ any maximizer of $\tikfunc$ over $\Thetadiamond$, i.e. that satisfies
    \begin{equation*}
        \tikfunc[\thetahat] = \sup_{\theta\in\Thetadiamond}\tikfunc[\theta].
    \end{equation*}
    We then have
    \begin{equation*}
        2\widetilde{\calW}_N(\thetahat) + \distd(\thetadiamond,\thetatrue) \geq \distd(\thetahat,\thetatrue),
    \end{equation*}
    where the inequality is understood $\IP_{\thetatrue}^N$-almost surely.
\end{lemma}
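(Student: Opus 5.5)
The plan is to derive the inequality deterministically from the maximizing property of $\thetahat$, with a bias--variance identity for the centring term.

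\emph{Step 1 (maximizer inequality).} Since $\thetadiamond\in\Thetadiamond$ and $\thetahat$ maximizes $\tikfunc$ over $\Thetadiamond$, we have $\tikfunc[\thetahat]\geq\tikfunc[\thetadiamond]$. Rearranging gives
\begin{equation*}
\frac{1}{2N}\sum_{i\nset{N}}s_i^{-2}\br{\Vnorm{Y_i-\calGtilde_{\thetadiamond}(Z_i)}^2-\Vnorm{Y_i-\calGtilde_{\thetahat}(Z_i)}^2}\geq \frac{\delta^2}{2}\br{\norm{\thetahat}_{\scrH}^2-\norm{\thetadiamond}_{\scrH}^2}.
\end{equation*}
By the definition of $\Tilde{L}_\theta$, $\log\br{\Tilde{L}_{\thetadiamond}/\Tilde{L}_{\theta}}(y,z)=\Vnorm{y-\calGtilde_{\thetadiamond}(z)}^2-\Vnorm{y-\calGtilde_\theta(z)}^2$. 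Hence the left-hand side is $\frac{1}{2N}\sum_i s_i^{-2}\int\log(\Tilde{L}_{\thetadiamond}/\Tilde{L}_{\thetahat})\,\rmd\empmeasure$. Adding and subtracting the $\IP_{\thetatrue}^{(i)}$-integrals, it becomes $\widetilde{\calW}_N(\thetahat)$ plus a centring term. In the centring term $\thetahat$ is plugged in after integrating over a fresh datum, i.e.\ we evaluate the deterministic map $\theta\mapsto\int\cdots\rmd\IP_{\thetatrue}^{(i)}$ at $\theta=\thetahat$.

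\emph{Step 2 (computing the centring term).} Under $\IP_{\thetatrue}^{(i)}$ we have $y=\calG(\thetatrue)(z)+\varepsilon$, where $\varepsilon$ is centred, independent of $z\sim\zeta$, and has finite second moment (\cref{ass:errorBernstein}). Expanding the square and using $\IE\Vsprod{\varepsilon}{v}=0$ kills the cross term, so
\begin{equation*}
\int\Vnorm{y-\calGtilde_\theta(z)}^2\rmd\IP_{\thetatrue}^{(i)}=\norm{\calGtilde(\theta)-\calG(\thetatrue)}_{\IL^2_\zeta(\calZ,V)}^2+\IE\Vnorm{\varepsilon_i}^2.
\end{equation*}
The noise term $\IE\Vnorm{\varepsilon_i}^2$ is identical for $\thetadiamond$ and $\theta$, so it cancels in the difference. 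Averaging with weights $s_i^{-2}$ and using $\frac1N\sum_i s_i^{-2}=\sbar^{-2}$, the centring term equals
\begin{equation*}
\frac{\sbar^{-2}}{2}\br{\norm{\calGtilde(\thetadiamond)-\calG(\thetatrue)}^2_{\IL^2_\zeta(\calZ,V)}-\norm{\calGtilde(\thetahat)-\calG(\thetatrue)}^2_{\IL^2_\zeta(\calZ,V)}}.
\end{equation*}

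\emph{Step 3 (conclusion).} Insert Step 2 into Step 1, multiply by $2$, and move the $\thetahat$-terms to one side and the $\thetadiamond$-terms to the other. Recalling that $\distd(\theta,\thetatrue)=\sbar^{-2}\norm{\calGtilde(\theta)-\calG(\thetatrue)}^2_{\IL^2_\zeta(\calZ,V)}+\delta^2\norm{\theta}_{\scrH}^2$, this is exactly $2\widetilde{\calW}_N(\thetahat)+\distd(\thetadiamond,\thetatrue)\geq\distd(\thetahat,\thetatrue)$, almost surely.

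The main obstacle is not algebraic. It is making sure every quantity is finite, so that the add-and-subtract in Step 1 is legitimate, and that plugging in $\thetahat$ is measurable. I would ensure $\calGtilde(\theta)\in\IL^2_\zeta(\calZ,V)$ for $\theta\in\Thetadiamond$: this follows from \cref{cond:fm:boundglobal} together with $\Thetadiamond\subseteq\scrR(M)$ (or from boundedness when $\gamma_B=0$), and the Bernstein moments of $\varepsilon_i$ give square integrability of $y$. Measurability of $\thetahat$ is taken as given from \cref{propo:existenceofthethat}, and the inequality is understood on the almost-sure event where the maximizer exists.
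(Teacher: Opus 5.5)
Your proposal is correct and follows the paper's proof essentially step for step. Both arguments start from the maximizer inequality $\tikfunc[\thetahat]\geq\tikfunc[\thetadiamond]$ and rewrite it through $\log\br{\Tilde{L}_{\thetadiamond}/\Tilde{L}_{\thetahat}}$; both then subtract the $\IP_{\thetatrue}^{(i)}$-expectation and kill the cross term using independence and centredness of $\varepsilon_i$, leaving the centring term as the $\sbar^{-2}$-weighted difference of squared $\IL^2_\zeta$-distances. Your remarks on finiteness are harmless extras the paper leaves implicit, and integrability is automatic here since $\calGtilde$ maps into $\IL^2_\zeta(\calZ,V)$ by assumption.
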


\begin{proof}[Proof of \cref{lem:KeyInequality}]
In the following, we fix an arbitrary maximizer $\thetahat$ of $\tikfunc$ under $\IP_{\thetatrue}^N$-probability. Thus, all (in-)equalities hold $\IP_{\thetatrue}^N$-almost surely.
By definition of $\thetahat$, we have
\[
    \tikfunc[\thetahat] \geq \tikfunc[\thetadiamond],
\]
and hence
\[
    \frac{1}{2N}\sum_{i\nset{N}} s_i^{-2}
    \left\{
        \Vnorm{Y_i-\calGtilde(\thetadiamond)(Z_i)}^2
        -\Vnorm{Y_i-\calGtilde(\thetahat)(Z_i)}^2
    \right\}
    +\frac{\delta^2}{2}\norm{\thetadiamond}_{\scrH}^2
    \geq
    \frac{\delta^2}{2}\norm{\thetahat}_{\scrH}^2.
\]
This is equivalent to
\[
    \frac{1}{2N}\sum_{i\nset{N}} s_i^{-2}
    \log\br{\frac{\Tilde{L}_{\thetadiamond}(Y_i,Z_i)}{\Tilde{L}_{\thetahat}(Y_i,Z_i)}}
    +\frac{\delta^2}{2}\norm{\thetadiamond}_{\scrH}^2
    \geq
    \frac{\delta^2}{2}\norm{\thetahat}_{\scrH}^2.
\]
Subtracting on both sides
\[
    \frac{1}{2N}\sum_{i\nset{N}} s_i^{-2}
    \int_{V\times\calZ}
    \log\br{\frac{\Tilde{L}_{\thetadiamond}(y_i,z_i)}{\Tilde{L}_{\thetahat}(y_i,z_i)}}
    \,\rmd\IP_{\thetatrue}^{(i)}(y_i,z_i),
\]
the previous inequality becomes
\[
    \widetilde{\calW}_N(\thetahat)
    +\frac{\delta^2}{2}\norm{\thetadiamond}_{\scrH}^2
    \geq
    \frac{\delta^2}{2}\norm{\thetahat}_{\scrH}^2
    - \frac{1}{2N}\sum_{i\nset{N}} s_i^{-2}
    \int_{V\times\calZ}
    \log\br{\frac{\Tilde{L}_{\thetadiamond}(y_i,z_i)}{\Tilde{L}_{\thetahat}(y_i,z_i)}}
    \,\rmd\IP_{\thetatrue}^{(i)}(y_i,z_i).
\]
Since $Z_i\sim\zeta$ is stochastically independent of $\varepsilon_i$, we observe that
\[
    \IP_{\thetatrue}^{(i)}
    =
    \br{\zeta\otimes \IP^{\varepsilon_i}}\circ T_{\thetatrue}^{-1},
    \quad
    T_{\thetatrue}\colon
    \calZ\times V \ni (z,e)
    \mapsto
    \br{z,\calG(\thetatrue)(z)+e}
    \in \calZ\times V,
\]
such that
\begin{align*}\label{eq:WNExp_Decomposition}
    &\int_{V\times\calZ}
    \log\br{\frac{\Tilde{L}_{\thetadiamond}(y_i,z_i)}{\Tilde{L}_{\thetahat}(y_i,z_i)}}
    \,\rmd\IP_{\thetatrue}^{(i)}(y_i,z_i) \\
    &\hspace{1cm}=
    \int_{\calZ\times V}
    \Vnorm{\calG(\thetatrue)(z_i)-\calGtilde(\thetadiamond)(z_i)}^2
    -\Vnorm{\calG(\thetatrue)(z_i)-\calGtilde(\thetahat)(z_i)}^2\rmd\br{\zeta\otimes\IP^{\varepsilon_i}}(z_i,e_i)\\
    & \hspace{2cm}+
    2\int_{\calZ\times V}\Vsprod{e_i}{\calGtilde(\thetahat)(z_i)-\calGtilde(\thetadiamond)(z_i)}
    \,\rmd\br{\zeta\otimes\IP^{\varepsilon_i}}(z_i,e_i).
\end{align*}
By independence of $Z_i$ and $\varepsilon_i$, and since $\varepsilon_i$ is a centered $V$-valued random variable, we obtain
\[
    \int_{V\times\calZ}
    \log\br{\frac{\Tilde{L}_{\thetadiamond}(y_i,z_i)}{\Tilde{L}_{\thetahat}(y_i,z_i)}}
    \,\rmd\IP_{\thetatrue}^{(i)}(y_i,z_i)
    =
    \norm{\calG(\thetatrue)-\calGtilde(\thetadiamond)}_{\IL^2_\zeta(\calZ,V)}^2
    -
    \norm{\calG(\thetatrue)-\calGtilde(\thetahat)}_{\IL^2_\zeta(\calZ,V)}^2.
\]
Combining these computations yields
\[
    \widetilde{\calW}_N(\thetahat)
    +\frac{1}{2}\sbar^{-2}
    \norm{\calG(\thetatrue)-\calGtilde(\thetadiamond)}_{\IL^2_\zeta(\calZ,V)}^2
    +\frac{\delta^2}{2}\norm{\thetadiamond}_{\scrH}^2
    \geq
    \frac{1}{2}\sbar^{-2}
    \norm{\calG(\thetatrue)-\calGtilde(\thetahat)}_{\IL^2_\zeta(\calZ,V)}^2
    +\frac{\delta^2}{2}\norm{\thetahat}_{\scrH}^2.
\]
This proves the claim.
\end{proof}

\begin{lemma}[M-Estimation]\label{lem:M_estimation}
    Define for each $\thetatrue\in\Theta$, $\thetadiamond\in\Thetadiamond$ and real numbers $\delta,R\in(0,\infty)$ the event
    \begin{equation*}
        \Xi_{\delta,R}\coloneqq\left\{\frakdtilde_{\delta,\ops}^2(\thetahat,\thetatrue) \geq c_1\br{\frakdtilde_{\delta,\ops}^2(\thetadiamond,\thetatrue)+R^2}\right\},
    \end{equation*}
    where $c_1\in\IR_{\geq 2}$ is a fixed positive constant. 
    On $\Xi_{\delta,R}$, we have 
    \begin{enumerate}[label = \roman*)]
        \item $\sbar^{-2}\norm{\calGtilde(\thetahat)-\calGtilde(\thetadiamond)}_{\IL_\zeta^2(\calZ,V)}^2 + \delta^2\norm{\thetahat}_{\scrH}^2\geq c_2R^2$ for some constant $c_2\geq 1$.
        \item  $\frakdtilde_{\delta,\ops}^2(\thetahat,\thetatrue)-\frakdtilde_{\delta,\ops}^2(\thetadiamond,\thetatrue)\geq \frac{1}{6}\br{\sbar^{-2}\norm{\calGtilde(\thetahat)-\calGtilde(\thetadiamond)}_{\IL_\zeta^2(\calZ,V)}^2 + \delta^2\norm{\thetahat}_{\scrH}^2}$.
    \end{enumerate}
\end{lemma}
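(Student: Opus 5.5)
The plan is to argue deterministically on the event $\Xi_{\delta,R}$, using only the triangle inequality in $\IL^2_\zeta(\calZ,V)$ and the elementary bound $(x+y)^2\le 2x^2+2y^2$. To lighten notation, set
\begin{equation*}
    a\coloneqq \frakdtilde_{\delta,\ops}^2(\thetahat,\thetatrue),\qquad b\coloneqq \frakdtilde_{\delta,\ops}^2(\thetadiamond,\thetatrue),\qquad D\coloneqq \sbar^{-2}\norm{\calGtilde(\thetahat)-\calGtilde(\thetadiamond)}_{\IL_\zeta^2(\calZ,V)}^2+\delta^2\norm{\thetahat}_{\scrH}^2.
\end{equation*}
On $\Xi_{\delta,R}$ we have by definition $a\ge c_1(b+R^2)$ with $c_1\ge 2$.

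\textbf{Step 1: two comparison inequalities.} I first relate $D$ to $a$ and $b$ in both directions.

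For the upper bound on $D$, split $\calGtilde(\thetahat)-\calGtilde(\thetadiamond)=\br{\calGtilde(\thetahat)-\calG(\thetatrue)}+\br{\calG(\thetatrue)-\calGtilde(\thetadiamond)}$. Squaring, multiplying by $\sbar^{-2}$ and adding $\delta^2\norm{\thetahat}_{\scrH}^2$ gives
\begin{equation*}
    D\le 2a+2b,
\end{equation*}
since $\delta^2\norm{\thetahat}_{\scrH}^2\le 2\delta^2\norm{\thetahat}_{\scrH}^2$ and $\delta^2\norm{\thetadiamond}_{\scrH}^2\ge 0$.

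For the reverse direction, split $\calGtilde(\thetahat)-\calG(\thetatrue)=\br{\calGtilde(\thetahat)-\calGtilde(\thetadiamond)}+\br{\calGtilde(\thetadiamond)-\calG(\thetatrue)}$. The same argument yields
\begin{equation*}
    a\le 2D+2b.
\end{equation*}

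\textbf{Step 2: proof of i).} Combine the reverse inequality with the defining property of $\Xi_{\delta,R}$:
\begin{equation*}
    2D\ge a-2b\ge (c_1-2)b+c_1R^2\ge c_1R^2,
\end{equation*}
where the last step uses $c_1\ge 2$ and $b\ge0$. Hence $D\ge \tfrac{c_1}{2}R^2$, and i) holds with $c_2\coloneqq c_1/2\ge 1$.

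\textbf{Step 3: proof of ii).} On $\Xi_{\delta,R}$ we have $b\le a/c_1\le a/2$. This gives two consequences:
\begin{equation*}
    a-b\ge \tfrac{a}{2}\qquad\text{and}\qquad D\le 2a+2b\le 3a.
\end{equation*}
Therefore $a-b\ge \tfrac{a}{2}\ge \tfrac{D}{6}$, which is exactly ii).

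\textbf{Main obstacle.} Nothing here is genuinely hard. The only point needing care is that $\frakdtilde_{\delta,\ops}$ compares $\calGtilde(\theta_1)$ with $\calG(\theta_2)$ rather than with $\calGtilde(\theta_2)$. The comparisons must therefore always pass through the fixed element $\calG(\thetatrue)$, and must never invoke a triangle inequality for $\frakdtilde_{\delta,\ops}$ itself, which is not a metric.
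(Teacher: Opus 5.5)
Your proof is correct, and it is the standard van de Geer-type computation from \cite{Geer_2001} that the paper cites but omits: you pass through $\calG(\thetatrue)$ with $(x+y)^2\le 2x^2+2y^2$ in both directions. Your constants $c_2=c_1/2$ and $\tfrac16=\tfrac12\cdot\tfrac13$ are exactly the ones the paper later uses in the proof of \cref{thm:LS_estimator}, namely the bound $\frakDtilde^2(\thetahat,\thetadiamond)\geq\frac12 c_1R^2$ and the threshold $\frac1{12}\frakDtilde^2(\thetahat,\thetadiamond)$.
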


\begin{proof}[Proof of \cref{lem:M_estimation}]
    The proof follows from standard computations in M-estimation theory as they can be found in \cite{Geer_2001}. Hence, the derivation is omitted.
\end{proof}

We are now able to proof the general concentration inequality presented in \cref{thm:LS_estimator}.

\begin{proof}[Proof of \cref{thm:LS_estimator}]
    Noting that the event in question is the event $\Xi_{\delta,R}$ as presented in \cref{lem:KeyInequality}, \cref{lem:KeyInequality} implies that it suffices to establish the correct exponential decay of the probability
    \begin{equation*}
        \IP_{\thetatrue}^N\br{\brK{2\widetilde{\calW}_N(\thetahat) +\frakdtilde_{\delta,\ops}^2(\thetadiamond,\thetatrue) \geq\frakdtilde_{\delta,\ops}^2(\thetahat,\thetatrue)}\cap \Xi_{\delta,R} }.
    \end{equation*}
    Defining the shorthand notation
    \begin{equation*}
        \frakDtilde^2(\thetahat,\thetadiamond) \coloneqq \sbar^{-2}\norm{\calGtilde(\thetahat)-\calGtilde(\thetadiamond)}_{\IL_\zeta^2(\calZ,V)}^2 + \delta^2\norm{\thetahat}_{\scrH}^2,
    \end{equation*}
    according to \cref{lem:M_estimation}, the last probability can be upper bounded by
    \begin{equation}\label{eq:LeastSquaresProof_1}
        \IP_{\thetatrue}^N\br{|\widetilde{\calW}_N(\thetahat)|\geq \frac{1}{12}\frakDtilde^2(\thetahat,\thetadiamond) ,\;\frakDtilde^2(\thetahat,\thetadiamond) \geq \frac{1}{2}c_1R^2}.
    \end{equation}
    To further upper bound the last probability, we apply a peeling device (see \cite{Geer_2000}). For this, we define slices of the set $\Thetadiamond$, i.e., for each $l\in\IN$ we set
    \begin{equation*}
        \Thetadiamond_l \coloneqq \brK{\theta\in\Thetadiamond:\,\frakDtilde^2(\theta,\thetadiamond)\in\left[\frac{1}{2}c_1R^2\cdot2^{2l-2},\frac{1}{2}c_1R^2\cdot 2^{2l}\right)}
    \end{equation*}
    and observe that
    \begin{equation*}
        \brK{\theta\in\Theta:\,\frakDtilde^2(\theta,\thetadiamond)\geq \frac{1}{2}c_1R^2} = \bigcup_{l\in\IN}\Thetadiamond_l.
    \end{equation*}
    Moreover, by definition we have for each $l\in\IN$
    \begin{equation*}
        \Thetadiamond_l\subseteq H^\alpha\br{\calM,W,2^l\frac{\sqrt{c_1}R}{\sqrt{2}\delta}}
        \quad\text{and for each $\theta\in\Thetadiamond_l$}\quad
        \norm{\calGtilde(\theta)-\calGtilde(\thetadiamond)}_{\IL_\zeta^2(\calZ,V)}^2 \leq \frac{2^{2l}c_1R^2}{2\sbar^{-2}},
    \end{equation*}
    where we have used \cref{cond:MAP:scrH}.
    Then, the peeling device yields that \cref{eq:LeastSquaresProof_1} can be upper bounded by
    \begin{equation}\label{eq:LeastSquaresProof_2}
        \sum_{l\in\IN}\IP_{\thetatrue}^N\br{\sup_{\theta\in\Thetadiamond_l}|\widetilde{\calW}_N(\theta)| \geq \frac{2^{2l}c_1R^2}{192}}.
    \end{equation}
    Now, analogously to the proof of \cref{lem:KeyInequality}, we decompose $\widetilde{\calW}_N(\theta)$ into two centered empirical processes.
    Indeed, under $\IP_{\thetatrue}^N$, we have for each $i\nset{N}$
    \begin{align*}
        \log\br{\frac{\Tilde{L}_{\thetadiamond}(Y_i,Z_i)}{\Tilde{L}_{\theta}(Y_i,Z_i)}}
        &= \Vnorm{\calG(\thetatrue)(Z_i)-\calGtilde(\thetadiamond)(Z_i)}^2-\Vnorm{\calG(\thetatrue)(Z_i)-\calGtilde_\theta(Z_i)}^2
        + 2\Vsprod{\calGtilde_\theta(Z_i)-\calGtilde(\thetadiamond)(Z_i)}{\varepsilon_i},
    \end{align*}
    such that under $\IP_{\thetatrue}^N$ we have
    \begin{align*}
        \widetilde{\calW}_{N}(\theta)
        &= \frac{1}{\sqrt{N}}T_{N,1}(\theta) + \frac{1}{\sqrt{N}}\br{T_{N,2}(\theta)-\IE\brE{T_{N,2}(\theta)}},
    \end{align*}
    where we have defined the empirical processes
    \begin{equation*}
        T_{N,1}(\theta) \coloneqq \frac{1}{\sqrt{N}}\sum_{i\nset{N}}s_i^{-2}\Vsprod{\varepsilon_i}{\calGtilde_\theta(Z_i)-\calGtilde(\thetadiamond)(Z_i)}
    \end{equation*}
    and
    \begin{equation*}
        T_{N,2}(\theta) \coloneqq \frac{1}{2\sqrt{N}}\sum_{i\nset{N}}s_i^{-2}\brK{\Vnorm{\calG(\thetatrue)(Z_i)-\calGtilde(\thetadiamond)(Z_i)}^2-\Vnorm{\calG(\thetatrue)(Z_i)-\calGtilde(\theta)(Z_i)}^2}.
    \end{equation*}
    Thus, \cref{eq:LeastSquaresProof_2} can be upper bounded by
    \begin{multline*}
        \sum_{l\in\IN}\IP\br{\sup_{\theta\in\Thetadiamond_l}|T_{N,1}(\theta)|\geq\sqrt{N}\frac{2^{2l}c_1R^2}{384}}
        +\sum_{l\in\IN}\IP\br{\sup_{\theta\in\Thetadiamond_l}|T_{N,2}(\theta)-\IE[T_{N,2}(\theta)]|\geq\sqrt{N}\frac{2^{2l}c_1R^2}{384}}.
    \end{multline*}
    We aim to control both remaining probabilities using \cref{lem:chaining}. Starting with $\br{T_{N,1}(\theta)}_{\theta\in\Theta}$, we define for each $l\in\IN$ the class of measurable functions
    \begin{equation*}
        \calH_{l}\coloneqq\brK{h_\theta\coloneqq\calGtilde_\theta-\calGtilde(\thetadiamond)\colon\calZ\to V:\;\theta\in\Thetadiamond_l}.
    \end{equation*}
    Now, with \cref{cond:fm:boundglobal}, we obtain
    \begin{equation*}
        \forall \theta\in\Thetadiamond:\;
        \norm{h_\theta}_\infty
        \leq \norm{\calGtilde_\theta-\calGtilde(\thetadiamond)}_\infty
        \leq 2\Const{\calGtilde,\rmB}\br{1+\norm{\thetadiamond}_{\scrR}^{\gamma_B}\lor\norm{\theta}_{\scrR}^{\gamma_B}}
        \leq 2\Const{\calGtilde,\rmB}\times\max\brK{2,1+M^{\gamma_B}}.
    \end{equation*}
    Thus,
    \begin{equation*}
        \sup_{\theta\in\Thetadiamond_l}\norm{h_\theta}_\infty \leq c_2\br{\Const{\calGtilde,\rmB},M,\gamma_B}\in(0,\infty).
    \end{equation*}
    Further, for any $Z\sim\zeta$, we have
    \begin{equation*}
        \sup_{\theta\in\Thetadiamond_l}\IE\brE{h_\theta(Z)^2}
        = \sup_{\theta\in\Thetadiamond_l}\norm{\calGtilde_\theta-\calGtilde(\thetadiamond)}_{\IL^2_\zeta(\calZ,V)}^2
        \leq \frac{2^{2l}c_1R^2}{2\sbar^{-2}}
        =: \rmv_l^2.
    \end{equation*}
    Thus, all conditions of \cref{lem:chaining} are satisfied, and we can apply the result to conclude that there exists a universal constant $L\in(0,\infty)$ such that for all $x\geq 1$,
    \begin{multline*}
        \IP\left(\sup_{\theta\in\Thetadiamond_l}|T_{N,1}(\theta)|\geq L\left(\sqrt{\frac{1}{N}\sum_{i\nset{N}}\frac{\sigma_i^2}{s_i^4}}\left(\calJ_2(\calH_{l})+\rmv_l\sqrt{x}\right)+\frac{\rmB}{s_0^2\sqrt{N}}\left(\calJ_\infty(\calH_{l})+2\Const{\calG,B} x\right)\right)\right)\\
        \leq 3\exp(-x),
    \end{multline*}
    We now show that for suitable $x\geq 1$ and $c_1\in(0,\infty)$ sufficently large,
    \begin{equation*}
        L\left(\sqrt{\frac{1}{N}\sum_{i\nset{N}}\frac{\sigma_i^2}{s_i^4}}\left(\calJ_2(\calH_{l})+\rmv_l\sqrt{x}\right)+\frac{\rmB}{s_0^2\sqrt{N}}\left(\calJ_\infty(\calH_{l})+2\Const{\calG,B} x\right)\right)
        \leq \sqrt{N}\frac{2^{2l}c_1R^2}{384}.
    \end{equation*}
    Therefore, we need to upper bound the entropy integrals $\calJ_2(\calH_{l})$ and $\calJ_\infty(\calH_{l})$, respectively.
    In \cite{Siebel_2025} it is shown that there exist finite constants $c_3,c_4 \in(0,\infty)$, both depending only on $\alpha,\kappa,d,\gamma_2,\gamma_\infty,\eta$ and $d_W$, such that for any $\rho\in(0,\infty)$
    \begin{equation*}
        \log N(\calH_l,d_2,\rho) \leq c_3\times\br{\frac{2^{l}\sqrt{c_1}R\Bar{m}_2}{\delta\rho}}^{\frac{d}{\alpha+\kappa}}
    \end{equation*}
    and
    \begin{equation*}
        \log N(\calH_l,d_\infty,\rho) \leq c_4\times\br{\frac{2^{l}\sqrt{c_1}R\Bar{m}_\infty}{\delta\rho}}^{\frac{d}{\alpha-\eta}}
    \end{equation*}
    with $\Bar{m}_i \coloneqq \max\brK{\ConstLiptwo,\ConstLipinfty}\times\br{1+\br{\frac{2^l\sqrt{c_1}R}{\delta}}^{\gamma_i}}$, $i\in\brK{2,\infty}$.
    With similar arguments as in \cite{Siebel_2025} we can choose $\alpha>\frac{d}{2}-\kappa$ and upper bound $\calJ_2(\calH_l)$ with 
    \begin{align*}
        \calJ_2(\calH_l)
        &= \int_{[0,2\rmv_l]}\sqrt{ \log N(\calH_l,d_2,\rho)}\,\rmd\rho\\
        & \leq c_3\times \sqrt{N}2^{2l}c_1R^2\times 2^{-l(1-\frac{\gamma_2d}{2(\alpha+\kappa)})} c_1^{-\frac{1}{2}(1+\frac{d\gamma_2}{2(\alpha+\kappa)})} \times  R^{-1+\frac{d\gamma_2}{2(\alpha+\kappa)}}\delta^{-\frac{d(\gamma_2+1)}{2(\alpha+\kappa)}} \times \sqrt{\sbar^{-2}}^{-1+\frac{d}{2(\alpha+\kappa)}}\times\sqrt{N}^{-1}\\
        & \leq c_3\times  \sqrt{N}2^{2l}c_1R^2\times c_1^{-\frac{1}{2}(1-\frac{d\gamma_2}{2(\alpha+\kappa)})}.
    \end{align*}
    In particular, the entropy integral can be upper bounded by multiples of $\sqrt{N}2^{2l}c_1R^2$ as small as desired by choosing $c_1$ sufficiently large.
    With similar (tedious) computations, we obtain overall
    \begin{equation*}
        \calJ_2(\calH_{l}) \leq c(c_1,\alpha,\gamma_2,\kappa,d,d_W,\ConstLiptwo,\ConstLipinfty)\times \sqrt{N}2^{2l}c_1R^2
    \end{equation*}
    as well as
    \begin{equation*}
        \calJ_\infty(\calH_{l}) \leq c(c_1,\alpha,\gamma_\infty,\gamma_B,\eta,\ConstLiptwo,\ConstLipinfty,\Const{\calGtilde,B},d,d_W,M)\times N2^{2l}c_1R^2,
    \end{equation*}
    where both constants can be made as small as desired by choosing
    $c_1\in(0,\infty)$ sufficiently large.
    Plugging in these computations and choosing $x=\Bar{c}N2^{2l}R^2$, we obtain,
    \begin{align*}
        \sqrt{\frac{1}{N}\sum_{i\nset{N}}\frac{\sigma_i^2}{s_i^4}}\left(\calJ_2(\calH_{l})+\rmv_l\sqrt{x}\right)
        & \leq c(c_1,\texttt{S})\times 2^{2l}c_1R^2\sqrt{N},
    \end{align*}
    as well as
    \begin{equation*}
        \frac{\rmB}{s_0^2\sqrt{N}}\left(\calJ_\infty(\calH_{l})+2\Const{\calG,B} x\right)
        \leq c(c_1,\texttt{S})\times 2^{2l}c_1R^2\sqrt{N},
    \end{equation*}
    where again both constants can be made as small as desired by choosing $c_1\in(0,\infty)$ sufficiently large.
    Altogether, we obtain
    \begin{equation*}
        L\left(\sqrt{\frac{1}{N}\sum_{i\nset{N}}\frac{\sigma_i^2}{s_i^4}}\left(\calJ_2(\calH_{l})+\rmv_l\sqrt{x}\right)+\frac{\rmB}{s_0^2\sqrt{N}}\left(\calJ_\infty(\calH_{l})+2\Const{\calG,B} x\right)\right)
        \leq \sqrt{N}\frac{2^{2l}c_1R^2}{384}.
    \end{equation*}
    for $c_1$ sufficiently large, depending on $c_1,\texttt{S}$. All together, we have shown
    \begin{equation*}
        \IP\left(\sup_{\theta\in\Thetadiamond_l}|T_{N,1}(\theta)|\geq \sqrt{N}\frac{2^{2l}c_1R^2}{384}\right)\leq 3\exp(-\Bar{c}N2^{2l}R^2).
    \end{equation*}
    By the same arguments, with a change of constants, we can similarly show that
    \begin{equation*}
        \IP\left(\sup_{\theta\in\Thetadiamond_l}|T_{N,2}(\theta)-\IE_{\thetatrue}^N[T_{N,2}(\theta)]|\geq \sqrt{N}\frac{2^{2l}c_1R^2}{192}\right)
        \leq 3\exp(-\Bar{c}N2^{2l}R^2).
    \end{equation*}
    Thus, overall, we obtain
    \begin{equation*}
        \sum_{l\in\IN}\IP\left(\sup_{\theta\in\Thetadiamond_l}|\widetilde{\calW}_N(\theta)|\geq \frac{2^{2l}c_1^2R^2}{48}\right)
        \leq 6 \sum_{l\in\IN}\exp(-\Bar{c}N2^{2l}R^2)
        \leq \frac{3}{\ln(2)} \exp(-\Bar{c}NR^2),
    \end{equation*}
    where we have used \cref{lem:ExpoSum} with $\rma = 0$, $\rmb = \Bar{c}NR^2$, $\rmB = 2$ and $\rmc=2$. This shows the claim. 
\end{proof}

%% file: Sections/_app_PDE.tex
\section{Analysis and PDE Theory}\label{sec:app:PDE}

In the following we will make regulary usage of \textit{Young's Inequality for products} that is 
\begin{equation}\label{eq:Young}
    \forall\epsilon\in(0,\infty),\;\forall a,b\in[0,\infty):\;ab \leq \frac{a^2}{2\epsilon}+\frac{\epsilon b^2}{2}.
\end{equation}

We note that for any $s\in[0,\infty)$, $(\Hdot^s(\torus))^* \simeq \Hdot^{-s}(\torus)$ is the topological dual w.r.t the $\IL^2$-pairing. In particular, we have for $u,v\in C^\infty(\IT^d)$
    \begin{equation}\label{eq:CauchySchwarz}
        \abs{\sprod{u}{v}{\IL^2(\torus)}} \leq \norm{u}_{\Hdot^s(\torus)}\cdot\norm{v}_{\Hdot^{-s}(\torus)}.
    \end{equation}
    Further, for $s_1,s_2,s_3\in \IR$, we have 
    \begin{equation}\label{eq:PartialIntegration}
        \sprod{(-\Delta_T)^{s_1}u}{(-\Delta_T)^{s_2}v}{\Hdot^{s_3}(\torus)} = \sprod{u}{(-\Delta_T v)^{s_1+s_2+s_3}}{\IL^2(\torus)}
    \end{equation}
    as well as 
    \begin{equation}\label{eq:Deltanorm}
        \norm{(-\Delta_T)^{s_1}}_{\Hdot^{s_2}(\torus)} = \norm{u}_{\Hdot^{2s_1+s_2}}.
    \end{equation}
    For further details we refer to Section A in \cite{Konen_Nickl_2025}.

\subsection{Reaction Diffusion Equation}

In this section, we derive the analytical properties the solution of the non-linear reaction diffusion equation needed for the misspecification setting in \cref{sec:examples}. To that end, we recall the framework presented in \cite{Nickl_2025_BvMRDE}. Let $d\in\IN$ and $\calM = \IT^d$ the $d$-dimensional torus. Fix some time horizont $T\in(0,\infty)$.
We are interested in periodic solutions $u$ to the parabolic PDE
\begin{equation}\label{eq:ReactionDiffusion}
    \begin{dcases}
        \frac{\partial}{\partial t}u_\theta(t,x) - \Delta u_\theta(t,x)& = f(u_\theta(t,x))\text{ on $\IT^d\times(0,T]$}\\
        \hfill u_\theta(0,x) &= \theta(x)\text{ on $\IT^d$},
    \end{dcases}
\end{equation}
where $\Delta = \sum_{i\nset{d}}\frac{\partial^2}{\partial x_i^2}$ denotes the spatial Laplacian, $f\in C_c^\infty(\IR)$ is a reaction term, and $\theta\in H^1(\IT^d)$ is a initial condition. A \textit{weak} solution to \cref{eq:ReactionDiffusion} is a map $u\in\IL^2([0,T],H^1(\IT^d))\cap C^0([0,T],\IL^2(\IT^d))$, such that $\frac{\partial}{\partial t}u\in\IL^2([0,T],H^{-1}(\IT^d))$ satisfies
\begin{equation*}
    \begin{dcases}
        \sprod{\frac{\partial}{\partial t}u_\theta(t,\cdot)}{v}{\IL^2(\IT^d)} = \sprod{\nabla  u_\theta(t,\cdot)}{\nabla  v}{\IL^2(\IT^d)} &= \sprod{f(u_\theta(t,\cdot))}{v}{\IL^2(\IT^d)}\\
        \hfill 
        u_\theta(\cdot,0) &= \theta
    \end{dcases}
\end{equation*}
for all $v\in H^1(\IT^d)$ and a.e. $t\in(0,T]$. $u$ is called \textit{strong} solution to \cref{eq:ReactionDiffusion}, if in addition 
\begin{equation*}
    u\in\IL^2((0,T],H^2(\IT^d))\cap C^0((0,T],H^1(\IT^d))\text{ and }\frac{\partial}{\partial t}u\in\IL^2([0,T],\IL^2(\IT^d)),
\end{equation*}
in which case then $u$ solves \cref{eq:ReactionDiffusion} as an equation in $\IL^2([0,T],\IL^2(\IT^d))$.
In Theorem 6 in Section 3.1.1. in \cite{Nickl_2025_BvMRDE} it is shown that under these conditions there exists always a strong solution $u=u_\theta^f$ to the reaction-diffusion equation \cref{eq:ReactionDiffusion} that is unique in $C^0([0,T],\IL^2(\IT^d))$.\\

For the proofs of \cref{sec:examples}, where we misspecifiy the reaction term $f$, we need some analytical properties of the map $f\mapsto u_\theta^f$ to verify the assumptions formulated in \cref{cond:misspec-model}. To that end, we need to refine Proposition 5 in \cite{Nickl_2025_BvMRDE}.

\begin{lemma}\label{lemma:prop5new}
    For $a\in\IN_0$, let $\theta\in H^a(\IT^d)\cap H^1(\IT^d)$ and $f\in C^\infty_c(\IR)$ fixed. Let $u = u_\theta$ be a solution to \cref{eq:ReactionDiffusion}. Then, there exists a constant $c=c(f,T,d,a)$, such that 
    \begin{equation*}
        \sup_{t\in[0,T]}\norm{u_\theta(t,\cdot)}_{H^a(\IT^d)}^2 + \int_0^T\norm{u_\theta(t,\cdot)}_{H^{a+1}(\IT^d)}^2 \rmd t \leq c\cdot\left(1+\norm{\theta}_{H^a(\IT^d)}^{2(a-1)!}\right),
    \end{equation*}
    where we define $(-1)! := 1$.
\end{lemma}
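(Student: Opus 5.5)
We argue by induction on $a$, using energy estimates for the strong solution. Following Proposition 5 of \cite{Nickl_2025_BvMRDE}, we may pair the equation with $(-\Delta)^a u$ (equivalently, differentiate the equation $a$ times in space and test against $D^{\boldsymbol\beta}u$, $|\boldsymbol\beta|=a$). This yields
\begin{equation*}
    \frac{1}{2}\frac{\rmd}{\rmd t}\norm{u(t)}_{\Hdot^a}^2 + \norm{u(t)}_{\Hdot^{a+1}}^2 = \sprod{f(u(t))}{(-\Delta)^a u(t)}{\IL^2(\torus)}.
\end{equation*}
Zero modes are handled separately, since $|\frac{\rmd}{\rmd t}\int u| \leq \norm{f}_\infty$.

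\textbf{Base cases $a=0,1$.} For $a=0$, we bound the right-hand side by $\norm{f}_\infty\norm{u}_{\IL^1}\lesssim 1+\norm{u}_{\IL^2}^2$. For $a=1$, we bound it by $\norm{f}_\infty\norm{u}_{\Hdot^2}\leq \frac{1}{2}\norm{u}_{\Hdot^2}^2+\frac{1}{2}\norm{f}_\infty^2$, using Young's inequality \cref{eq:Young}. Grönwall's inequality then gives the bound $c(1+\norm{\theta}_{H^a}^2)$, which matches the exponent $2(a-1)!$ with $(-1)!=0!=1$.

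\textbf{Inductive step, $a\geq 2$.} We move $a-1$ derivatives onto $f(u)$:
\begin{equation*}
    |\sprod{f(u)}{(-\Delta)^a u}{\IL^2}| \leq \norm{f(u)}_{H^{a-1}}\norm{u}_{\Hdot^{a+1}} \leq \frac{1}{2}\norm{u}_{\Hdot^{a+1}}^2 + \frac{1}{2}\norm{f(u)}_{H^{a-1}}^2 .
\end{equation*}
The key estimate is a Faà di Bruno/Moser-type bound for the composition. Since $f\in C_c^\infty$, all derivatives $f^{(k)}$ are bounded, and $D^{a-1}f(u)$ is a sum of terms $f^{(k)}(u)\prod_{j=1}^k D^{m_j}u$ with $\sum_j m_j=a-1$. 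Via Hölder and Gagliardo–Nirenberg interpolation between $\IL^\infty$-type control and $H^{a}$, we aim to bound $\norm{f(u)}_{H^{a-1}}^2$ by a polynomial in lower-order norms times $\norm{u}_{H^{a}}^{2}$. A cruder product bound uses the induction hypothesis: $\sup_t\norm{u}_{H^{a-1}}$ and $\int\norm{u}_{H^{a}}^2$ are already controlled by $c(1+\norm{\theta}_{H^{a-1}}^{2(a-2)!})$. One then aims for
\begin{equation*}
    \int_0^T\norm{f(u)}_{H^{a-1}}^2\rmd t \lesssim \Bigl(1+\sup_t\norm{u}_{H^{a-1}}^{2}\Bigr)^{a-1}\Bigl(1+\int_0^T\norm{u}_{H^a}^2\rmd t\Bigr),
\end{equation*}
so the exponent of $\norm{\theta}$ multiplies by roughly $(a-1)$ per step. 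Integrating in time and adding $\norm{\theta}_{H^a}^2$ then gives $c(1+\norm{\theta}_{H^a}^{2(a-1)!})$, since $(a-1)\cdot(a-2)!=(a-1)!$ and $\norm{\theta}_{H^{a-1}}\leq\norm{\theta}_{H^a}$. We absorb the $\frac12\norm{u}_{\Hdot^{a+1}}^2$ term on the left and take the supremum over $t$.

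\textbf{Main obstacle.} The hard part is the composition estimate, specifically keeping the exponent bookkeeping exactly at $(a-1)!$ rather than something larger. In products of several derivatives, the top-order factor $D^{a-1}u$ must appear at most squared and in a time-integrable form, which is only available through $\int\norm{u}_{H^{a}}^2$. All other factors must be placed in $\sup_t$ norms controlled by the induction hypothesis via Sobolev embeddings. If these embeddings fail for a given $d\leq 3$, we would first try Gagliardo–Nirenberg interpolation to trade powers, at the cost of tracking constants. For the regularity needed to justify testing against $(-\Delta)^a u$, we would use a Galerkin approximation, or the parabolic smoothing already established in \cite{Nickl_2025_BvMRDE}.
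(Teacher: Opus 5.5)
Your overall scheme is the paper's: the energy identity at level $a$, then Cauchy--Schwarz and Young to reduce everything to $\|f(u)\|_{H^{a-1}}^2$, then induction on $a$. Your base cases $a=0,1$ are fine.

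\textbf{The gap.} It is in the inductive step. The composition bound you aim for,
\[
\int_0^T\|f(u)\|_{H^{a-1}}^2\,\rmd t \lesssim \Bigl(1+\sup_t\|u\|_{H^{a-1}}^2\Bigr)^{a-1}\Bigl(1+\int_0^T\|u\|_{H^a}^2\,\rmd t\Bigr),
\]
does not produce the exponent you claim. By the induction hypothesis \emph{both} factors are of size $1+\|\theta\|_{H^{a-1}}^{2(a-2)!}$. The right-hand side is therefore of order $\|\theta\|^{2a(a-2)!}$. The exponent gets multiplied by $a$ per step, not by $a-1$, and you end with $2\,a!$ instead of $2(a-1)!$. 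Already for $a=2$ this gives $\sup_t\|u\|_{H^2}^2\lesssim 1+\|\theta\|_{H^2}^4$ rather than $1+\|\theta\|_{H^2}^2$. That matters: $a=2$ is exactly the case used in the proof of \cref{thm:example-noise} to get $\gamma_B=1$ for \cref{item:NMII.II}.

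\textbf{The missing observation.} No time-integrated factor is needed. $D^{a-1}f(u)$ involves derivatives of $u$ of order at most $a-1$. Its top-order term $f'(u)D^{a-1}u$ is bounded in $\IL^2$ by $\|f'\|_\infty\|u(t)\|_{H^{a-1}}$, pointwise in $t$. So your remark that this factor is only available in time-integrable form through $\int_0^T\|u\|_{H^a}^2$ is mistaken.

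The paper accordingly uses a pointwise-in-time composition estimate:
\begin{itemize}
\item for $a=2$, simply $\|f(u)\|_{H^1}^2\le\|f\|_\infty^2+\|f'\|_\infty^2\|u\|_{H^1}^2$;
\item for higher levels, \cref{thm:Composition}, $\|f(u)\|_{H^m}\lesssim 1+\|u\|_{H^m}^m$. This uses Gagliardo--Nirenberg for the lower-order products and requires $m\ge d/2$. For $d\le 3$ that holds whenever products actually occur, i.e.\ $m\ge 2$.
\end{itemize}
At level $a$, i.e.\ with $m=a-1$, this yields
\[
\sup_t\|f(u(t))\|_{H^{a-1}}^2\lesssim 1+\bigl(\sup_t\|u\|_{H^{a-1}}^2\bigr)^{a-1}\lesssim 1+\|\theta\|_{H^{a-1}}^{2(a-1)(a-2)!}.
\]
Integrating over $[0,T]$ costs only a factor $T$. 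Your identity $(a-1)(a-2)!=(a-1)!$ then gives exactly the stated exponent.
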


\begin{remark}
    In \cite{Nickl_2025_BvMRDE} the claim was proven under the additional assumption that $\norm{\theta}_{H^a(\IT^d)}\leq B$ for some fixed and known constant $B\in(0,\infty)$, such that the right-hand side of the equation above is bounded by a constant  depending additionally on $B$.
\end{remark}

\begin{proof}[Proof of \cref{lemma:prop5new}]

From the proof of Proposition 4 in \cite{Nickl_2025_BvMRDE}, it suffices to assume that the solution $u_\theta$ is sufficiently regular as the general statement then follows from a Galerkin approximation argument.
\begin{itemize}
    \item Step I - Preliminary identities and estimates: Let $a\in\IN_0$. Using the fact that 
    \begin{equation*}
        \forall t\in[0,T]:\,\norm{\nabla  u_\theta(t,\cdot)}_{h^a}^2 = - \sprod{\Delta u_\theta(t,\cdot)}{u_\theta(t,\cdot)}{h^a}
    \end{equation*}
    and differentiating the squared $h^a$-norm yields
    \begin{equation}\label{eq:NewPropo5BaiscEquation1}
        \forall t\in[0,T]:\,\frac{1}{2}\frac{\rmd}{\rmd t}\norm{u_\theta(t,\cdot)}_{h^a}^2 + \norm{\nabla  u_\theta(t,\cdot)}_{h^a}^2 = \sprod{f(u_\theta(t,\cdot))}{u_\theta(t,\cdot)}{h^a}.
    \end{equation}
    For $a = 0$, we observe
    \begin{equation*}
        \forall t\in[0,T]:\,\sprod{f(u_\theta(t,\cdot))}{u_\theta(t,\cdot)}{\IL^2(\IT^d)} = \int_{\IT^d} f(u_\theta(t,x))u\mathds{1}_{\left\{|u_\theta(t,x)|\leq K\right\}}\mathrm{d}x \leq C_f  < \infty
    \end{equation*}
    for some $K\in(0,\infty)$ as $f$ is compactly supported. Thus, \cref{eq:NewPropo5BaiscEquation1} implies
    \begin{equation*}
        \forall t\in[0,T]:\; \norm{u_\theta(t,\cdot)}_{\IL^2(\torus)}^2 + 2\int_0^t\norm{\nabla  u_\theta(s,\cdot)}_{\IL^2(\torus)}^2\rmd s \leq 2 C_f t + \norm{\theta}_{\IL^2(\torus)}^2.
    \end{equation*}
    From this, elementray computations lead to 
    \begin{equation*}
        \sup_{t\in[0,T]} \norm{u_\theta(t,\cdot)}_{\IL^2(\torus)}^2 + 2\int_0^T\norm{ u_\theta(t,\cdot)}_{H^1(\torus)}^2\rmd t \leq c(f,T)\times\br{1+\norm{\theta}_{\IL^2(\torus)}^2}.
    \end{equation*}
    This already shows the claim for $a = 0$. For $a\in\IN$, going back to \cref{eq:NewPropo5BaiscEquation1},
    an application of the Cauchy-Schwarz inequality further yields
    \begin{align*}
        \forall t\in[0,T]:\,\sprod{f(u_\theta(t,\cdot))}{u_\theta(t,\cdot)}{h^a}&\leq \norm{f(u_\theta(t,\cdot))}_{h^{a-1}}\cdot\norm{u_\theta(t,\cdot)}_{h^{a+1}}.
    \end{align*}
    Using Young's inequality with $\epsilon = 2$ yields
    \begin{align*}
        \norm{f(u_\theta(t,\cdot))}_{h^{a-1}}\cdot\norm{u_\theta(t,\cdot)}_{h^{a+1}}
        &\leq \norm{f(u_\theta(t,\cdot))}_{h^{a-1}}^2 + \frac{1}{4}\norm{u_\theta(t,\cdot)}_{h^{a+1}}^2\\
        &\simeq \norm{f(u_\theta(t,\cdot))}_{H^{a-1}}^2\cdot\frac{1}{4}\br{\norm{\nabla  u_\theta(t,\cdot)}_{h^a}^2+\norm{u_\theta(t,\cdot)}_{\IL^2(\IT^d)}^2},
    \end{align*}
    where we have used $\norm{\cdot}_{H^{a+1}(\torus)}^2\simeq \norm{\cdot}_{\IL^2(\torus)}^2+\norm{\nabla \cdot}_{h^a}^2$.
    Thus, we have 
    \begin{equation}\label{eq:NewPropo5BaiscEquation2}
        \forall t\in[0,T]:\,\frac{1}{2}\frac{\rmd}{\rmd t}\norm{u_\theta(t,\cdot)}_{h^a}^2 + \frac{3}{4}\norm{\nabla  u_\theta(t,\cdot)}_{h^a}^2 \lesssim \norm{f(u_\theta(t,\cdot))}_{H^{a-1}}^2 + \frac{1}{4}\norm{u_\theta(t,\cdot)}_{\IL^2(\IT^d)}^2.
    \end{equation}

    \item Step II - Induction over $a$: We will prove the general claim by an induction argument.
    To that end, we start with the induction beginning by proving the claim for $a = 1$ and $a = 2$ explicitly. 
    \begin{itemize}
        \item \textbf{(IB)} $a = 1$: We start with \cref{eq:NewPropo5BaiscEquation2} and obtain
        \begin{equation*}
            \forall t\in[0,T]:\,\frac{1}{2}\frac{\rmd}{\rmd t}\norm{u_\theta(t,\cdot)}_{h^1}^2 + \frac{3}{4}\norm{\nabla  u_\theta(t,\cdot)}_{h^1}^2 \lesssim \norm{f(u_\theta(t,\cdot))}_{\IL^2(\torus)}^2 + \frac{1}{4}\norm{u_\theta(t,\cdot)}_{\IL^2(\IT^d)}^2.
        \end{equation*}
        Observing that 
        \begin{equation*}
            \forall t\in[0,T]:\,\norm{f(u_\theta(t,\cdot))}_{\IL^2(\torus)}^2  \leq \norm{f}_{\infty}^2
        \end{equation*}
        as well as using the bound derived for $a = 0$ leads to 
        \begin{equation*}
             \forall t\in[0,T]:\,\frac{1}{2}\frac{\rmd}{\rmd t}\norm{u_\theta(t,\cdot)}_{h^1}^2 + \frac{3}{4}\norm{\nabla  u_\theta(t,\cdot)}_{h^1}^2 \lesssim c(f,T)\times\br{1+\norm{\theta}_{\IL^2(\torus)}^2}.
        \end{equation*}
        Integration yields
        \begin{equation*}
         \sup_{t\in[0,T]}\norm{u_\theta(t,\cdot)}_{h^1}^2 + \frac{3}{2}\int_{0}^T\norm{\nabla  u_\theta(t,\cdot)}_{h^1}^2\rmd t \leq c(f,t)\br{1+\norm{\theta}_{H^1(\torus)}^2},
        \end{equation*}
        which shows the claim.
        \item \textbf{(IB)} $a = 2$: We start with \cref{eq:NewPropo5BaiscEquation2} and obtain
        \begin{equation*}
             \forall t\in[0,T]:\, \frac{1}{2}\frac{\rmd}{\rmd t}\norm{u_\theta(t,\cdot)}_{h^2}^2 + \frac{3}{4}\norm{\nabla  u_\theta(t,\cdot)}_{h^2}^2 \lesssim \norm{f(u_\theta(t,\cdot))}_{h^{1}}^2 + \frac{1}{4}\norm{u_\theta(t,\cdot)}_{\IL^2(\IT^d)}^2.
        \end{equation*}
        Note that 
        \begin{align*}
             \norm{f(u_\theta(t,\cdot))}_{h^{1}}^2 & \simeq \norm{f(u_\theta(t,\cdot))}_{\IL^2(\torus)}^2 + \max_{\abs{\boldbeta} = 1}\norm{\rmD^{\boldbeta}f(u_\theta(t,\cdot))}_{\IL^2(\torus)}^2\\
             & \leq  \norm{f}_\infty^2 + \norm{f'}_\infty^2\cdot\norm{u_\theta(t,\cdot)}_{H^1(\torus)}^2
              \leq c(f,f',T)\times\br{1+\norm{\theta}_{H^1(\torus)}^2},
        \end{align*}
        where we have used the bound derived for $a=1$ in the last step.
        Thus, we obtain for all $t\in[0,T]$
         \begin{equation*}
             \frac{1}{2}\frac{\rmd}{\rmd t}\norm{u_\theta(t,\cdot)}_{h^2}^2 + \frac{3}{4}\norm{\nabla  u_\theta(t,\cdot)}_{h^2}^2 \lesssim c(f,f',T)\times\br{1+\norm{\theta}_{H^1(\torus)}^2} + \frac{1}{4}\norm{u_\theta(t,\cdot)}_{\IL^2(\IT^d)}^2,
        \end{equation*}
        which - after integration - shows the claim for $a=2$.

        \item \textbf{(IH)} For $a \in\IN$ fixed, there exists a finite constant $c=c(f,T)\in(0,\infty)$, such that
        \begin{equation*}
            \sup_{t\in[0,T]}\norm{u_\theta(t,\cdot)}_{H^a(\IT^d)}^2 + \frac{3}{2}\int_0^T\norm{u_\theta(t,\cdot)}_{H^{a+1}(\IT^d)}^2\rmd t \leq c(f,T)\times\left(1+\norm{\theta}_{H^a(\IT^d)}^{2(a-1)!}\right).
        \end{equation*}
        \item \textbf{(IS)} $a \Rightarrow a+1$: Under \textbf{(IH)}, we consider the case $a +1$. We start with \cref{eq:NewPropo5BaiscEquation2} and obtain
        \begin{equation*}
             \forall t\in[0,T]:\,\frac{1}{2}\frac{\rmd}{\rmd t}\norm{u_\theta(t,\cdot)}_{h^{a+1}}^2 + \frac{3}{4}\norm{\nabla  u}_{h^{a+1}}^2 \lesssim \norm{f(u_\theta(t,\cdot))}_{h^{a}}^2 + \frac{1}{4}\norm{u_\theta(t,\cdot)}_{\IL^2(\IT^d)}^2.
        \end{equation*}
        With \cref{thm:Composition} we obtain 
        \begin{equation*}
             \forall t\in[0,T]:\,\norm{f(u(t,\cdot))}_{h^{a}}^2 \lesssim 1+\norm{u(t,\cdot)}_{H^a(\IT^d)}^{2a}.
        \end{equation*}
        Then \textbf{(IH)} yields 
        \begin{equation*}
            \forall t\in[0,T]:\, \norm{u(t,\cdot)}_{H^a(\IT^d)}^{2a} \lesssim 1+\norm{\theta}_{H^a(\IT^d)}^{2a(a-1)!} =1+\norm{\theta}_{H^a(\IT^d)}^{2a!}
        \end{equation*}
        Thus, 
        \begin{equation*}
            \norm{f(u)}_{h^{a}}^2 \lesssim1+\norm{\theta}_{H^a(\IT^d)}^{2a!}.
        \end{equation*}
        Integrating the resulting inequality as before then yields the desired claim.
    \end{itemize}
\end{itemize}
\end{proof}

\begin{lemma}\label{lem:RDE_f_Lipschitz}
    Let $d\nset{3}$. Let $\theta\in H^2(\IT^d)$. Denote by $u_{\theta}^{f_1}$ and $u_{\theta}^{f_1}$ the unique solutions to \cref{eq:ReactionDiffusion} with different reaction terms  $f_1,f_2\in C_c^\infty(\IR)$. Then there exists a finite constant $c=c(f_1,f_2,T)\in(0,\infty)$, such that
    \begin{equation*}
        \sup_{t\in[0,T]}\norm{u_{\theta}^{f_1}(t,\cdot)-u_{\theta}^{f_2}(t,\cdot)}_{H^2(\torus)}^2 \leq c(f_1,f_2,T,d)\cdot\br{1+\norm{\theta}_{H^2(\torus)}^2}\cdot\norm{f_1-f_2}_{C^1(\IR)}^2.
    \end{equation*}
\end{lemma}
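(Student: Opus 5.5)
We compare the two solutions via their difference. Set $w(t)\coloneqq u_\theta^{f_1}(t,\cdot)-u_\theta^{f_2}(t,\cdot)$, which satisfies
\begin{equation*}
    \partial_t w - \Delta w = f_1(u^{f_1}) - f_2(u^{f_2}) = \big[f_1(u^{f_1})-f_1(u^{f_2})\big] + \big[(f_1-f_2)(u^{f_2})\big] \eqqcolon F_1(t)+F_2(t),\qquad w(0)=0 .
\end{equation*}
As in the proof of \cref{lemma:prop5new}, we assume the solutions are sufficiently smooth, the general case following by Galerkin approximation. We then perform energy estimates in $h^2$: testing against $w$ in the $h^2$ inner product and arguing as in \cref{eq:NewPropo5BaiscEquation1}--\cref{eq:NewPropo5BaiscEquation2} gives
\begin{equation*}
    \tfrac12\tfrac{\rmd}{\rmd t}\norm{w}_{h^2}^2+\tfrac34\norm{\nabla w}_{h^2}^2\lesssim \norm{F_1}_{h^1}^2+\norm{F_2}_{h^1}^2+\norm{w}_{\IL^2}^2 .
\end{equation*}
Gronwall with $w(0)=0$ then reduces the claim to bounding $\int_0^T\norm{F_2}_{h^1}^2$ by the right-hand side of the lemma, together with showing that $\norm{F_1}_{h^1}^2$ is controlled linearly by $\norm{w}_{h^2}^2$.

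For the source term $F_2$, the chain rule gives
\begin{equation*}
    \norm{(f_1-f_2)(u^{f_2})}_{H^1}^2\le \norm{f_1-f_2}_\infty^2+\norm{(f_1-f_2)'}_\infty^2\norm{u^{f_2}}_{H^1}^2 .
\end{equation*}
By \cref{lemma:prop5new} with $a=1$, this is at most $c(f_2,T)\,(1+\norm{\theta}_{H^1}^2)\norm{f_1-f_2}_{C^1}^2$, which is already of the required form.

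The Lipschitz term $F_1$ is the main obstacle. Write $F_1=g\,w$ with $g=\int_0^1 f_1'(u^{f_2}+s w)\,\rmd s$, so that
\begin{equation*}
    \nabla F_1=g\nabla w+w\nabla g,\qquad \norm{g}_\infty\le\norm{f_1'}_\infty,\qquad |\nabla g|\le\norm{f_1''}_\infty\big(|\nabla u^{f_1}|+|\nabla u^{f_2}|\big).
\end{equation*}
The first piece is bounded by $\norm{f_1'}_\infty\norm{w}_{H^1}$. The second is the dangerous one. Since $d\le3$, we have $H^2\hookrightarrow \IL^\infty$ and $H^1\hookrightarrow\IL^4$, so we bound it in the form
\begin{equation*}
    \norm{w\nabla u}_{\IL^2}\le\norm{w}_{\IL^\infty}\norm{\nabla u}_{\IL^2}\lesssim\norm{w}_{H^2}\norm{u}_{H^1}.
\end{equation*}
This gives $\norm{F_1}_{h^1}^2\lesssim(1+\norm{u^{f_1}}_{H^1}^2+\norm{u^{f_2}}_{H^1}^2)\norm{w}_{H^2}^2$. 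The coefficient $\beta(t)\coloneqq 1+\norm{u^{f_1}(t)}_{H^1}^2+\norm{u^{f_2}(t)}_{H^1}^2$ is bounded by $c(1+\norm{\theta}_{H^1}^2)$ via \cref{lemma:prop5new}.

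Applying Gronwall to $y(t)=\norm{w(t)}_{h^2}^2$ then yields
\begin{equation*}
    \sup_{t\in[0,T]} y(t)\le \exp\Big(\int_0^T\beta\Big)\int_0^T\norm{F_2}_{h^1}^2 .
\end{equation*}
The difficulty is the exponential factor, which a priori gives $e^{c(1+\norm{\theta}^2)}$ rather than the claimed linear factor $(1+\norm{\theta}_{H^2}^2)$. To avoid this, I would first try to exploit the compact support of $f_1''$ to bound the coefficient uniformly. Failing that, I would use an interpolation bound such as $\norm{w}_{\IL^\infty}\lesssim\norm{w}_{H^1}^{1/2}\norm{w}_{H^2}^{1/2}$ (for $d=3$, e.g. Agmon), then absorb the $H^3$ part into the dissipation term $\norm{\nabla w}_{h^2}^2$ via Young's inequality \cref{eq:Young}, and close with a lower-order estimate on $\norm{w}_{H^1}$. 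Here $\beta$ is still only controlled through \cref{lemma:prop5new}, i.e. $\int_0^T\beta\lesssim 1+\norm{\theta}_{H^1}^2$, so this step may yield a constant depending on $\norm{\theta}_{H^2}$ in a worse way than stated. Should the linear dependence fail, I would settle for a bound uniform over balls $\norm{\theta}_{H^2}\le\rmM$, which is all that \cref{cond:MM:III} requires.
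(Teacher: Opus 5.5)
Your proposal does not prove the lemma as stated, as you largely concede yourself. The trouble is the estimate $\|w\nabla g\|_{L^2}\le\|w\|_{L^\infty}\|\nabla u\|_{L^2}\lesssim\|w\|_{H^2}\|u\|_{H^1}$. It makes the $\theta$-dependent quantity $\beta(t)$ the coefficient of $\|w\|_{h^2}^2$, so Gronwall gives a factor $\exp\bigl(c(1+\|\theta\|_{H^1}^2)\bigr)$ instead of $1+\|\theta\|_{H^2}^2$. None of your fallbacks repairs this.
\begin{itemize}
\item The compact support of $f_1''$ only restricts where $f_1''(u^{f_2}+sw)$ is nonzero. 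It gives no control on $\nabla u^{f_i}$ there.
\item The Agmon--Young route keeps $\|u\|_{H^1}$ as a multiplier. Combined with an $H^1$ bound on $w$, it leaves a source of the type $\|w\|_{H^1}^2\|u\|_{H^1}^4$, i.e.\ quartic growth in $\|\theta\|$.
\item A bound uniform on balls $\|\theta\|_{H^2}\le \mathrm{M}$ is strictly weaker than the claimed linear dependence.
\end{itemize}

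The missing idea is to estimate $w$ in stages and to put the sup-norm on $u$ rather than on $w$.

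First run the energy identity at the levels $L^2$ and $h^1$. With your splitting, $|F_1+F_2|\le\|f_1-f_2\|_\infty+\|f_1'\|_\infty|w|$ pointwise, so the only Gronwall coefficient is the constant $\|f_1'\|_\infty$. Using $\langle F,w\rangle_{h^1}\le\|F\|_{L^2}\|w\|_{h^2}$ and absorbing into the dissipation, you obtain the $\theta$-free bound $\sup_{t}\bigl(\|w(t)\|_{L^2}^2+\|w(t)\|_{h^1}^2\bigr)\le c(f_1,f_2,T)\|f_1-f_2\|_\infty^2$.

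Then, at the $h^2$ level, bound the two pieces of $\nabla F_1$ separately:
\begin{itemize}
\item $\|g\nabla w\|_{L^2}\le\|f_1'\|_\infty\|w\|_{h^1}$;
\item $\|w\nabla g\|_{L^2}\le\|f_1''\|_\infty\|w\|_{L^2}\bigl(\|\nabla u^{f_1}\|_\infty+\|\nabla u^{f_2}\|_\infty\bigr)\lesssim\|w\|_{L^2}\bigl(\|u^{f_1}\|_{H^3}+\|u^{f_2}\|_{H^3}\bigr)$, using $H^3\hookrightarrow C^1$ for $d\le 3$.
\end{itemize}
With the lower-order bounds, $\|F_1(t)\|_{h^1}^2\lesssim\bigl(1+\|u^{f_1}(t)\|_{H^3}^2+\|u^{f_2}(t)\|_{H^3}^2\bigr)\|f_1-f_2\|_\infty^2$ becomes a pure source term. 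Since $w(0)=0$, you simply integrate, and no exponential of a $\theta$-dependent quantity appears. By \cref{lemma:prop5new} with $a=2$, $\int_0^T\|u^{f_i}(t)\|_{H^3}^2\,\mathrm{d}t\le c(1+\|\theta\|_{H^2}^2)$, which gives exactly the factor $1+\|\theta\|_{H^2}^2$.

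This is the paper's argument; its $E_1,E_2$ correspond to your $F_1,F_2$, and your treatment of $F_2$ is fine. A Hölder split $\|w\|_{L^4}\|\nabla u\|_{L^4}\lesssim\|w\|_{H^1}\|u\|_{H^2}$, combined with the $\theta$-free $H^1$ bound on $w$, would serve equally well.
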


\begin{proof}[Proof of \cref{lem:RDE_f_Lipschitz}]
    Let $\theta\in H^2(\torus)$ be arbitrary but fixed. We show the claim successively by deriving a $\IL^2$-bound first, a $H^1$-bound second and finally the desired claim. 
    We define the difference $w\coloneqq u_{\theta}^{f_1}-u_{\theta}^{f_2}$, which solves the PDE 
    \begin{equation*}
        \begin{dcases}
            \partial_t w(t,x) -\Delta w(t,x) &= f_1( u_{\theta}^{f_1}(t,x)) - f_2( u_{\theta}^{f_2}(t,x)),\quad \text{on $(0,T]\times \torus$}\\
            \hfill w(0,x) & = 0,\quad\hfill \text{on $\torus$}\\
        \end{dcases}
    \end{equation*}
    Analogously as in the proof of \cref{lemma:prop5new}, for all $a\in\IN_0$, we then have the basic equality 
    \begin{equation}\label{eq:RDE:basic}
        \frac{1}{2}\frac{\rmd}{\rmd t}\norm{w(t,\cdot)}_{h^a}^2 +\norm{\nabla  w(t,\cdot)}_{h^a}^2 = \sprod{f_1( u_{\theta}^{f_1}(t,\cdot)) - f_2( u_{\theta}^{f_2}(t,\cdot))}{w(t,\cdot)}{h^a}.
    \end{equation}
    In the subsequent, we will upper bound the right-hand-side of the equation accordingly.
    \begin{itemize}
        \item $a = 0$: For all $t\in[0,T]$, we can write
        \begin{align*}
            f_1( u_{\theta}^{f_1}(t,\cdot)) - f_2( u_{\theta}^{f_2}(t,\cdot)) = f_1( u_{\theta}^{f_1}(t,\cdot)) -  f_2( u_{\theta}^{f_1}(t,\cdot)) +  f_2( u_{\theta}^{f_1}(t,\cdot))- f_2( u_{\theta}^{f_2}(t,\cdot)),
        \end{align*}
        such that
        \begin{equation*}
             \forall t\in[0,T]:\; \abs{f_1( u_{\theta}^{f_1}(t,\cdot)) - f_2( u_{\theta}^{f_2}(t,\cdot))} \leq \norm{f_1-f_2}_{\infty} + \norm{f_2'}_\infty\cdot \abs{w(t,\cdot)}.
        \end{equation*}
        Thus, by Cauchy-Schwarz inequality
        \begin{align*}
            &\abs{\sprod{f_1( u_{\theta}^{f_1}(t,\cdot)) - f_2( u_{\theta}^{f_2}(t,\cdot))}{w(t,\cdot)}{\IL^2(\torus)}}\\
            &\hspace{2cm}\leq \norm{f_1( u_{\theta}^{f_1}(t,\cdot)) - f_2( u_{\theta}^{f_2}(t,\cdot))}{\IL^2(\torus)}\cdot \norm{w(t,\cdot)}{\IL^2(\torus)}\\
            &\hspace{2.5cm}\leq \norm{f_1-f_2}_{\infty} \cdot \norm{w(t,\cdot)}{\IL^2(\torus)} + \norm{f_2'}_\infty\norm{w(t,\cdot)}_{\IL^2(\torus)}^2\\
            &\hspace{3cm}\leq \frac{1}{2}\norm{f_1-f_2}_{\infty}^2 + \br{\frac{1}{2}+\norm{f_2'}_\infty}\norm{w(t,\cdot)}{\IL^2(\torus)}^2.
        \end{align*}
        Thus, the basic inequality yields
        \begin{align*}
            \frac{\rmd}{\rmd t}\norm{w(t,\cdot)}_{\IL^2(\torus)}^2 +2\norm{\nabla  w(t,\cdot)}_{\IL^2(\torus)}^2 & \leq \norm{f_1-f_2}_{\infty}^2 + \br{1+2\norm{f_2'}_\infty}\norm{w(t,\cdot)}{\IL^2(\torus)}^2.
        \end{align*}
        Integrating the last inequality and applying Grönwall's inequality yields 
        \begin{equation*}
            \sup_{t\in[0,T]}\norm{w(t,\cdot)}_{\IL^2(\torus)}^2 \leq c(T,f_1,f_2)\times\norm{f_1-f_2}_{\infty}^2.
        \end{equation*}
    
        \item $a = 1$: Applying Cauchy-Schwarz inequality, the right hand side of \cref{eq:RDE:basic} can be for all $t\in[0,T]$ upper bounded by 
        \begin{align*}
            &\sprod{f_1( u_{\theta}^{f_1}(t,\cdot)) - f_2( u_{\theta}^{f_2}(t,\cdot))}{w(t,\cdot)}{h^1} \leq \norm{f_1( u_{\theta}^{f_1}(t,\cdot)) - f_2( u_{\theta}^{f_2}(t,\cdot))}_{\IL^2(\torus)}\cdot\norm{w(t,\cdot)}_{h^2}\\
            &\leq \norm{f_1( u_{\theta}^{f_1}(t,\cdot)) - f_2( u_{\theta}^{f_2}(t,\cdot))}_{\IL^2(\torus)}^2 + \frac{1}{4}\norm{w(t,\cdot)}_{\IL^2(\torus)}^2 + \frac{1}{4}\norm{\nabla  w(t,\cdot)}_{h^1}^2\\
            & \lesssim \norm{f_1-f_2}_\infty^2+\br{\norm{f_2'}_\infty^2+\frac{1}{4}}\norm{w(t,\cdot)}_{\IL^2(\torus)}^2+\frac{1}{4}\norm{\nabla  w(t,\cdot)}_{h^1}^2.
        \end{align*}
        Integrating \cref{eq:RDE:basic} after absorbing the remaining term on the right-hand side, we have for all $t\in[0,T]$
        \begin{equation*}
        \norm{w(t,\cdot)}_{h^1}^2 +\frac{3}{2}\int_0^t\norm{\nabla w(s,\cdot)}_{h^1}^2\rmd s \lesssim \norm{f_1-f_2}_\infty^2+\br{\norm{f_2'}_\infty^2+\frac{1}{4}}\int_0^t\norm{w(s,\cdot)}_{h^1}^2\rmd s.
        \end{equation*}
        Using the bound derived for $a=0$ yields
        \begin{equation*}
            \sup_{t\in[0,T]}\norm{w(t,\cdot)}_{h^1}^2 \leq c(T,f_1,f_2)\times\norm{f_1-f_2}_\infty^2.
        \end{equation*}
        
        \item $a = 2$: Again, by applying Cauchy-Schwarz inequality to the right-hand side of \cref{eq:RDE:basic} yields 
        \begin{align*}
            \forall t\in[0,T]:\;&\sprod{f_1( u_{\theta}^{f_1}(t,\cdot)) - f_2( u_{\theta}^{f_2}(t,\cdot))}{w(t,\cdot)}{h^2}\\ 
            &\hspace{1cm}\leq \norm{f_1( u_{\theta}^{f_1}(t,\cdot)) - f_2( u_{\theta}^{f_2}(t,\cdot))}_{h^1}^2+ \frac{1}{4}\norm{w(t,\cdot)}_{\IL^2(\torus)}^2 + \frac{1}{4}\norm{\nabla  w(t,\cdot)}_{h^2}^2.
        \end{align*}
        As before, we can use the bounds derived for $a=0$ to get an $\IL^2$-estimate for $w$ for the second term. The third expression is absorbed by the left-hand-side of \cref{eq:RDE:basic}. It remains to upper bound the first expression. First observe
        \begin{align*}
            \forall t\in[0,T]:\;&\norm{f_1( u_{\theta}^{f_1}(t,\cdot)) - f_2( u_{\theta}^{f_2}(t,\cdot))}_{h^1}^2\\
            &\hspace{1cm}\leq \norm{f_1( u_{\theta}^{f_1}(t,\cdot))-f_1( u_{\theta}^{f_2}(t,\cdot))}_{h^1}^2
            +\norm{f_1( u_{\theta}^{f_2}(t,\cdot)) - f_2( u_{\theta}^{f_2}(t,\cdot))}_{h^1}^2\\
            & \hspace{1cm}\eqcolon E_1(t)+E_2(t).
        \end{align*}
        For $E_1$ we have for all $t\in[0,T]$
        \begin{align*}
             E_1(t) 
             &\lesssim  \norm{f_1( u_{\theta}^{f_1}(t,\cdot))-f_1( u_{\theta}^{f_2}(t,\cdot))}_{\IL^2(\torus)}^2+\max_{|\boldbeta|=1}\norm{\rmD^{\boldbeta} f_1( u_{\theta}^{f_1}(t,\cdot))-\rmD^{\boldbeta} f_1( u_{\theta}^{f_2}(t,\cdot))}_{\IL^2(\torus)}^2\\
             &\leq \norm{f_1''}_\infty^2\cdot\norm{u_\theta^{f_1}}_{C^1(\torus)}^2\cdot\norm{w(t,\cdot)}_{\IL^2(\torus)}^2 + \norm{f_1'}_\infty^2\cdot\norm{w(t,\cdot)}_{h^1}^2.
        \end{align*}
         As $d\leq3$, we have the embedding $H^3(\torus)\hookrightarrow C^1(\torus)$ and hence 
        \begin{equation*}
            \forall t\in[0,T]:\; E_1(t) \lesssim \br{1+\norm{u_{\theta}^{f_1}(t,\cdot)}_{H^3(\torus)}^2}\cdot\norm{f_1-f_2}_{\infty}^2,
        \end{equation*}
        where in the last step we also used the bounds derived for $a\in\brK{0,1}$.
        Similarly, for $E_2$ we derive for all $t\in[0,T]$
        \begin{align*}
            E_2(t) 
            &\lesssim \norm{f_1( u_{\theta}^{f_2}(t,\cdot)) - f_2( u_{\theta}^{f_2}(t,\cdot))}_{\IL^2(\torus)}^2 \\
            &\hspace{2cm}+\max_{|\boldbeta|=1}\norm{\rmD^{\boldbeta} f_1( u_{\theta}^{f_2}(t,\cdot)) - \rmD^{\boldbeta} f_2( u_{\theta}^{f_2}(t,\cdot))}_{\IL^2(\torus)}^2\\
            & \leq \norm{f_1-f_2}_\infty^2  +\max_{|\boldbeta|=1}\norm{\rmD^{\boldbeta} u_{\theta}^{f_2}(t,\cdot)\cdot \br{ f_1'( u_{\theta}^{f_2}(t,\cdot)) - f_2'( u_{\theta}^{f_2}(t,\cdot))}}_{\IL^2(\torus)}^2\\
            &\lesssim \norm{f_1-f_2}_\infty^2 + \norm{u_{\theta}^{f_2}(t,\cdot)}_{C^1(\torus)}^2\cdot\norm{f_1'-f_2'}_\infty^2.
        \end{align*}
        As $d\leq3$, we have the embedding $H^3(\torus)\hookrightarrow C^1(\torus)$ and hence 
        \begin{equation*}
            \forall t\in[0,T]:\; E_2(t) \lesssim \br{1+\norm{u_{\theta}^{f_2}(t,\cdot)}_{H^3(\torus)}^2}\cdot\norm{f_1-f_2}_{C^1(\IR)}^2.
        \end{equation*}
        Overall, the right-hand side of \cref{eq:RDE:basic} is then for all $t\in[0,T]$ upper bounded by 
        \begin{align*}
            \frac{1}{2}\frac{\rmd}{\rmd t}\norm{w(t,\cdot)}_{h^2}^2 +\frac{3}{4}\norm{\nabla  w(t,\cdot)}_{h^2}^2 
            & \leq c(f_1,f_2,T)\br{1+\norm{u_{\theta}^{f_2}(t,\cdot)}_{H^3(\torus)}^2}\cdot\norm{f_1-f_2}_{C^1(\IR)}^2.
        \end{align*}
        Integrating everything and using \cref{lemma:prop5new} with $a =2$, we have 
        \begin{equation*}
            \norm{w(t,\cdot)}_{h^2}^2 +\frac{3}{2}\int_0^t\norm{\nabla  w(s,\cdot)}_{h^2}^2\rmd s \leq c(f_1,f_2,T,d)\cdot\br{1+\norm{\theta}_{H^2(\torus)}^2}\cdot\norm{f_1-f_2}_{C^1(\IR)}^2
        \end{equation*}
        for all $t\in[0,T]$, which shows the claim.
    \end{itemize}
\end{proof}

\subsection{2D-Navier-Stokes Equation}

In the following, we derive regularity estimates for the solution of the 2D-Navier-Stokes equation, see \cref{eq:navier-stokes}. To that end recall the setting introduced in \cref{sec:examples}. As is common in the literature for the Navier-Stokes equation, we study the \textit{projected} equation, that is given the Leray operator $P$ (see \cref{eq:leray-operator}), we are interested in the solution $u:[0,T]\to\Hddiamond$ solving 
\begin{equation}\label{eq:2DNSE}
    \begin{dcases}
        \frac{\rmd}{\rmd t}u + \nu Au + B[u,u] &= f\\
        \hfill u_\theta(0) &= \theta
    \end{dcases},
\end{equation}
where $A\coloneqq -P\Delta$ denotes the Stokes operator, and 
\begin{equation*}
    B[u,v]\coloneqq P\br{\br{u\cdot\nabla }v}.
\end{equation*}
We call $u$ a strong solution of \cref{eq:2DNSE}, if the equations holds in $\IL^2([0,T],\Hddiamond)$. Based on the theory provided in \cite{NicklTiti_2024} it is shown in \cite{Konen_Nickl_2025} that for every $a\in\IN$, $\nu\in(0,\infty)$, $\theta\in\Hddiamond^a$, and $f\in\IL^2([0,T], \Hddiamond^{a-1})$ there exists a strong solution $u = u_\theta$ of \cref{eq:2DNSE} that satisfies
\begin{equation*}
    u_\theta\in C^0\br{[0,T],\Hddiamond^a}\cap\IL^2\br{[0,T],\Hddiamond^{a+1}}.
\end{equation*}
In the following, we derive an regularity estimate for differences of solutions of \cref{eq:2DNSE} with different viscosity and forcing terms, which is used in \cref{thm:example-model-rde}.

\begin{lemma}\label{lem:NS:Misspec:regularity}
    Let $\theta\in \Hddiamond^2$, such that $\norm{\theta}_{\Hdot^2(\IT^2)}\leq B$ for some $B\in(0,\infty)$, $\nu_1,\nu_2\in(0,\infty)$, and $f_1,f_2\in\IL^2\br{[0,T],\Hddiamond^1}$. Denote by $u_\theta^{\nu_i,f_i}$, $i\nset{2}$, the solutions to \cref{eq:2DNSE} with viscosity $\nu_i$ and external forcing $f_i$, respectively. We then have 
    \begin{equation*}
        \sup_{t\in[0,T]}\norm{u_\theta^{\nu_1,f_1}(t)-u_\theta^{\nu_2,f_2}(t)}_{\Hdot^2(\IT^2)}^2\leq c(\nu_1,\nu_2,f_1,f_2,T,B)\times \br{\abs{\nu_1-\nu_2}^2+\norm{f_1-f_2}_{\IL^2\br{[0,T],\Hdot^1(\IT^2)}}^2}.
    \end{equation*}
\end{lemma}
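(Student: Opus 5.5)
We set $u_i\coloneqq u_\theta^{\nu_i,f_i}$ and $w\coloneqq u_1-u_2$, so that $w(0)=0$ and
\begin{equation*}
    \frac{\rmd}{\rmd t}w + \nu_1 A w = (\nu_2-\nu_1)Au_2 - B[w,u_1] - B[u_2,w] + (f_1-f_2).
\end{equation*}
The plan is the same energy-method ladder as in \cref{lem:RDE_f_Lipschitz}: successive estimates in $\Hdot^0$, $\Hdot^1$ and $\Hdot^2$, each closed by Grönwall.

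\textbf{Input bounds.} We will use the a priori bounds of \cite{NicklTiti_2024,Konen_Nickl_2025} for $u_i$ with $\theta\in\Hddiamond^2$, $\norm{\theta}_{\Hdot^2}\le B$, and $f_i\in\IL^2([0,T],\Hddiamond^1)$. These give
\begin{equation*}
    \sup_t\norm{u_i(t)}_{\Hdot^2}^2+\int_0^T\norm{u_i}_{\Hdot^3}^2\,\rmd t\le c(\nu_i,f_i,T,B).
\end{equation*}

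\textbf{Energy identities.} We pair the difference equation with $A^k w$ for $k=0,1,2$, using \cref{eq:PartialIntegration}. This gives
\begin{equation*}
    \tfrac12\tfrac{\rmd}{\rmd t}\norm{w}_{\Hdot^k}^2+\nu_1\norm{w}_{\Hdot^{k+1}}^2
    = \langle \text{RHS}, A^k w\rangle.
\end{equation*}
We move half a derivative onto each side, so the right-hand side is controlled by $\norm{\text{RHS}}_{\Hdot^{k-1}}\norm{w}_{\Hdot^{k+1}}$. Young's inequality \cref{eq:Young} with small $\epsilon$ then absorbs $\tfrac{\nu_1}{2}\norm{w}_{\Hdot^{k+1}}^2$ into the left-hand side.

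\textbf{Linear source terms.} The forcing contributes $\norm{f_1-f_2}_{\Hdot^{k-1}}^2\le\norm{f_1-f_2}_{\Hdot^1}^2$, since $k\le 2$. The viscosity term contributes $|\nu_1-\nu_2|^2\norm{Au_2}_{\Hdot^{k-1}}^2=|\nu_1-\nu_2|^2\norm{u_2}_{\Hdot^{k+1}}^2$. Both are time-integrable by the input bounds, the latter through $\int_0^T\norm{u_2}_{\Hdot^3}^2\,\rmd t$ when $k=2$.

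\textbf{Bilinear terms.} For $k=0$ we use $\langle B[u_2,w],w\rangle=0$ and $|\langle B[w,u_1],w\rangle|\le\norm{\nabla u_1}_\infty\norm{w}_{\IL^2}^2$, or in 2D the Ladyzhenskaya inequality. For $k\ge1$ we estimate $\norm{B[w,u_1]}_{\Hdot^{k-1}}+\norm{B[u_2,w]}_{\Hdot^{k-1}}$ with the 2D product estimates ($\Hdot^{2}\hookrightarrow\IL^\infty$ and algebra-type bounds). This gives at most
\begin{equation*}
    \norm{w}_{\Hdot^k}\bigl(\norm{u_1}_{\Hdot^{k+1}}+\norm{u_2}_{\Hdot^{2}}\bigr)+\norm{u_2}_{\Hdot^2}\norm{w}_{\Hdot^{k}}^{1/2}\norm{w}_{\Hdot^{k+1}}^{1/2}.
\end{equation*}
The last factor is absorbed again by Young's inequality.

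\textbf{Closing the estimate.} After these bounds we obtain
\begin{equation*}
    \tfrac{\rmd}{\rmd t}\norm{w}_{\Hdot^k}^2\le g_k(t)\norm{w}_{\Hdot^k}^2+c\bigl(|\nu_1-\nu_2|^2\norm{u_2}_{\Hdot^{k+1}}^2+\norm{f_1-f_2}_{\Hdot^1}^2\bigr),
\end{equation*}
where $g_k\in\IL^1(0,T)$ is bounded in terms of the $u_i$ norms above. Since $w(0)=0$, Grönwall yields the claimed bound at level $k=2$; the intermediate levels are not even needed in a strict sense.

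\textbf{Main obstacle.} The hardest step is the $k=2$ estimate of $\langle B[u_2,w],A^2w\rangle$. Bounding it naively costs $\norm{w}_{\Hdot^3}$ together with $\nabla w$ at top order. I would first try to control it via the commutator structure: use $\langle (u_2\cdot\nabla)\partial^2 w,\partial^2 w\rangle=0$ from incompressibility, so that only terms with derivatives falling on $u_2$ remain. Those are bounded by $\norm{u_2}_{\Hdot^3}\norm{w}_{\Hdot^2}\norm{w}_{\Hdot^3}$, which is admissible after Young since $\norm{u_2}_{\Hdot^3}^2\in\IL^1(0,T)$.
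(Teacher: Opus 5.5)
Your proposal is correct and is essentially the paper's proof. Both run a direct $\dot H^2$ energy estimate for $w=u_1-u_2$, absorb the $\dot H^3$ terms by Young, use the a priori bounds on $\sup_t\|u_i(t)\|_{\dot H^2}$ and $\int_0^T\|u_i\|_{\dot H^3}^2$ from \cite[Proposition A.6]{Konen_Nickl_2025}, and close with Gr\"onwall from $w(0)=0$. The only difference is that your ``main obstacle'' is not one: the 2D product bound $|\langle B[u_2,w],w\rangle_{\dot H^2}|\lesssim\|u_2\|_{\dot H^2}\|w\|_{\dot H^2}\|w\|_{\dot H^3}$ already suffices, as your own bilinear paragraph shows and as the paper uses via \cite[Proposition A.3]{Konen_Nickl_2025}, so neither the commutator cancellation nor the $k=0,1$ levels are needed.
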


\begin{proof}[Proof of \cref{lem:NS:Misspec:regularity}]
    In this proof, we shorten notation by writing $u_i\coloneqq u_\theta^{\nu_i,f_i}$ for $i\nset{2}$ and further, we write $\Hdot^2\coloneqq\Hdot^2(\IT^2)$. First observe, that $w\coloneqq u_1-u_2$ satisfies the equation 
    \begin{equation*}
        \frac{\rmd}{\rmd t} w + \nu_1\Delta w +\br{\nu_1-\nu_2}\Delta u_2 + B[w,u_1] - B[u_2,w] = f_1 - f_2
    \end{equation*}
    on $[0,T]\times\IT^2$ with initial condition $w(0,\cdot) = 0$ on $\IT^2$. In the following, the proof is understood in terms of a Galerkin approximation, such that $w$ can be viewed of being smooth in the sense that $w\in C^\infty\cap\Hddiamond$, see \cite[Proposition A.6 and Proposition B.1]{Konen_Nickl_2025} for example. Thus, taking the $\Hdot^2$-inner product with $w$ yields
    \begin{align*}
        \frac{1}{2}\frac{\rmd}{\rmd t}\norm{w(t)}_{\Hdot^2}^2 + \nu_1\norm{w(t)}_{\Hdot^3}^2
        &= - \br{\nu_1-\nu_2}\sprod{\Delta u_2(t)}{w(t)}{\Hdot^2} - \sprod{B[w(t),u_1(t)]}{w(t)}{\Hdot^2}\\
        &\hspace{2cm}- \sprod{B[u_2(t),w(t)]}{w(t)}{\Hdot^2} +  \sprod{f_1(t)-f_2(t)}{w(t)}{\Hdot^2}\\
        & \eqcolon E_1(t) + E_2(t) + E_3(t) + E_4(t).
    \end{align*}
    In the subsequent, we upper bound the terms $E_1,\dots,E_4$.
    \begin{enumerate}
        \item $E_1(t)$: Utilising \cref{eq:PartialIntegration}, \cref{eq:CauchySchwarz} as well as \cref{eq:Deltanorm}, we obtain for all $t\in[0,T]$
        \begin{align*}
            \abs{E_1(t)} &= \abs{\nu_1-\nu_2}\cdot\abs{\sprod{u_2(t)}{(-\Delta)^3w(t)}{\Hdot^0}}
            \leq \abs{\nu_1-\nu_2}\cdot \norm{u_2(t)}_{\Hdot^3}\cdot\norm{(-\Delta)^3w(t)}_{\Hdot^{-3}}\\
            & \leq \abs{\nu_1-\nu_2}\cdot\norm{u_2(t)}_{\Hdot^3}\cdot\norm{w(t)}_{\Hdot^{3}}
            \leq c(\nu_1)\cdot\abs{\nu_1-\nu_2}^2\cdot\norm{u_2(t)}_{\Hdot^3}^2 + \frac{1}{6}\nu_1\norm{w(t)}_{\Hdot^{3}}^2
        \end{align*}

         \item $E_2(t)+E_3(t)$: Applying \cite[Proposition A.3]{Konen_Nickl_2025} with $a=2$ twice, we obtain for all $t\in[0,T]$
            \begin{align*}
                \abs{E_2(t)+E_3(t)} &\leq \abs{\sprod{B[w(t),u_1(t)]}{w(t)}{\Hdot^2}} + \abs{\sprod{B[u_2(t),w(t)]}{w(t)}{\Hdot^2}}\\
                & \lesssim \br{\norm{u_1(t)}_{\Hdot^2}+\norm{u_2(t)}_{\Hdot^2}}\cdot \norm{w(t)}_{\Hdot^2}\cdot \norm{w(t)}_{\Hdot^3}\\
                & \leq c(\nu_1)\cdot \br{\norm{u_1(t)}_{\Hdot^2}^2+\norm{u_2(t)}_{\Hdot^2}^2}\cdot \norm{w(t)}_{\Hdot^2}^2 + \frac{1}{6}\nu_1\norm{w(t)}_{\Hdot^3}^2.
            \end{align*}

          \item $E_4(t)$: Applying \cref{eq:CauchySchwarz} as well as Young's Inequality, we obtain for all $t\in[0,T]$
          \begin{equation*}
              \abs{E_4(t)} \leq \norm{f_1(t)-f_2(t)}_{\Hdot^1}\cdot\norm{w(t)}_{\Hdot^3} \leq c(\nu_1)\cdot \norm{f_1(t)-f_2(t)}_{\Hdot^1}^2 + \frac{1}{6}\nu_1\norm{w(t)}_{\Hdot^3}^2.
          \end{equation*}
    \end{enumerate}
    Absorbing the three $\Hdot^3$-terms to the left-hand side, we obtain
    \begin{align*}
       \frac{1}{2}\frac{\rmd}{\rmd t}\norm{w(t)}_{\Hdot^2}^2 + \frac{1}{2}\nu_1\norm{w(t)}_{\Hdot^3}^2 &\leq c(\nu_1)\cdot\abs{\nu_1-\nu_2}^2\cdot\norm{u_2(t)}_{\Hdot^3}^2\\
        &\hspace{1cm}+ c(\nu_1)\cdot \br{\norm{u_1(t)}_{\Hdot^2}^2+\norm{u_2(t)}_{\Hdot^2}^2}\cdot \norm{w(t)}_{\Hdot^2}^2\\
        &\hspace{1cm}+ c(\nu_1)\cdot\norm{f_1(t)-f_2(t)}_{\Hdot^1}^2 
    \end{align*}
    Integrating over $[0,t]$ for any $t\in[0,T]$ and applying \cite[Proposition A.6]{Konen_Nickl_2025} with $a=2$ yields
    \begin{align*}
        \norm{w(t)}_{\Hdot^2}^2 &\leq c(\nu_1,\nu_2,T,f_2,B)\cdot\br{\abs{\nu_1-\nu_2}^2+\norm{f_1-f_2}_{\IL^2([0,T],\Hdot^1)}^2}\\
        &\hspace{2cm}+ c(\nu_1)\int_{0}^t\br{\norm{u_1(s)}_{\Hdot^2}^2+\norm{u_2(s)}_{\Hdot^2}^2}\cdot \norm{w(t)}_{\Hdot^2}^2\rmd s.
    \end{align*}
    Thus, an application of Grönwall's inequality as well as \cite[Proposition A.6]{Konen_Nickl_2025} with $a = 2$ yields 
    \begin{equation*}
    \sup_{t\in[0,T]}\norm{w(t)}_{\Hdot^2}^2 \leq c(\nu_1,\nu_2,T,f_1,f_2,B)\times \br{\abs{\nu_1-\nu_2}^2+\norm{f_1-f_2}_{\IL^2([0,T],\Hdot^1)}^2},
    \end{equation*}
    which shows the claim.

\end{proof}

\subsection{Oseen approximation}

\begin{proof}[Proof of \cref{prop:oseen-satisfies-condition}]
        The proof follows similar arguments as the theory developed in \cite[Appendix A and B]{Konen_Nickl_2025}. 
        Firstly, we look at a general functional equation as
        \begin{equation}\label{oseengeneraleq}
            \frac{\rmd}{\rmd t}U +\nu \Delta U + B[U,v_1]+B[v_2,U] = f,\quad U(0) = \xi.
        \end{equation}
        Denote for $a\in\IZ$
        \begin{equation*}
            a^*\coloneqq 
            \begin{dcases}
                \abs{a}+1,&\quad \text{if $\abs{a}\leq 1$,}\\
                \abs{a},&\quad \text{if $\abs{a}\geq 2$.}
            \end{dcases}
        \end{equation*}
        Following the proof of \cite[Proposition B.1]{Konen_Nickl_2025}, if $\xi\in\Hddiamond^a$, $f\in\IL^2\br{[0,T],\Hddiamond^{a-1}}$, $\nu>0$ and $v_1,v_2\in\IL^2\br{[0,T],\Hddiamond^{a^*}}$, 
        there exists a unique solution $U\colon [0,T]\times\IT^2\to\IR^2$ of \cref{oseengeneraleq}, and we have 
        \begin{equation}\label{eq:osseengeneralregularoity}
            U\in C^{0}([0,T],\Hddiamond^a)\cap\IL^2([0,T],\Hddiamond^{a+1}),\quad\frac{\rmd U}{\rmd t}\in \IL^2([0,T],\Hddiamond^{a-1}).
        \end{equation}
        In particular, 
        \begin{equation}\label{eq:oseen:generalinequality}
            \sup_{t\in[0,T]}\norm{U(t)}_{\Hdot^a}^2 + \nu\int_0^T\norm{U(t)}_{\Hdot^{a+1}}^2\rmd t \leq c\times\br{\norm{\xi}_{\Hdot^a}^2+\norm{f}_{\IL^2\br{[0,T],\Hddiamond^{a-1}}}^2}
        \end{equation}
        with some constant $c=c\br{\nu,T,f,a,\norm{v_1}_{\IL^2([0,T],\Hdot^{a^*})},\norm{v_2}_{\IL^2([0,T],\Hdot^{a^*})}}>0$.
        Thus, looking at the Oseen-type iteration \cref{eq:oseen-iteration}, we deduce that if $u^0\in\IL^2\br{[0,T],\Hddiamond^{a^*}}$,
        for each $l\in\IN$, there exists a iterated solution $u^l$ that satisfies \cref{eq:osseengeneralregularoity}.
        Now let $L=L_N\in\IN$ chosen as by hypothesis, such that
        \begin{equation}\label{eq:oseen:MMI:hypothesis}
            \forall\theta\in\Hddiamond^2:\;\sup_{t\in[0,T]}\norm{u_\theta^{L}(t)}_{\Hdot^2(\IT^2)} \leq \Const{Oseen,B}\times\br{1+\norm{\theta}_{\Hdot^2}^2}.
        \end{equation}
        \begin{equation*}
            \forall r>0:\; \sup_{\theta\in\Hddiamond^2(r)}\sup_{t\in[0,T]}\norm{u_\theta^{L}(t)-u_\theta^{L-1}(t)}_{\Hdot^2(\IT^2)} \leq \Const{\mathrm{model}}(r)\times\delta_N^2,
        \end{equation*}
        We are now verifying that the associated forward map $\calGtilde(\theta)\coloneqq u_\theta^L$ satisfies the conditions of \cref{cond:misspec-model}.
        \begin{itemize}
            \item Proof of \cref{cond:MM:I}: Follows by hypothesis \cref{eq:oseen:MMI:hypothesis}.
            \item Proof of \cref{cond:MM:III}:  Let us write $w^L := u^L-u$.
                    Then, $w^L$ solves
                    \begin{equation}\label{eq:oseen-difference}
                        \frac{\rmd }{\rmd t}w^{L}
                        - \nu \Delta w^{L}
                        + B[u^{L-1}, u^{L}] - B[u,u]
                        = 0,\quad w^L(0) = 0
                    \end{equation}
                    By bilinearity,
                    \begin{align*}
                        B[u^{L-1}, u^{L}] - B[u,u] 
                    &= B[u^{L-1}-u^L, u^L] + B[u^L, u^L] - B[u,u]\\ 
                    &= B[u^{L-1}-u^L, u^L] + B[w^L, u^L] + B[u,u^L] - B[u,u]\\ 
                    &=B[u^{L-1} -u^{L}, u^L] + B[w^L,u^L] + B[u, w^L].
                    \end{align*}
                    Thus, applying \cref{eq:oseen:generalinequality} and \cref{eq:oseen:MMI:hypothesis} with $U = w^L$, $v_1=u^L$, $v_2 = u$, $f = -B[u^{L-1} -u^{L}, u^L]$, $\xi = 0$ and $a=2$, we obtain for all $r>0$
                    \begin{align*}
                         \sup_{\theta\in\Hdot^2(r)}\sup_{t\in[0,T]}\norm{w^L(t)}_{\Hdot^a}^2 &\leq c\times \sup_{\theta\in\Hdot^2(r)}\norm{B[u^{L-1} -u^{L}, u^L]}_{\IL^2\br{[0,T],\Hddiamond^{1}}}^2\\
                            & = c\times\int_0^T\norm{B[u^L-u^{L-1}, u^L](t)}_{\dot H^1}^2\rmd t\\
                            & \leq  c\times \sup_{\theta\in\Hdot^2(r)}\int_0^T\norm{u^L(t)-u^{L-1}(t)}_{\dot H^2}^2\cdot\norm{u^L(t)}_{\Hdot^2}^2\rmd t\\
                            &\leq c'\times \delta_N^4
                    \end{align*}
                    with $c'=c'(\nu,T,f,r)>0$, which shows the claims.
    \end{itemize}

\end{proof}

%% file: Sections/app_miscellaneous.tex
\section{Miscellaneous}

\subsection{An inequality for Sobolev spaces}



\begin{lemma}[Composition with smooth and compactly supported functions]\label{thm:Composition}
    Let $f\in C_c^\infty(\IR)$ and assume that $u\in H^m(\IT^d)$ for some integer $m \geq\frac{d}{2}$. We then have 
    \begin{equation*}
        \norm{f(u)}_{H^m(\IT^d)} \leq c\cdot(1+\norm{u}_{H^m(\IT^d)}^m),
    \end{equation*}
    where $c=c(m,d)\in(0,\infty)$.
\end{lemma}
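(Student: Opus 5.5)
The plan is to run a Moser-type estimate: expand the derivatives of $f(u)$ with the Faà di Bruno formula and then control each product of derivatives of $u$ using the Gagliardo--Nirenberg interpolation inequality. Since $f\in C_c^\infty(\IR)$, every derivative $f^{(k)}$ is bounded, so $\norm{f^{(k)}}_\infty\leq c(f,k)$ for all $k\leq m$. For the order-zero term, $\norm{f(u)}_{\IL^2(\IT^d)}\leq\norm{f}_\infty$ because the torus has measure one.

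Fix a multi-index $\boldbeta$ with $1\leq|\boldbeta|=j\leq m$. By Faà di Bruno, $\rmD^{\boldbeta}f(u)$ is a finite linear combination of terms
\begin{equation*}
f^{(k)}(u)\,\rmD^{\boldbeta_1}u\cdots\rmD^{\boldbeta_k}u,\qquad 1\leq k\leq j,\quad \boldbeta_1+\dots+\boldbeta_k=\boldbeta,\quad |\boldbeta_i|\geq1.
\end{equation*}
After pulling out $\norm{f^{(k)}}_\infty$, it remains to bound $\norm{\prod_i\rmD^{\boldbeta_i}u}_{\IL^2}$. Apply Hölder with exponents $p_i=2j/|\boldbeta_i|$, so that $\sum_i 1/p_i=1/2$. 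This gives the bound $\prod_i\norm{\rmD^{\boldbeta_i}u}_{\IL^{p_i}}$.

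Each factor is then handled by Gagliardo--Nirenberg on $\IT^d$, interpolating between $H^m$ and $\IL^\infty$:
\begin{equation*}
\norm{\rmD^{\boldbeta_i}u}_{\IL^{2j/|\boldbeta_i|}}\lesssim \norm{u}_{H^{j}}^{|\boldbeta_i|/j}\norm{u}_{\infty}^{1-|\boldbeta_i|/j}+\norm{u}_{\IL^2}.
\end{equation*}
The lower-order correction is needed on the torus because $u$ need not have zero mean. Multiplying over $i$, the exponents $|\boldbeta_i|/j$ sum to one, so the product is bounded by $\norm{u}_{H^j}\norm{u}_\infty^{k-1}$ plus lower-order terms. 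Here I would first try to avoid the $\IL^\infty$ norm, which cannot be controlled by $H^m$ when $m=d/2$ exactly. The alternative is to interpolate between $H^m$ and $\IL^2$ via the embeddings $H^{m-|\boldbeta_i|+\dots}\hookrightarrow\IL^{p_i}$. Each factor is then bounded by a power of $\norm{u}_{H^m}$, for example $\norm{\rmD^{\boldbeta_i}u}_{\IL^{p_i}}\lesssim\norm{u}_{H^{s_i}}$ with $s_i=|\boldbeta_i|+d(1/2-1/p_i)\leq m$. The product is then at most $\norm{u}_{H^m}^k\leq 1+\norm{u}_{H^m}^m$, since $k\leq j\leq m$.

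The main obstacle is checking this Sobolev-index bookkeeping: one must verify $s_i\leq m$, which amounts to $|\boldbeta_i|(1-d/(2j))+d/2\cdots$, using $m\geq d/2$. The borderline case $p_i=\infty$ only arises when $k=1$, and there the term is simply $\norm{\rmD^{\boldbeta}u}_{\IL^2}\leq\norm{u}_{H^m}$. If the direct embedding fails in a corner case, I would fall back on the classical Moser estimate $\norm{f(u)}_{H^m}\lesssim c(\norm{u}_\infty)(1+\norm{u}_{H^m})$. Summing over $\boldbeta$ and $k$ then yields the stated bound with $c=c(m,d)$, where the constant implicitly also depends on $f$.
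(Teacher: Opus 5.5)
Your main route is correct, and it differs in one genuine respect from the argument the paper points to: Lemma 29 of \cite{Nickl_vdGeer_Wang_2020} combined with Nirenberg's inequality on the torus. In its standard form that argument is your first attempt. One applies Fa\`a di Bruno and Hölder with $p_i=2j/|\boldbeta_i|$, interpolates each factor between $\|u\|_\infty$ and $\|u\|_{H^j}$ to get $\|u\|_\infty^{k-1}\|u\|_{H^j}$, and then uses $\|u\|_\infty\lesssim\|u\|_{H^m}$. That yields a sharper tame estimate, linear in the top-order norm with a constant depending on $\|u\|_\infty$. However, reaching the stated polynomial form from it needs $H^m\hookrightarrow L^\infty$, i.e.\ $m>d/2$. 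Your replacement bounds every factor by a critical Sobolev embedding $H^{m-|\boldbeta_i|}(\IT^d)\hookrightarrow L^{p_i}(\IT^d)$. It only gives the cruder $\|u\|_{H^m}^k$, but that is exactly what the lemma asserts. It needs no interpolation inequality, and it covers the endpoint $m=d/2$ that the statement includes.

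You should, however, close the bookkeeping you left open, and it does close. The embedding needs $m-|\boldbeta_i|\geq d(\tfrac12-\tfrac1{p_i})$, i.e.\ $s_i=\tfrac d2+|\boldbeta_i|\bigl(1-\tfrac{d}{2j}\bigr)\leq m$.
\begin{itemize}
\item If $j\geq d/2$, the coefficient of $|\boldbeta_i|$ is nonnegative and $|\boldbeta_i|\leq j$, so $s_i\leq j\leq m$.
\item If $j<d/2$, the coefficient is negative and $|\boldbeta_i|\geq 1$, so $s_i<d/2\leq m$.
\end{itemize}

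Also, $p_i\in[2,2j]$ is always finite: for $k=1$ one has $p_1=2$, not $p_1=\infty$. So you only use embeddings $H^{t}\hookrightarrow L^{p}$ with $t=d(\tfrac12-\tfrac1p)$ and $p<\infty$, which hold on $\IT^d$ even in this critical case.

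Drop the fallback to the classical Moser estimate. At $m=d/2$ its constant $c(\|u\|_\infty)$ is not controlled by $\|u\|_{H^m}$, so it would not rescue any corner case.

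Since $u\in H^{d/2}$ need not be bounded, you should justify the chain rule by density. Prove the bound for trigonometric polynomials $u_n\to u$ in $H^m$. Then $f(u_n)\to f(u)$ in $L^2$ because $f$ is Lipschitz, and weak compactness plus lower semicontinuity of the $H^m$-norm transfer the bound to $f(u)$.

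As you remark, $c$ also depends on $f$, through $\max_{k\leq m}\|f^{(k)}\|_\infty$.
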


The proof of \cref{thm:Composition} follows exactly the lines of Lemma 29 in \cite{Nickl_vdGeer_Wang_2020} by utilizing an analogous version of Nierenberg's inequality on the torus, which can be found for instance in Theorem 3.70 in \cite{Aubin_1982}.


\subsection{A chaining lemma for non i.i.d.\ data}

\begin{lemma}\label{lem:chaining}
Let $\Theta$ be a countable set. Consider the family
\begin{equation*}
    \scrH := \{ h_\theta : \calZ \to V \mid \theta \in \Theta \}
\end{equation*}
of $V$-valued functions defined on a probability space $(\calZ,\scrZ,\IP^Z)$.
Assume that there exist finite constants $\rmv,\rmU \in \pRz$ such that
\begin{equation}\label{eq:chaining_C1}
    \sup_{\theta\in\Theta}\IE[\Vnorm{h_\theta(Z)}^2] \leq \rmv^2,
    \qquad
    \sup_{\theta\in\Theta}\norm{h_\theta}_\infty \leq \rmU,
    \tag{C1}
\end{equation}
where $Z\sim\IP^Z$. Define the entropy integrals $\calJ_2(\scrH)$ and $\calJ_\infty(\scrH)$ by
\begin{equation*}
    \calJ_2(\scrH)
    := \int_{[0,2\rmv]} \sqrt{\log N(\scrH,d_2,\rho)}\,\mathrm{d}\rho,
    \qquad
    \calJ_\infty(\scrH)
    := \int_{[0,2\rmU]} \log N(\scrH,d_\infty,\rho)\,\mathrm{d}\rho,
\end{equation*}
with respect to the (pseudo)-metrics
\begin{equation*}
    d_2(\theta_1,\theta_2)
    := \sqrt{\IE[\Vnorm{h_{\theta_1}(Z)-h_{\theta_2}(Z)}^2]},
    \qquad
    d_\infty(\theta_1,\theta_2)
    := \norm{h_{\theta_1}-h_{\theta_2}}_\infty .
\end{equation*}

\begin{enumerate}
\item
Let $N\in\IN$. Let $\varepsilon_1,\dots,\varepsilon_N$ be independent $V$-valued random variables satisfying \cref{ass:errorBernstein}.
Let $Z_1,\dots,Z_N$ be i.i.d.\ copies of $Z\sim\IP^Z$, independent of $\varepsilon_1,\dots,\varepsilon_N$, and let $a_1,\dots,a_N$ be real numbers such that $ \max_{i\in\nset{N}} \abs{a_i} \le a_\infty \in (0,\infty)$.
Define the empirical process
\begin{equation}\label{eq:emp1}
    \forall \theta\in\Theta:\quad
    T_{N,1}(\theta)
    := \frac{1}{\sqrt{N}}
    \sum_{i\in\nset{N}} a_i \Vsprod{\varepsilon_i}{h_\theta(Z_i)}.
    \tag{EP1}
\end{equation}
Then there exists a universal constant $M\in(0,\infty)$ such that for all $x\geq 1$,
\begin{equation*}
    \IP\!\left(
        \sup_{\theta\in\Theta} |T_{N,1}(\theta)|
        \ge
        M\!\left(
            \sqrt{\overline{(a\sigma)_N}^2}
            \bigl(\calJ_2(\scrH)+\rmv\sqrt{x}\bigr)
            + \frac{a_\infty\rmB}{\sqrt{N}}
            \bigl(\calJ_\infty(\scrH)+\rmU x\bigr)
        \right)
    \right)
    \le 3\exp(-x),
\end{equation*}
where
\[
    \overline{(a\sigma)_N}^2 := \frac{1}{N}\sum_{i\in\nset{N}} a_i^2 \sigma_i^2 .
\]

\item
Let $V=\IR$.
Let $a_1,\dots,a_N$ be real numbers such that $\max_{i\in\nset{N}} \abs{a_i} \le a_\infty \in (0,\infty)$.
Let $Z_1,\dots,Z_N$ be i.i.d.\ copies of $Z\sim\IP^Z$ and define
\begin{equation}\label{eq:emp2}
    \forall \theta\in\Theta:\quad
    T_{N,2}(\theta)
    := \frac{1}{\sqrt{N}}
    \sum_{i\in\nset{N}} a_i
    \bigl(h_\theta(Z_i)-\IE[h_\theta(Z_i)]\bigr).
    \tag{EP2}
\end{equation}
Then there exists a universal constant $M\in(0,\infty)$ such that for all $x\geq 1$,
\begin{equation*}
    \IP\!\left(
        \sup_{\theta\in\Theta} |T_{N,2}(\theta)|
        \ge
        M\!\left(
            \Bar{a}_N
            \bigl(\calJ_2(\scrH)+\rmv\sqrt{x}\bigr)
            + \frac{a_\infty}{\sqrt{N}}
            \bigl(\calJ_\infty(\scrH)+\rmU x\bigr)
        \right)
    \right)
    \le 3\exp(-x).
\end{equation*}
\end{enumerate}
\end{lemma}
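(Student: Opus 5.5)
The proof is a standard generic chaining argument adapted to independent but non-identically distributed summands. Both parts have the same structure: a Bernstein-type increment bound, a mixed-tail chaining theorem, and a passage from expectations to deviation probabilities. I will treat (EP1) in detail and indicate the changes for (EP2). Since $\Theta$ is countable, all suprema are measurable and no separability issues arise.

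\textbf{Step 1: increment bound for (EP1).} For fixed $\theta_1,\theta_2$, put $\Delta := h_{\theta_1}-h_{\theta_2}$ and $\xi_i := a_i\Vsprod{\varepsilon_i}{\Delta(Z_i)}$. By independence of $Z_i$ and $\varepsilon_i$ and conditioning on $Z_i$, \cref{ass:errorBernstein} gives, for $k\geq 2$,
\begin{equation*}
\IE|\xi_i|^k \leq \tfrac{k!}{2}\,|a_i|^k\sigma_i^2\rmB^{k-2}\,\IE\Vnorm{\Delta(Z_i)}^k \leq \tfrac{k!}{2}\,\big(a_i^2\sigma_i^2\, d_2(\theta_1,\theta_2)^2\big)\,\big(a_\infty\rmB\, d_\infty(\theta_1,\theta_2)\big)^{k-2}.
\end{equation*}
Here I used $\IE\Vnorm{\Delta}^k\leq \norm{\Delta}_\infty^{k-2}\,\IE\Vnorm{\Delta}^2$. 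The $\xi_i$ are independent and centred, so Bernstein's inequality yields
\begin{equation*}
\IP\big(|T_{N,1}(\theta_1)-T_{N,1}(\theta_2)|\geq \sqrt{2\overline{(a\sigma)_N}^2\,u}\; d_2+\tfrac{a_\infty\rmB}{\sqrt N}\,u\, d_\infty\big)\leq 2e^{-u}.
\end{equation*}
This is exactly a mixed sub-Gaussian/sub-exponential increment condition with respect to the pair of metrics $\sqrt{\overline{(a\sigma)_N}^2}\,d_2$ and $\frac{a_\infty\rmB}{\sqrt N}\,d_\infty$.

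\textbf{Step 2: chaining.} I will invoke the generic chaining theorem for mixed tails (Talagrand's $\gamma_2+\gamma_1$ bound, in the form of Dirksen's tail version). It gives, for some fixed $\theta^\ast$ and all $x\geq1$,
\begin{equation*}
\sup_\theta|T(\theta)-T(\theta^\ast)| \lesssim \sqrt{\overline{(a\sigma)_N}^2}\,\big(\gamma_2(d_2)+\sqrt{x}\,\operatorname{diam}_2\big)+\tfrac{a_\infty\rmB}{\sqrt N}\big(\gamma_1(d_\infty)+x\,\operatorname{diam}_\infty\big)
\end{equation*}
with probability at least $1-2e^{-x}$. Dudley's bounds $\gamma_2\lesssim \calJ_2(\scrH)$ and $\gamma_1\lesssim\calJ_\infty(\scrH)$ then apply, together with $\operatorname{diam}_2\leq 2\rmv$ and $\operatorname{diam}_\infty\leq 2\rmU$ from (C1).

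The single term $T(\theta^\ast)$ is controlled by one more application of Bernstein's inequality, using variance proxy $\overline{(a\sigma)_N}^2\rmv^2$ and scale $a_\infty\rmB\rmU/\sqrt N$. This costs an additional $e^{-x}$, which is where the total $3e^{-x}$ comes from. Absorbing all numerical constants into $M$ gives part 1.

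\textbf{Step 3: (EP2).} Set $\xi_i := a_i\big(\Delta(Z_i)-\IE\Delta(Z_i)\big)$. Then $|\xi_i|\leq 2a_\infty d_\infty$ and $\IE\xi_i^2\leq a_i^2 d_2^2$. Bernstein's inequality for bounded variables gives the same mixed increment condition with $\sqrt{\overline{(a\sigma)_N}^2}$ replaced by $\bar a_N$ (understood as $(\frac1N\sum a_i^2)^{1/2}$) and $\rmB$ replaced by a constant. Steps 2 and the single-point bound then go through verbatim.

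\textbf{Main obstacle.} The delicate step is the passage from the increment tail bound to a deviation inequality for the supremum with the correct linear-in-$x$ and $\sqrt x$ terms. The heteroscedastic weights are harmless, since they only enter through the variance proxy; the real issue is having the right chaining machinery. I would first try to cite Dirksen (2015), Theorem 3.5, directly. If that is unavailable, I would run the chaining manually along a dyadic net, using union bounds with $u = x + 2^k$ at level $k$, and sum the resulting contributions.
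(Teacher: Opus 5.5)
Your proposal is correct and follows the paper's proof essentially step for step. You derive a mixed sub-Gaussian/sub-exponential increment bound from the moment condition in \cref{ass:errorBernstein}; the paper gets it via the MGF and a Chernoff optimisation, which is just the proof of the Bernstein inequality you quote. Then, as in the paper, you apply Dirksen's Theorem 3.5 with $\gamma_2\lesssim\calJ_2(\scrH)$, $\gamma_1\lesssim\calJ_\infty(\scrH)$ and the diameter bounds $2\rmv$, $2\rmU$ from (C1), and finish with Bernstein's inequality for the single point $T_{N,1}(\theta^\ast)$. The only slip is in the failure-probability bookkeeping: the chaining step costs $e^{-x}$ and the two-sided single-point bound costs $2e^{-x}$, not the reverse, so the total $3e^{-x}$ is unchanged. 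Your treatment of (EP2), reading $\bar a_N^2=\frac1N\sum_i a_i^2$, is also fine; the paper omits that part.
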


\begin{proof}[Proof of \cref{lem:chaining}]
For both empirical processes we will apply Theorem~3.5 in \cite{Dirksen_2015}. To this end, we verify that the processes
$(T_{N,1}(\theta))_{\theta\in\Theta}$ and $(T_{N,2}(\theta))_{\theta\in\Theta}$
satisfy Condition~3.8 therein, that is, they exhibit mixed sub-Gaussian--exponential tails. We begin with the multiplier process $(T_{N,1}(\theta))_{\theta\in\Theta}$.
Fix arbitrary $\theta_1,\theta_2\in\Theta$, $\lambda\in\IR$, and $i\in\nset{N}$.
By Fubini's theorem,
\begin{multline*}
\IE\brE{\exp(\lambda a_i\Vsprod{\varepsilon_i}{h_{\theta_1}(Z_i)-h_{\theta_2}(Z_i)})}
= \IE\left[\sum_{k\in\IN_0}\frac{\lambda^k}{k!}a_i^k
\Vsprod{\varepsilon_i}{h_{\theta_1}(Z_i)-h_{\theta_2}(Z_i)}^k\right] \\
\le 1
+ \lambda a_i\IE\brE{\Vsprod{\varepsilon_i}{h_{\theta_1}(Z_i)-h_{\theta_2}(Z_i)}}
+ \sum_{k\in\IN_{\ge 2}}\frac{|\lambda|^k}{k!}|a_i|^k
\IE\brE{\IE\brE{|\Vsprod{\varepsilon_i}{h_{\theta_1}(Z_i)-h_{\theta_2}(Z_i)}|^k \mid Z_i}} .
\end{multline*}
Using independence of $Z_i$ and $\varepsilon_i$ and applying \cref{ass:errorBernstein}, the preceding display is bounded by
\begin{align*}
1
+ \frac{\sigma_i^2}{2}
\sum_{k\in\IN_{\ge 2}}
\frac{|\lambda|^k}{k!}
|a_i|^k
k!\rmB^{k-2}
\IE\brE{\Vnorm{h_{\theta_1}(Z_i)-h_{\theta_2}(Z_i)}^{k}} .
\end{align*}
Invoking \cref{eq:chaining_C1}, we further bound this by
\begin{align*}
1
+ \frac{\sigma_i^2}{2}
\lambda^2 |a_i|^2
\IE\brE{\Vnorm{h_{\theta_1}(Z_i)-h_{\theta_2}(Z_i)}^2}
\sum_{k\in\IN_{\ge 2}}
|\lambda|^{k-2}
|a_i|^{k-2}
\rmB^{k-2}
\norm{h_{\theta_1}-h_{\theta_2}}_\infty^{k-2}.
\end{align*}
The remaining sum is a geometric series. Hence, for
$|\lambda| < (a_\infty\rmB d_\infty(\theta_1,\theta_2))^{-1}$,
\begin{align*}
\IE\brE{\exp(\lambda a_i\Vsprod{\varepsilon_i}{h_{\theta_1}(Z_i)-h_{\theta_2}(Z_i)})}
\le
\exp\!\left(
\frac{\lambda^2 a_i^2 \sigma_i^2 d_2^2(\theta_1,\theta_2)}
{2 - 2|\lambda| a_\infty\rmB d_\infty(\theta_1,\theta_2)}
\right),
\end{align*}
where we used that $1+x\le \exp(x)$ for all $x\in\IR$.

Similarly, for
$|\lambda| < \frac{\sqrt{N}}{a_\infty\rmB d_\infty(\theta_1,\theta_2)}$,
independence yields
\begin{multline*}
\IE\left[\exp(\lambda(T_{N,1}(\theta_1)-T_{N,1}(\theta_2)))\right]
= \prod_{i\in\nset{N}}
\IE\left[\exp\left(\frac{\lambda}{\sqrt{N}}a_i
\Vsprod{\varepsilon_i}{h_{\theta_1}(Z_i)-h_{\theta_2}(Z_i)}\right)\right] \\
\le
\exp\!\left(
\frac{\lambda^2 \overline{(a\sigma)_N^2} d_2^2(\theta_1,\theta_2)}
{2 - 2N^{-1/2}|\lambda|a_\infty\rmB d_\infty(\theta_1,\theta_2)}
\right).
\end{multline*}
Applying the exponential Chebyshev inequality, for any $x\in\IR_{\ge 0}$,
\begin{align*}
\IP\left(|T_{N,1}(\theta_1)-T_{N,1}(\theta_2)| \ge x\right)
\le
2\exp\!\left(
\frac{\lambda^2 \overline{(a\sigma)_N^2} d_2^2(\theta_1,\theta_2)}
{2 - 2N^{-1/2}|\lambda|a_\infty\rmB d_\infty(\theta_1,\theta_2)}
-\lambda x
\right).
\end{align*}
Minimizing the right-hand side with respect to $\lambda$, as in the proof of Theorem~3.1.8 in \cite{Gine_Nickl_2021}, yields
\begin{align*}
\IP\left(|T_{N,1}(\theta_1)-T_{N,1}(\theta_2)| \ge x\right)
\le
2\exp\!\left(
-\frac{x^2}{
2\overline{(a\sigma)_N^2}d_2^2(\theta_1,\theta_2)
+ a_\infty\frac{\rmB}{\sqrt{N}} d_\infty(\theta_1,\theta_2)x}
\right).
\end{align*}
Consequently, for any $\theta_1,\theta_2\in\Theta$,
\begin{equation*}
\IP\!\left(
|T_{N,1}(\theta_1)-T_{N,1}(\theta_2)|
\ge
2\sqrt{\overline{(a\sigma)_N^2}}\,d_2(\theta_1,\theta_2)\sqrt{x}
+ \frac{2a_\infty\rmB}{\sqrt{N}}d_\infty(\theta_1,\theta_2)x
\right)
\le 2e^{-x}.
\end{equation*}
Hence, $(T_{N,1}(\theta))_{\theta\in\Theta}$ satisfies Condition~3.8 of \cite{Dirksen_2015}
with metrics
$\Bar d_1 := \frac{2a_\infty\rmB}{\sqrt{N}}d_\infty$
and
$\Bar d_2 := 2\sqrt{\overline{(a\sigma)_N^2}}d_2$. By Theorem~3.5 in \cite{Dirksen_2015}, there exist universal constants $c,C\in(0,\infty)$ such that for any $\theta^\dagger\in\Theta$ and $x\geq 1$,
\begin{multline*}
\IP\!\left(
\sup_{\theta\in\Theta}
|T_{N,1}(\theta)-T_{N,1}(\theta^\dagger)|
>
C\bigl(\gamma_2(\scrH,\Bar d_2)+\gamma_1(\scrH,\Bar d_1)\bigr)
+ c\bigl(\sqrt{x}\Delta_{\Bar d_2}(\scrH)+x\Delta_{\Bar d_1}(\scrH)\bigr)
\right)
\le e^{-x}.
\end{multline*}
Moreover, the diameters satisfy
\begin{equation*}
\Delta_{\Bar d_1}(\scrH)
= \frac{2a_\infty\rmB}{\sqrt{N}}
\sup_{\theta_1,\theta_2\in\Theta} d_\infty(\theta_1,\theta_2)
\le \frac{4a_\infty\rmB\rmU}{\sqrt{N}},
\end{equation*}
and
\begin{equation*}
\Delta_{\Bar d_2}(\scrH)
= 2\sqrt{\overline{(a\sigma)_N^2}}
\sup_{\theta_1,\theta_2\in\Theta} d_2(\theta_1,\theta_2)
\le 4\sqrt{\overline{(a\sigma)_N^2}}\rmv .
\end{equation*}
Furthermore, up to universal constants,
\begin{align*}
\gamma_1(\scrH,\Bar d_1)
&\lesssim \int_{(0,\infty)}\log N(\scrH,\Bar d_1,\rho)\,\mathrm{d}\rho 
= \frac{2a_\infty\rmB}{\sqrt{N}}\calJ_\infty(\scrH).
\end{align*}
Analogously,
\begin{align*}
\gamma_2(\scrH,\Bar d_2)
&\lesssim \int_{(0,\infty)}\log N(\scrH,\Bar d_2,\rho)^{1/2}\,\mathrm{d}\rho 
= 2\sqrt{\overline{(a\sigma)_N^2}}\calJ_2(\scrH).
\end{align*}
Thus, for some universal $M\in(0,\infty)$,
\begin{multline*}
\IP\!\left(
\sup_{\theta\in\Theta}
|T_{N,1}(\theta)-T_{N,1}(\theta^\dagger)|
>
M\!\left(
\sqrt{\overline{(a\sigma)_N^2}}(\calJ_2(\scrH)+\rmv\sqrt{x})
+ \frac{a_\infty\rmB}{\sqrt{N}}(\calJ_\infty(\scrH)+\rmU x)
\right)
\right)
\le e^{-x}.
\end{multline*}
Now, for $x\geq 1$ and $\tau(x)$ specified below,
\begin{align*}
\IP\left(\sup_{\theta\in\Theta}|T_{N,1}(\theta)| > 2\tau(x)\right)
&\le
\IP\left(\sup_{\theta\in\Theta}|T_{N,1}(\theta)-T_{N,1}(\theta^\dagger)| > \tau(x)\right)
+
\IP\left(|T_{N,1}(\theta^\dagger)| > \tau(x)\right).
\end{align*}
Moreover, for all $k\in\IN$,
\begin{align*}
\frac{1}{N}\sum_{i\in\nset{N}}\IE\brE{|a_i\Vsprod{h_{\theta^\dagger}(Z_i)}{\varepsilon_i}|^k}
&\le \frac{1}{2}k!\,\overline{(a\sigma)_N^2}\,\rmv^2\,(a_\infty\rmB\rmU)^{k-2}.
\end{align*}
Therefore, Bernstein's inequality (Lemma~2.2.10 in \cite{Wellner_2023}) implies that for all $x\in(0,\infty)$,
\begin{equation*}
\IP\left(\left|\sum_{i\in\nset{N}}a_i\Vsprod{h_{\theta^\dagger}(Z_i)}{\varepsilon_i}\right|>x\right)
\le
\exp\!\left(
-\frac{x^2}{2N\overline{(a\sigma)_N^2}\rmv^2+2a_\infty\rmB\rmU x}
\right),
\end{equation*}
and hence
\begin{equation*}
\IP\left(
|T_{N,1}(\theta^\dagger)|
>
2\sqrt{\overline{(a\sigma)_N^2}}\rmv\sqrt{x}
+\frac{4}{\sqrt{N}}a_\infty\rmB\rmU x
\right)
\le 2e^{-x}.
\end{equation*}
The claim follows by taking $M\ge 2$ and defining, for $x\geq 1$,
\begin{equation*}
\tau(x)
:= M\!\left(
\sqrt{\overline{(a\sigma)_N^2}}(\calJ_2(\scrH)+\rmv\sqrt{x})
+ \frac{a_\infty\rmB}{\sqrt{N}}(\calJ_\infty(\scrH)+\rmU x)
\right),
\end{equation*}
which proves~(i). Part~(ii) follows by analogous computations and is omitted.
\end{proof}

\subsection{An inequality for infinite series}

\begin{lemma}\label{lem:ExpoSum}
    Let $\rmB>1$. Let $\rma\in\pRzz$, and $\rmb,\rmc\in\pRz$, such that $\rma < \rmb\rmc$. It holds,
    \begin{equation}\label{eq:ExpoSum}
        \sum_{l\in\IN}\rmB^{\rma l}\exp\br{-\rmb\cdot\rmB^{\rmc l}} \leq \frac{\rmb^{-\frac{\rma}{\rmc}}}{\rmc\ln(\rmB)}\Gamma\br{2+\frac{\rma}{\rmc}}\cdot \exp\br{-\tau^\star\rmb},
    \end{equation}
    where $\tau^\star\coloneqq\Gamma\br{2+\frac{\rma}{\rmc}}^{-\frac{\rmc}{\rmc+\rma}}$ and $\Gamma$ denotes the Gamma function.
\end{lemma}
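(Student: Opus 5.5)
The plan is to compare the series with an integral and then estimate an upper incomplete Gamma function. Set $g(x)\coloneqq \rmB^{\rma x}\exp\br{-\rmb\rmB^{\rmc x}}$ for $x\geq 0$.

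The first step is monotonicity. We have
\[
\frac{\rmd}{\rmd x}\log g(x)=\ln(\rmB)\br{\rma-\rmb\rmc\,\rmB^{\rmc x}}\leq \ln(\rmB)\br{\rma-\rmb\rmc}<0,
\]
using $\rmB^{\rmc x}\geq 1$ for $x\geq0$ together with the hypothesis $\rma<\rmb\rmc$. So $g$ is decreasing on $[0,\infty)$. This is the only place the hypothesis enters, and it gives
\[
\sum_{l\in\IN}g(l)\leq\int_0^\infty g(x)\,\rmd x .
\]

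The second step is the substitution $y=\rmb\rmB^{\rmc x}$. Then $\rmd y=\rmc\ln(\rmB)\,y\,\rmd x$ and $\rmB^{\rma x}=(y/\rmb)^{\rma/\rmc}$. Writing $s\coloneqq \rma/\rmc$, this turns the integral into
\[
\frac{\rmb^{-s}}{\rmc\ln(\rmB)}\int_{\rmb}^\infty y^{s-1}e^{-y}\,\rmd y .
\]
The prefactor $\rmb^{-\rma/\rmc}/(\rmc\ln\rmB)$ in \cref{eq:ExpoSum} is already in place. It then remains to show that the incomplete Gamma integral $\Gamma(s,\rmb)=\int_{\rmb}^\infty y^{s-1}e^{-y}\,\rmd y$ is at most $\Gamma(2+s)\exp(-\tau^\star\rmb)$.

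For that last estimate I would split the exponential as $e^{-y}=e^{-\tau y}e^{-(1-\tau)y}$ with $\tau\in(0,1)$ to be chosen. On $\{y\geq\rmb\}$ the first factor is at most $e^{-\tau\rmb}$, and the remaining integral is at most $(1-\tau)^{-s}\Gamma(s)$. Alternatively, a Markov/Chernoff-type argument can be run with the moment $\int y^{s+1}e^{-y}\,\rmd y=\Gamma(s+2)$. This is where $\Gamma(2+s)$ and the exponent $\frac{\rmc}{\rmc+\rma}=\frac{1}{1+s}$ in $\tau^\star$ should come from. One interpolates between the trivial bound $\Gamma(s,\rmb)\leq\Gamma(s)$ and the tail bound $\Gamma(s,\rmb)\leq \rmb^{-2}\Gamma(s+2)$ (from $y^{s-1}\leq y^{s+1}/\rmb^2$ on $y\geq \rmb$), i.e. takes a geometric mean of the form $\Gamma(s,\rmb)\leq\Gamma(s)^{1-\lambda}\Gamma(s,\rmb)^{\lambda}$, and then optimizes over the free parameter. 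The optimum is what fixes $\tau^\star=\Gamma(2+s)^{-1/(1+s)}$.

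I expect this final matching of constants to be the main obstacle, specifically the singular factor $y^{s-1}$ near the lower endpoint when $s=\rma/\rmc<1$, above all when $\rma=0$. Then $\Gamma(0,\rmb)$ behaves like $\log(1/\rmb)$ for small $\rmb$, so a bound by $\exp(-\tau^\star\rmb)$ cannot hold uniformly in $\rmb$. I would therefore first try to confirm that the stated constant holds in the regime the paper uses, where $\rmb=\bar c NR^2$ is large (e.g. $\rmb\geq1$, so that $y^{s-1}\leq y^{s+1}$ on $[\rmb,\infty)$ and $\Gamma(s+2)$ appears directly). If needed, I would add that mild lower bound on $\rmb$ as an assumption. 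A fallback is to bound the $l=1$ term separately and compare the rest of the sum with $\int_1^\infty g$, which keeps the integral away from the singular region.
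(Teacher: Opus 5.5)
Your monotonicity argument, the comparison $\sum_{l\in\IN}g(l)\le\int_0^\infty g$, and the substitution $y=\rmb\rmB^{\rmc x}$ are correct and coincide with the paper's first steps. Your Jacobian $1/y$ is in fact the right one. The paper's proof writes $\rmb/x$ instead, and that spurious factor $\rmb$ is the only reason it can pass from $x^{\rma/\rmc-1}$ to $x^{\rma/\rmc}$, and dispose of $\rma=0$, with no condition on $\rmb$.

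Your suspicion is therefore justified, and it is stronger than you state: the inequality itself, not just the integral route, is false for small $\rmb$. Take $\rma=0$, $\rmc=1$, $\rmB=e$, $\rmb=10^{-2}$. Then $\Gamma(2)=\tau^\star=1$, so the right-hand side is $e^{-\rmb}<1$, while the first two summands already give $e^{-e\rmb}+e^{-e^2\rmb}>1.9$. Consequently no argument proves the statement as written, including your fallback of splitting off $l=1$. An assumption such as $\rmb\ge 1$ is needed. This is harmless where the lemma is applied, since there $\rmb$ is of order $NR^2$ or $N\delta_N^2$ and can be taken large. Under it, $y^{s-1}\le y^{s}$ on $[\rmb,\infty)$, and everything reduces to proving $\int_\rmb^\infty y^{s}e^{-y}\,\rmd y\le\Gamma(2+s)e^{-\tau^\star\rmb}$.

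This last estimate is the genuine gap, and none of your three mechanisms delivers it.
\begin{itemize}
\item The split $e^{-y}=e^{-\tau y}e^{-(1-\tau)y}$, which is the same as a Chernoff bound, gives the constant $(1-\tau)^{-s}\Gamma(s)$. This is of order $1/s$ for small $s$ and infinite at $s=0$. Even applied to $y^{s}e^{-y}$, it gives $(1-\tau)^{-(s+1)}\Gamma(s+1)$. At $\tau=\tau^\star$ this exceeds $\Gamma(s+2)=(s+1)\Gamma(s+1)$: it is infinite at $s=0$, where $\tau^\star=1$, and at $s=1$ the factor is about $11.7$ against $2$. So this route only reaches a smaller exponent.
\item Interpolating between $\Gamma(s)$ and $\rmb^{-2}\Gamma(s+2)$ can only produce polynomial decay in $\rmb$, never a factor $e^{-\tau^\star\rmb}$. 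Moreover, your displayed inequality $\Gamma(s,\rmb)\le\Gamma(s)^{1-\lambda}\Gamma(s,\rmb)^{\lambda}$ is just $\Gamma(s,\rmb)\le\Gamma(s)$ in disguise. Nothing there fixes $\tau^\star$.
\end{itemize}

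The missing idea, which is what the paper does, has three parts. Substitute $y=u^{p}$ with $p=\frac{1}{1+s}=\frac{\rmc}{\rmc+\rma}$, which removes the power entirely. Then use Alzer's sharp lower bound $\int_0^x e^{-t^p}\,\rmd t\ge\Gamma(1+\frac1p)\bigl(1-e^{-\beta x^p}\bigr)^{1/p}$ with $\beta=\Gamma(1+\frac1p)^{-p}$, valid for $0<p<1$. Finally apply Bernoulli's inequality $(1-z)^{1/p}\ge 1-z/p$:
\[
\int_\rmb^\infty y^{s}e^{-y}\,\rmd y
= p\int_{\rmb^{1/p}}^\infty e^{-u^{p}}\,\rmd u
\le p\,\Gamma\bigl(1+\tfrac1p\bigr)\Bigl(1-\bigl(1-e^{-\beta\rmb}\bigr)^{1/p}\Bigr)
\le \Gamma\bigl(1+\tfrac1p\bigr)e^{-\beta\rmb}.
\]
Since $1+\frac1p=2+s$ and $\beta=\Gamma(2+s)^{-1/(1+s)}=\tau^\star$, this is exactly where both $\Gamma(2+\frac{\rma}{\rmc})$ and the exponent $\frac{\rmc}{\rmc+\rma}$ in $\tau^\star$ come from. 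For $\rma=0$, so $p=1$, one simply uses $\int_\rmb^\infty e^{-y}\,\rmd y=e^{-\rmb}$.
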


\begin{proof}[Proof of \cref{lem:ExpoSum}]
    \mbox{}
    We follow the proof strategy of Lemma A.2 in \cite{KutriScheichl_2024}. We start with defining for $x\in\pRz$ the function $ g(x)\coloneqq \rmB^{\rma x}\exp\br{-\rmb\cdot\rmB^{\rmc x}}$.
    Observing that 
    \begin{equation*}
       g'(x) = -\ln(\rmB)\cdot
       \left(\rmB^{\rmc x} \rmb\rmc - \rma\right)\cdot g(x),
    \end{equation*}
    we see directly that $g$ is monotonically decreasing, if $\rma = 0$. If $\rma>0$, we have
    \begin{equation*}
        g'(x_o) = 0 \quad\text{ if and only if }\quad
        x_o = \frac{\ln\br{\frac{\rma}{\rmc\rmb}}}{\rmc\ln(\rmB)}.
    \end{equation*}
    As $\rma <\rmb\rmc$ by assumption, we have $x_o < 0$ as well as $\left(\rmB^{\rmc x} \rmb\rmc - \rma\right)>0$ for all $x>x_o$,
    such that $g$ is monotonically decreasing on $[0,\infty)$. Thus, independent of the choice of $\rma\in\pRzz$, we have  
    \begin{align*}
        \sum_{l\in\IN}g(l)
        &\leq \int_{0}^\infty g(x)\rmd x 
        = \frac{1}{\rmc\ln(\rmB)}
        \int_\rmb^\infty
        \br{\frac{x}{\rmb}}^{\frac{\rma}{\rmc}}
        \frac{\rmb}{x}\exp(-x)\rmd x 
        = \frac{\rmb^{1-\frac{\rma}{\rmc}}}{\rmc\ln(\rmB)}
        \int_{b}^\infty x^{\frac{\rma}{\rmc}-1}\exp\br{-x}\rmd x,
    \end{align*}
    where we used the integral transformation
    $\varphi: x\mapsto\frac{\ln\br{\frac{x}{\rmb}}}{\rmc\ln\br{\rmB}}$. In case of $\rma = 0$, the above display is upper bounded by 
    \begin{equation*}
        \frac{1}{\rmc\ln(\rmB)}\int_{b}^\infty \exp\br{-x}\rmd x
        \leq \frac{1}{\rmc\ln(\rmB)}\exp(-\rmb),
    \end{equation*}
    which shows the claim. If $\rma\neq 0$, applying the integral transformation
    $\varphi_p\colon\pRz\ni x\mapsto x^p\in\pRz$, $p\in\pRz$, the last display is upper bounded by 
    \begin{equation*}
        \frac{\rmb^{-\frac{\rma}{\rmc}}}{\rmc\ln(\rmB)}
        \int_{b}^\infty x^{\frac{\rma}{\rmc}}\exp\br{-x}\rmd x
        = \frac{p\rmb^{-\frac{\rma}{\rmc}}}{\rmc\ln(\rmB)}
        \int_{\rmb^{\frac{1}{p}}}^\infty
        x^{p\frac{\rma}{\rmc}+p-1}\exp(-x^p)\rmd x.
    \end{equation*}
    Choosing $p=\frac{\rmc}{\rmc+\rma}< 1$, the last display reads
    \begin{equation*}
        \frac{p\rmb^{-\frac{\rma}{\rmc}}}{\rmc\ln(\rmB)}
        \int_{\rmb^{\frac{1}{p}}}^\infty \exp(-x^p)\rmd x.
    \end{equation*}
    For any $0\leq \tau\leq \min\brK{1,\Gamma(1+1/p)^{-p}}$, we can apply the results of \cite{Alzer_1997} and upper bound the last display by
    \begin{equation*}
         \frac{p\rmb^{-\frac{\rma}{\rmc}}}{\rmc\ln(\rmB)}
         \Gamma\br{1+1/p}\cdot
         \br{1-\br{1-\exp\br{-\tau\rmb}}^{\frac{1}{p}}}.
    \end{equation*}
    By an application of Bernoulli's inequality, the last term is upper bounded by 
    \begin{equation*}
        \frac{\rmb^{-\frac{\rma}{\rmc}}}{\rmc\ln(\rmB)}
        \Gamma\br{2+\frac{\rma}{\rmc}}\cdot
        \exp\br{-\tau^\star\rmb}
    \end{equation*}
    with $\tau^\star\coloneqq\Gamma\br{2+\frac{\rma}{\rmc}}^{-\frac{\rmc}{\rmc+\rma}}\leq1$.
\end{proof}

%% file: misspec.bib
@article{Dirksen_2015,
  author = {Dirksen, Sjoerd},
  title = {Tail bounds via generic chaining},
  journal = {Electronic Journal of Probability},
  volume = {20},
  pages = {1--29},
  year = {2015},
  doi = {10.1214/EJP.v20-3760},
  url = {https://doi.org/10.1214/EJP.v20-3760}
}

@book{Gine_Nickl_2021,
  author = {Gine, Evarist and Nickl, Richard},
  title = {Mathematical Foundations of Infinite-Dimensional Statistical Models},
  series = {Cambridge Series in Statistical and Probabilistic Mathematics},
  publisher = {Cambridge University Press},
  year = {2021},
  address = {Cambridge}
}

@book{Wellner_2023,
  author = {van der Vaart, Aad W. and Wellner, Jon A.},
  title = {Weak Convergence and Empirical Processes},
  subtitle = {With Applications to Statistics},
  edition = {2},
  publisher = {Springer},
  address = {Cham},
  year = {2023},
  series = {Springer Series in Statistics},
  isbn = {978-3-031-29040-4},
  doi = {10.1007/978-3-031-29040-4},
  url = {https://doi.org/10.1007/978-3-031-29040-4}
}

@book{Geer_2000,
  author = {{van de Geer}, Sara},
  title = {Applications of Empirical Process Theory},
  series = {Cambridge Series in Statistical and Probabilistic Mathematics},
  volume = {6},
  publisher = {Cambridge University Press},
  address = {Cambridge},
  year = {2000},
  isbn = {0-521-65002-X}
}

@book{Engletal_2000,
  title = {Regularization of {{Inverse Problems}}},
  author = {Engl, Heinz Werner and Hanke, Martin and Neubauer, A.},
  year = 2000,
  month = mar,
  publisher = {Springer Science \& Business Media},
  isbn = {978-0-7923-6140-4},
  langid = {english}
}

@article{GiordanoWang_2025,
  title = {Statistical Algorithms for Low-Frequency Diffusion Data: {{A PDE}} Approach},
  shorttitle = {Statistical Algorithms for Low-Frequency Diffusion Data},
  author = {Giordano, Matteo and Wang, Sven},
  year = 2025,
  month = jun,
  journal = {The Annals of Statistics},
  volume = {53},
  number = {3},
  pages = {1150--1175},
  publisher = {Institute of Mathematical Statistics},
  issn = {0090-5364, 2168-8966},
  doi = {10.1214/25-AOS2496},
  urldate = {2026-01-09}
}

@article{Haireretal_2014,
  title = {Spectral {{Gaps}} for a {{Metropolis}}–{{Hastings Algorithm}} in {{Infinite Dimensions}}},
  author = {Hairer, Martin and Stuart, Andrew M. and Vollmer, Sebastian J.},
  year = 2014,
  journal = {The Annals of Applied Probability},
  volume = {24},
  number = {6},
  eprint = {24520134},
  eprinttype = {jstor},
  pages = {2455--2490},
  publisher = {Institute of Mathematical Statistics},
  issn = {1050-5164},
  urldate = {2026-01-09}
}

@book{Triebel_1983,
  title = {Theory of {{Function Spaces}}},
  author = {Triebel, Hans},
  year = 1983,
  publisher = {Springer},
  address = {Basel},
  doi = {10.1007/978-3-0346-0416-1},
  urldate = {2025-11-29},
  copyright = {https://www.springernature.com/gp/researchers/text-and-data-mining},
  isbn = {978-3-0346-0416-1},
  langid = {english}
}

@article{Kekkonen_2022,
  title = {Consistency of {{Bayesian}} Inference with {{Gaussian}} Process Priors for a Parabolic Inverse Problem},
  author = {Kekkonen, Hanne},
  year = 2022,
  month = jan,
  journal = {Inverse Problems},
  volume = {38},
  number = {3},
  pages = {035002},
  publisher = {IOP Publishing},
  issn = {0266-5611},
  doi = {10.1088/1361-6420/ac4839},
  urldate = {2025-11-25},
  langid = {english}
}

@book{LionsMagenes_1972,
  title = {Non-{{Homogeneous Boundary Value Problems}} and {{Applications}}},
  author = {Lions, J. L. and Magenes, E.},
  year = 1972,
  publisher = {Springer},
  address = {Berlin, Heidelberg},
  doi = {10.1007/978-3-642-65161-8},
  urldate = {2026-01-12},
  copyright = {http://www.springer.com/tdm},
  isbn = {978-3-642-65161-8}
}

@article{Cotteretal_2013,
  title = {{{MCMC Methods}} for {{Functions}}: {{Modifying Old Algorithms}} to {{Make Them Faster}}},
  shorttitle = {{{MCMC Methods}} for {{Functions}}},
  author = {Cotter, S. L. and Roberts, G. O. and Stuart, A. M. and White, D.},
  year = 2013,
  month = aug,
  journal = {Statistical Science},
  volume = {28},
  number = {3},
  pages = {424--446},
  publisher = {Institute of Mathematical Statistics},
  issn = {0883-4237, 2168-8745},
  doi = {10.1214/13-STS421},
  urldate = {2026-01-08}
}

@book{Evans_2010,
  title = {Partial {{Differential Equations}}},
  author = {Evans, Lawrence C.},
  year = 2010,
  publisher = {American Mathematical Soc.},
  googlebooks = {Xnu0o\_EJrCQC},
  isbn = {978-0-8218-4974-3},
  langid = {english}
}

@book{Robinson_2001,
  title = {Infinite-{{Dimensional Dynamical Systems}}: {{An Introduction}} to {{Dissipative Parabolic PDEs}} and the {{Theory}} of {{Global Attractors}}},
  shorttitle = {Infinite-{{Dimensional Dynamical Systems}}},
  author = {Robinson, James C.},
  year = 2001,
  month = apr,
  publisher = {Cambridge University Press},
  googlebooks = {3e4h1j9WNlwC},
  isbn = {978-0-521-63204-1},
  langid = {english}
}

@book{Nickl_2023,
  author = {Nickl, Richard},
  title = {Bayesian Non-linear Statistical Inverse Problems},
  series = {Zurich Lectures in Advanced Mathematics},
  publisher = {EMS Press},
  address = {Berlin},
  year = {2023},
  isbn = {978-3-98547-053-2},
  doi = {10.4171/zlam/30},
  url = {https://doi.org/10.4171/zlam/30}
}

@book{Aubin_1982,
  title = {Nonlinear {{Analysis}} on {{Manifolds}}. {{Monge-Ampère Equations}}},
  author = {Aubin, Thierry},
  editor = {Artin, M. and Chern, S. S. and Doob, J. L. and Grothendieck, A. and Heinz, E. and Hirzebruch, F. and Hörmander, L. and Lane, S. Mac and Magnus, W. and Moore, C. C. and Moser, J. K. and Nagata, M. and Schmidt, W. and Scott, D. S. and Tits, J. and Van Der Waerden, B. L. and Berger, M. and Eckmann, B. and Varadhan, S. R. S.},
  year = 1982,
  series = {Grundlehren Der Mathematischen {{Wissenschaften}}},
  volume = {252},
  publisher = {Springer},
  address = {New York, NY},
  doi = {10.1007/978-1-4612-5734-9},
  urldate = {2026-01-30},
  copyright = {http://www.springer.com/tdm},
  isbn = {978-1-4612-5734-9},
  langid = {english}
}

@misc{KutriScheichl_2024,
  title = {Dirichlet-{{Neumann Averaging}}: {{The DNA}} of {{Efficient Gaussian Process Simulation}}},
  shorttitle = {Dirichlet-{{Neumann Averaging}}},
  author = {Kutri, Robert and Scheichl, Robert},
  year = 2024,
  month = dec,
  number = {arXiv:2412.07929},
  eprint = {2412.07929},
  primaryclass = {stat},
  publisher = {arXiv},
  doi = {10.48550/arXiv.2412.07929},
  urldate = {2026-01-22},
  archiveprefix = {arXiv}
}

@article{Andrieuetal_2010,
  title = {Particle {{Markov Chain Monte Carlo Methods}}},
  author = {Andrieu, Christophe and Doucet, Arnaud and Holenstein, Roman},
  year = 2010,
  month = jun,
  journal = {Journal of the Royal Statistical Society Series B: Statistical Methodology},
  volume = {72},
  number = {3},
  pages = {269--342},
  issn = {1369-7412},
  doi = {10.1111/j.1467-9868.2009.00736.x},
  urldate = {2026-01-22}
}

@article{ChristenFox_2005,
  title = {Markov {{Chain Monte Carlo Using}} an {{Approximation}}},
  author = {Christen, J. Andrés and Fox, Colin},
  year = 2005,
  journal = {Journal of Computational and Graphical Statistics},
  volume = {14},
  number = {4},
  eprint = {27594150},
  eprinttype = {jstor},
  pages = {795--810},
  publisher = {[American Statistical Association, Taylor \& Francis, Ltd., Institute of Mathematical Statistics, Interface Foundation of America]},
  issn = {1061-8600},
  urldate = {2026-01-22}
}

@article{AndrieuRoberts_2009,
  title = {The {{Pseudo-Marginal Approach}} for {{Efficient Monte Carlo Computations}}},
  author = {Andrieu, Christophe and Roberts, Gareth O.},
  year = 2009,
  journal = {The Annals of Statistics},
  volume = {37},
  number = {2},
  eprint = {30243645},
  eprinttype = {jstor},
  pages = {697--725},
  publisher = {Institute of Mathematical Statistics},
  issn = {0090-5364},
  urldate = {2026-01-22}
}

@misc{CastreNickl_2026,
  title = {On Gradient Stability in Nonlinear {{PDE}} Models and Inference in Interacting Particle Systems},
  author = {Castre, Aurélien and Nickl, Richard},
  year = 2026,
  month = jan,
  number = {arXiv:2601.10326},
  eprint = {2601.10326},
  primaryclass = {math},
  publisher = {arXiv},
  doi = {10.48550/arXiv.2601.10326},
  urldate = {2026-01-22},
  archiveprefix = {arXiv}
}

@book{Vaart_1998,
  title = {Asymptotic {{Statistics}}},
  author = {{van der Vaart}, Aad},
  year = 1998,
  series = {Cambridge {{Series}} in {{Statistical}} and {{Probabilistic Mathematics}}},
  publisher = {Cambridge University Press},
  address = {Cambridge},
  doi = {10.1017/CBO9780511802256},
  urldate = {2026-01-14},
  isbn = {978-0-521-78450-4}
}

@book{Geer_2000a,
  title = {Empirical {{Processes}} in {{M-Estimation}}},
  author = {{van de Geer}, Sara A.},
  year = 2000,
  month = jan,
  publisher = {Cambridge University Press},
  googlebooks = {2DYoMRz\_0YEC},
  isbn = {978-0-521-65002-1},
  langid = {english}
}

@article{Siebel_2025,
  author = {Siebel, Maximilian},
  title = {Convergence Rates for the Maximum A Posteriori Estimator in PDE-Regression Models with Random Design},
  journal = {SIAM/ASA Journal on Uncertainty Quantification},
  year = {2025},
  pages = {1862--1903},
  doi = {10.1137/25M1744526}
}

@article{Geer_2001,
  author = {{van de Geer}, Sara},
  title = {Least squares estimation with complexity penalties},
  journal = {Mathematical Methods of Statistics},
  volume = {10},
  number = {3},
  pages = {355--374},
  year = {2001}
}

@article{Nickl_Wang_2024,
  author = {Nickl, Richard and Wang, Sven},
  title = {On polynomial-time computation of high-dimensional posterior measures by Langevin-type algorithms},
  journal = {Journal of the European Mathematical Society},
  volume = {26},
  number = {3},
  pages = {1031--1112},
  year = {2024},
  doi = {10.4171/jems/1304}
}

@article{Bohr_Nickl_2024,
  author = {Bohr, Jan and Nickl, Richard},
  title = {On log-concave approximations of high-dimensional posterior measures and stability properties in non-linear inverse problems},
  journal = {Annales de l'Institut Henri Poincaré Probabilités et Statistiques},
  volume = {60},
  number = {4},
  pages = {2619--2667},
  year = {2024},
  doi = {10.1214/23-aihp1397}
}

@article{Giordano_Nickl_2020,
  author = {Giordano, Matteo and Nickl, Richard},
  title = {Consistency of Bayesian inference with Gaussian process priors in an elliptic inverse problem},
  journal = {Inverse Problems},
  volume = {36},
  number = {8},
  pages = {085001},
  year = {2020},
  doi = {10.1088/1361-6420/ab7d2a}
}

@misc{Nickl_2025_BvMRDE,
  author = {Nickl, Richard},
  title = {Bernstein-von Mises Theorems for Time Evolution Equations},
  year = {2024},
  eprint = {2407.14781},
  archivePrefix = {arXiv},
  primaryClass = {math.ST},
  url = {https://arxiv.org/abs/2407.14781}
}

@article{Nickl_vdGeer_Wang_2020,
  author = {Nickl, Richard and {van de Geer}, Sara and Wang, Sven},
  title = {Convergence Rates for Penalized Least Squares Estimators in PDE Constrained Regression Problems},
  journal = {SIAM/ASA Journal on Uncertainty Quantification},
  volume = {8},
  number = {1},
  pages = {374--413},
  year = {2020},
  doi = {10.1137/18M1236137}
}

@article{NicklTiti_2024,
  author = {Nickl, Richard and Titi, Edriss S.},
  title = {On Posterior Consistency of Data Assimilation with Gaussian Process Priors: The 2D Navier--Stokes Equations},
  journal = {Annals of Statistics},
  volume = {52},
  number = {4},
  pages = {1825--1844},
  year = {2024},
  doi = {10.1214/24-AOS2427}
}

@misc{Konen_Nickl_2025,
  author = {Konen, Dimiri and Nickl, Richard},
  title = {Data Assimilation with the 2D Navier-Stokes Equations: Optimal Gaussian Asymptotics for the Posterior Measure},
  year = {2025},
  eprint = {2507.18279},
  archivePrefix = {arXiv},
  primaryClass = {math},
  doi = {10.48550/arXiv.2507.18279},
  url = {https://arxiv.org/abs/2507.18279}
}

@article{Alzer_1997,
  author = {Alzer, Horst},
  title = {On Some Inequalities for the Incomplete Gamma Function},
  journal = {Mathematics of Computation},
  volume = {66},
  number = {218},
  pages = {771--778},
  year = {1997}
}

@article {Norets_2015,
    AUTHOR = {Norets, Andriy},
     TITLE = {Bayesian regression with nonparametric heteroskedasticity},
   JOURNAL = {J. Econometrics},
  FJOURNAL = {Journal of Econometrics},
    VOLUME = {185},
      YEAR = {2015},
    NUMBER = {2},
     PAGES = {409--419},
      ISSN = {0304-4076,1872-6895},
   MRCLASS = {62F15 (62F12 62G05 62J05)},
  MRNUMBER = {3311830},
MRREVIEWER = {Todd\ Alan\ Kuffner},
       DOI = {10.1016/j.jeconom.2014.12.006},
       URL = {https://doi.org/10.1016/j.jeconom.2014.12.006},
}

@article {kleijn_2012,
    AUTHOR = {Kleijn, B. J. K. and {van der Vaart}, Aad},
     TITLE = {The {B}ernstein-{V}on-{M}ises theorem under misspecification},
   JOURNAL = {Electron. J. Stat.},
  FJOURNAL = {Electronic Journal of Statistics},
    VOLUME = {6},
      YEAR = {2012},
     PAGES = {354--381},
      ISSN = {1935-7524},
   MRCLASS = {62F15 (62F12 62F25)},
  MRNUMBER = {2988412},
MRREVIEWER = {Gang\ Shen},
       DOI = {10.1214/12-EJS675},
       URL = {https://doi.org/10.1214/12-EJS675},
}

@book{GhosalvanderVaart_2017,
  title = {Fundamentals of {{Nonparametric Bayesian Inference}}},
  author = {Ghosal, Subhashis and {van der Vaart}, Aad},
  year = 2017,
  series = {Cambridge {{Series}} in {{Statistical}} and {{Probabilistic Mathematics}}},
  publisher = {Cambridge University Press},
  address = {Cambridge},
  doi = {10.1017/9781139029834},
  urldate = {2026-01-09},
  isbn = {978-0-521-87826-5}
}

@article {stuart_2010,
    author = {Stuart, A. M.},
     TITLE = {Inverse problems: a {B}ayesian perspective},
   JOURNAL = {Acta Numer.},
  FJOURNAL = {Acta Numerica},
    VOLUME = {19},
      YEAR = {2010},
     PAGES = {451--559},
      ISSN = {0962-4929,1474-0508},
   MRCLASS = {65J22 (35R25 35R30 62C10 65J20)},
  MRNUMBER = {2652785},
MRREVIEWER = {Ruben\ D.\ Spies},
       DOI = {10.1017/S0962492910000061},
       URL = {https://doi.org/10.1017/S0962492910000061},
}

@article {kleijn_2006,
    AUTHOR = {Kleijn, B. J. K. and {van der Vaart}, Aad},
     TITLE = {Misspecification in infinite-dimensional {B}ayesian
              statistics},
   JOURNAL = {Ann. Statist.},
  FJOURNAL = {The Annals of Statistics},
    VOLUME = {34},
      YEAR = {2006},
    NUMBER = {2},
     PAGES = {837--877},
      ISSN = {0090-5364,2168-8966},
   MRCLASS = {62G07 (62F05 62G20)},
  MRNUMBER = {2283395},
       DOI = {10.1214/009053606000000029},
       URL = {https://doi.org/10.1214/009053606000000029},
}

@article {white_1982,
    AUTHOR = {White, Halbert},
     TITLE = {Maximum likelihood estimation of misspecified models},
   JOURNAL = {Econometrica},
  FJOURNAL = {Econometrica. Journal of the Econometric Society},
    VOLUME = {50},
      YEAR = {1982},
    NUMBER = {1},
     PAGES = {1--25},
      ISSN = {0012-9682,1468-0262},
   MRCLASS = {62P20},
  MRNUMBER = {640163},
MRREVIEWER = {D.\ E. A. Giles},
       DOI = {10.2307/1912526},
       URL = {https://doi.org/10.2307/1912526},
}

@article {berk_1966,
    AUTHOR = {Berk, Robert H.},
     TITLE = {Limiting behavior of posterior distributions when the model is
              incorrect},
   JOURNAL = {Ann. Math. Statist.},
  FJOURNAL = {Annals of Mathematical Statistics},
    VOLUME = {37},
      YEAR = {1966},
     PAGES = {51--58; correction, ibid. 745--746},
      ISSN = {0003-4851},
   MRCLASS = {62.35},
  MRNUMBER = {189176},
MRREVIEWER = {J.\ Sacks},
       DOI = {10.1214/aoms/1177699477},
       URL = {https://doi.org/10.1214/aoms/1177699477},
}

@book{Temam_1997,
  title = {Infinite-{{Dimensional Dynamical Systems}} in {{Mechanics}} and {{Physics}}},
  author = {Temam, Roger},
  editor = {Marsden, J. E. and Sirovich, L. and John, F.},
  year = 1997,
  series = {Applied {{Mathematical Sciences}}},
  volume = {68},
  publisher = {Springer},
  address = {New York, NY},
  doi = {10.1007/978-1-4612-0645-3},
  urldate = {2026-01-08},
  copyright = {http://www.springer.com/tdm},
  isbn = {978-1-4612-0645-3}
}

@book{Strogatz_2018,
  title = {Nonlinear {{Dynamics}} and {{Chaos}}: {{With Applications}} to {{Physics}}, {{Biology}}, {{Chemistry}}, and {{Engineering}}},
  shorttitle = {Nonlinear {{Dynamics}} and {{Chaos}}},
  author = {Strogatz, Steven H.},
  year = 2018,
  month = may,
  edition = {2},
  publisher = {CRC Press},
  address = {Boca Raton},
  doi = {10.1201/9780429492563},
  isbn = {978-0-429-49256-3}
}

@book{ConstantinFoias_1989,
  title = {Navier-{{Stokes Equations}}},
  author = {Constantin, Peter and Foias, Ciprian},
  year = 1989,
  month = jul,
  series = {Chicago {{Lectures}} in {{Mathematics}}},
  publisher = {University of Chicago Press},
  address = {Chicago, IL},
  urldate = {2026-01-08},
  isbn = {978-0-226-11549-8},
  langid = {english}
}

@article{Arridgeetal_2019,
  title = {Solving Inverse Problems Using Data-Driven Models},
  author = {Arridge, Simon and Maass, Peter and Öktem, Ozan and Schönlieb, Carola-Bibiane},
  year = 2019,
  month = may,
  journal = {Acta Numerica},
  volume = {28},
  pages = {1--174},
  issn = {0962-4929, 1474-0508},
  doi = {10.1017/S0962492919000059},
  urldate = {2026-01-08},
  langid = {english}
}

@book{Kaltenbacheretal_2008,
  title = {Iterative {{Regularization Methods}} for {{Nonlinear Ill-Posed Problems}}},
  author = {Kaltenbacher, Barbara and Neubauer, Andreas and Scherzer, Otmar},
  year = 2008,
  month = sep,
  publisher = {De Gruyter},
  doi = {10.1515/9783110208276},
  urldate = {2026-01-08},
  copyright = {De Gruyter expressly reserves the right to use all content for commercial text and data mining within the meaning of Section 44b of the German Copyright Act.},
  isbn = {978-3-11-020827-6},
  langid = {english}
}

@book {Girault_Raviart_1986,
    AUTHOR = {Girault, Vivette and Raviart, Pierre-Arnaud},
     TITLE = {Finite element methods for {N}avier-{S}tokes equations},
    SERIES = {Springer Series in Computational Mathematics},
    VOLUME = {5},
      NOTE = {Theory and algorithms},
 PUBLISHER = {Springer-Verlag, Berlin},
      YEAR = {1986},
     PAGES = {x+374},
      ISBN = {3-540-15796-4},
   MRCLASS = {65N30 (65-02 76-08)},
  MRNUMBER = {851383},
MRREVIEWER = {Max\ D.\ Gunzburger},
       DOI = {10.1007/978-3-642-61623-5},
       URL = {https://doi.org/10.1007/978-3-642-61623-5},
}

@article {reiss_2008,
    AUTHOR = {Rei\ss, Markus},
     TITLE = {Asymptotic equivalence for nonparametric regression with
              multivariate and random design},
   JOURNAL = {Ann. Statist.},
  FJOURNAL = {The Annals of Statistics},
    VOLUME = {36},
      YEAR = {2008},
    NUMBER = {4},
     PAGES = {1957--1982},
      ISSN = {0090-5364,2168-8966},
   MRCLASS = {62G08 (62B15 62G20)},
  MRNUMBER = {2435461},
       DOI = {10.1214/07-AOS525},
       URL = {https://doi.org/10.1214/07-AOS525},
}

@article {vollmer_2013,
    AUTHOR = {Vollmer, Sebastian J.},
     TITLE = {Posterior consistency for {B}ayesian inverse problems through
              stability and regression results},
   JOURNAL = {Inverse Problems},
  FJOURNAL = {Inverse Problems. An International Journal on the Theory and
              Practice of Inverse Problems, Inverse Methods and Computerized
              Inversion of Data},
    VOLUME = {29},
      YEAR = {2013},
    NUMBER = {12},
     PAGES = {125011, 32},
      ISSN = {0266-5611,1361-6420},
   MRCLASS = {35R30 (62F15 62G08 65J22 65N21)},
  MRNUMBER = {3141858},
MRREVIEWER = {Vyacheslav\ F.\ Gubarev},
       DOI = {10.1088/0266-5611/29/12/125011},
       URL = {https://doi.org/10.1088/0266-5611/29/12/125011},
}

@article {Huillier_2023,
    AUTHOR = {L'Huillier, Alice and Travis, Luke and Castillo, Isma\"el and
              Ray, Kolyan},
     TITLE = {Semiparametric inference using fractional posteriors},
   JOURNAL = {J. Mach. Learn. Res.},
  FJOURNAL = {Journal of Machine Learning Research (JMLR)},
    VOLUME = {24},
      YEAR = {2023},
     PAGES = {Paper No. [389], 61},
      ISSN = {1532-4435,1533-7928},
   MRCLASS = {62G08 (62G20)},
  MRNUMBER = {4720845},
MRREVIEWER = {Matteo\ Giordano},
}

@incollection {Grunwald_2012,
    AUTHOR = {Gr\"unwald, Peter},
     TITLE = {The safe {B}ayesian: learning the learning rate via the
              mixability gap},
 BOOKTITLE = {Algorithmic learning theory},
    SERIES = {Lecture Notes in Comput. Sci.},
    VOLUME = {7568},
     PAGES = {169--183},
 PUBLISHER = {Springer, Heidelberg},
      YEAR = {2012},
      ISBN = {978-3-642-34106-9},
   MRCLASS = {62F15 (62H30 68T05)},
  MRNUMBER = {3042889},
       DOI = {10.1007/978-3-642-34106-9\_16},
       URL = {https://doi.org/10.1007/978-3-642-34106-9_16},
}

@article {miller_2019,
    AUTHOR = {Miller, Jeffrey W. and Dunson, David B.},
     TITLE = {Robust {B}ayesian inference via coarsening},
   JOURNAL = {J. Amer. Statist. Assoc.},
  FJOURNAL = {Journal of the American Statistical Association},
    VOLUME = {114},
      YEAR = {2019},
    NUMBER = {527},
     PAGES = {1113--1125},
      ISSN = {0162-1459,1537-274X},
   MRCLASS = {62H30 (62F15)},
  MRNUMBER = {4011766},
       DOI = {10.1080/01621459.2018.1469995},
       URL = {https://doi.org/10.1080/01621459.2018.1469995},
}

@article {Bhattacharya_2019,
    AUTHOR = {Bhattacharya, Anirban and Pati, Debdeep and Yang, Yun},
     TITLE = {Bayesian fractional posteriors},
   JOURNAL = {Ann. Statist.},
  FJOURNAL = {The Annals of Statistics},
    VOLUME = {47},
      YEAR = {2019},
    NUMBER = {1},
     PAGES = {39--66},
      ISSN = {0090-5364,2168-8966},
   MRCLASS = {62G07 (62F15 62G20)},
  MRNUMBER = {3909926},
MRREVIEWER = {Ao\ Yuan},
       DOI = {10.1214/18-AOS1712},
       URL = {https://doi.org/10.1214/18-AOS1712},
}

@book {Batchelor_1999,
    AUTHOR = {Batchelor, G. K.},
     TITLE = {An introduction to fluid dynamics},
    SERIES = {Cambridge Mathematical Library},
   EDITION = {paperback},
 PUBLISHER = {Cambridge University Press, Cambridge},
      YEAR = {1999},
     PAGES = {xviii+615},
      ISBN = {0-521-66396-2},
   MRCLASS = {76-01},
  MRNUMBER = {1744638},
}

@article {gine_2011,
    AUTHOR = {Gin\'e, Evarist and Nickl, Richard},
     TITLE = {Rates of contraction for posterior distributions in
              {$L^r$}-metrics, {$1\leq r\leq\infty$}},
   JOURNAL = {Ann. Statist.},
  FJOURNAL = {The Annals of Statistics},
    VOLUME = {39},
      YEAR = {2011},
    NUMBER = {6},
     PAGES = {2883--2911},
      ISSN = {0090-5364,2168-8966},
   MRCLASS = {62G20 (62F15 62G07 62G08)},
  MRNUMBER = {3012395},
MRREVIEWER = {Thomas\ R.\ Boucher},
       DOI = {10.1214/11-AOS924},
       URL = {https://doi.org/10.1214/11-AOS924},
}

@book {Kaipio_Somersalo_2005,
    AUTHOR = {Kaipio, Jari and Somersalo, Erkki},
     TITLE = {Statistical and computational inverse problems},
    SERIES = {Applied Mathematical Sciences},
    VOLUME = {160},
 PUBLISHER = {Springer-Verlag, New York},
      YEAR = {2005},
     PAGES = {xvi+339},
      ISBN = {0-387-22073-9},
   MRCLASS = {65-01 (60G60 62-01 62G05 62H12 65J22 65L09 65N21)},
  MRNUMBER = {2102218},
}
